\setlist[enumerate]{label=(\roman*),font=\normalfont}
\def\new@part{\@startsection{part}{0}%
  \z@{\linespacing\@plus\linespacing}{.5\linespacing}%
  {\normalfont\large\bfseries\raggedright}}
\def\part#1{%
{\def\@secnumfont{\bfseries}%
\new@part{#1}%
}}
\numberwithin{equation}{section}
\theoremstyle{plain}
\newtheorem*{theorem*}{Theorem}
\newtheorem{theorem}[equation]{Theorem}
\newtheorem{proposition}[equation]{Proposition}
\newtheorem{lemma}[equation]{Lemma}
\newtheorem{corollary}[equation]{Corollary}
\newtheorem{conjecture}[equation]{Conjecture}
\theoremstyle{definition}
\newtheorem{definition}[equation]{Definition}
\newtheorem{construction}[equation]{Construction}
\newtheorem{example}[equation]{Example}
\newtheorem{observation}[equation]{Observation}
\newtheorem{digression}[equation]{Digression}
\newtheorem{remark}[equation]{Remark}
\newtheorem{warning}[equation]{Warning}
\let\scr=\mathcal
\let\bb=\mathbb
\let\rm=\mathrm
\def\N{\bb N}
\def\Z{\bb Z}
\def\Q{\bb Q}
\def\C{\bb C}
\def\F{\bb F}
\def\A{\bb A}
\def\P{\bb P}
\def\V{\bb V}
\def\1{\mathbf 1}
\def\h{\mathrm h}
\def\G{\mathbb G}
\def\ph{\mathord-}
\def\pt{{\mathpalette\pt@{.75}}}
\def\pt@#1#2{\mathord{\scalebox{#2}{$\m@th#1\bullet$}}}
\def\L{\mathrm{L}{}}
\def\LB{\mathrm{LB}{}}
\let\into=\hookrightarrow
\let\onto=\twoheadrightarrow
\def\simto{\xrightarrow{\sim}}
\let\emptyset=\varnothing
\DeclareMathOperator{\Sym}{Sym}
\def\id{\mathrm{id}}
\def\Hom{\mathrm{Hom}}
\def\End{\mathrm{End}}
\def\Aut{\mathrm{Aut}}
\def\Map{\mathrm{Map}}
\DeclareMathOperator{\Spec}{Spec}
\DeclareMathOperator{\Spf}{Spf}
\DeclareMathOperator{\Proj}{Proj}
\def\Th{\mathrm{Th}}
\def\Pic{\mathrm{Pic}}
\def\HH{\mathrm{H}}
\def\E{\mathbb{E}}
\def\THH{\mathrm{THH}}
\def\SSeq{\mathrm{SSeq}}
\def\hatotimes{\mathbin{\widehat\otimes}}
\def\adams#1{{\smash[t]{(#1)}}}
\DeclareMathOperator{\rk}{rk}
\def\Nis{\mathrm{Nis}}
\def\Zar{\mathrm{Zar}}
\def\et{\mathrm{\acute et}}
\DeclareMathOperator{\Tr}{Tr}
\def\QCoh{\mathrm{QCoh}{}}
\def\Bl{\mathrm{Bl}}
\let\cat=\mathrm
\def\Gr{\mathrm{Gr}{}}
\def\MU{\mathrm{MU}}
\def\MGL{\mathrm{MGL}}
\def\MGr{\mathrm{MGr}}
\def\PMGL{\mathrm{PMGL}}
\def\KGL{\mathrm{KGL}}
\def\KU{\mathrm{KU}}
\def\K{\mathrm{K}{}}
\def\Mod{\cat{M}\mathrm{od}{}}
\def\Sch{\cat{S}\mathrm{ch}{}}
\def\Sm{{\cat{S}\mathrm{m}}}
\def\Fin{\cat F\mathrm{in}}
\def\op{\mathrm{op}}
\def\gr{\mathrm{gr}}
\def\Fil{\mathrm{Fil}}
\def\Vect{\cat{V}\mathrm{ect}{}}
\def\Pair{\mathrm{Pair}}
\def\MS{\mathrm{MS}}
\def\Sp{\mathrm{Sp}}
\def\CAlg{\mathrm{CAlg}{}}
\def\ev{\mathrm{ev}}
\def\setminus{-}
\def\rig{\mathrm{rig}}
\def\Cat{\mathrm{C}\mathrm{at}{}}
\def\Ab{\mathrm{A}\mathrm{b}}
\def\Tw{\mathrm{Tw}{}}
\def\fp{\mathrm{fp}}
\def\Fun{\mathrm{Fun}}
\def\all{\mathrm{all}}
\def\Span{\mathrm{Span}}
\def\Perf{\mathrm{Perf}{}}
\def\epi{\mathrm{epi}}
\def\sbu{\mathrm{sbu}}
\def\ebu{\mathrm{ebu}}
\def\sncd{\mathrm{sncd}}
\def\gys{\mathrm{gys}{}}
\newcommand{\SigmaP}{\Sigma_{\mathbb{P}^1}}
\newcommand{\CC}{\mathcal{C}}
\newcommand{\DD}{\mathcal{D}}
\newcommand{\map}{\operatorname{map}}
\newcommand{\QSyn}{\mathrm{QSyn}}
\newcommand{\TC}{\mathrm{TC}}
\newcommand{\TP}{\mathrm{TP}}
\newcommand{\HC}{\mathrm{HC}}
\newcommand{\HP}{\mathrm{HP}}
\newcommand{\crys}{\mathrm{crys}}
\newcommand{\kgl}{\mathrm{kgl}}
\newcommand{\syn}{\mathrm{syn}}
\newcommand{\tc}{\mathrm{tc}}
\newcommand{\tp}{\mathrm{tp}}
\newcommand{\thh}{\mathrm{thh}}
\newcommand{\lax}{\mathrm{lax}}
\DeclareSymbolFontAlphabet{\mathbb}{AMSb} 
\DeclareSymbolFontAlphabet{\mathbbl}{bbold}
\newcommand{\prism}{{\mathbbl{\Delta}}}
\def\w{\bar{\mathrm{h}}}
\def\fg{\mathrm{fg}}
\def\lisse{\mathrm{lisse}}
\def\dual{\mathrm{dual}}
\let\heart=\heartsuit
\let\lim=\relax
\DeclareMathOperator*{\lim}{lim}
\DeclareMathOperator*{\colim}{colim}
\let\phi=\varphi
\let\epsilon=\varepsilon
\title{Atiyah duality for motivic spectra}
\date{\today}
\author{Toni Annala}
\address{School of Mathematics\\
Institute for Advanced Study\\
1 Einstein Drive,
08540 Princeton, NJ, USA
}
\email{\href{mailto:tannala@ias.edu}{tannala@ias.edu}}
\urladdr{\url{https://www.math.ias.edu/~tannala/}}
\thanks{This article was written when T.A. was in residence at the Institute for Advanced Study in Princeton.}
\author{Marc Hoyois}
\address{Fakultät für Mathematik\\
Universität Regensburg\\
Universitätsstr. 31\\
93040 Regensburg\\
Germany}
\email{\href{mailto:marc.hoyois@ur.de}{marc.hoyois@ur.de}}
\urladdr{\url{https://hoyois.app.uni-regensburg.de}}
\thanks{M.H.\ was partially supported by the Collaborative Research Center SFB 1085 \emph{Higher Invariants} funded by the DFG}
\author{Ryomei Iwasa}
\address{Laboratoire de Math\'ematiques d'Orsay, Universit\'e Paris-Saclay, 307 rue Michel Magat, F-91405 Orsay.}
\email{\href{mailto:ryomei.iwasa@cnrs.fr}{ryomei.iwasa@cnrs.fr}}
\urladdr{\url{https://ryomei.com}}
\thanks{R.I.\ was supported by the European Union’s Horizon 2020 research and innovation programme under the European Research Council (ERC) grant agreement No. 101001474.}
\begin{document}
	
\maketitle

\begin{abstract}
	We prove that Atiyah duality holds in the $\infty$-category of non-$\A^1$-invariant motivic spectra over arbitrary derived schemes: every smooth projective scheme is dualizable with dual given by the Thom spectrum of its negative tangent bundle. The Gysin maps recently constructed by L.~Tang are a key ingredient in the proof. We then present several applications. First, we study $\A^1$-colocalization, which transforms any module over the $\A^1$-invariant sphere into an $\A^1$-invariant motivic spectrum without changing its values on smooth projective schemes. This can be applied to all known $p$-adic cohomology theories and gives a new elementary approach to ``logarithmic'' or ``tame'' cohomology theories; it recovers for instance the logarithmic crystalline cohomology of strict normal crossings compactifications over perfect fields and shows that the latter is independent of the choice of compactification. Second, we prove a motivic Landweber exact functor theorem, associating a motivic spectrum to any graded formal group law classified by a flat map to the moduli stack of formal groups. Using this theorem, we compute the ring of $\P^1$-stable cohomology operations on the algebraic K-theory of qcqs derived schemes, and we prove that rational motivic cohomology is an idempotent motivic spectrum.
\end{abstract}

\tableofcontents

\section{Introduction}

This paper is a sequel to \cite{AHI}, where we introduced the stable $\infty$-category $\MS_S$ of motivic spectra over a derived scheme $S$.
Our goal is to prove the following theorem and discuss some applications:

\begin{theorem}\label{thm:main-intro}
	Let $f\colon X\to S$ be a smooth projective morphism between derived schemes.
	\begin{enumerate}
		\item \textnormal{(Ambidexterity, Theorem~\ref{thm:ambidexterity})} There is a canonical isomorphism of functors
		\[
		f_\sharp\Sigma^{-\Omega_f} \simeq f_*\colon \MS_X\to \MS_S.
		\]
		\item \textnormal{(Atiyah duality, Corollary~\ref{cor:atiyah})} For every $\xi\in\K(X)$, the Thom spectrum $\Th_X(\xi)\in\MS_S$ is dualizable with dual $\Th_X(-\xi-\Omega_f)$.
	\end{enumerate}
\end{theorem}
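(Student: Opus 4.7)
The strategy is to prove (1) directly and deduce (2) by a formal argument. For (2), assume (1). Then $f_*$ is both a left and right adjoint; combining the smooth projection formula for $f_\sharp$ established in \cite{AHI} with the ambidexterity identification yields the right-adjoint projection formula $f_*(A)\otimes B\simeq f_*(A\otimes f^*B)$. A short computation of mapping spaces then produces a natural equivalence
\[
\Map_{\MS_S}(\Th_X(\xi),\, B) \;\simeq\; \Map_{\MS_S}(\1_S,\, \Th_X(-\xi-\Omega_f)\otimes B)
\]
for any $B\in\MS_S$, exhibiting $\Th_X(-\xi-\Omega_f)$ as the dual of $\Th_X(\xi)$; verifying the triangle identities for the candidate evaluation and coevaluation is then formal, given the monoidality of $f_\sharp$.

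For (1), the plan is to construct a natural transformation $\alpha_f\colon f_\sharp\Sigma^{-\Omega_f}\to f_*$ and prove it is an equivalence. The map $\alpha_f$ should arise from a \emph{fundamental class} $\eta_f\colon f_\sharp\Sigma^{-\Omega_f}\1_X\to\1_S$ in $\MS_S$, which one builds using Tang's Gysin map applied to the diagonal $\Delta_f\colon X\hookrightarrow X\times_S X$ (a regular closed immersion of smooth $S$-schemes with conormal bundle $\Omega_f$). Given $\eta_f$, extension to an arbitrary $A\in\MS_X$ proceeds as follows: by the adjunction $f_\sharp\dashv f^*$, a map $f_\sharp\Sigma^{-\Omega_f}A\to f_* A$ is equivalent to a map $\Sigma^{-\Omega_f}A\to f^*f_* A$, and one takes the composition of $\eta_f$ tensored with $A$ with the unit $A\to f^*f_* A$ of the adjunction $f^*\dashv f_*$.

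To show $\alpha_f$ is an equivalence, I would exploit the compatibility of $\alpha$ under composition together with the Zariski-local factorization of a smooth projective morphism through a regular closed immersion $X\hookrightarrow\mathbb P^n_S$ followed by the projection. This reduces the problem to two model cases: (a) the projection $\mathbb P^n_S\to S$, which can be checked directly from the projective bundle formula for $\MS$ already available in \cite{AHI}; and (b) regular closed immersions of smooth $S$-schemes, handled by Tang's purity equivalence, which identifies the situation (after deformation to the normal cone) with the zero section of a vector bundle, where an explicit computation is possible.

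The principal difficulty I expect is the coherence of the construction: one must verify that Tang's Gysin map on the diagonal really assembles into a natural transformation $\alpha_f$ that behaves correctly under composition of smooth projective morphisms. In the $\A^1$-invariant setting, such coherence is typically handled uniformly by deformation to the normal cone; in the non-$\A^1$-invariant setting one must lean more heavily on the refined coherence built into Tang's construction. Once functoriality under composition is established, the reduction to projective bundles and regular embeddings is fairly standard, and (1) follows.
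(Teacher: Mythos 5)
Your overall architecture — build $\alpha_f\colon f_!\to f_*$ from a trace/fundamental class supported on the diagonal Gysin map, then deduce Atiyah duality by a projection-formula argument — matches the paper's design: the trace map is Construction~\ref{ctr:trace} and the deduction of duality is Lemma~\ref{lem:duality-pairing} and Corollary~\ref{cor:atiyah}. The reduction to projective space via a Zariski-local factorization is also the right top-level move (Theorem~\ref{thm:ambidexterity}). But the core of your plan for proving that $\alpha_f$ is an equivalence contains two gaps, one of which is fatal in this setting.

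The fatal one is your case (b). There is no ``Tang's purity equivalence'' in $\MS_S$: the paper emphasizes (Remark~\ref{rmk:purity}) that the Gysin map $X_+\to\Th_Z(\scr N_{Z/X})$ is almost never an isomorphism prior to $\A^1$-localization, and its $\A^1$-localization is what recovers Morel--Voevodsky purity. Deformation to the normal cone is precisely the technique that does not carry over; the paper flags this explicitly in the remark comparing with the argument of \cite{hoyois-sixops}, which ``strongly relies on $\A^1$-homotopy invariance.'' Relatedly, your factorization $X\into\P^n_S\to S$ does not give an ``ambidexterity statement for the closed immersion'': the immersion $i$ is not smooth, so $i_\sharp$ and hence $i_!$ are not defined, and there is no $\alpha_i$ whose composability with $\alpha_p$ you could exploit. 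The paper instead defines the candidate unit $\eta_f=\gys(p,i)\circ\eta_p$ and verifies the triangle identities directly, using the base-change, base-independence, linearity, and composition axioms for Gysin transformations (Proposition~\ref{prop:gysin-trans}); it is a genuine computation, not a formal compatibility.

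Your case (a) is also substantially underselling the difficulty. Dualizability of $\Sigma^\infty_{\P^1}\P^n_+$ is indeed easy from the cofiber sequences $\P^{n-1}_+\to\P^n_+\to(\P^1)^{\otimes n}$, but identifying the dual as $\Th_{\P^n}(-\Omega_{\P^n})$ and showing that the geometric evaluation map is a perfect pairing is the technical heart of the paper. It requires the induction of Theorem~\ref{thm:dualsofP}, which in turn needs the Gysin null-sequences for $\P^0,\P^{n-1}\subset\P^n$ to be \emph{cofiber} sequences for the twists $\scr O(-1)^m$ and $-\Omega-\scr O(-1)^m$ — this is Corollary~\ref{cor:gys-cofib}, proved via the blowup model of Lemma~\ref{lem:projquot} and the twist-by-divisor zigzag of Construction~\ref{ctr:thomtwist}. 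Without some argument replacing that, ``checked directly from the projective bundle formula'' does not close the loop. Note also that this forces the paper to reverse your proposed order: it proves Atiyah duality for $\P^n$ first and \emph{deduces} ambidexterity for $\P^n$ from it (Proposition~\ref{prop:dual->ambi}), rather than the other way around.
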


In $\A^1$-homotopy theory, this ambidexterity theorem was announced by Voevodsky \cite{Voevodsky-6FF} and proved independently by Ayoub \cite{Ayoub} and Röndigs \cite{RondigsSH}. This theorem is one of two key ingredients in the construction of the six-functor formalism for $\A^1$-invariant motivic spectra, the other one being the localization theorem of Morel and Voevodsky. While the latter does not hold as is without $\A^1$-invariance, Theorem~\ref{thm:main-intro} is hopefully a first major step towards a six-functor formalism for some theory of non-$\A^1$-invariant motivic spectra.

It follows from Atiyah duality that any cohomology theory satisfying the Künneth formula and representable in $\MS_S$ satisfies Poincaré duality.
This unifies essentially all known instances of Poincaré duality for cohomology theories of schemes, including non-$\A^1$-invariant cases such as de Rham cohomology by Clausen \cite{Cla21} and relative prismatic cohomology by Tang \cite{Tan22}.

\textbf{Part I} is dedicated to the proof of Theorem~\ref{thm:main-intro}.
We crucially make use of the \emph{Gysin maps} in $\MS_S$ constructed by Longke Tang \cite{Tang}: for a closed immersion $Z\into X$ between smooth $S$-schemes, the associated Gysin map is a canonical map
\[
\gys\colon X_+ \to \Th_Z(\scr N_{Z/X})
\]
in $\MS_S$, whose $\A^1$-localization recovers the purity isomorphism $\L_{\A^1}(X/(X-Z))\simeq \L_{\A^1}\Th_Z(\scr N_{Z/X})$ of Morel and Voevodsky.\footnote{Note that the purity isomorphism does not hold prior to $\A^1$-localization. In fact, a motivic spectrum in $\MS_S$ satisfies purity if and only if it is $\A^1$-invariant; see Remark~\ref{rmk:purity}.} Using the Gysin map for the diagonal embedding $X\into X\times_S X$ of a smooth separated $S$-scheme, we can define in a standard way a canonical pairing
\[
\ev_{X,\xi}\colon \Th_X(\xi)\otimes \Th_X(-\xi-\Omega_{X/S}) \to \1_S
\]
in $\MS_S$, which we investigate in Section~\ref{sec:eval}. A more precise statement of Atiyah duality is that the pairing $\ev_{X,\xi}$ is perfect when $X$ is smooth and projective.

Similarly to the proofs of Ayoub and Röndigs in $\A^1$-homotopy theory, we reduce the ambidexterity theorem to the case of projective space $\P^n_S\to S$, where we then argue by induction on $n$. 
The heart of the proof is the induction step, which is carried out in Section~\ref{sec:Pdual}. In more details, the structure of the proof of Theorem~\ref{thm:main-intro} is as follows:
\begin{itemize}
	\item We prove Atiyah duality for $X=\P^n_S$ and $\xi=\scr O(-1)^m$ by induction on $n$ (Theorem~\ref{thm:dualsofP}).
	\item Atiyah duality for $X=\P^n_S$ and $\xi=0$ implies ambidexterity for $f\colon \P^n_S\to S$ (Proposition~\ref{prop:dual->ambi}).
	\item Ambidexterity for $\P^n$ implies ambidexterity for all smooth projective morphisms (Theorem~\ref{thm:ambidexterity}).
	\item Finally, ambidexterity for $f\colon X\to S$ implies Atiyah duality for any $\xi\in\K(X)$ (Corollary~\ref{cor:atiyah}).
\end{itemize}

\begin{remark}
	In \cite[Section 5.3]{hoyois-sixops}, a different proof of ambidexterity for $f\colon \P^n_S\to S$ is given, which does not proceed by induction on $n$ but instead uses a direct geometric construction of the unit map $\eta_f\colon \id\to f_\sharp\Sigma^{-\Omega_f}f^*$ (called the Pontryagin–Thom collapse map in \emph{loc.\ cit.}).
	This geometric construction strongly relies on $\A^1$-homotopy invariance, so it is not clear how to adapt it to $\MS_S$.
\end{remark}

\begin{remark}
	In $\A^1$-homotopy theory, the ambidexterity theorem was generalized to smooth \emph{proper} morphisms by Cisinski and Déglise in \cite{CD}.
	To obtain such a generalization in our setting requires further developments towards the six-functor formalism. If $k$ is a field of characteristic $0$, one can combine Theorem~\ref{thm:main-intro}(ii), Chow's lemma, resolution of singularities, and weak factorization to prove that $\Th_X(\xi)$ is dualizable in $\MS_k$ for any smooth proper $k$-scheme $X$ and any $\xi\in\K(X)$.
\end{remark}

In \textbf{Part II}, we discuss some applications of Atiyah duality. 
As one of our main applications, we study \emph{$\A^1$-colocalization} in Section~\ref{sec:logarithmic}, by which we mean the \emph{right} adjoint to the inclusion of the subcategory $\MS_S^{\smash[t]{\A^1}}$ of $\A^1$-invariant motivic spectra (which is the usual stable motivic homotopy $\infty$-category of Morel and Voevodsky).
More specifically, we consider the functor
\[
\Mod_{\1_{\A^1}}(\MS_S) \to \MS_S^{\A^1}, \quad E\mapsto E^\dagger,
\]
which is right adjoint to the inclusion, where $\1_{\A^1}=\L_{\A^1}(\1)$ is the $\A^1$-invariant motivic sphere.
Atiyah duality implies that the $\A^1$-colocalization $E^\dagger$ of a $\1_{\A^1}$-module $E$ has the same values as $E$ on smooth projective schemes. Moreover, its values on more general smooth schemes can sometimes be computed by means of a suitable compactification:

\begin{theorem}[Computing the $\A^1$-colocalization]
	\label{thm:intro-A1-coloc}
	Let $S$ be a derived scheme and let $E\in \Mod_{\1_{\A^1}}(\MS_S)$.
	\begin{enumerate}
		\item \textnormal{(Proposition \ref{prop:coloc})} For any smooth projective $S$-scheme $X$ and any $\xi\in \K(X)$, the counit map
		\[
		E^\dagger(\Th_X(\xi)) \to E(\Th_X(\xi))
		\]
		is an isomorphism.
		\item \textnormal{(Proposition \ref{prop:log})} Let $U$ be a smooth $S$-scheme admitting a smooth projective compactification $U\into X$ with a relative strict normal crossings boundary $\partial X=X- U$. Let $\partial_1X,\dotsc,\partial_nX$ be the smooth components of $\partial X$, and for any subset $I\subset \{1,\dotsc,n\}$, let $\partial_IX=\bigcap_{i\in I}\partial_iX$ and let $i_I\colon \partial_IX\into X$ be the inclusion.
		 Then, for any $\xi\in\K(X)$, one may compute $E^\dagger(\Th_U(\xi))$ as the total cofiber of an $n$-cube $I\mapsto E(\Th_{\partial_IX}(\xi+\scr N_{i_I}))$, whose edges are Gysin maps.
	\end{enumerate}
\end{theorem}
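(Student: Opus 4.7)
The plan is to tackle the two parts sequentially, with part (i) supplying the key input at each vertex of the cube in part (ii). The structural insight is that Atiyah duality forces the $\1_{\A^1}$-module $\Th_X(\xi) \otimes \1_{\A^1}$ to be $\A^1$-invariant whenever $X$ is smooth projective, after which the universal property of $(-)^\dagger$ closes the argument.

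For part (i), I would first establish the abstract principle: if $F \in \MS_S$ is dualizable with dual $F^\vee$, then $F \otimes \1_{\A^1}$ is $\A^1$-invariant. For any $\A^1$-acyclic $H \in \MS_S$, duality gives
\[
\map_{\MS_S}(H, F \otimes \1_{\A^1}) \simeq \map_{\MS_S}(H \otimes F^\vee, \1_{\A^1}),
\]
and the right-hand side vanishes because $\1_{\A^1}$ is $\A^1$-local and $H \otimes F^\vee$ remains $\A^1$-acyclic by symmetric monoidality of $L_{\A^1}$. Applying this to $F = \Th_X(\xi)$, dualizable by Theorem~\ref{thm:main-intro}(ii), places $\Th_X(\xi) \otimes \1_{\A^1}$ in the essential image of $\MS_S^{\A^1} \hookrightarrow \Mod_{\1_{\A^1}}$. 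The base-change adjunction between $\MS_S$ and $\Mod_{\1_{\A^1}}$, combined with the adjunction defining $(-)^\dagger$, then yields
\begin{align*}
\map_{\MS_S}(\Th_X(\xi), E^\dagger)
&\simeq \map_{\Mod_{\1_{\A^1}}}(\Th_X(\xi) \otimes \1_{\A^1}, E^\dagger) \\
&\simeq \map_{\Mod_{\1_{\A^1}}}(\Th_X(\xi) \otimes \1_{\A^1}, E) \\
&\simeq \map_{\MS_S}(\Th_X(\xi), E),
\end{align*}
the middle equivalence coming from $\Th_X(\xi) \otimes \1_{\A^1}$ being $\A^1$-invariant.

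For part (ii), the reduction is that $E^\dagger$ is $\A^1$-invariant, so $E^\dagger(\Th_U(\xi))$ coincides with $\map_{\MS_S^{\A^1}}(L_{\A^1}\Th_U(\xi), E^\dagger)$. I would then decompose $L_{\A^1}\Th_U(\xi)$ in $\MS_S^{\A^1}$ by iterating Morel--Voevodsky purity along the SNC stratification --- valid after $\A^1$-localization and realized by the Gysin maps of \cite{Tang} --- expressing it as the total fiber of the $n$-cube $I \mapsto L_{\A^1}\Th_{\partial_I X}(\xi + \scr N_{i_I})$, with edges induced by the Gysin maps for the transverse inclusions $\partial_{I \cup \{j\}}X \hookrightarrow \partial_I X$. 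Mapping this total fiber into $E^\dagger$ converts it into a total cofiber, and at each vertex part (i) identifies $E^\dagger(\Th_{\partial_I X}(\xi + \scr N_{i_I}))$ with $E(\Th_{\partial_I X}(\xi + \scr N_{i_I}))$, since every $\partial_I X$ is a smooth closed subscheme of the projective scheme $X$ and hence smooth projective over $S$. The edges of the resulting cube translate to the Gysin maps in $E$-cohomology.

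The hard part will be the SNC decomposition in step (ii): packaging the iterated Gysin/purity data into a single coherent $n$-cube whose edges are exactly the Gysin maps with the correct normal-bundle twists. Part (i) is essentially formal once one isolates the principle that dualizability promotes $\otimes\,\1_{\A^1}$ to $\A^1$-invariance. For the SNC decomposition, one must carefully track the additive behavior of normal bundles under transverse intersections --- using $\scr N_{i_I} \simeq \bigoplus_{i \in I} \scr N_{i_{\{i\}}}|_{\partial_I X}$ --- and verify that the edges at each level of the cube coincide with the Gysin maps predicted by the theorem, with compatible inclusion-exclusion structure across all $n$ levels.
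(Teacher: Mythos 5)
Your argument for part (i) is correct and is essentially the same as the paper's (Proposition~\ref{prop:coloc}): the paper derives from $\MS_S^\dual$-linearity of the inclusion $\MS^{\A^1}_S\into\Mod_{\1_{\A^1}}(\MS_S)$ (Lemma~\ref{lem:dual}) that $A\otimes\1_{\A^1}\simeq\L_{\A^1}A$ for dualizable $A$, which is just another way of saying that $A\otimes\1_{\A^1}$ is $\A^1$-invariant; the chain of adjunctions you then run is the same one the paper runs.

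For part (ii) your high-level plan matches Proposition~\ref{prop:log}, but you leave open exactly the point the paper has to work for: how to package the iterated purity/Gysin data into a single coherent $n$-cube. You flag this as ``the hard part'' without proposing a mechanism, and trying to build the cube of Gysin maps $\Th_{\partial_I X}(\xi+\scr N_{i_I})$ directly would require a coherence statement for Gysin maps with respect to composition of \emph{transverse} closed immersions that the paper does not set up in that form. The paper sidesteps this (Construction~\ref{cst:sncd} and Lemma~\ref{lem:purity}) by a dualization trick: the cube $\Th_{(X,\partial X)}(\xi)$ is defined as $I\mapsto (f_{I\sharp}i_I^*\Sigma^{-\xi-\Omega_X}\1_X)^\vee$, i.e.\ as the termwise dual of the cube of \emph{inclusion} maps $\partial_I X\into X$ applied to $(-\xi-\Omega_X)$-twisted Thom spectra; inclusions form a strictly functorial cube $\Box^{n,\op}\to\Sm_S$, and Atiyah duality then identifies the edges of the dualized cube with the desired Gysin maps. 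The identification of $\L_{\A^1}\Th_U(\xi)$ as the total fiber of this cube is then obtained not by iterating purity directly but by exhibiting a bivariantly natural pairing $\Th_{X/(X\setminus Z)}(\xi)\otimes\Th_Z(-\xi-\Omega_X)\to\1_S$ (built from collapse maps and the diagonal Gysin map) which becomes perfect after $\A^1$-localization by Morel--Voevodsky purity; Zariski descent for $\Th_{X\setminus\partial X}(\xi)\simeq\lim_{I\neq\emptyset}\Th_{X\setminus\partial_I X}(\xi)$ then finishes the argument. So the structure of your proof is right, but the coherence mechanism you would need is precisely what you did not supply, and the paper's solution (dualize an easy-to-cohere cube of inclusions rather than cohere a cube of Gysin maps) is a genuine idea you are missing.
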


For this theorem to be useful, we need to know some examples of $\1_{\A^1}$-modules. 
A large supply of $\1_{\A^1}$-modules comes from the fact that algebraic K-theory is $\A^1$-invariant on regular noetherian schemes.
As recently proved by Bachmann \cite{Bac22}, Voevodsky's slice filtration of the motivic spectrum $\KGL$ over a Dedekind domain $D$ induces a multiplicative filtration of the algebraic K-theory of smooth $D$-schemes with associated graded given by the Bloch–Levine motivic complexes; this is the \emph{motivic filtration} $F^*\K$ of algebraic K-theory. This filtration is represented by the motivic spectrum $\kgl\in \MS_D$ in the sense that $F^n\K=\Omega^{\infty-n}_{\P^1}\kgl$ for all $n\in \Z$. Moreover, the structure maps $F^{n}\K\to F^{n-1}\K$ are induced by the multiplication by the Bott element $\beta\colon\P^1\to \kgl$, and we have
\[
\kgl[\beta^{-1}]=\KGL\quad\text{and}\quad \kgl/\beta=\HH\Z.
\]
In a similar way, the motivic filtration of $p$-complete topological cyclic homology $\TC_p$ defined by Bhatt, Morrow, and Scholze in \cite{BMS} is represented by a motivic spectrum $\tc_p\in \MS_S$ over any derived scheme $S$, with a Bott element $\beta\colon \P^1\to\tc_p$ such that
\[
\tc_p[\beta^{-1}]=\TC_p\quad\text{and}\quad \tc_p/\beta=\HH\Z_p^\syn.
\]
Over a Dedekind domain $D$, we show that the cyclotomic trace $\K\to \TC_p$ is compatible with these motivic filtrations (Proposition~\ref{prop:motfil}), which gives rise to a morphism of motivic $\E_\infty$-ring spectra $\kgl\to \tc_p$ in $\MS_D$.

\begin{theorem}[Examples of $\1_{\A^1}$-modules]
	\label{thm:intro-A1-modules}
	\leavevmode
	\begin{enumerate}
		\item \textnormal{(Localizing invariants, Example~\ref{ex:locinv})} Let $E$ be a localizing invariant of $\Z$-linear $\infty$-categories and let $E_S\in \MS_S$ be the associated motivic spectrum over a qcqs derived scheme $S$. If $S$ is regular noetherian or if $E$ is truncating, then $E_S$ is a $\1_{\A^1}$-module.
		\item \textnormal{(Rational orientable ring spectra, Example~\ref{ex:Q-orientable})} If $S$ is regular noetherian, then any rational orientable commutative ring in $\h\MS_S$ is a $\1_{\A^1}$-module.
		\item \textnormal{($p$-adic étale cohomology theories, Corollary~\ref{cor:tc}, Examples \ref{ex:syn} and \ref{ex:prism})} Let $S$ be $p$-completely smooth over a Dedekind domain and let $(A,I)$ be a prism with a map $S^\wedge_p\to\Spf(A/I)$. Then the following motivic spectra are $\E_\infty$-algebras over $\kgl$ and hence over $\1_{\A^1}$ in $\MS_S$:
		\[
		\tc_p,\,\tp_p,\,\tc_p^-,\,\thh_p,\,
		\TC_p,\,\TP_p,\,\TC_p^-,\,\THH_p,\,
		\HH\Z_p^\syn,\,\HH\Z_p^\prism,\, \HH A^\prism.
		\]
		Here, the motivic spectra $\tc_p$, $\tp_p$, $\tc_p^-$, and $\thh_p$ represent the Bhatt–Morrow–Scholze motivic filtrations of $\TC_p$, $\TP_p$, $\TC_p^-$, and $\THH_p$, respectively, $\HH\Z_p^\syn=\tc_p/\beta$ represents syntomic cohomology (of $p$-adic formal schemes), $\HH\Z_p^\prism=\tp_p/\beta$ represents absolute prismatic cohomology, and $\HH A^\prism$ represents prismatic cohomology relative to $(A,I)$.
		
		In particular, if $k$ is a perfect field of characteristic $p$, then the crystalline cohomology motivic spectrum $\HH W(k)^\crys=\HH W(k)^\prism$ is an $\E_\infty$-algebra over $\1_{\A^1}$ in $\MS_k$.
	\end{enumerate}
\end{theorem}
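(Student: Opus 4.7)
The unifying principle is that $\A^1$-localization is a smashing localization of $\MS_S$, so $\1_{\A^1}$-modules are exactly $\A^1$-invariant motivic spectra. Thus each of the three parts reduces either to verifying $\A^1$-invariance directly or to exhibiting the spectrum in question as an algebra over something already known to be $\A^1$-invariant.

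For part (i), the truncating case is immediate from the Land--Tamme theorem, which gives $\A^1$-invariance of any truncating invariant on all qcqs derived schemes, so $E_S$ inherits $\A^1$-invariance by construction. For regular noetherian $S$, the relevant input is that every smooth $S$-scheme is itself regular noetherian, combined with the pro-cdh/cdh-invariance package for localizing invariants of $\Z$-linear $\infty$-categories (Kerz--Strunk--Tamme, Land--Tamme), which forces any such $E$ to be $\A^1$-invariant on smooth $S$-schemes. The motivic spectrum $E_S$ then automatically becomes $\A^1$-invariant. The main subtlety is ensuring that the construction of $E\mapsto E_S$ from \cite{AHI} is compatible with this pointwise $\A^1$-invariance, which should follow formally from the universal property producing $E_S$.

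For part (ii), let $R$ be a rational orientable commutative ring in $\h\MS_S$ with $S$ regular noetherian. The orientation determines a formal group law on $\pi_{2*}R$, which rationally is strictly isomorphic to the additive law. This isomorphism produces a ring map from rational motivic cohomology $\HH\Q_S$ to $R$, exhibiting $R$ as an $\HH\Q_S$-algebra. Since $\HH\Q_S$ is $\A^1$-invariant on regular noetherian bases (the rational Bloch--Levine motivic complexes are $\A^1$-invariant by construction), so is $R$, and hence $R$ is a $\1_{\A^1}$-module.

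For part (iii), the foundational construction is an $\E_\infty$-algebra map $\kgl\to\tc_p$ over a Dedekind domain, which is exactly the content of Proposition~\ref{prop:motfil}: the cyclotomic trace $\K\to\TC_p$ respects the motivic filtrations on both sides and lifts to a map of motivic $\E_\infty$-ring spectra. Since $\kgl$ is a $\1_{\A^1}$-module by part (i) (K-theory being $\A^1$-invariant on regular noetherian schemes), every $\kgl$-algebra is. The remaining spectra in the list arise from $\tc_p$ via cyclotomic structure maps (giving $\thh_p$, $\tc_p^-$, $\tp_p$ as $\E_\infty$-rings under $\tc_p$), $\beta$-inversion (giving the unfiltered theories $\TC_p$, $\THH_p$, $\TC_p^-$, $\TP_p$), and the mod-$\beta$ quotients (giving $\HH\Z_p^\syn$, $\HH\Z_p^\prism$, and $\HH A^\prism$ as a relative version in the prismatic setup). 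Each operation visibly preserves the $\kgl$-algebra structure. The main obstacle is Proposition~\ref{prop:motfil} itself: its proof will require identifying Bachmann's motivic filtration on $\K$ with the BMS filtration on $\TC_p$ under the cyclotomic trace at the level of motivic spectra, which amounts to matching the $\P^1$-graded pieces and establishing compatibility of Bott elements on both sides.
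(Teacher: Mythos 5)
Your opening ``unifying principle'' is incorrect and it contaminates the arguments in parts (i) and (ii). In $\MS_S$, the $\A^1$-localization is \emph{not} smashing: being a $\1_{\A^1}$-module in $\MS_S$ is strictly weaker than being $\A^1$-invariant. This is the whole reason the paper develops $\A^1$-colocalization --- if $\Mod_{\1_{\A^1}}(\MS_S)$ coincided with $\MS_S^{\A^1}$, the right adjoint $(\ph)^\dagger$ would be the identity and Theorem~\ref{thm:intro-A1-coloc} would be vacuous. The correct mechanism, which the paper uses throughout, is the one-directional fact that $\A^1$-invariant spectra are $\1_{\A^1}$-modules (because the lax monoidal right adjoint to $\L_{\A^1}$ carries modules over the unit to modules over $\1_{\A^1}$), and hence any module or algebra over an $\A^1$-invariant ring spectrum is a $\1_{\A^1}$-module.

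For part (i), your claim that Land--Tamme gives $\A^1$-invariance of truncating invariants is false: infinitesimal K-theory $\K^{\inf}=\fib(\K\to\TC)$ is truncating by Dundas--Goodwillie--McCarthy but is not $\A^1$-invariant even on regular noetherian bases (since $\TC$ is not). Likewise, general localizing invariants on smooth schemes over a regular noetherian base are not $\A^1$-invariant ($\TC$ again). What the paper uses is that $E_S$ is canonically a $\KGL$-module by the universality of $\K$-theory, that $\KGL$ is $\A^1$-invariant for regular noetherian $S$ by Thomason--Trobaugh, and in the truncating case that $E_S$ is a module over the cdh-sheafification $\KH$ of $\KGL$, which is $\A^1$-invariant by Kerz--Strunk--Tamme. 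This yields $\1_{\A^1}$-module structures without any (false) claim of $\A^1$-invariance of $E_S$.

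For part (ii), the final inference ``Since $\HH\Q_S$ is $\A^1$-invariant\dots so is $R$'' has the same flaw: being an $\HH\Q$-algebra does not make $R$ $\A^1$-invariant. The correct conclusion is only that $R$ is a $\1_{\A^1}$-module, because $\HH\Q$ is a $\1_{\A^1}$-algebra over regular noetherian $S$. You also underestimate the work needed to exhibit $R$ as an $\HH\Q$-module: in the paper this goes through Proposition~\ref{prop:HQ-modules}, which rests on the idempotence of $\HH\Q$ (Theorem~\ref{thm:HQ}), itself proved via the motivic Landweber exact functor theorem. A strict isomorphism of formal group laws does not by itself produce a ring map $\HH\Q\to R$ in this non-$\A^1$-invariant setting. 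Part (iii) is broadly on track: $\kgl$ is $\A^1$-invariant by construction (as the slice filtration spectrum of the $\A^1$-invariant $\KGL$), the filtered cyclotomic trace of Proposition~\ref{prop:motfil}/Corollary~\ref{cor:motfil} gives the $\E_\infty$-map $\kgl\to\tc_p$, and the remaining spectra inherit $\kgl$-algebra structures. Your description of the proof of Proposition~\ref{prop:motfil} is a bit off, though: it is not about matching Bott elements but rather a connectivity argument (Lemma~\ref{lem:motfil}), using that the BMS filtration is $p$-quasisyntomic-locally the double-speed Postnikov filtration and that the left Kan extension of $F^n\K$ is $p$-quasisyntomic-locally $2n$-connective.
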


\begin{remark}\label{rmk:1=1A1}
	Given that almost all examples of cohomology theories are provably or conjecturally $\1_{\A^1}$-modules over regular noetherian schemes, one may reasonably ask if the motivic sphere itself is $\A^1$-invariant over such bases.
	While an answer to this question is probably out of reach in this generality, the case of a base field seems approachable, especially in characteristic zero.
	One can also enforce a positive answer by replacing $\MS_S$ with $\Mod_{\1_{\A^1}}(\MS_S)$ without any obvious undesirable side effects (at least when $S$ is a field or a Dedekind domain).
\end{remark}

Theorem~\ref{thm:intro-A1-coloc}(ii) suggests that $E^\dagger(U)$ may be interpreted as the ``logarithmic $E$-cohomology'' of the pair $(X,\partial X)$, but it is defined without reference to a choice of compactification (which may not even exist in general). We make this interpretation more precise for crystalline cohomology by comparing the $\A^1$-colocalization of $\HH W(k)^\crys$ with the logarithmic crystalline cohomology defined by Kato~\cite{Kat}:

\begin{theorem}[Comparison with rigid and logarithmic crystalline cohomology, Proposition~\ref{prop:crys}]
	\label{thm:intro-crystalline}
	 Let $k$ be a perfect field of characteristic $p>0$.
	 \begin{enumerate}
	 \item The motivic spectrum $\HH W(k)^{\crys,\dagger}[1/p]$ represents Berthelot's rigid cohomology.
	 \item For a smooth $k$-scheme $U$, suppose that there is a smooth projective compactification $X$ such that $\partial X=X\setminus U$ is a strict normal crossings divisor.
	 Then there is a $W(k)$-linear isomorphism
	 \[
	 	\HH W(k)^{\crys,\dagger}(U) \simeq \mathrm{R}\Gamma_\crys((X,\partial X)/W(k)) \rlap,
	 \]
	 where the right hand side is logarithmic crystalline cohomology; in particular, the latter does not depend on the choice of compactification.
	 \end{enumerate}
\end{theorem}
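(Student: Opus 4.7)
My approach proves (ii) first and then derives (i) from it. For (ii), I would apply Theorem~\ref{thm:intro-A1-coloc}(ii) to $E = \HH W(k)^\crys$, which is a $\1_{\A^1}$-module by Theorem~\ref{thm:intro-A1-modules}(iii), with $\xi = 0$: this writes $\HH W(k)^{\crys,\dagger}(U)$ as the total cofibre of an $n$-cube $I \mapsto \HH W(k)^\crys(\Th_{\partial_IX}(\scr N_{i_I}))$ whose edges are Gysin maps. Since crystalline cohomology is orientable (being a module over $\kgl$), the Thom isomorphism identifies each vertex with a shift/twist of $\mathrm{R}\Gamma_\crys(\partial_IX/W(k))$.

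The heart of the argument is then to match this Gysin cube with the cube of iterated Poincaré residues arising from the weight filtration on Kato's logarithmic de Rham--Witt complex of $(X,\partial X)$: its associated graded pieces are expressible in terms of $W\Omega^\bullet_{\partial_IX}$, and the differentials of the resulting $n$-cube are iterated residues whose total cofibre computes $\mathrm{R}\Gamma_\crys((X,\partial X)/W(k))$. Classical crystalline computations (à la Illusie for de Rham--Witt) identify Poincaré residues with the classical crystalline Gysin maps, and one then invokes the compatibility of Tang's Gysin maps with those classical Gysin maps to obtain a map of cubes and hence of total cofibres. Independence of the chosen compactification is automatic, since the left-hand side of the resulting identification depends only on $U$.

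For (i), I would combine (ii) with the classical comparison $\mathrm{R}\Gamma_\crys((X,\partial X)/W(k))[1/p] \simeq \mathrm{R}\Gamma_\rig(U)$ (Baldassarri--Chiarellotto, and in greater generality Shiho) to obtain, for every smooth $U$ admitting an SNC compactification, a natural isomorphism $\HH W(k)^{\crys,\dagger}[1/p](U) \simeq \mathrm{R}\Gamma_\rig(U)$. To promote this to a representability statement, I would construct a morphism in $\MS_k^{\A^1}$ from $\HH W(k)^{\crys,\dagger}[1/p]$ to a motivic spectrum $E_\rig$ representing rigid cohomology, refining the tautological isomorphism on smooth projective schemes where both sides compute $\mathrm{R}\Gamma_\crys(X)[1/p]$ (via Theorem~\ref{thm:intro-A1-coloc}(i) and the fact that rigid agrees with crystalline in the proper case). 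Combining this with the local existence of SNC compactifications (Nisnevich-locally in characteristic $p$, via de Jong's alterations plus further blowups) and Nisnevich descent, the comparison is an equivalence on all smooth $k$-schemes.

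The principal obstacle is the identification in (ii) between Tang's Gysin cube and Kato's residue cube: it requires comparing two a priori distinct constructions of crystalline Gysin maps, and verifying compatibility of every square in the $n$-cube in a functorial way. A secondary difficulty in (i) is producing an honest morphism of motivic spectra (rather than a pointwise equivalence) and then descending the comparison from smooth $U$ admitting SNC compactifications to all smooth $k$-schemes.
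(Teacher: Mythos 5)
Your approach for (ii) is the natural one, but the paper sidesteps precisely the obstacle you identify as principal. Instead of matching Tang's Gysin cube against Kato's residue/weight-filtration cube (which would require proving that Tang's abstract Gysin maps, evaluated on $\HH W(k)^\crys$, agree with the classical crystalline Gysin maps — a genuinely delicate compatibility), the paper passes to duals. Both sides of the desired identification
\[
\mathrm{R}\Gamma_\crys((X,\partial X)/W(k)) \simeq \operatorname{tcofib}_I\,\HH W(k)^\crys(\Th_{\partial_IX}(\scr N_{i_I}))
\]
are dualizable $W(k)$-modules: the right-hand side because $\HH W(k)^\crys(\ph)$ satisfies the K\"unneth formula and so sends dualizable motivic spectra to dualizable modules (with dual the total \emph{fiber} of $\mathrm{R}\Gamma_\crys(\partial_IX/W(k))$ shifted by $2d$), and the left-hand side by Poincar\'e duality for logarithmic crystalline cohomology (Tsuji), with dual $\mathrm{R}\Gamma_{\crys,c}((X,\partial X)/W(k))[2d]$. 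After dualizing, the required isomorphism is
\[
\mathrm{R}\Gamma_{\crys,c}((X,\partial X)/W(k)) \simeq \operatorname{tfib}_I\,\mathrm{R}\Gamma_\crys(\partial_IX/W(k)),
\]
and here the cube is built from restrictions along closed immersions, not Gysin maps, so no comparison of Gysin constructions is needed; the statement is a classical one due to Nakkajima--Shiho. This duality trick is the key idea you are missing.

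For (i), the paper does not deduce it from (ii) and does not rely on SNC compactifications at all. Your proposed route leans on the existence of SNC compactifications Nisnevich-locally in characteristic $p$, which is not a known fact (de Jong's alterations produce an alteration, not a compactification of $U$ itself, and resolving the boundary to SNC in positive characteristic is open). The paper instead constructs a map $\HH W(k)^\rig\to \HH W(k)^\crys[1/p]$ via the overconvergent de Rham--Witt complex of Davis--Langer--Zink, which is a differential graded subalgebra of the de Rham--Witt complex rationally computing rigid cohomology; this gives an actual map of motivic spectra that is an isomorphism on smooth projective schemes. Then one invokes the theorem (Proposition~B.1 of \cite{LYZ}) that $\MS_k^{\A^1}[1/p]$ is generated under colimits and $\P^1$-desuspensions by smooth projective $k$-schemes, together with Proposition~\ref{prop:coloc}, to conclude that the map becomes an isomorphism after $\A^1$-colocalization. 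This is both more direct and avoids the unestablished geometric input your argument needs. You also face the issue you flag yourself: producing an honest morphism of motivic spectra rather than pointwise equivalences requires a rigid cohomology spectrum with enough functoriality, and the overconvergent de Rham--Witt model is precisely how the paper supplies this.
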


The question of independence of the choice of compactification has been open since the introduction of logarithmic crystalline cohomology.
Mokrane \cite{Mok} proved it under resolution of singularities.

In Section~\ref{sec:lisse}, we show that the algebraic cobordism spectrum $\MGL$ can be built in a familiar way using Grassmannians (which is less obvious than in $\A^1$-homotopy theory). Atiyah duality then implies that $\MGL$ is \emph{lisse}, i.e., a colimit of a dualizable objects in $\MS_S$. This in turn implies a homological version of the Conner–Floyd isomorphism of \cite{AHI}:

\begin{theorem}[Homological Conner–Floyd isomorphism, Corollary~\ref{cor:conner-floyd}]
	\label{thm:intro-conner-floyd}
	Let $S$ be a qcqs derived scheme. There is an isomorphism of bigraded multiplicative homology theories
	\[
	\MGL_{**}(\ph)\otimes_\L\Z[\beta^{\pm 1}]\simeq \KGL_{**}(\ph)\colon \MS_S\to \Ab^{\Z\times\Z}.
	\]
\end{theorem}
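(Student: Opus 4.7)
The plan is to deduce the homological Conner–Floyd isomorphism from the cohomological version proved in \cite{AHI} by exploiting the lisseness of $\MGL$ established in Section~\ref{sec:lisse}. The cohomological Conner–Floyd theorem can be phrased as an isomorphism of motivic $\E_\infty$-rings $\MGL \otimes_\L \Z[\beta^{\pm 1}] \simeq \KGL$ in $\MS_S$, where the tensor product is formed in $\MGL$-modules against the graded Lazard ring $\L$. Tensoring both sides with an arbitrary $X\in\MS_S$ and applying $\pi_{**}$ reduces the problem to commuting the derived tensor product past the homotopy functor, i.e.\ to identifying
\[
\pi_{**}\bigl((\MGL \otimes X) \otimes_\L \Z[\beta^{\pm 1}]\bigr) \simeq \MGL_{**}(X) \otimes_\L \Z[\beta^{\pm 1}].
\]

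For this identification, I would invoke the universal coefficient spectral sequence
\[
E_2^{s,**} = \Tor^\L_{s,**}\bigl(\MGL_{**}(X),\, \Z[\beta^{\pm 1}]\bigr) \Longrightarrow \pi_{**}\bigl((\MGL \otimes X) \otimes_\L \Z[\beta^{\pm 1}]\bigr)
\]
and argue that it collapses on the edge $s=0$. The collapse is a Landweber-exactness statement for the multiplicative formal group law $\Z[\beta^{\pm 1}]$ over $\L$; in the topological setting this is the classical fact making $\KU$-homology Landweber exact over $\MU$, and in our setting it is a direct consequence of the motivic Landweber exact functor theorem announced in the introduction.

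The role of the lisseness of $\MGL$ is to ensure that both sides of the claimed isomorphism are bigraded homology theories defined on all of $\MS_S$ and not merely on dualizable objects: writing $\MGL = \colim_\alpha L_\alpha$ with each $L_\alpha$ dualizable makes the functor $X \mapsto \MGL \otimes X$, hence also $X \mapsto \MGL_{**}(X) \otimes_\L \Z[\beta^{\pm 1}]$, commute with arbitrary colimits. The natural transformation to $\KGL_{**}(-)$ is then pinned down by its values on a generating family of dualizable motivic spectra, where it is controlled by the cohomological Conner–Floyd of \cite{AHI} together with Atiyah duality (applied both to $\MGL$ via its lisse presentation and to the dualizable test objects). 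The main obstacle is supplying the Landweber-exactness input needed to collapse the Tor spectral sequence; once that is in hand, the homological Conner–Floyd isomorphism follows formally from the lisse presentation of $\MGL$ delivered by Section~\ref{sec:lisse}, and multiplicativity is inherited from the fact that all maps in sight are maps of motivic $\E_\infty$-rings.
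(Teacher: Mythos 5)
Your proposal diverges from the paper's proof in a way that creates a genuine gap. You want to lift the cohomological Conner--Floyd theorem to a \emph{spectrum-level} $\E_\infty$-ring isomorphism $\MGL\otimes_\L\Z[\beta^{\pm 1}]\simeq\KGL$, and then tensor with $X$ and control the result via a universal coefficient spectral sequence. But the cohomological CF of \cite[Theorem~8.11]{AHI} is a statement about bigraded cohomology \emph{groups} on $\Sm_S$, not a statement about motivic spectra; and the object $\MGL\otimes_\L\Z[\beta^{\pm 1}]$ is not a priori a motivic spectrum at all, since $\L$ is a discrete graded ring rather than a motivic ring spectrum. To \emph{build} such a spectrum is precisely the content of the motivic Landweber exact functor theorem (Theorem~\ref{thm:LEFT}), which produces it only in $\w\MS_S^\lisse$ (modulo phantoms), and the paper's identification of that spectrum with $\KGL$ in Example~\ref{ex:weakly-periodic}(ii) is obtained \emph{from} Corollary~\ref{cor:conner-floyd}. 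So the route you propose is circular unless you supply an independent construction of the Landweber-exact spectrum together with an independent comparison to $\KGL$; the comparison would in the end have to be a check on a generating family of dualizable objects, at which point you have rederived the paper's argument by a detour.

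The paper's actual proof of Corollary~\ref{cor:conner-floyd} never leaves the level of homology and cohomology theories. It takes the comparison map $\MGL^{**}(\ph)\otimes_\L\Z[\beta^{\pm 1}]\to\KGL^{**}(\ph)$, which is an isomorphism on $\Sm_S$ and hence on $\MS_S^\omega$; then observes that for $X$ dualizable one has $E_{**}(X)\simeq E^{**}(X^\vee)$, which turns the cohomological isomorphism into the homological one on $\MS_S^\dual$; extends to $\MS_S^\lisse$ by colimit-preservation; and finally uses that both $\MGL$ and $\KGL$ are lisse (Corollary~\ref{cor:MGL-lisse} and Example~\ref{ex:KGL-lisse}) so that, by Observation~\ref{obs:lisse-proj}(ii), the homology theories $\MGL_{**}$ and $\KGL_{**}$ factor through $\lisse\colon\MS_S\to\MS_S^\lisse$ and are therefore determined by their restriction to $\MS_S^\lisse$. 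This is strictly more elementary than what you outline: no UCT spectral sequence, no flatness-over-$\scr M_\fg$ argument, and no spectrum-level tensor product. The duality step $E_{**}(X)\simeq E^{**}(X^\vee)$ is the key move you are missing; you mention Atiyah duality only in passing, whereas in the paper it is what converts the known cohomological input into the desired homological output. Your final paragraph does gesture at checking the natural transformation on dualizable generators, which is the right idea, but as an afterthought to a plan whose main structure (construct a Landweber spectrum, collapse a Tor spectral sequence) is both heavier than needed and not independently available at this point in the paper.
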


The fact that $\MGL$ is lisse further allows us to adapt the proof of the motivic Landweber exact functor theorem of Naumann–Spitzweck–Østvær \cite{Naumann:2009} to our non-$\A^1$-invariant setting, which is the content of Section~\ref{sec:LEFT}. This theorem associates to certain graded modules $M$ over the Lazard ring $\L$ a motivic spectrum $\Phi(M)\in\MS_S$ with an isomorphism of homology theories $\Phi(M)_{**}(\ph)\simeq \MGL_{**}(\ph)\otimes_\L M$.
For example, Theorem~\ref{thm:intro-conner-floyd} says that the motivic K-theory spectrum $\KGL$ is obtained in this way from the multiplicative formal group law $x+y-\beta xy$ on $\Z[\beta^{\pm 1}]$. It turns out that this construction $\Phi$ has good functorial and multiplicative properties, at least modulo phantom maps, which is enough for some applications.
To succintly formulate these properties, we consider on the one hand a certain symmetric monoidal $1$-category $\Mod_{\smash[b]{\fg}}^{\flat,+}$ of flat modules over the moduli stack $\scr M_{\smash[b]{\fg}}$ of formal groups, and on the other hand the symmetric monoidal $1$-category $\w\MS_S^\lisse$ whose objects are lisse motivic spectra and whose morphisms are those of $\h\MS_S^\lisse$ modulo phantom maps.

\begin{theorem}[Motivic Landweber exact functor theorem, Theorem~\ref{thm:LEFT}]
	\label{thm:intro-LEFT}
	There is for any qcqs derived scheme $S$ a symmetric monoidal functor
	\[
	\Phi\colon \Mod_\fg^{\flat,+}\to \w\MS_S^\lisse,
	\]
	natural in $S$, such that for any $\Z$-graded $\L$-module $M$ that is flat over $\scr M_\fg$, we have an isomorphism
	\[
	\Phi(M)_{**}(\ph)\simeq \MGL_{**}(\ph)\otimes_\L M\colon \MS_S\to \Ab^{\Z\times\Z}.
	\]
\end{theorem}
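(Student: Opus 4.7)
The plan is to adapt Naumann--Spitzweck--Østvær's proof \cite{Naumann:2009} to the non-$\A^1$-invariant setting, exploiting the structural inputs just established: $\MGL$ is lisse (Section~\ref{sec:lisse}), the Conner--Floyd-type comodule structure on $\MGL$-homology is available on dualizable objects, and $\MS_S$ is presentable so admits Brown representability. The basic idea is that for $M\in\Mod_\fg^{\flat,+}$, the motivic spectrum $\Phi(M)$ should represent the bigraded functor $h_M:=\MGL_{**}(\ph)\otimes_\L M$ on $\MS_S$; the computation of $\Phi(M)_{**}(\ph)$ is then tautological.

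First I would check that $h_M$ is a bigraded homology theory on $\MS_S$. Lisseness of $\MGL$ equips $\MGL_{**}(X)$ with a natural $\MGL_{**}\MGL$-comodule structure, and $\L$-modules flat over $\scr M_\fg$ correspond to comodule-flat $\MGL_{**}\MGL$-comodules; Landweber's filtration theorem, applied inside this category of comodules, forces $-\otimes_\L M$ to preserve short exact sequences. Brown representability in $\MS_S$ then produces a representing motivic spectrum, determined only up to phantom maps, which is precisely why the target is $\w\MS_S^\lisse$ rather than $\MS_S^\lisse$. The representing object can be chosen lisse because a graded form of Lazard's theorem expresses $M$ as a filtered colimit of finitely generated free $\L$-modules, and this presentation can be lifted to a filtered colimit of shifted copies of $\MGL$.

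Functoriality and symmetric monoidality would then follow formally from representability: a morphism $M\to N$ induces a natural transformation $h_M\to h_N$ and hence a unique morphism $\Phi(M)\to\Phi(N)$ in $\w\MS_S^\lisse$; flatness of $M$ and $N$ ensures that $h_{M\otimes_\L N}$ agrees with the tensor product of $h_M$ and $h_N$ on dualizable spectra, so $\Phi(M)\otimes\Phi(N)$ represents $h_{M\otimes_\L N}$ modulo phantoms. Naturality in $S$ is inherited from base-change compatibility of $\MGL$ and of lisseness, together with the naturality of the representability construction.

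The main obstacle will be the first step: endowing $\MGL_{**}(X)$ with a functorial $\MGL_{**}\MGL$-comodule structure on \emph{all} of $\MS_S$ (not merely on dualizable objects), and showing that flatness over $\scr M_\fg$ translates to exactness of $-\otimes_\L M$ at the level of comodules. Lisseness of $\MGL$---which presents $\MGL\otimes\MGL$ as a filtered colimit of dualizables whose bigraded homotopy groups assemble into a flat Hopf algebroid over $\L$---is precisely the ingredient that makes this step go through without $\A^1$-invariance, reducing the remainder of the argument to a formal transcription of the NSO proof.
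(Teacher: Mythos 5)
Your plan---produce a bigraded homology theory $h_M := \MGL_{**}(\ph)\otimes_\L M$ on $\MS_S$, represent it, then verify functoriality and monoidality---matches the paper's overall strategy, and the observations about lisseness of $\MGL$, the $\MGL_{**}\MGL$-comodule structure, and Landweber exactness are the right inputs. But the representability step, which is the technical heart of the proof, has two genuine gaps.

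First, Brown representability produces representing objects for \emph{cohomology} theories on compacts; representing a \emph{homology} theory up to phantoms requires Adams representability (Proposition~\ref{prop:adams}), which has two hypotheses your sketch does not address: the compact objects of the ambient category must coincide with its dualizable objects, and the $\infty$-category of compacts must be countable. The first hypothesis fails in $\MS_S$ itself---affine smooth $S$-schemes give compact but non-dualizable objects---which is precisely why the paper applies Adams representability inside $\MS_S^\lisse$, where compact $=$ dualizable by construction. The countability hypothesis is a real input: the paper verifies it in Lemma~\ref{lem:countable} for countable $S$ and then handles general $S$ by a base change argument (Lemma~\ref{lem:phantom-PB} together with the second square in~\eqref{eqn:LEFT-lisse}). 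This step disappears entirely from your sketch.

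Second, your argument that the representing object can be chosen lisse---``a graded form of Lazard's theorem expresses $M$ as a filtered colimit of finitely generated free $\L$-modules, and this presentation can be lifted to a filtered colimit of shifted copies of $\MGL$''---would require $M$ to be flat as an $\L$-module. Landweber exactness (flatness over $\scr M_\fg$) is strictly weaker: the multiplicative example $M=\Z[\beta^{\pm 1}]$ classifying $\KGL$ is Landweber exact but not flat over $\L$, so your argument already fails in the most important case. The paper sidesteps this entirely: lisseness of $\MGL$ implies (Lemma~\ref{lem:LEFT}(i)) that the functor $\MGL_{**}(\ph)\otimes_\L M\colon\MS_S\to\Ab^\Z$ factors through the colocalization $\lisse\colon\MS_S\to\MS_S^\lisse$, \emph{independently of any flatness hypothesis on $M$}, so the Adams representability argument already takes place in $\w\MS_S^\lisse$ and the lisseness of $\Phi(M)$ comes for free. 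You should also be aware that the paper's verification that the lax symmetric monoidal structure is strict on $\Mod_\fg^\flat$ (the last part of the proof of Theorem~\ref{thm:LEFT}) is not immediate from representability alone and involves a nontrivial comparison of assembly maps.
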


In Section~\ref{sec:HQ}, we use Theorems \ref{thm:intro-conner-floyd} and~\ref{thm:intro-LEFT} to study $\P^1$-stable operations in algebraic K-theory. We explicitly compute the bigraded endomorphism ring $\KGL^{**}_\Lambda\KGL$ for any subring $\Lambda\subset\Q$ in terms of the algebraic K-theory of the base scheme and the automorphisms of the multiplicative formal group (Proposition~\ref{prop:End(KGL)}). When $\Lambda=\Q$, we recover the idempotents defined by Riou in $\A^1$-homotopy theory \cite[Section 5]{RiouK}, which induce a decomposition of $\KGL_\Q$ into eigenspaces of the Adams operations. Along the way, we generalize to derived schemes a theorem of Soulé about the finiteness of the Adams decomposition (Proposition~\ref{prop:Adams-dec}(ii)). 
We then define the \emph{rational motivic cohomology spectrum} $\HH\Q$ over any derived scheme $S$ as the $0$th Adams eigenspace of $\KGL_\Q$.
When $S$ is regular noetherian, $\HH\Q$ is $\A^1$-invariant and coincides with Riou's motivic spectrum, which was further investigated by Cisinski and Déglise in \cite[Section 14]{CD}.
Finally, as another application of Theorem~\ref{thm:intro-LEFT}, we prove:

\begin{theorem}
	\label{thm:intro-HQ}
	Let $S$ be a derived scheme.
	\begin{enumerate}
		\item \textnormal{(Idempotence of $\HH\Q$, Theorem~\ref{thm:HQ})} The rational motivic cohomology spectrum $\HH\Q\in \MS_S$ is an idempotent $\E_\infty$-algebra.
		\item \textnormal{(Characterization of $\HH\Q$-modules, Proposition~\ref{prop:HQ-modules})} A $\Q$-linear motivic spectrum lies in the image of the fully faithful embedding $\Mod_{\HH\Q}(\MS_S)\into \MS_S$ if and only if it admits a structure of $\MGL$-module in the homotopy category $\h\MS_S$.
	\end{enumerate}
\end{theorem}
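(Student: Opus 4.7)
The plan is to leverage the motivic Landweber exact functor theorem (Theorem~\ref{thm:intro-LEFT}) together with the Adams decomposition of $\KGL_\Q$ developed in Section~\ref{sec:HQ}.

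For part~(i), the first step is to identify $\HH\Q$ with $\Phi(M_0)$, where $M_0$ is the graded $\L$-module $\Q$ concentrated in degree $0$ and classified by the additive formal group law. Starting from the Conner–Floyd identification $\KGL_\Q \simeq \Phi(\Q[\beta^{\pm 1}])$ and using the fact that the additive and multiplicative formal group laws are canonically strictly isomorphic over $\Q$-algebras via the logarithm, one rewrites $\KGL_\Q$ as $\Phi$ applied to $\Q[\beta^{\pm 1}]$ with the additive law; the Adams decomposition then corresponds to the degreewise decomposition $\Q[\beta^{\pm 1}] = \bigoplus_i \Q\cdot\beta^i$, whose $i=0$ summand is $M_0$. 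Since $\Phi$ is symmetric monoidal into $\w\MS_S^\lisse$, it suffices to verify idempotence of $M_0$ in $\Mod_\fg^{\flat,+}$, i.e., $M_0 \otimes_{\scr M_\fg} M_0 \simeq M_0$. This is a direct algebraic check: every formal group over a $\Q$-algebra admits a unique strict isomorphism to $\hat\G_a$ via the logarithm, and the grading kills the residual $\G_m$-worth of non-strict automorphisms. Upgrading this to an actual equivalence $\HH\Q \otimes \HH\Q \simeq \HH\Q$ in $\MS_S$ uses the realization of $\HH\Q$ as a summand of $\KGL_\Q$ under the Adams idempotent, whose homotopy is controlled by Conner–Floyd, precluding phantom obstructions; a coherent $\E_\infty$-structure then follows from the universal property of idempotent objects in stable symmetric monoidal $\infty$-categories.

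For part~(ii), the forward direction is immediate from part~(i): any $E \in \Mod_{\HH\Q}(\MS_S)$ carries an $\MGL$-module structure via the composite $\MGL \to \MGL_\Q \to \HH\Q$, where the second map arises from Landweber applied to the additive formal group law. For the converse, suppose $E$ is $\Q$-linear with an $\MGL$-module structure in $\h\MS_S$. Since $\HH\Q$ is idempotent, it suffices to prove $E \otimes \HH\Q \simeq E$. The $\MGL$-module structure equips $E^{**}$ with an action of the Hopf algebroid $\MGL_\Q^{**}\MGL_\Q$, and in particular with the Adams idempotents; these produce an Adams eigenspace decomposition of $E$, mirroring Riou's construction in the $\A^1$-invariant setting. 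The $0$th eigenspace identifies with $E \otimes \HH\Q$, and the explicit bigraded shift between eigenspaces recorded in Proposition~\ref{prop:End(KGL)}, combined with $\Q$-linearity, forces $E$ to coincide with this piece.

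The principal obstacle throughout is the passage between structures in $\MS_S$, $\h\MS_S$, and $\w\MS_S^\lisse$. Concretely, promoting the modulo-phantoms equivalence $\HH\Q \otimes \HH\Q \simeq \HH\Q$ to a genuine equivalence in $\MS_S$, and refining the homotopical $\MGL$-action on $E$ into a coherent $\HH\Q$-action, require careful use of the lisseness of the spectra involved and of the concrete form of the Adams idempotents.
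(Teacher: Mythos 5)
Your proof of part~(i) follows the paper's strategy in outline: identify $\HH\Q$ with $\Phi(\Q)$ for the additive formal group over $\Q$, observe idempotence algebraically over $\scr M_\fg$, and transport this through $\Phi$. However, your promotion from $\w\MS_S^\lisse$ to $\MS_S$ deserves more than the vague ``precluding phantom obstructions.'' The paper's actual mechanism: the multiplication $\mu\colon\Phi(\Q)\otimes\Phi(\Q)\to\Phi(\Q)$ is an isomorphism modulo phantoms (since $\Q\otimes_\L\LB\otimes_\L\Q\to\Q$ is an isomorphism by Lazard's theorem), hence an isomorphism in $\MS_S^\lisse$ because $\h\MS_S^\lisse\to\w\MS_S^\lisse$ is conservative; then $\id_{\HH\Q}\otimes\eta$, being a section of the now-invertible $\mu$, is itself an isomorphism. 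It is a categorical argument about sections of isomorphisms, not anything about Conner--Floyd ``controlling homotopy.''

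Your converse in part~(ii) contains a genuine error. You assert that an $\MGL$-module structure on $E$ in $\h\MS_S$ equips $E^{**}$ with an action of the Hopf algebroid $\MGL_\Q^{**}\MGL_\Q$, yielding Adams idempotents on $E$. This is precisely the mistake the paper flags in the remark following Theorem~\ref{thm:LEFT}: the homotopy groups of an $\MGL$-module do \emph{not} inherit a comodule structure over $(\L,\LB)$, because that structure encodes descent to the sphere, which an $\MGL$-module structure does not provide (this is the flaw in \cite[Proposition 7.9]{Naumann:2009} that the paper is explicitly correcting). Furthermore, even granting such a decomposition, your conclusion that $E$ coincides with its $0$th Adams eigenspace would be false: $E=\KGL_\Q$ is $\Q$-linear and an $\MGL$-module, yet its $0$th Adams eigenspace is $\HH\Q\neq\KGL_\Q$. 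The paper's argument sidesteps both problems. Since $\Mod_{\HH\Q}(\MS_S)\subset\MS_S$ is a tensor ideal closed under retracts, and $E$ is a retract of $\MGL_\Q\otimes E$ (via the $\MGL$-module structure in $\h\MS_S$ and $\Q$-linearity), it suffices to show the canonical map $\MGL_\Q\to\MGL\otimes\HH\Q$ is an isomorphism. Under $\Phi$ this is the map $\L\otimes\Q\to\LB\otimes_\L\Q$, an isomorphism again by Lazard's theorem. You should replace your Adams-decomposition-of-$E$ argument with this retract argument.
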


Note that even though $\HH\Q$ is the usual $\A^1$-invariant rational motivic cohomology spectrum when $S$ is regular noetherian, its idempotence in $\MS_S$ is a stronger statement than its idempotence as an $\A^1$-invariant motivic spectrum. 
A natural question that we do not pursue in this paper is whether Morel's isomorphism $\HH\Q\simeq \1_\Q^+$ holds in $\MS_S$. Here, $\1_\Q=\1_\Q^+\times\1_\Q^-$ is the decomposition of the rational motivic sphere into the eigenspaces of the swap automorphism of $\P^1\otimes\P^1$.
We expect this to be true in general and record it as a conjecture:

\begin{conjecture}
	For any derived scheme $S$, we have $\HH\Q\simeq \1_\Q^+$ in $\MS_S$.
\end{conjecture}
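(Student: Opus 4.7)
The plan is to exhibit both $\1_\Q^+$ and $\HH\Q$ as idempotent $\E_\infty$-algebras in $\MS_S$, construct a ring map between them, and then verify that their module categories agree as full subcategories of $\MS_S$. Idempotence of $\HH\Q$ is Theorem~\ref{thm:intro-HQ}(i), and idempotence of $\1_\Q^+$ is formal from the splitting $\1_\Q \simeq \1_\Q^+ \times \1_\Q^-$ of commutative rings. For two idempotent $\E_\infty$-algebras, equivalence is equivalent to mutual absorption, so producing a map $\1_\Q^+ \to \HH\Q$ and showing $\HH\Q \otimes \1_\Q^+ \simeq \HH\Q$ will suffice.

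To construct the map, I would start from the unit $\1 \to \HH\Q$, rationalize to $\1_\Q \to \HH\Q$, and show that the restriction to $\1_\Q^-$ is null. By Proposition~\ref{prop:HQ-modules}, $\HH\Q$ admits an $\MGL$-module structure in $\h\MS_S$. Since $\MGL$ carries a canonical orientation, the swap automorphism of $\P^1 \otimes \P^1$ acts as the identity on $\MGL$, and therefore on any object of $\h\MS_S$ that is an $\MGL$-module in the homotopy category. Hence the swap acts as $+1$ on $\HH\Q$, which forces the $\1_\Q^-$-component of the unit to vanish and yields the desired factorization $\1_\Q^+ \to \HH\Q$. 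Promoting this to an $\E_\infty$-map is automatic because a map from an idempotent algebra to any $\E_\infty$-algebra is uniquely a ring map.

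The heart of the argument is to match the two module categories. One inclusion is immediate: any $\HH\Q$-module is an $\MGL$-module in $\h\MS_S$ by Proposition~\ref{prop:HQ-modules}, hence carries a trivial swap action, hence is a $\1_\Q^+$-module. The reverse direction — that every $\1_\Q^+$-module in $\MS_S$ admits an $\MGL$-module structure in $\h\MS_S$ — is where I expect the main obstacle. The natural approach is via the motivic Landweber exact functor theorem (Theorem~\ref{thm:intro-LEFT}): rationally every formal group law is isomorphic to the additive one, so $\MGL_\Q$ should be representable as a free graded $\HH\Q$-algebra on the Lazard ring, and one would like to deduce from this a splitting of $\MGL_\Q$ as $\HH\Q$ tensored with a sum of shifts of the sphere in $\h\MS_S$, which would let one build the $\MGL$-action on any $\1_\Q^+$-module from its native $\1_\Q^+$-action together with the rational orientation data. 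Making this precise non-$\A^1$-invariantly, without access to Morel's splitting of the rational sphere, is the delicate point.

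An alternative route is to bootstrap from the $\A^1$-invariant setting via base change. Morel's theorem together with Theorem~\ref{thm:intro-HQ}(i) gives the conjecture after $\A^1$-localization, and over a regular noetherian base such as $\Spec\Z$ the spectrum $\HH\Q$ is already $\A^1$-invariant. If one could show that $\1_\Q^+$ itself is $\A^1$-colocal over $\Spec\Z$ — which by Proposition~\ref{prop:coloc} and the idempotence of $\1_\Q^+$ reduces to checking that $\1_\Q^+$ is a $\1_{\A^1}$-module — then $\1_\Q^+$ and $\HH\Q$ would both equal the classical rational motivic cohomology spectrum over $\Spec\Z$, and pulling back along the essentially unique map $S \to \Spec\Z$ would conclude the proof for arbitrary $S$ since both constructions are stable under symmetric monoidal base change. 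In either strategy, the essential difficulty is to extract finer information about $\1_\Q^+$ than is visible from the decomposition of $\1_\Q$ alone.
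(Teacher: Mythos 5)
This statement is a \emph{Conjecture} in the paper, not a theorem: the authors explicitly leave it open, and in the paragraph that follows they record that by Theorem~\ref{thm:intro-HQ}(ii) the conjecture is equivalent to the orientability of the motivic ring spectrum $\1_\Q^+$. Your proposal correctly sets up the mutual-absorption criterion for idempotent $\E_\infty$-algebras and correctly proves the easy half: since $\HH\Q$ is oriented, the swap automorphism of $\P^1\otimes\P^1$ acts as $+1$ on it, so the unit $\1_\Q\to\HH\Q$ kills $\1_\Q^-$ and yields a ring map $\1_\Q^+\to\HH\Q$, giving $\HH\Q\otimes\1_\Q^+\simeq\HH\Q$. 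But the reverse absorption $\1_\Q^+\otimes\HH\Q\simeq\1_\Q^+$ — equivalently, by Proposition~\ref{prop:HQ-modules}, that $\1_\Q^+$ carries an $\MGL$-module structure in $\h\MS_S$, equivalently that $\1_\Q^+$ is orientable — is precisely the open content, and neither of your routes supplies it. The Landweber route describes $\MGL_\Q$ as an object \emph{built from} $\HH\Q$ (and only in $\w\MS_S^\lisse$, modulo phantoms); it does not manufacture a map $\MGL\to\1_\Q^+$ or an $\MGL$-action on $\1_\Q^+$, which is what orientability requires and which cannot be read off from the splitting $\1_\Q\simeq\1_\Q^+\times\1_\Q^-$ alone.

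Your bootstrap route also contains a concrete logical gap: being a $\1_{\A^1}$-module does not reduce the problem of being $\A^1$-colocal. The $\A^1$-localization is not smashing, so $\MS_S^{\A^1}$ is a \emph{proper} full subcategory of $\Mod_{\1_{\A^1}}(\MS_S)$, and Proposition~\ref{prop:coloc} only controls the counit $E^\dagger\to E$ on dualizable inputs, not on all of $\MS_S$. To make the bootstrap work you would need $\1_\Q^+$ itself to be $\A^1$-invariant over $\Spec\Z$, which is essentially the rational-plus part of the question raised in Remark~\ref{rmk:1=1A1}; the paper presents the conjecture as a first step \emph{towards} that question, not a consequence of it, so this route is circular at the point of difficulty. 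In short: your framework is sound, your identification of the obstruction (orientability of $\1_\Q^+$) is exactly the one the paper records as open, and the proposal does not close it.
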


By Theorem~\ref{thm:intro-HQ}(ii), this conjecture is equivalent to the orientability of the motivic ring spectrum $\1_\Q^+$.
Establishing this conjecture would be a first step towards answering the question of the $\A^1$-invariance of the motivic sphere raised in Remark~\ref{rmk:1=1A1}.

\subsection*{Conventions and notation}
We generally use the same notation as in \cite{AHI}, except that we use the Nisnevich-local version of the $\infty$-category of motivic spectra:
\[
\MS_S=\Sp_{\P^1}(\scr P_{\Nis,\ebu}(\Sm_S,\Sp)).
\]
Thus, the Morel–Voevodsky stable motivic homotopy $\infty$-category is the full subcategory of $\MS_S$ spanned by the $\A^1$-invariant objects.
Moreover, by \cite[Proposition 2.2]{AHI}, any motivic spectrum satisfies smooth blowup excision.\footnote{In fact, we will never directly use Nisnevich descent, and all results in this paper remain valid if we define $\MS_S$ using $\scr P_{\Zar,\sbu}$; the proof of ambidexterity for projective spaces even works using $\scr P_{\Zar,\ebu}$. To simplify the exposition, and because they do not seem to be of any practical relevance, we chose not to keep track of such refinements.}

We write $\Map(X,Y)$ for the mapping anima in an $\infty$-category, $\map(X,Y)$ for the mapping spectrum in a stable $\infty$-category, and $\Hom(X,Y)$ for the internal Hom object in a monoidal $\infty$-category.

Given motivic spectra $E,X\in \MS_S$, we write 
\begin{align*}
	E_{p,q}(X)&=\pi_0\Map(\Sigma^{p-2q}\SigmaP^q\1, X\otimes E),\\
	E^{p,q}(X)&=\pi_0\Map(X, \Sigma^{p-2q}\SigmaP^qE),
\end{align*}
and we let $E_n(X)=E_{2n,n}(X)$ and $E^n(X)=E^{2n,n}(X)$. We also write $E(X)$ for the mapping spectrum $\map(X,E)$.

A scheme is a derived scheme by default. Note that we often use hooked arrows $\into$ for immersions of derived schemes, even though these are not monomorphisms.
We write $\Sch_X$ for the $\infty$-category of $X$-schemes and $\Sm_X\subset \Sch_X$ for the full subcategory of smooth $X$-schemes. The superscript ``$\fp$'' means ``of finite presentation''.

We write $\Vect(X)$ for the anima of finite locally free sheaves over a scheme $X$, and $\Pic(X)=\Vect_1(X)$ for the subanima of invertible sheaves.
For a sheaf $\scr E\in\Vect(X)$, we denote by $\V(\scr E)=\Spec(\Sym\scr E)$ and $\P(\scr E)=\Proj(\Sym\scr E)$ the associated vector and projective bundles.
We denote by $\scr N_i=\scr L_i[-1]$ the conormal sheaf of a quasi-smooth immersion $i\colon Z\into X$, so that $\V(\scr N_i)\to Z$ is the normal bundle of $i$.

\subsection*{Acknowledgments}
We are especially grateful to Longke Tang, who shared with us his construction of Gysin maps, to Jacob Lurie for suggesting the link between $\A^1$-colocalization and logarithmic cohomology theories, and to Dustin Clausen and Akhil Mathew for communicating with us about the filtered cyclotomic trace.

\part{Atiyah duality}

\section{Background on Gysin maps}
\label{sec:gysin}

Let $S$ be a derived scheme.
In \cite[Section 7]{AHI}, we constructed a symmetric monoidal functor
\[
\K(S)\to \Pic(\MS_S),\quad \xi\mapsto \Sigma^{\xi}\1_S,
\]
called the \emph{J-homomorphism}, where $\K(S)$ is the K-theory anima of $S$, which is moreover natural in $S$. It is induced by a symmetric monoidal structure on the functor
\[
\Vect^\epi(S)\to \scr P_\ebu(\Sm_S)_*,\quad \scr E\mapsto \P(\scr E\oplus\scr O)/\P(\scr E),
\]
where $\Vect^\epi(S)$ is the $\infty$-category of finite locally free sheaves on $S$ and epimorphisms.
For a smooth morphism $f\colon X\to S$ and an element $\xi\in \K(X)$, we define the \emph{Thom spectrum} $\Th_X(\xi)\in\MS_S$ to be the motivic spectrum $f_\sharp\Sigma^\xi\1_X$. 
Note that any fiber sequence $\scr F\to\scr E\to\scr G$ in $\Perf(X)$ induces an isomorphism $\scr E\simeq \scr F\oplus\scr G$ in $\K(X)$, hence an isomorphism of Thom spectra
\[
\Th_X(\scr E)\simeq \Th_X(\scr F\oplus\scr G).
\]

Let $\Pair_S$ be the subcategory of the arrow category $\Sm_S^{\Delta^1}$ whose objects are closed embeddings and whose morphisms are excess intersection squares
\[
\begin{tikzcd}
Z' \arrow[hook]{r} \arrow[d] & X' \arrow[d]\\
Z \arrow[hook]{r} & X
\end{tikzcd}
\]
in the sense of \cite[Introduction]{KhanExcess}, i.e., topologically cartesian squares such that the induced map $\scr N_{Z/X} \vert_{Z'} \to \scr N_{Z'/X'}$ is surjective (these are exactly the conditions under which there is an induced map $\Bl_{Z'}(X')\to\Bl_Z(X)$). 
If $\int_{\Sm_S}\Vect^\epi\to\Sm_S$ denotes the cartesian fibration classified by $\Vect^\epi$, there is an obvious symmetric monoidal functor
\[
\Pair_S \to \int_{\Sm_S}\Vect^{\mathrm{epi}},\quad (Z\into X)\mapsto (Z,\scr N_{Z/X}).
\]
Applying the J-homomomorphism yields a symmetric monoidal functor
\begin{equation}\label{eqn:Th-Pair}
\Pair_S\to \MS_S,\quad (Z\into X)\mapsto \Th_Z(\scr N_{Z/X}).
\end{equation}
Furthermore, let $\mathrm{Triple}_S$ be the subcategory of $\Sm_S^{\Delta^2}$ whose objects are composable pairs of closed embeddings and whose morphisms are pairs of excess intersection squares. 
If $S_2\Vect^\epi$ denotes the $\infty$-category of short exact sequences of finite locally free sheaves (with epimorphisms as morphisms), there is a symmetric monoidal functor
\[
\mathrm{Triple}_S \to \int_{\Sm_S}S_2\Vect^\epi,\quad (W\into Z\into X) \mapsto (W,\scr N_{Z/X}|_W\into \scr N_{W/X}\onto\scr N_{W/Z}).
\]
Applying the J-homomorphism yields a symmetric monoidal functor
\begin{equation}\label{eqn:Th-Triple}
\mathrm{Triple}_S \to \MS_S^{\Delta^1},\quad (W\into Z\into X) \mapsto (\Th_W(\scr N_{W/X})\simeq \Th_W(\scr N_{Z/X}|_W\oplus \scr N_{W/Z})).
\end{equation}

Let $Z \into X$ be a closed embedding between smooth $S$-schemes. In \cite{Tang}, Longke Tang constructs a \emph{Gysin map}
\begin{equation*}
\gys\colon X_+ \to \Th_{Z}(\scr N_{Z/X})
\end{equation*}
in $\MS_S$, where $\scr N_{Z/X}$ is the conormal sheaf of the embedding. The following theorem records the minimal properties of Tang's construction that we will need.

\begin{theorem}[Tang]
\label{thm:tang}
\leavevmode
\begin{enumerate}
\item \textnormal{(Functoriality in pairs and linearity)} The Gysin maps assemble into an $\Sm_S$-linear functor
\[
\Pair_S \to \MS_S^{\Delta^1},\quad (Z\into X)\mapsto (\gys\colon X_+\to\Th_Z(\scr N_{Z/X})),
\]
natural in $S$, whose boundary $\Pair_S\to \MS_S^{\partial\Delta^1}$ is given by $(Z\into X)\mapsto \Sigma^\infty_{\P^1}X_+$ and~\eqref{eqn:Th-Pair}.
In particular, we have for any $\scr E\in\Vect(X)$ an $\scr E$-twisted Gysin map
\[
\Th_X(\scr E) =\frac{\P(\scr E\oplus\scr O_X)}{\P(\scr E)}\xrightarrow{\gys}\frac{\Th_{\P(\scr E_Z\oplus\scr O_Z)}(\scr N_{Z/X})}{\Th_{\P(\scr E_Z)}(\scr N_{Z/X})}\simeq \Th_Z(\scr E_Z\oplus\scr N_{Z/X}).
\]

\item \textnormal{(Composition of closed immersions)} 
The Gysin map $X_+\to\Th_X(\scr N_{\id})=X_+$ is the identity, naturally in $X$ and in $S$. If $Z \into X$ and $W \into Z$ are closed embeddings in $\Sm_S$, then there is a commutative square
\[
\begin{tikzcd}
	X_+ \ar{r}{\mathrm{gys}} \ar{d}[swap]{\mathrm{gys}} & \Th_{Z}(\scr N_{Z/X}) \ar{d}{\mathrm{gys}} \\
	 \Th_{W}(\scr N_{W/X}) \ar{r}{\sim} & \Th_{W}(\scr N_{Z/X}\vert_W \oplus \scr N_{W/Z} )
\end{tikzcd}
\]
in $\MS_S$, where the lower isomorphism comes from the fiber sequence $\scr N_{Z/X}\vert_W\to \scr N_{W/X}\to\scr N_{W/Z}$ in $\Perf(W)$.
Moreover, these squares assemble into an $\Sm_S$-linear functor
\[
\mathrm{Triple}_S\to \MS_S^{\Delta^1\times\Delta^1},
\]
natural in $S$,
whose boundary $\mathrm{Triple}_S\to \MS_S^{\partial(\Delta^1\times\Delta^1)}$ is given by three instances of \textnormal{(i)} and one instance of~\eqref{eqn:Th-Triple}.

\item \textnormal{(Normalization)} For any $\scr E\in\Vect(S)$, the null-sequence
\[
\P(\scr E)_+\to \P(\scr E \oplus \scr O)_+ \xrightarrow{\gys} \Th_S(\scr E)
\]
in $\MS_S$ obtained by applying \textnormal{(i)} to the excess intersection square
\[
\begin{tikzcd}
	\emptyset \ar[hook]{r}\ar{d} & \P(\scr E) \ar{d} \\
	S\ar[hook]{r} & \P(\scr E \oplus\scr O)
\end{tikzcd}
\]
is a cofiber sequence.
\end{enumerate}
\end{theorem}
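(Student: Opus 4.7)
The plan is to construct the Gysin map via a $\P^1$-parametric deformation to the normal bundle and to read (i)--(iii) off the functoriality of the underlying geometric construction. Given a closed immersion $Z \into X$ in $\Sm_S$, I would consider the blowup
\[
\bar D := \Bl_{Z\times\{0\}}(X\times\P^1) \longrightarrow \P^1,
\]
whose fiber over any $t\neq 0$ is $X$ and whose fiber over $0$ is the scheme-theoretic union $\Bl_Z(X)\cup_{\P(\scr N_{Z/X})}\P(\scr N_{Z/X}\oplus\scr O)$. Two instances of smooth blowup excision (one for $\bar D \to X\times\P^1$, one for $\Bl_Z(X)\to X$) together with the projective bundle formula should identify the cofiber of the inclusion of the strict transform $\Bl_Z(X)_+\to \bar D_+$ in $\MS_S$ with $\Th_Z(\scr N_{Z/X})$, possibly after collapsing a $\P^1$-factor. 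Pulling back along a chosen point $\infty\in\P^1\setminus\{0\}$ gives an inclusion $X_+\to \bar D_+$, and composing with the projection to this cofiber yields the Gysin map.

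For (i), the assignment $(Z\into X)\mapsto\bar D$ is evidently functorial in strict Cartesian squares in $\Sm_S$ and compatible with products by smooth $S$-schemes, giving an $\Sm_S$-linear functor out of the strict subcategory of $\Pair_S$ whose boundary agrees with~\eqref{eqn:Th-Pair}. To extend to all excess intersection squares, I would use that such a square is exactly the input needed to produce a compatible map of blowups $\Bl_{Z'}(X')\to\Bl_Z(X)$ and hence of deformation spaces $\bar D'\to\bar D$; the induced map on conormal data agrees with the surjection $\scr N_{Z/X}\vert_{Z'}\onto\scr N_{Z'/X'}$ that feeds the J-homomorphism in~\eqref{eqn:Th-Pair}. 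Packaging the whole construction coherently as a single functor $\Pair_S\to\MS_S^{\Delta^1}$ would be done at the level of presheaves on $\Sm_S$ before passing to $\MS_S$, so that functoriality in $S$ is automatic from the base-change behavior of blowups.

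For (ii), the main difficulty, I would attempt a double deformation to the normal bundle. Given a chain $W\into Z\into X$, consider a two-parameter family such as
\[
\bar{\bar D} := \Bl_{W\times\{0\}\times\P^1}\bigl(\Bl_{Z\times\P^1\times\{0\}}(X\times\P^1\times\P^1)\bigr)\longrightarrow\P^1\times\P^1,
\]
designed so that restricting to one axis recovers the deformation for $Z\into X$, restricting to the other recovers the deformation for $W\into X$, and the fiber over $(0,0)$ encodes the combined degeneration built out of $\V(\scr N_{Z/X}\vert_W)$, $\V(\scr N_{W/Z})$ and $\V(\scr N_{W/X})$. The resulting square of collapse maps, combined with the short exact sequence $\scr N_{Z/X}\vert_W\to\scr N_{W/X}\to\scr N_{W/Z}$ in $\Perf(W)$ tracked by the J-homomorphism~\eqref{eqn:Th-Triple}, should yield the commutative square of (ii), with $\Sm_S$-linear functoriality over $\mathrm{Triple}_S$ following formally. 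I expect this to be by far the hardest step: making the iterated blowup construction homotopy coherent and matching the intermediate cofibers with the Thom spectra produced by the J-homomorphism is where all the real work sits.

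For (iii), in the case $Z=S\into X=\P(\scr E\oplus\scr O)$ with conormal bundle $\scr E$, the deformation $\bar D$ degenerates to an especially explicit scheme (essentially $\P(\scr E\oplus\scr O)\times\P^1$ blown up along a section), and a direct inspection should identify the Gysin map with the evident quotient $\P(\scr E\oplus\scr O)_+\to\P(\scr E\oplus\scr O)/\P(\scr E)=\Th_S(\scr E)$. The stated null-sequence is then a cofiber sequence by the very definition of the quotient, and provides the normalization that anchors the whole construction.
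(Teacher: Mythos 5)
The paper does not prove Theorem~\ref{thm:tang}: it is stated as a black box, attributed to Tang and cited from \cite{Tang}, with the preamble that it ``records the minimal properties of Tang's construction that we will need.'' So there is no internal proof to compare yours against, and what you have written is a reconstruction attempt rather than a competing proof. Your overall strategy---build the Gysin map from a deformation to the normal bundle $\bar D=\Bl_{Z\times\{0\}}(X\times\P^1)$, read functoriality off the geometry of blowups, and use the iterated deformation for composition of closed immersions---is indeed the geometric idea underlying Tang's construction, and your remark that excess intersection squares are precisely the morphisms for which one gets an induced map of blowups is correct and is exactly why $\Pair_S$ is defined that way.

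There is, however, a concrete gap at the very start of part~(i). You propose to define the Gysin map by identifying the cofiber of the strict-transform inclusion $\Bl_Z(X)_+\to\bar D_+$ with $\Th_Z(\scr N_{Z/X})$ via blowup excision and the projective bundle formula. Blowup excision for $\bar D\to X\times\P^1$ gives $\bar D/\P(\scr N_{Z/X}\oplus\scr O)\simeq (X\times\P^1)/(Z\times\{0\})$, which involves the \emph{exceptional} divisor, not the strict transform $\Bl_Z(X)$; and the square
\begin{equation*}
\begin{tikzcd}
	\P(\scr N_{Z/X})_+ \ar{r} \ar{d} & \P(\scr N_{Z/X}\oplus\scr O)_+ \ar{d}\\
	\Bl_Z(X)_+ \ar{r} & \bar D_+
\end{tikzcd}
\end{equation*}
is not cocartesian in $\MS_S$ (it is not even a Zariski or elementary blowup square), so one cannot conclude that the cofiber of $\Bl_Z(X)_+\to\bar D_+$ equals $\Th_Z(\scr N_{Z/X})$. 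Without $\A^1$-invariance you also cannot fall back on purity to turn $\bar D/\Bl_Z(X)_+$ into a Thom spectrum of the divisor. Indeed, Remark~\ref{rmk:purity} explicitly says that purity \emph{fails} in $\MS_S$; any construction that implicitly relies on it, as the usual deformation argument does, must be replaced by something that only produces a one-way map. This is precisely the content of Tang's construction and is where the real technical work lies; your sketch does not yet supply it. For part~(ii) you candidly flag the iterated deformation as the hardest step and leave it essentially open, which is fair---but do note that in the non-$\A^1$-invariant setting the required coherence (an $\Sm_S$-linear functor out of $\mathrm{Triple}_S$ landing in $\MS_S^{\Delta^1\times\Delta^1}$ with prescribed boundary) is itself a substantial extra layer beyond producing the individual commutative squares. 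Your observation about part~(iii), that for the zero section $S\into\P(\scr E\oplus\scr O)$ the Gysin map should be the quotient map $\P(\scr E\oplus\scr O)_+\to\P(\scr E\oplus\scr O)/\P(\scr E)$, is the correct normalization.
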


\begin{remark}
Theorem~\ref{thm:tang}(i) is a coherent version of the following statement: if
\[
\begin{tikzcd}
Z' \arrow[r,hookrightarrow]{} \arrow[d] & X' \arrow[d]\\
Z \arrow[r,hookrightarrow]{} & X
\end{tikzcd}
\]
is a topologically cartesian square in $\Sm_S$ such that the induced map $\scr N_{Z/X} \vert_{Z'} \to \scr N_{Z'/X'}$ is surjective, then there is an induced commutative square
\[
\begin{tikzcd}
X'_+ \arrow[r]{}{\gys} \arrow[d] & \Th_{Z'}(\scr N_{Z'/X'}) \arrow[d] \\
X_+ \arrow[r]{}{\gys} & \Th_Z(\scr N_{Z/X})
\end{tikzcd}
\]
in $\MS_S$. Moreover, the $\Sm_S$-linearity implies that we have for every $Y\in\Sm_S$ a commutative square
\[
\begin{tikzcd}
	X_+\otimes Y_+ \ar{r}{\gys\otimes \id} \ar{d}[swap,sloped]{\sim} & \Th_Z(\scr N_{Z/X}) \otimes Y_+ \ar{d}[sloped]{\sim} \\
	(X\times Y)_+ \ar{r}{\gys} & \Th_{Z\times Y}(\scr N_{Z\times Y/X\times Y})\rlap.
\end{tikzcd}
\]
\end{remark}

\begin{remark}\label{rmk:purity}
	Let $Z\into X$ be a closed embedding in $\Sm_S$ and let $U=X-Z$ be the open complement.
	Applying the functoriality of Gysin maps to the cartesian square
	\[
	\begin{tikzcd}
		\emptyset \ar[hook]{r} \ar{d} & U \ar{d} \\
		Z \ar[hook]{r} & X\rlap,
	\end{tikzcd}
	\]
	we obtain a null-homotopy of the composition
	\[
	U_+\to X_+\xrightarrow{\gys} \Th_Z(\scr N_{Z/X})
	\]
	in $\MS_S$. Unlike in $\A^1$-homotopy theory, this is almost never a cofiber sequence in $\MS_S$.
	For example, by Theorem~\ref{thm:tang}(iii), the fiber of the Gysin map $\P^1_+\to\Th_\infty(\scr N_\infty)$ is $\{0\}_+$ and not $\A^1_+$.
\end{remark}

\begin{construction}[Gysin transformation]
	\label{ctr:gysin}
	Let $f\colon X\to S$ be a smooth morphism and $i\colon Z\into X$ a closed embedding such that $fi$ is smooth.
	Then there is an $\Sm_S$-linear functor $\Sm_X \to \Pair_S$ that sends $Y \in \Sm_X$ to the pair $Y_Z \into Y$. Composing it with the functor of Theorem~\ref{thm:tang}(i), we get an $\Sm_S$-linear natural transformation
\begin{equation}\label{eq:funcyc}
f_\sharp\Sigma^\infty_{\P^1}(\ph)_+ \to (fi)_\sharp \Sigma^{\scr N_i} i^*\Sigma^\infty_{\P^1}(\ph)_+\colon \Sm_X \to \MS_S.
\end{equation}
By the universal property of $\MS_X$ as a $\scr P_{\Nis, \ebu}(\Sm_S,\Sp)$-linear $\infty$-category \cite[Proposition 1.2.2]{AnnalaIwasa2}, composition with $\Sigma^\infty_{\P^1}(\ph)_+\colon \Sm_X\to \MS_X$ induces an isomorphism of $\infty$-categories
\begin{equation}\label{eqn:univ-prop-MS}
\Fun_{\MS_S}^\mathrm{L}(\MS_X,\MS_S) \simto \Fun_{\Sm_S}^{\Nis,\ebu}(\Sm_X,\MS_S),
\end{equation}
where $\Fun_{\scr C}$ means $\scr C$-linear functors, $\Fun^\mathrm{L}$ means colimit-preserving functors, and $\Fun^{\Nis,\ebu}$ means functors that send Nisnevich sieves and elementary blowup squares to colimit diagrams.
We then define the \emph{Gysin transformation} 
\begin{equation*}
\mathrm{gys}(f,i)\colon f_\sharp \to g_\sharp \Sigma^{\scr N_i} i^*\colon \MS_X\to\MS_S
\end{equation*}
to be the $\MS_S$-linear natural transformation obtained from~\eqref{eq:funcyc} using the isomorphism~\eqref{eqn:univ-prop-MS}.
\end{construction}

\begin{proposition}[Properties of Gysin transformations]
	\label{prop:gysin-trans}
	\leavevmode
	\begin{enumerate}
		\item \textnormal{(Base change)} Given a cartesian diagram
		\[
		\begin{tikzcd}
			W \ar[hook]{r}{k} \ar{d}{c} & Y \ar{r}{g} \ar{d}{b} & T \ar{d}{a} \\
			Z \ar[hook]{r}{i} & X \ar{r}{f} & S\rlap,
		\end{tikzcd}
		\]
		where $i$ is a closed immersion and $f$ and $fi$ are smooth, the following square commutes:
		\[
		\begin{tikzcd}[column sep=4em]
			g_\sharp b^* \ar{r}{\mathrm{gys}(g,k)b^*} \ar{d}[sloped]{\sim}[swap]{\mathrm{BC}} & (gk)_\sharp\Sigma^{\scr N_k}c^*i^* \ar{d}{\mathrm{BC}}[swap,sloped]{\sim} \\
			a^*f_\sharp \ar{r}{a^*\mathrm{gys}(f,i)} & a^*(fi)_\sharp\Sigma^{\scr N_i}i^*\rlap.
		\end{tikzcd}
		\]
		\item \textnormal{(Base independence)} Given morphisms
		\[
		\begin{tikzcd}
			Z \ar[hook]{r}{i} & X \ar{r}{f} & S \ar{r}{a} & T\rlap,
		\end{tikzcd}
		\]
		where $i$ is a closed immersion and $f$, $fi$, and $a$ are smooth, the following square commutes:
		\[
		\begin{tikzcd}[column sep=4em]
			a_\sharp f_\sharp \ar{r}{a_\sharp\mathrm{gys}(f,i)} \ar{d}[swap,sloped]{\sim} & a_\sharp(fi)_\sharp\Sigma^{\scr N_i}i^* \ar{d}[sloped]{\sim} \\
			(af)_\sharp \ar{r}{\mathrm{gys}(af,i)} & (afi)_\sharp\Sigma^{\scr N_i}i^*\rlap.
		\end{tikzcd}
		\]
		\item \textnormal{(Linearity)} Given morphisms
		\[
		\begin{tikzcd}
			Z \ar[hook]{r}{i} & X \ar{r}{f} & S\rlap,
		\end{tikzcd}
		\]
		where $i$ is a closed immersion and $f$ and $fi$ are smooth, and given $A\in\MS_X$ and $B\in \MS_S$, the following square commutes:
		\[
		\begin{tikzcd}[column sep=4em]
			f_\sharp(A\otimes f^*(B)) \ar{d}[sloped]{\sim}[swap]{\mathrm{PF}} \ar{r}{\mathrm{gys}(f,i)} & (fi)_\sharp\Sigma^{\scr N_i}(i^*(A)\otimes (fi)^*(B)) \ar{d}{\mathrm{PF}}[swap,sloped]{\sim} \\
			f_\sharp(A) \otimes B \ar{r}{\mathrm{gys}(f,i)\otimes\id} & (fi)_\sharp \Sigma^{\scr N_i}i^*(A)\otimes B\rlap.
		\end{tikzcd}
		\]
		\item \textnormal{(Composition of closed immersions)}
		The map $\mathrm{gys}(f,\id)$ is the identity, and 
		given morphisms
		\[
		\begin{tikzcd}
			W \ar[hook]{r}{k} & Z \ar[hook]{r}{i} & X \ar{r}{f} & S\rlap,
		\end{tikzcd}
		\]
		where $i$ and $k$ are closed immersions and $f$, $fi$, and $fik$ are smooth, the following square commutes:
		\[
		\begin{tikzcd}[column sep=4em]
			f_\sharp \ar{r}{\mathrm{gys}(f,i)} \ar{d}[swap]{\mathrm{gys}(f,ik)} & (fi)_\sharp\Sigma^{\scr N_i}i^* \ar{d}{\mathrm{gys}(fi,k)} \\
			 (fik)_\sharp\Sigma^{\scr N_{ik}}k^*i^* \ar{r}{\sim} & (fik)_\sharp\Sigma^{\scr N_k+k^*\scr N_i}k^*i^* \rlap.
		\end{tikzcd}
		\]
		Here, the isomorphism is induced by the fiber sequence $k^*(\scr N_i)\to \scr N_{ik}\to\scr N_k$ in $\Perf(W)$.
	\end{enumerate}
\end{proposition}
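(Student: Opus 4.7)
The uniform strategy is to reduce each of the four compatibilities to an identity of Gysin maps on the generating objects $\Sigma^\infty_{\P^1}V_+$ for $V\in\Sm_X$. The key formal input is the observation, analogous to \eqref{eqn:univ-prop-MS}, that $\MS_X$ is freely generated under colimits by $\Sigma^\infty_{\P^1}(\ph)_+\colon \Sm_X\to\MS_X$ modulo Nisnevich descent, blowup excision, and $\P^1$-stability, so a diagram of colimit-preserving functors out of $\MS_X$ commutes as soon as it does after precomposition with this functor. By Construction~\ref{ctr:gysin}, $\mathrm{gys}(f,i)$ evaluated on $\Sigma^\infty_{\P^1}V_+$ is the Gysin map of Theorem~\ref{thm:tang} for the pair $V_Z=V\times_X Z\into V$ in $\Pair_S$.

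Parts (i) and (ii) both reduce to the functoriality recorded in Theorem~\ref{thm:tang}(i). For (i), pulling back the pair $V_Z\into V$ along $b\colon Y\to X$ produces a cartesian square, hence a morphism in $\Pair_S$ (with the induced map of conormal sheaves even an isomorphism), and applying the functor $\Pair_S\to\MS_S^{\Delta^1}$ to this morphism yields the required square after unwinding the identifications $b^*\Sigma^\infty V_+\simeq \Sigma^\infty(V\times_X Y)_+$ etc.\ provided by the $\sharp$-$*$ base-change isomorphisms on generators. For (ii), the pair $V_Z\into V$ can be viewed in either $\Pair_S$ or $\Pair_T$, and the naturality of Tang's construction in the base (part of Theorem~\ref{thm:tang}(i)) identifies its Gysin map in $\MS_T$ with $a_\sharp$ applied to its Gysin map in $\MS_S$; combined with the canonical isomorphism $a_\sharp f_\sharp\simeq (af)_\sharp$ this gives the desired square.

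Part (iii) is essentially already embedded in Construction~\ref{ctr:gysin}: the functor of Theorem~\ref{thm:tang}(i) is $\Sm_S$-linear, hence so is the natural transformation \eqref{eq:funcyc}, and an $\Sm_S$-linear natural transformation corresponds under \eqref{eqn:univ-prop-MS} to an $\MS_S$-linear one, which unwinds to precisely the projection-formula square in (iii). For (iv), the identity case $\mathrm{gys}(f,\id)=\id$ is the first statement of Theorem~\ref{thm:tang}(ii). The composition square, applied on generators $V\in\Sm_X$ and evaluated at the triple $V_W\into V_Z\into V$, is precisely the coherent square produced by the functor $\mathrm{Triple}_S\to\MS_S^{\Delta^1\times\Delta^1}$ of Theorem~\ref{thm:tang}(ii); the identification of conormal cofiber sequences appearing there matches the one arising in our statement.

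The main obstacle is $\infty$-categorical coherence: the diagrams in (i)--(iv) are diagrams of natural transformations between functors, so each commutativity is a higher homotopy that must be produced coherently, not merely verified pointwise. Fortunately, this is handled automatically because Tang's construction is already packaged as coherent $\infty$-functors $\Pair_S\to\MS_S^{\Delta^1}$ and $\mathrm{Triple}_S\to\MS_S^{\Delta^1\times\Delta^1}$, and the universal property \eqref{eqn:univ-prop-MS} is an equivalence of $\infty$-categories, under which all required coherences propagate from the restriction along $\Sigma^\infty_{\P^1}(\ph)_+$.
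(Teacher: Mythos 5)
Your strategy for (iii) and (iv) matches the paper: (iii) is immediate from the $\Sm_S$-linearity built into Construction~\ref{ctr:gysin}, and (iv) is obtained from Theorem~\ref{thm:tang}(ii) exactly as $\gys(f,i)$ was obtained from Theorem~\ref{thm:tang}(i), via the functor $\Sm_X\to\mathrm{Triple}_S$ and the equivalence \eqref{eqn:univ-prop-MS}. Your coherence observation is also on point: \eqref{eqn:univ-prop-MS} transports the required homotopies from restrictions along $\Sigma^\infty_{\P^1}(\ph)_+$.

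For (i), your idea is right but the formalization is imprecise: the pulled-back pair $(V\times_XW\into V\times_XY)$ lives over $T$, and since $a\colon T\to S$ is an arbitrary morphism it is generally \emph{not} in $\Sm_S$, so the cartesian square is not a morphism \emph{in} $\Pair_S$. What you actually need is the naturality in $S$ of Tang's functor, which gives a commutative square relating $\Pair_S\to\MS_S^{\Delta^1}$ to $\Pair_T\to\MS_T^{\Delta^1}$ via pullback; precomposing with $\Sm_X\to\Pair_S$ and $\Sm_Y\to\Pair_T$ and using \eqref{eqn:univ-prop-MS} gives (i). This is the paper's argument, and your version is the same modulo this correction.

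The genuine gap is in (ii). You invoke a \emph{pushforward} compatibility of Gysin maps along smooth morphisms of bases — that the Gysin map of $(V_Z\into V)$ in $\MS_T$ equals $a_\sharp$ applied to the Gysin map of $(V_Z\into V)$ in $\MS_S$ — and attribute it to the "naturality in $S$" of Theorem~\ref{thm:tang}(i). But the naturality stated there is pullback naturality (along $a^*$), not pushforward compatibility (along $a_\sharp$). Evaluating (ii) on generators $\Sigma^\infty_{\P^1}V_+$ shows that the pushforward compatibility you assert \emph{is} the content of (ii), so invoking it directly is circular. The paper proves (ii) by a genuinely different route: one builds an $\Sm_S$-linear functor $\Sm_X\to\Pair_S^{\Delta^1}$ sending $U$ to the excess intersection square comparing $(U_Z\into U)$ with its base change $(U_Z\times_TS\into U\times_TS)$ along $S\times_TS\to S$, applies Tang's functor and \eqref{eqn:univ-prop-MS} to obtain one commutative square, combines it with part (i) applied to the projections of $S\times_TS$, and finally transposes across the adjunction $a_\sharp\dashv a^*$. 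This converts the desired pushforward statement into a pullback statement that the hypotheses actually provide. You need to supply an argument of this shape; the assertion cannot be read off the stated naturality.
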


\begin{proof}
	(i) It suffices to build a square in $\Fun_{\Sm_S}(\Sm_X,\MS_T)$. By Theorem~\ref{thm:tang}(i), there is a commutative diagram of $\Sm_S$-modules
	\[
	\begin{tikzcd}
		\Sm_X \ar{r} \ar{d} & \Pair_S \ar{d} \ar{r} & \MS_S^{\Delta^1} \ar{d} \\
		\Sm_Y \ar{r} & \Pair_T \ar{r} & \MS_T^{\Delta^1}\rlap.
	\end{tikzcd}
	\]
	The boundary is an $\Sm_S$-linear transformation between two functors $\Sm_X\to\MS_T^{\Delta^1}$, i.e., a functor $\Delta^1\times\Delta^1\to\Fun_{\Sm_S}(\Sm_X,\MS_T)$. This is the desired square.
	
	(ii) We consider the cartesian diagram
	\[
	\begin{tikzcd}
		Z\times_TS \ar[hook]{r}{\bar\imath} \ar{d}{\pi_Z} & X\times_TS \ar{r}{\bar f} \ar{d}{\pi_X} & S\times_TS \ar{r}{\bar a} \ar{d}{\pi_S} & S \ar{d}{a} \\
		Z \ar[hook]{r}{i} & X \ar{r}{f} & S \ar{r}{a} & T\rlap.
	\end{tikzcd}
	\]
	There is an $\Sm_S$-linear functor $\Sm_X\to\Pair_S^{\Delta^1}$ sending $U$ to the excess intersection square
	\[
	\begin{tikzcd}
		U_Z \ar[hook]{r} \ar{d} & U \ar{d} \\
		U_Z\times_TS \ar[hook]{r} & U\times_TS\rlap.
	\end{tikzcd}
	\]
	Composing with the Gysin map functor $\Pair_S\to \MS_S^{\Delta^1}$ and applying the isomorphism~\eqref{eqn:univ-prop-MS}, we obtain a commutative square
	\[
	\begin{tikzcd}[column sep=4em]
		f_\sharp \ar{r}{\gys(f,i)} \ar{d} & (fi)_\sharp\Sigma^{\scr N_i} i^* \ar{d} \\
		(\bar a\bar f)_\sharp \pi_X^* \ar{r}{\gys(\bar a\bar f,\bar\imath)} & (\bar a\bar f\bar\imath)_\sharp\Sigma^{\scr N_{\bar\imath}}\pi_Z^*i^*\rlap.
	\end{tikzcd}
	\]
	On the other hand, (i) gives a commutative square
	\[
	\begin{tikzcd}[column sep=4em]
		(\bar a\bar f)_\sharp \pi_X^* \ar{r}{\mathrm{gys}(\bar a\bar f,\bar\imath)} \ar{d}[sloped]{\sim}[swap]{\mathrm{BC}} & (\bar a\bar f\bar\imath)_\sharp\Sigma^{\scr N_{\bar\imath}}\pi_Z^*i^* \ar{d}{\mathrm{BC}}[swap,sloped]{\sim} \\
		a^*(af)_\sharp \ar{r}{a^*\mathrm{gys}(af,i)} & a^*(afi)_\sharp\Sigma^{\scr N_i}i^*\rlap.
	\end{tikzcd}
	\]
	Combining these two squares and using the adjunction $a_\sharp\dashv a^*$ gives the desired commutative square.
	
	(iii) The natural transformation $\gys(f,i)$ is $\MS_S$-linear by construction.
	
	(iv) This square is obtained from Theorem~\ref{thm:tang}(ii) in the same way that the Gysin transformation was obtained from Theorem~\ref{thm:tang}(i), using the functor $\Sm_X\to\mathrm{Triple}_S$ sending $Y$ to $Y_W\into Y_Z\into Y$.
\end{proof}

\begin{remark}[Functoriality of Gysin transformations]
	\label{rmk:func-gysin}
	Let $\int_{\Pair_S}\MS\to \Pair_S$ be the cartesian fibration classified by 
	\[
	\Pair_S^\op\to \Cat_\infty,\quad (Z\into X)\mapsto \MS_X.
	\]
\begin{enumerate}
	\item There is an $\MS_S$-linear functor
	\begin{equation*}\label{eqn:Gys-natural}
	\int_{\Pair_S}\MS \to \MS_S^{\Delta^1},
	\end{equation*}
	whose restriction to the fiber over $Z\into X$ is the associated Gysin transformation $\MS_X\to \MS_S^{\Delta^1}$.
	To see this, consider the functor
	\[
	\int_{\Pair_S}\Sm \to \Pair_S,\quad (Z\into X,Y\in \Sm_X)\mapsto (Y_Z\into Y),
	\]
	which is fiberwise $\Sm_S$-linear. Composing it with the $\Sm_S$-linear functor from Theorem~\ref{thm:tang}(i) and applying the isomorphism~\eqref{eqn:univ-prop-MS} fiberwise yields the claimed $\MS_S$-linear functor.
	
	In the same way, there is an $\MS_S$-linear functor
	\[
	\int_{\mathrm{Triple}_S}\MS \to \MS_S^{\Delta^1\times\Delta^1}
	\]
	encoding the commutative squares of Proposition~\ref{prop:gysin-trans}(iv).
	\item The $\Sm_S$-linear functor from Theorem~\ref{thm:tang}(i) can also be extended to an $\Sm_S$-linear functor
	\[
	\int_{\Pair_S}\MS \to \MS_S^{\Delta^1},
	\]
	where the action of $\Sm_S$ on the source is given by
	\[
	Y \otimes (Z\into X, E) = (Y\times Z\into Y\times X, \pi_X^*E).
	\]
	Note that this differs from the $\Sm_S$-linear functor of (i), which used the fiberwise $\Sm_S$-action on the source.
	However, the obvious maps
	\[
	(Y\times Z\into Y\times X, \pi_X^*E) \to (Z\into X, \pi_{X\sharp}\pi_X^*E)
	\]
	are part of an oplax $\Sm_S$-linear structure on the identity of $\int_{\Pair_S}\MS$, viewed as a functor from the above $\Sm_S$-module structure to the fiberwise one, which by smooth base change becomes strict after composing with the functor to $\MS_S^{\Delta^1}$.
\end{enumerate}
\end{remark}

\begin{remark}[Gysin map for Thom spectra]
	\label{rmk:gysin-thom}
	Let $i\colon Z\into X$ be a closed immersion in $\Sm_S$ and let $\xi\in\K(X)$. Applying the Gysin transformation to $\Sigma^{\xi}\1_X$ yields a $\xi$-twisted version of the Gysin map
	\[
	\Th_X(\xi) \to \Th_Z(\scr N_i+i^*\xi).
	\]
	When $\xi$ comes from a finite locally free sheaf $\scr E\in\Vect(X)$, this is by construction the $\scr E$-twisted Gysin map described in Theorem~\ref{thm:tang}(i).
	
	Composing the functor of Remark~\ref{rmk:func-gysin}(ii) with the J-homomorphism, we obtain an extension of the functor of Theorem~\ref{thm:tang}(i) to an $\Sm_S$-linear functor
	\[
	\int_{\Pair_S}\K\to \MS_S^{\Delta^1},
	\]
	sending $(Z\into X,\xi)$ to the $\xi$-twisted Gysin map above.
	Similarly, we can extend the functor of Theorem~\ref{thm:tang}(ii) to an $\Sm_S$-linear functor
	\[
	\int_{\mathrm{Triple}_S}\K\to \MS_S^{\Delta^1\times\Delta^1}.
	\]
\end{remark}

\section{The geometric evaluation map}
\label{sec:eval}

Here, we construct the geometric dual and the geometric evaluation map of Thom spectra over smooth separated $S$-schemes, which play an instrumental role in our proofs of Atiyah duality and ambidexterity. The geometric evaluation map induces a comparison map between the geometric dual and the categorical dual, which will ultimately be shown to be an isomorphism for Thom spectra over smooth projective $S$-schemes (see Corollary~\ref{cor:atiyah}). The naturality properties of this comparison map are also investigated.

\begin{construction}[Geometric evaluation map and comparison map]
\label{ctr:ev}
Let $X$ be a smooth separated $S$-scheme and let $\xi\in\K(X)$.
The \emph{geometric evaluation map}
\[
\ev_{X,\xi}\colon \Th_X(\xi)\otimes \Th_X(-\xi-\Omega_X) \to\1_S
\] 
is defined as the composition
\[
\Th_X(\xi)\otimes \Th_X(-\xi-\Omega_X) \simeq \Th_{X\times X}(\xi\boxplus (-\xi-\Omega_X)) \xrightarrow{\gys} \Th_X(\scr N_{\delta}-\Omega_X) \simeq X_+ \to\1_S,
\]
where $\delta\colon X\into X\times X$ is the diagonal embedding, whose conormal sheaf $\scr N_\delta$ is identified with $\Omega_X$ via the canonical isomorphisms \[\scr N_\delta\simeq \delta^*\Omega_{\pi_2}\simeq \delta^*\pi_1^*(\Omega_X)\simeq\Omega_X.\]
The Thom spectrum $\Th_X(-\xi - \Omega_X)$ is called the \emph{geometric dual} of $\Th_X(\xi)$ in $\MS_S$.

By adjunction, the geometric evaluation map induces the \emph{comparison map}
\begin{equation*}
\rm{comp}_{X,\xi} \colon \Th_X(- \xi - \Omega_X) \to \Th_X(\xi)^\vee
\end{equation*}
in $\MS_S$, where $E^\vee=\Hom(E,\1_S)$ denotes the categorical dual of a motivic spectrum $E$.
\end{construction}

\begin{remark}
	The choice of isomorphism $\scr N_\delta\simeq\Omega_X$ used in the definition of $\ev_{X,\xi}$ is arbitrary, but it will be important to remember this choice. The other choice yields a pairing that differs from $\ev_{X,\xi}$ by the automorphism $\langle -1\rangle^{\rk\Omega_X}$ of $\1_X$, which is nontrivial when $X$ is odd-dimensional and $-1$ is not a square.
\end{remark}

\begin{construction}[Gysin null-sequences]
	\label{ctr:null-sequences}
	Let $X$ be a smooth $S$-scheme and let $Y$ and $Z$ be smooth closed subschemes in $X$ with $Y\cap Z=\emptyset$.
	The cartesian square
	\[
	\begin{tikzcd}
		\emptyset \ar[hook]{r} \ar{d} & Y \ar{d} \\
		Z \ar[hook]{r} & X
	\end{tikzcd}
	\]
	is then a morphism from $\emptyset\into Y$ to $Z\into X$ in the category $\mathrm{Pair}_S$, inducing a null-sequence
	\[
	\Th_Y(\xi|_Y) \xrightarrow{\mathrm{inc}} \Th_X(\xi) \xrightarrow{\gys} \Th_Z(\scr N_Z+\xi|_Z)
	\]
	in $\MS_S$ for every $\xi\in\K(X)$. 
\end{construction}

\begin{proposition}[Bivariant naturality of the comparison map]
	\label{prop:null-sequences}
	Let $X$ be a smooth separated $S$-scheme and let $Y$ and $Z$ be smooth closed subschemes in $X$ with $Y\cap Z=\emptyset$.
	Then the comparison maps $\mathrm{comp}_{Z,\xi+\scr N_Z}$, $\mathrm{comp}_{X,\xi}$, and $\mathrm{comp}_{Y,\xi}$ participate in a morphism of null-sequences
	\begin{equation}\label{eqn:null-sequences}
	\begin{tikzcd}
		\Th_Z(-\xi-\Omega_X) \ar{r}{\mathrm{inc}} \ar{d} & \Th_X(-\xi-\Omega_X) \ar{r}{\mathrm{gys}} \ar{d} & \Th_Y(-\xi-\Omega_Y) \ar{d} \\
		\Th_Z(\xi+\scr N_Z)^\vee \ar{r}{\mathrm{gys}^\vee} & \Th_X(\xi)^\vee \ar{r}{\mathrm{inc}^\vee} & \Th_Y(\xi)^\vee\rlap,
	\end{tikzcd}
	\end{equation}
	where both rows are instances of Construction~\ref{ctr:null-sequences}.
\end{proposition}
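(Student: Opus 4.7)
Both squares in \eqref{eqn:null-sequences} will be verified by adjunction, passing to the corresponding statement about geometric evaluation pairings; the null-homotopy compatibility of the two rows will be obtained uniformly from the functorial Gysin package.

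For the right-hand square, the adjunction $\otimes\dashv\Hom$ reduces the claim to the equality of the two composites
\[
\Th_X(-\xi-\Omega_X)\otimes\Th_Y(\xi)\rightrightarrows\1_S
\]
given by $\ev_{Y,\xi}\circ(\gys_{i_Y}\otimes\mathrm{id})$ and $\ev_{X,\xi}\circ(\mathrm{id}\otimes\mathrm{inc}_Y)$, respectively. The geometric heart of the matter is the graph embedding $\Gamma_{i_Y}\colon Y\hookrightarrow X\times Y$, $y\mapsto(i_Y(y),y)$, together with the square
\[
\begin{tikzcd}
Y \ar[hook]{r}{\Gamma_{i_Y}} \ar{d}{i_Y} & X\times Y \ar{d}{\mathrm{id}\times i_Y} \\
X \ar[hook]{r}{\delta_X} & X\times X\rlap,
\end{tikzcd}
\]
which is a morphism in $\Pair_S$: it is topologically cartesian, and the induced map of conormal sheaves $\scr N_{\delta_X}|_Y\to\scr N_{\Gamma_{i_Y}}$ is the identity of $\Omega_X|_Y$. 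Applying the $\K$-theoretic twisted Gysin functor $\int_{\Pair_S}\K\to\MS_S^{\Delta^1}$ of Remark~\ref{rmk:gysin-thom} with the twist $(-\xi-\Omega_X)\boxplus\xi\in\K(X\times X)$, the bottom-left composite $\ev_{X,\xi}\circ(\mathrm{id}\otimes\mathrm{inc}_Y)$ is identified with the twisted Gysin for $\Gamma_{i_Y}$ followed by the structure map $Y_+\to\1_S$. Factoring $\Gamma_{i_Y}$ as $Y\xrightarrow{\delta_Y}Y\times Y\xrightarrow{i_Y\times\mathrm{id}}X\times Y$ and invoking composition of twisted Gysin maps (Theorem~\ref{thm:tang}(ii), in its $\K$-theoretic form provided by the triple functor $\int_{\mathrm{Triple}_S}\K\to\MS_S^{\Delta^1\times\Delta^1}$ of Remark~\ref{rmk:gysin-thom}), the top-right composite $\ev_{Y,\xi}\circ(\gys_{i_Y}\otimes\mathrm{id})$ is identified with the same twisted Gysin for $\Gamma_{i_Y}$ followed by $Y_+\to\1_S$. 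The two paths therefore agree.

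The left-hand square is treated symmetrically, using the graph embedding $Z\hookrightarrow Z\times X$, $z\mapsto(z,i_Z(z))$, and the analogous excess intersection square against $\delta_X$ on the left factor. To produce the full morphism of null-sequences coherently (including the null-homotopy compatibility coming from $Y\cap Z=\emptyset$), the plan is not to check individual pieces pointwise but to assemble \eqref{eqn:null-sequences} in a single step as the image, under the functor of Remark~\ref{rmk:gysin-thom}, of one diagram in $\int_{\mathrm{Triple}_S}\K$ built from the morphisms $(\emptyset\hookrightarrow Y\hookrightarrow X\times Y)\to(\emptyset\hookrightarrow X\hookrightarrow X\times X)$ and $(\emptyset\hookrightarrow Z\hookrightarrow Z\times X)\to(\emptyset\hookrightarrow X\hookrightarrow X\times X)$, the excess intersection squares above supplying the morphisms in $\mathrm{Triple}_S$. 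This yields all horizontals, verticals, commutativities, and null-homotopies in one stroke.

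The main obstacle is the coherent bookkeeping of $\K$-theoretic twists and higher coherences in the $\infty$-categorical Gysin machine; this is precisely why the argument should be organized through the packaged functors $\int_{\Pair_S}\K\to\MS_S^{\Delta^1}$ and $\int_{\mathrm{Triple}_S}\K\to\MS_S^{\Delta^1\times\Delta^1}$ rather than by ad hoc pointwise checks. Once the underlying excess intersection data are laid out, the proposition follows by naturality.
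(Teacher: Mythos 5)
Your verification of the two squares by adjunction is correct and uses the same geometric mechanism as the paper: the graph embeddings $\Gamma_{i_Y},\Gamma_{i_Z}$ sit in excess intersection squares over the diagonal $\delta_X$, and their Gysin maps factor through the diagonal Gysins that define $\ev_{Y,\xi}$ and $\ev_{Z,\xi+\scr N_Z}$, via the triples $Y\hookrightarrow Y\times Y\hookrightarrow X\times Y$ and $Z\hookrightarrow Z\times Z\hookrightarrow Z\times X$ together with $\Sm_S$-linearity. So far, so good.

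The gap is in your final assembly step, and it is not a side detail: a morphism of null-sequences carries a compatibility between the null-homotopies of the two rows, and exhibiting this compatibility is the real content of the statement. The cospan of triples you propose does not supply it. With $\emptyset$ as the smallest member, $(\emptyset\hookrightarrow Y\hookrightarrow X\times Y)\to(\emptyset\hookrightarrow X\hookrightarrow X\times X)$ outputs only the graph Gysins followed by the zero map: it records neither the factorization through $Y\times Y$ (which you need to recover $\ev_{Y,\xi}$ from $\gys_{\Gamma_{i_Y}}$ — that factorization is the \emph{middle} term of a triple, and you have put $\emptyset$ there), nor the disjointness $Y\cap Z=\emptyset$ (which is what produces the null-homotopies of the rows in the first place). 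The correct input, which the paper uses, consists of the nondegenerate triples $(Z\hookrightarrow Z\times Z\hookrightarrow X\times Z)$ and $(Y\hookrightarrow Y\times Y\hookrightarrow Y\times X)$ encoding the two factorizations, each receiving a morphism in $\mathrm{Triple}_S$ from the degenerate triple $(\emptyset\hookrightarrow\emptyset\hookrightarrow Y\times Z)$ — the excess intersection squares $Y\times Z\hookrightarrow X\times Z$ over $\emptyset\hookrightarrow Z\times Z$ and $Y\times Z\hookrightarrow Y\times X$ over $\emptyset\hookrightarrow Y\times Y$ are exactly where $Y\cap Z=\emptyset$ enters — together with the $\Delta^1\times\Delta^1$-diagram in $\Pair_S$ on $(\emptyset\hookrightarrow Y\times Z)$, $(Y\hookrightarrow Y\times X)$, $(Z\hookrightarrow X\times Z)$, $(X\hookrightarrow X\times X)$. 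These three pieces of Gysin data glue into a single three-dimensional diagram in $\MS_S$, and it is the middle $0$-vertex (coming from the common source $\Th_{Y\times Z}(\dotsc)$) that identifies the two induced null-homotopies of the composite $\Th_Z(-\xi-\Omega_X)\to\Th_Y(\xi)^\vee$. You need a coherence diagram of roughly this shape; the cospan you wrote down neither encodes the factorizations nor sees the disjointness, so it cannot produce the required morphism of null-sequences.
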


\begin{proof}
	Unpacking the duals, we are led to consider the following diagram in $\MS_S$ (where we omit $\xi$ for simplicity):
	\begin{equation}\label{eqn:big-3d-diagram}
	\begin{tikzcd}[row sep={25,between origins}, column sep={42,between origins}]
		& & 0 \ar[hook]{rrrr} && && Y_+\otimes \Th_Y(-\Omega_Y) \ar{dddr} \\
	 & Y_+\otimes \Th_Z(-\Omega_X) \ar[hook]{rrrr} \ar[hook]{dddd} \ar{ur} \ar{dl} && && Y_+\otimes \Th_X(-\Omega_X) \ar[hook]{dddd} \ar{ur} && \\
		0 \ar[hook]{dddd} \ar[crossing over]{drrr} \\
		& && 0 \ar[from=uuul,crossing over] \ar[crossing over,hook]{rrrr} \ar[from=uull,shorten >=2pt] && && Y_+ \ar[hook]{dddd} \ar[from=uull,shorten >=2pt] \\
		\\
		& X_+\otimes \Th_Z(-\Omega_X) \ar[hook]{rrrr} \ar{ddrr} \ar{dl} && &&  X_+\otimes \Th_X(-\Omega_X) \ar{ddrr} && \\
		\Th_Z(\scr N_Z)\otimes \Th_Z(-\Omega_X) \ar{drrr} \\
		& && Z_+ \ar[hook]{rrrr} \ar[from=uuuu,crossing over,hook] && && X_+\rlap{$\to \1_S$.}
	\end{tikzcd}
	\end{equation}
	Here, the horizontal and vertical arrows are induced by closed immersions in $\Sm_S$, while the other arrows are the obvious Gysin maps. 
	The left prism is induced by the excess intersection squares
	\[
	\begin{tikzcd}
		\emptyset \ar[hook]{r} \ar{d} & \emptyset \ar[hook]{r} \ar{d} & Y\times Z \ar{d} \\
		Z \ar[hook]{r} & Z\times Z \ar[hook]{r} & X\times Z\rlap.
	\end{tikzcd}
	\]
	The prism at the top is similarly induced by the excess intersection squares
	\[
	\begin{tikzcd}
		\emptyset \ar[hook]{r} \ar{d} & \emptyset \ar[hook]{r} \ar{d} & Y\times Z \ar{d} \\
		Y \ar[hook]{r} & Y\times Y \ar[hook]{r} & Y\times X\rlap.
	\end{tikzcd}
	\]
	Finally, the cube is induced by the following commutative square in $\mathrm{Pair}_S$:
	\[
	\begin{tikzcd}
		(\emptyset\into Y\times Z) \ar{r} \ar{d} & (Y\into Y\times X) \ar{d} \\
		(Z\into X\times Z) \ar{r} & (X\into X\times X)\rlap.
	\end{tikzcd}
	\]
	The bottom (resp.\ right) face of~\eqref{eqn:big-3d-diagram} corresponds to the left (resp.\ right) square in~\eqref{eqn:null-sequences}.
	The uppermost (resp.\ leftmost) $0$ in~\eqref{eqn:big-3d-diagram} corresponds to the null-homotopy of the first (resp.\ second) row in~\eqref{eqn:null-sequences} (here, we use the $\Sm_S$-linearity of Gysin maps from Remark~\ref{rmk:gysin-thom}).
	Finally, the factorization through the middle $0$ in~\eqref{eqn:big-3d-diagram} identifies the two resulting null-homotopies of the composite map $\Th_Z(-\xi-\Omega_X)\to \Th_Y(\xi)^\vee$ in~\eqref{eqn:null-sequences}.
\end{proof}

\begin{lemma}\label{lem:PF-BC}
	Consider a cartesian square
	\[
	\begin{tikzcd}
	Z \ar{d}[swap]{\bar g} \ar{r}{\bar f} \ar{dr}[description]{h} & Y \ar{d}{g} \\
	X \ar{r}[swap]{f} & S\rlap,
	\end{tikzcd}
	\]
	where $f$ and $g$ are smooth.
	Then the following diagram commutes for all $A\in \MS_X$ and $B\in\MS_Y$:
	\[
	\begin{tikzcd}[column sep=2.5em]
	f_\sharp (A\otimes f^*g_\sharp (B)) \ar{d}[sloped]{\sim}[swap]{\mathrm{PF}} \ar[<-]{r}{\mathrm{BC}}[swap]{\sim} & f_\sharp (A\otimes \bar g_\sharp \bar f^*(B))  \\
	f_\sharp (A)\otimes g_\sharp (B) & h_\sharp (\bar g^*(A)\otimes \bar f^*(B)) \ar{d}{\mathrm{PF}}[swap,sloped]{\sim} \ar{u}[sloped]{\sim}[swap]{\mathrm{PF}}  \\
	g_\sharp (g^*f_\sharp (A)\otimes B) \ar{u}{\mathrm{PF}}[swap,sloped]{\sim} \ar[<-]{r}{\mathrm{BC}}[swap]{\sim} & g_\sharp (\bar f_\sharp \bar g^*(A)\otimes B)\rlap.
	\end{tikzcd}
	\]
\end{lemma}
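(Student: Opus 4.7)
The plan is to reduce to corepresentable $A$ and $B$ via the universal property of $\MS_X$ and $\MS_Y$, and then to a direct scheme-theoretic identification in which every vertex collapses to a single smooth $S$-scheme.

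First, I would observe that both paths around the hexagon define natural transformations between colimit-preserving bifunctors $\MS_X\times\MS_Y\to\MS_S$. Indeed, each of the six vertices is a composition of colimit-preserving operations—smooth pushforwards ($f_\sharp$, $g_\sharp$, $h_\sharp$, $\bar f_\sharp$, $\bar g_\sharp$), pullbacks ($f^*$, $g^*$, $\bar f^*$, $\bar g^*$), and the tensor product—and each of the six edges is obtained from the PF and BC natural transformations, both of which are natural in all variables. Applying the universal property~\eqref{eqn:univ-prop-MS} in the variable $A$, and then independently in $B$, I can reduce to checking commutativity of the hexagon when $A=\Sigma^\infty_{\P^1}U_+$ and $B=\Sigma^\infty_{\P^1}V_+$ for some $U\in\Sm_X$ and $V\in\Sm_Y$.

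In this representable case, each of the six vertices is canonically isomorphic to $\Sigma^\infty_{\P^1}(U\times_S V)_+\in\MS_S$, with the identification $U\times_S V\simeq U\times_X Z\times_Y V$ coming from $Z\simeq X\times_S Y$. For instance, $f_\sharp(U_+\otimes f^*g_\sharp V_+)\simeq f_\sharp(U_+\otimes(V\times_S X)_+)\simeq(U\times_SV)_+$ via smooth base change and the tautological computation $f_\sharp W_+\simeq W_+$ for $W\in\Sm_X$; an analogous chain of identifications handles each of the other five vertices. Under these identifications, every PF and BC edge of the hexagon reduces to the identity on $(U\times_SV)_+$, since each is induced by the same underlying cartesian square of smooth $S$-schemes.

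The hard part will be organizational rather than computational: before invoking~\eqref{eqn:univ-prop-MS}, I need to assemble the hexagon as a \emph{single} functor from the appropriate shape (a filled commutative hexagon, conveniently presented as two glued commutative squares) into $\Fun_{\Sm_S}^{\Nis,\ebu}(\Sm_X\times\Sm_Y,\MS_S)$. This packaging amounts to the coherent naturality of the PF and BC isomorphisms in all variables simultaneously, which is built into their construction (PF from the $\MS_S$-linearity of $f_\sharp$ and $g_\sharp$, BC from smooth base change for the cartesian square). Once this coherent hexagonal diagram of natural transformations between colimit-preserving bifunctors is in place, the identification on representables carried out above suffices to conclude.
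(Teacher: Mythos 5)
Your approach is genuinely different from the paper's. The paper's proof is a one-liner: it invokes the lax monoidal structure of the functor $\h\MS^*_{\sharp}\colon \Span(\Sch,\all,\mathrm{smooth})\to\Cat_1$, which encodes the PF and BC isomorphisms and their compatibility as a single coherent structure, so the hexagon commutes as an instance of lax monoidal functoriality. You instead propose a direct check on representables via the universal property~\eqref{eqn:univ-prop-MS}. Both routes are reasonable, and yours is in some ways more self-contained; but the paper's one-line argument outsources the coherence to a single previously constructed structure, whereas you must verify it by hand.

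Two concerns about your write-up. First, the claim that ``every PF and BC edge of the hexagon reduces to the identity on $(U\times_S V)_+$'' is asserted, not verified, and it is the entire substance of the argument once the reduction is made. Your justification---that each is ``induced by the same underlying cartesian square of smooth $S$-schemes''---is not accurate for the PF edges: the projection-formula isomorphism comes from the $\MS_S$-linearity of $f_\sharp$ (i.e.\ from the adjunction $f_\sharp\dashv f^*$ and monoidality of $f^*$), not from any cartesian square. You would need to unwind this linearity datum on representables and check it agrees with the scheme-theoretic identification $U\times_X(V\times_SX)\simeq U\times_SV$, and similarly trace through the three distinct identifications of $U\times_SV$ that arise from the three factorizations $Z\simeq X\times_SY$. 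This is doable but is real work, and is exactly where an error could hide.

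Second, your formulation of the ``organizational'' step risks circularity. You say you must ``assemble the hexagon as a single functor from the appropriate shape (a filled commutative hexagon)'' before invoking the universal property, and that this ``amounts to the coherent naturality\ldots built into their construction.'' But the coherent interaction of PF with BC is precisely what the lemma asserts; you cannot assume a filled hexagon in order to apply~\eqref{eqn:univ-prop-MS}. The correct way to structure the reduction is: (a) observe that the two composite paths are each well-defined natural transformations $\alpha,\beta\colon F\Rightarrow G$ between the top-left and bottom-left vertices, simply by composing the six edge transformations, with no coherence needed; (b) note that $F$ and $G$ are colimit-preserving and $\MS_S$-bilinear, so that $\Map(F,G)$ is computed after restriction to $\Sm_X\times\Sm_Y$ by~\eqref{eqn:univ-prop-MS}; (c) check $\alpha$ and $\beta$ agree after restriction. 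As written, your phrasing conflates constructing the filled hexagon with comparing its two boundary paths.
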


\begin{proof}
This follows from the lax monoidal structure of 
\[
\h\MS^*_{\sharp}\colon \Span(\Sch,\all,\mathrm{smooth})\to\Cat_1, \quad (X\xleftarrow{f} Z\xrightarrow{g} Y)\mapsto g_{\sharp} f^*.\qedhere
\]
\end{proof}

\begin{proposition}\label{prop:skew-symmetry}
	Let $X$ be a smooth separated $S$-scheme and let $\xi\in\K(X)$. Then the pairing
	\[
	\ev_{X,\xi}\colon \Th_X(\xi) \otimes \Th_X(-\xi-\Omega_X) \to \1_S
	\]
	of Construction~\ref{ctr:ev} is skew-symmetric in the following sense:
	\[
	\ev_{X,\xi}\circ\sigma = \ev_{X,-\xi-\Omega_X} \langle -1\rangle^{\rk\Omega_X},
	\]
	where $\sigma$ is the swap map and $\langle -1\rangle$ is the image of the nontrivial loop by the J-homomorphism $\Omega\K(\Z)\to\Aut(\1_\Z)$.
\end{proposition}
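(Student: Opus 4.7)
The plan is to exploit the fact that the swap $\tau\colon X\times X\to X\times X$ fixes the diagonal, i.e.\ $\tau\circ\delta = \delta$, and then use Theorem~\ref{thm:tang}(i) to transport the Gysin map used in the definition of $\ev_{X,\xi}$ across the swap. First I would unpack $\ev_{X,\xi}\circ\sigma$ using Construction~\ref{ctr:ev}. The symmetric monoidal swap $\sigma$ on $\Th_X(\xi)\otimes\Th_X(-\xi-\Omega_X)$ corresponds, via the Künneth/Thom isomorphism $\Th_X(\alpha)\otimes\Th_X(\beta)\simeq\Th_{X\times X}(\alpha\boxplus\beta)$, to $\tau^*$ composed with the exchange of summands. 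Applying Theorem~\ref{thm:tang}(i) to the commutative square in $\mathrm{Pair}_S$ with $\tau$ on top and bottom and $\id_X$ on the sides, I obtain an identification of $\tau^*\circ\gys_\delta$ with $\gys_\delta$ up to the automorphism of $\Th_X(\scr N_\delta)$ induced by $\tau$ acting on $\scr N_\delta$.

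The heart of the argument is then the comparison of the two canonical isomorphisms $\iota_1,\iota_2\colon\scr N_\delta\simto \Omega_X$ induced by the two projections $\pi_1,\pi_2$: the first is used in defining $\ev_{X,\xi}$, while after swap the second appears. These fit into the cotangent short exact sequence
\[
0 \to \scr N_\delta \to \delta^*\Omega_{X\times X} \simeq \Omega_X \oplus \Omega_X \to \Omega_X \to 0,
\]
in which $\scr N_\delta$ is embedded anti-diagonally. A straightforward local computation with K\"ahler differentials shows $\iota_2 = -\iota_1$. Applying the J-homomorphism to the sign automorphism of the rank $\rk\Omega_X$ sheaf $\Omega_X$ yields the automorphism $\langle -1\rangle^{\rk\Omega_X}$ of $\Th_X(-\Omega_X)$. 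This is precisely the discrepancy between $\ev_{X,\xi}\circ\sigma$ and $\ev_{X,-\xi-\Omega_X}$, because after the swap the roles of $\xi$ and $-\xi-\Omega_X$ are exchanged and everything else in the definition of Construction~\ref{ctr:ev} matches on the nose.

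The main obstacle is coherence: one has to verify that the symmetric monoidal swap on the tensor product is compatible with the Künneth isomorphism $\Th_X(\alpha)\otimes\Th_X(\beta)\simeq\Th_{X\times X}(\alpha\boxplus\beta)$ in such a way that it really corresponds to $\tau^*$ composed with exchange of summands, and that the $\Sm_S$-linear naturality packaged in Theorem~\ref{thm:tang}(i) together with Remark~\ref{rmk:gysin-thom} produces the claimed square at the level of Thom spectra. Both facts are formal consequences of the symmetric monoidal structure of the J-homomorphism and of the functoriality of Gysin maps in pairs, so once the elementary sign computation $\iota_2=-\iota_1$ is in hand the proof reduces to chasing these coherences.
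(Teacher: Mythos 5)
Your proposal is correct and follows essentially the same route as the paper's proof: both hinge on the observation that the swap $\sigma$ fixes the diagonal $\delta$ and induces $-\id$ on $\scr N_\delta$ (equivalently, the two canonical isomorphisms $\scr N_\delta\simeq\Omega_X$ coming from the two projections differ by a sign), which the J-homomorphism converts into $\langle -1\rangle^{\rk\Omega_X}$. The paper's proof simply spells out in full the coherence bookkeeping that you flag as "the main obstacle," assembling a large commutative diagram out of Lemma~\ref{lem:PF-BC}, the $\MS$-linearity, base change, and base independence of Gysin transformations (Proposition~\ref{prop:gysin-trans}), rather than invoking Theorem~\ref{thm:tang}(i) and Remark~\ref{rmk:gysin-thom} directly.
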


\begin{proof}
	Let $f\colon X\to S$ be the structure map.
	The pairing $\ev_{X,\xi}$ is by definition
	\begin{align*}
	f_\sharp\Sigma^\xi\1_X\otimes f_\sharp\Sigma^{-\xi-\Omega_f}\1_X \overset{\mathrm{PF}}{\simeq} & f_\sharp \Sigma^\xi f^*f_\sharp\Sigma^{-\xi-\Omega_f}\1_X
	\overset{\mathrm{BC}}{\simeq} f_\sharp\Sigma^\xi\pi_{2\sharp}\pi_1^*\Sigma^{-\xi-\Omega_f}\1_X \\
	 \xrightarrow{\mathrm{gys}(\pi_2,\delta)} & f_\sharp\Sigma^{\xi}\Sigma^{\scr N_\delta}\Sigma^{-\xi-\Omega_f}\1_X
	 \simeq f_\sharp\1_X \xrightarrow{\epsilon} \1_S.
	\end{align*}
	The following commutative diagram (with $A=\Sigma^\xi\1_X$ and $B=\Sigma^{-\xi-\Omega_f}\1_X$) shows that the pairing with values in $f_\sharp\1_X$ is already skew-symmetric in the desired sense:
	\[
	\begin{tikzcd}[column sep=3em]
	f_\sharp(A\otimes f^*f_\sharp(B)) \ar{d}[sloped]{\sim}[swap]{\mathrm{PF}} \ar[<-]{r}{\mathrm{BC}}[swap]{\sim} & f_\sharp(A\otimes \pi_{2\sharp}\pi_1^*(B)) \ar{r}{\mathrm{gys}(\pi_2,\delta)} & f_\sharp(A\otimes \Sigma^{\scr N_\delta}B) \ar{r}{\Sigma^{\alpha_2}}[swap]{\sim} & f_\sharp(A\otimes \Sigma^{\Omega_f}B) \ar{dd}{\Sigma^{-\id}}[sloped,swap]{\sim}  \\
	f_\sharp(A)\otimes f_\sharp(B) & h_\sharp(\pi_2^*(A)\otimes \pi_1^*(B)) \ar{r}{\mathrm{gys}(h,\delta)} \ar{d}{\mathrm{PF}}[swap,sloped]{\sim} \ar{u}[sloped]{\sim}[swap]{\mathrm{PF}} & f_\sharp\Sigma^{\scr N_\delta}(A\otimes B) \ar{d}[sloped,swap]{\sim} \ar{u}[sloped]{\sim} \\
	f_\sharp(f^*f_\sharp(A)\otimes B) \ar{u}{\mathrm{PF}}[swap,sloped]{\sim} \ar[<-]{r}{\mathrm{BC}}[swap]{\sim} \ar[equal]{d} & f_\sharp(\pi_{1\sharp}\pi_2^*(A)\otimes B) \ar{r}{\mathrm{gys}(\pi_1,\delta)} \ar{d}{\mathrm{BC}}[swap,sloped]{\sim} & f_\sharp(\Sigma^{\scr N_\delta}A\otimes B) \ar{d}{\Sigma^{-\id}}[sloped,swap]{\sim} \ar{r}{\Sigma^{\alpha_1}}[swap]{\sim} & f_\sharp(\Sigma^{\Omega_f}A\otimes B) \ar[equal]{d} \\
	f_\sharp(f^*f_\sharp(A)\otimes B) \ar[<-]{r}{\mathrm{BC}}[swap]{\sim} & f_\sharp(\pi_{2\sharp}\pi_1^*(A)\otimes B) \ar{r}{\mathrm{gys}(\pi_2,\delta)} & f_\sharp(\Sigma^{\scr N_\delta}A\otimes B) \ar{r}{\Sigma^{\alpha_2}}[swap]{\sim} & f_\sharp(\Sigma^{\Omega_f}A\otimes B)\rlap.
	\end{tikzcd}
	\]
	Here:
	\begin{itemize}
		\item the big left rectangle commutes by Lemma~\ref{lem:PF-BC};
		\item the two central upper rectangles commute by the $\MS$-linearity and base independence of Gysin transformations;
		\item the rectangle of base change isomorphisms is induced by the cartesian squares
		\[
		\begin{tikzcd}
			X\times_SX \ar{d}[swap]{\sigma} \ar{r}{\pi_1} \ar[bend right=50]{dd}[swap]{\pi_2} & X \ar{d}{\id} \ar[bend left=50]{dd}{f} \\
			X\times_SX \ar{d}[swap]{\pi_1} \ar{r}{\pi_2} & X \ar{d}{f} \\
			X \ar{r}[swap]{f} & S\rlap,
		\end{tikzcd}
		\]
		where $\sigma$ is the swap map;
		\item the central bottom rectangle commutes by the compatibility of Gysin transformations with base change, applied to the following cartesian diagram:
		\[
		\begin{tikzcd}
			X \ar[hook]{r}{\delta} \ar{d}[swap]{\id} & X\times_SX \ar{d}[swap]{\sigma} \ar{r}{\pi_1} & X \ar{d}{\id} \\
			X \ar[hook]{r}{\delta} & X\times_SX \ar{r}{\pi_2} & X
		\end{tikzcd}
		\]
		(note that the automorphism of $\scr N_\delta$ induced by $\sigma$ is $-\id$);
		\item the isomorphism $\alpha_2$ is $\scr N_\delta\simeq \delta^*\Omega_{\pi_2}\simeq \delta^*\pi_1^*\Omega_f\simeq\Omega_f$ and similarly for $\alpha_1$, so that $\alpha_1\alpha_2^{-1}=-\id$; therefore, the upper and lower right rectangles commute.\qedhere
	\end{itemize}
\end{proof}

\section{Duals of projective spaces}
\label{sec:Pdual}

Here, we identify the duals of projective spaces in $\MS_S$ with their geometric duals. More precisely, we show that the comparison map
\begin{equation*}
\rm{comp}_{\P^n,\scr O(-1)^m} \colon \Th_{\P^n} (- \scr O(-1)^m - \Omega_{\P^n})\to \Th_{\P^n}(\scr O(-1)^m)^\vee
\end{equation*}
is an isomorphism for all $n,m \geq 0$. We will argue by induction on $n$, using Proposition~\ref{prop:null-sequences} for the disjoint closed subschemes $\P^0,\P^{n-1}\subset \P^n$. The nontrivial part is to show that the null-sequences in \emph{loc.\ cit.} are in fact cofiber sequences.

\begin{lemma}\label{lem:projquot}
Let $\scr E$ and $\scr F$ be finite locally free sheaves on a derived scheme. The commutative square
\[
\begin{tikzcd}
	\P_{\P(\scr E)}(\scr F) \ar{d} \ar[hook]{r} & \P_{\P(\scr E)}(\scr O (1) \oplus \scr F) \ar{d} \\
	\P(\scr F) \ar[hook]{r} & \P(\scr E \oplus \scr F)
\end{tikzcd}
\]
is a universal strict virtual Cartier divisor over $\P(\scr F) \into \P(\scr E \oplus \scr F)$. In particular, there is an isomorphism
\begin{equation*}
\Bl_{\P(\scr F)}(\P(\scr E \oplus \scr F)) \simeq \P_{\P(\scr E)}(\scr O (1) \oplus \scr F)
\end{equation*}
identifying the exceptional divisor with $\P_{\P(\scr E)}(\scr F)$. 
\end{lemma}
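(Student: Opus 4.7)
The plan is to verify directly that the displayed square is a strict virtual Cartier divisor over $i\colon\P(\scr F)\into\P(\scr E\oplus\scr F)$ in the sense of \cite{AHI}, and then establish the universal property by reduction to the classical case; the ``in particular'' statement follows automatically from the universal property of the derived blowup.

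First I would construct the right-hand vertical map $\pi\colon\P_{\P(\scr E)}(\scr O(1)\oplus\scr F)\to\P(\scr E\oplus\scr F)$: on $\P(\scr E)$ the tautological quotient $\scr E\onto\scr O(1)$ yields a surjection $\scr E\oplus\scr F\onto\scr O(1)\oplus\scr F$, and composing with the tautological rank one quotient of $\scr O(1)\oplus\scr F$ on $\P_{\P(\scr E)}(\scr O(1)\oplus\scr F)$ produces a rank one quotient of $\scr E\oplus\scr F$ there, which classifies $\pi$. Next, the summand inclusion $\scr O(1)\into\scr O(1)\oplus\scr F$ composed with this tautological quotient defines a global section of the line bundle $\scr O(1)\otimes\pi_{\P(\scr E)}^*\scr O_{\P(\scr E)}(-1)$ on $\P_{\P(\scr E)}(\scr O(1)\oplus\scr F)$, whose vanishing locus is precisely $\P_{\P(\scr E)}(\scr F)$, exhibiting the left vertical map as a Cartier divisor. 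The square is then cartesian because $\P(\scr F)\into\P(\scr E\oplus\scr F)$ is cut out by the projection $\scr E\oplus\scr F\onto\scr E$, and its pullback along $\pi$ is cut out by the vanishing of the composite $\pi_{\P(\scr E)}^*\scr O_{\P(\scr E)}(1)\to\scr O(1)\oplus\scr F\to\scr O(1)$, which is the same Cartier divisor.

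The strictness condition requires identifying the exceptional divisor with $\P_{\P(\scr F)}(\scr N_i)$. Directly from the quotient $\scr E\oplus\scr F\onto\scr F$ one computes $\scr N_i\simeq\scr E|_{\P(\scr F)}\otimes\scr O_{\P(\scr F)}(-1)$, hence $\P_{\P(\scr F)}(\scr N_i)\simeq\P(\scr E)\times_S\P(\scr F)\simeq\P_{\P(\scr E)}(\scr F)$, and one checks that the canonical comparison induced by the surjection $\pi^*\scr N_i|_D\onto\scr N_{D/X'}$ realizes this identification under the above isomorphisms. For the universal property, I would argue by descent: both the formation of the derived blowup and the construction of $\P_{\P(\scr E)}(\scr O(1)\oplus\scr F)$ with its map to $\P(\scr E\oplus\scr F)$ commute with arbitrary pullback, so we may assume Zariski-locally that $\scr E\simeq\scr O^a$ and $\scr F\simeq\scr O^b$ are free. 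In that case $i$ is the classical linear embedding $\P^{b-1}\into\P^{a+b-1}$ between classical smooth schemes, so its derived blowup coincides with the classical one, which is well-known to be $\P_{\P^{a-1}}(\scr O(1)\oplus\scr O^b)$ under the above identifications. The main obstacle, and the step where care is needed, is tracking the line-bundle twists so that the various conormal and normal bundle identifications remain coherent with the canonical comparison maps, especially in the derived setting where the relevant conormal sheaves are perfect complexes rather than sheaves.
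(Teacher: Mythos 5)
Your proposal is correct in outline but takes a genuinely different route from the paper. The paper's proof is a one-shot functor-of-points comparison: it observes that $\P(\scr F)\into\P(\scr E\oplus\scr F)$ is the vanishing locus of the tautological map $\scr E\to\scr O(1)$ on $\P(\scr E\oplus\scr F)$, then invokes the universal property of the derived blowup of a vector bundle section (\cite[Proposition 2.7]{annala-chern}): points of the blowup are commutative squares of surjections to invertible sheaves, while points of $\P_{\P(\scr E)}(\scr O(1)\oplus\scr F)$ are pairs of surjections $\scr E\onto\scr L$ and $\scr L\oplus\scr F\onto\scr L'$. These are manifestly the same data, and the exceptional divisor falls out as the locus where $\scr L\to\scr L'$ vanishes. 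In contrast, you first build the virtual Cartier divisor structure by hand and then reduce to the classical case by trivializing $\scr E$ and $\scr F$ Zariski-locally. The payoff of the paper's approach is that it never needs to track line-bundle twists or conormal identifications at all — precisely the bookkeeping you flag as the delicate step at the end — and it produces the isomorphism canonically rather than by descent.

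One small tightening is needed in your descent step: having local isomorphisms on a cover does not by itself glue to a global isomorphism. The fix is to observe that once the square is a strict virtual Cartier divisor, the universal property of $\Bl_{\P(\scr F)}(\P(\scr E\oplus\scr F))$ furnishes a \emph{canonical} comparison map $\P_{\P(\scr E)}(\scr O(1)\oplus\scr F)\to\Bl_{\P(\scr F)}(\P(\scr E\oplus\scr F))$, compatible with base change; it is this map whose invertibility can then legitimately be checked Zariski-locally (and, by base change from $\Spec\Z$, reduced to the classical linear embedding $\P^{b-1}\into\P^{a+b-1}$). Your conormal computation $\scr N_i\simeq\scr E|_{\P(\scr F)}(-1)$ and the resulting identification $\P_{\P(\scr F)}(\scr N_i)\simeq\P_{\P(\scr E)}(\scr F)$ are correct.
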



\begin{proof}
The embedding $\P(\scr F) \into \P(\scr E \oplus \scr F)$ is the vanishing locus of the tautological map $\scr E \to \scr O(1)$ on $\P(\scr E \oplus \scr F)$. Thus, by the universal property of derived blowup of a vector bundle section \cite[Proposition 2.7]{annala-chern}, the points of the blowup are commutative squares
\begin{equation*}\label{eq:qcohsq}
\begin{tikzcd}
\scr E \arrow[r,twoheadrightarrow] \arrow[d] & \scr L \arrow[d] \\
\scr E \oplus \scr F \arrow[r,twoheadrightarrow] & \scr L'
\end{tikzcd}
\end{equation*}
of quasi-coherent sheaves on the blowup, where $\scr L$ and $\scr L'$ are invertible (see also \cite[Lemma~3.2]{AHI}).

On the other hand, the points of $\P_{\P(\scr E)}(\scr O(1) \oplus \scr F)$ are pairs of surjections $\scr E \onto \scr L$ and $\scr L \oplus \scr F \onto \scr L'$. This data is clearly equivalent to that of the commutative square above. The exceptional divisor is the vanishing locus of $\scr L \to \scr L'$, which coincides with $\P_{\P(\scr E)}(\scr F)$.
\end{proof}

\begin{construction}[Thom spaces of twists by divisors]
	\label{ctr:thomtwist}
Let $X \in \Sm_S$, let $D\subset X$ be a smooth divisor, and let $\scr E$ be a finite locally free sheaf on $X$.
We construct a zigzag in $\Sm_S^\sncd$ \cite[Definition 2.4]{AHI}
\begin{equation*}
\P_X(\scr E \oplus \scr O) \leftarrow B \rightarrow \P_X(\scr E(D) \oplus \scr O)
\end{equation*}
inducing isomorphisms
\begin{equation*}
\Th_X(\scr E) / \Th_D(\scr E) \xleftarrow{\sim} B/\partial B \xrightarrow{\sim} \Th_X(\scr E(D)) / \Th_D(\scr E(D))
\end{equation*}
in $\scr P_{\sbu}(\Sm_S)_*$ (or in $\scr P_{\ebu}(\Sm_S)_*$ if $D\into X$ is elementary).
Let 
\[
b_1\colon B_1 \to \P_X(\scr E \oplus \scr O) \quad\text{and} \quad  b_2\colon B_2 \to \P_X(\scr E(D) \oplus \scr O)
\]
be the blowups of $\P_D(\scr O) \into \P_X(\scr E \oplus \scr O)$ and $\P_D(\scr E(D)) \into \P_X(\scr E(D) \oplus \scr O)$, respectively. The functor of points of these blowups may be described using the universal property of blowups of vanishing loci (see \cite[Theorem 122]{AnnalaThesis} or \cite[Lemma 3.2]{AHI}):
\begin{itemize}
\item An $X$-morphism to $B_1$ corresponds to a surjection
\[
\scr E \oplus \scr O \onto \scr L_1
\]
of locally free sheaves, where $\scr L_1$ is invertible, and a factorization 
\[
\begin{tikzcd}
\scr E(D) \oplus \scr L_1 \arrow[r] \arrow[d,twoheadrightarrow,dashed] & \scr L_1(D) \\
\scr M_1 \ar[dashed]{ur}
\end{tikzcd}
\]
through a surjection to an invertible sheaf $\scr M_1$. The vanishing locus of $\scr M_1\to\scr L_1(D)$ is the exceptional divisor $E_1$ of the blowup.

\item An $X$-morphism to $B_2$ corresponds to a surjection
\[
\scr E(D) \oplus \scr O \onto \scr L_2
\]
of locally free sheaves, where $\scr L_2$ is invertible, and a factorization 
\[
\begin{tikzcd}
\scr O(D) \oplus \scr L_2 \arrow[r] \arrow[d,twoheadrightarrow,dashed] & \scr L_2(D) \\
\scr M_2 \ar[dashed]{ur}
\end{tikzcd}
\]
through a surjection to an invertible sheaf $\scr M_2$. The vanishing locus of $\scr M_2\to\scr L_2(D)$ is the exceptional divisor $E_2$ of the blowup.
\end{itemize}
We note that in both cases, since the composite $\scr L_i\to\scr M_i\to\scr L_i(D)$ is the identity tensored with the tautological map $\scr O\to\scr O(D)$, so is the reverse composite $\scr M_i\to\scr L_i(D)\to\scr M_i(D)$.

We claim that $B_1$ and $B_2$ can be identified in such a way that $\scr L_2=\scr M_1$ and $\scr M_2=\scr L_1(D)$:
\begin{itemize}
\item Over $B_1$, the composition
\begin{equation*}\label{eq:thomtwistsurj1}
\scr E (D) \oplus \scr O \to \scr E (D) \oplus \scr L_1 \onto \scr M_1
\end{equation*}
is a surjection, which defines a map $B_1\to\P_X(\scr E(D) \oplus \scr O)$. Indeed, the vanishing loci of $\scr E (D) \to \scr M_1$ and $\scr O \to \scr M_1$ are strict transforms of $\P_X(\scr O)$ and $\P_D(\scr E \oplus \scr O) \cup \P_X(\scr E )$, respectively, and these are disjoint.

Furthermore, we have a factorization
\[
\begin{tikzcd}
\scr O (D) \oplus \scr M_1 \arrow[r] \arrow[d]& \scr M_1(D)\rlap. \\
\scr L_1(D) \arrow[ur]&
\end{tikzcd}
\]
The vanishing loci of the maps $\scr O (D) \to \scr L_1(D)$ and $\scr M_1 \to \scr L_1(D)$ are $\P_X(\scr E)$ and the exceptional divisor, respectively. Since they are disjoint, the vertical map is surjective. This defines a map $B_1\to B_2$.

\item Over $B_2$, the composition
\begin{equation*}\label{eq:thomtwistsurj2}
\scr E(D) \oplus \scr O(D) \to \scr L_2 \oplus \scr O(D) \onto \scr M_2
\end{equation*}
is a surjection, which defines a map $B_2 \to \P_X(\scr E \oplus \scr O)$.
Indeed, the vanishing loci of $\scr E(D) \to \scr M_2$ and $\scr O(D) \to \scr M_2$ are strict transforms of $\P_X(\scr O)\cup\P_D(\scr E(D) \oplus \scr O)$ and $\P_X(\scr E(D))$, respectively, and these are disjoint.

Furthermore, we have a factorization
\[
\begin{tikzcd}
\scr E (D) \oplus \scr M_2(-D) \arrow[r] \arrow[d]& \scr M_2\rlap. \\
\scr L_2 \arrow[ur]&
\end{tikzcd}
\]
The vanishing loci of the maps $\scr E (D) \to \scr L_2$ and $\scr M_2(-D)\to\scr L_2$ are $\P_X(\scr O)$ and the exceptional divisor, respectively. Since they are disjoint, the vertical map is a surjection. This defines a map $B_2 \to B_1$. 
\end{itemize}
It is clear from the construction that the maps $B_1\to B_2$ and $B_2\to B_1$ are inverse to one another. 

Finally, we define relative strict normal crossings divisors $\partial B_i$ on $B_i$ as follows:
\begin{itemize}
\item The smooth components of $\partial B_1$ are the strict transforms $D_1$ and $P_1$ of $\P_D(\scr E \oplus \scr O)$ and $\P_X(\scr E)$, as well as the exceptional divisor $E_1$.
\item The smooth components of $\partial B_2$ are the strict transforms $D_2$ and $P_2$ of $\P_D(\scr E(D) \oplus \scr O)$ and $\P_X(\scr E(D))$, as well as the exceptional divisor $E_2$.
\end{itemize}
By smooth blowup excision \cite[Proposition 2.7]{AHI}, the maps $b_i$ induce isomorphisms
\begin{align*}
B_1/\partial B_1 &\simto \P_X(\scr E \oplus \scr O) / (\P_D(\scr E \oplus \scr O) \cup \P_X(\scr E)) \simeq \Th_X(\scr E) / \Th_D(\scr E), \\
B_2 / \partial B_2 & \simto \P_X(\scr E(D) \oplus \scr O) / (\P_D(\scr E(D) \oplus \scr O) \cup \P_X(\scr E(D))) \simeq \Th_X(\scr E(D)) / \Th_D(\scr E(D))
\end{align*}
in $\scr P_{\sbu}(\Sm_S)_*$.
As the isomorphism $B_1 \simeq B_2$ identifies $D_1$ with $E_2$, $P_1$ with $P_2$, and $E_1$ with $D_2$, we obtain the desired zigzag in $\Sm_S^\sncd$.
\end{construction}

\begin{definition}
	Let $X$ be a smooth $S$-scheme, let $\xi\in\K(X)$, and let $Y,Z\subset X$ be smooth closed subschemes.
	We say that $(X,Y,Z,\xi)$ is a \emph{Gysin quadruple} if $Y\cap Z =\emptyset$ and if the null-sequence
	\[
	\Th_{Y}(\xi) \xrightarrow{\mathrm{inc}} \Th_{X}(\xi) \xrightarrow{\gys}\Th_{Z}(\scr N_Z+\xi)
	\]
	of Construction~\ref{ctr:null-sequences} is a cofiber sequence in $\MS_S$.
\end{definition}

\begin{proposition}\label{prop:gys-cofib}
Let $X$ be a smooth $S$-scheme, let $\xi\in \K(X)$, and let $Y,Z\subset X$ be smooth closed subschemes with $Y\cap Z=\emptyset$.
\begin{enumerate}
	\item For any $\scr E\in\Vect(S)$, $(\P(\scr E\oplus\scr O),\P(\scr E),S,0)$ is a Gysin quadruple.
	\item Let $\zeta\in\K(S)$. Then $(X,Y,Z,\xi)$ is a Gysin quadruple if and only if $(X,Y,Z,\xi+\zeta)$ is.
	\item Let $B$ be the blowup of $X$ in $Y$ and let $E$ be the exceptional divisor. Then $(X,Y,Z,\xi)$ is a Gysin quadruple if and only if $(B,E,Z,\xi)$ is.
	\item Let $D$ be a smooth divisor on $X$ with $D\cap Z=\emptyset$ and let $\scr E\in\Vect(X)$. Then $(X,D,Z,\scr E)$ is a Gysin quadruple if and only if $(X,D,Z,\scr E(D))$ is.
\end{enumerate}
\end{proposition}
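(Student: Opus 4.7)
The plan is to dispose of the four parts separately, each by a different ingredient from Section~\ref{sec:gysin}.

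Part (i) is immediate: the section $S\into\P(\scr E\oplus\scr O)$ at the point $[0{:}\cdots{:}0{:}1]$ has conormal sheaf $\scr E$ and is disjoint from $\P(\scr E)$, so the null-sequence of Construction~\ref{ctr:null-sequences} attached to $(\P(\scr E\oplus\scr O),\P(\scr E),S,0)$ is precisely the cofiber sequence stated in Theorem~\ref{thm:tang}(iii). For part (ii), by the projection formula and the $\MS_S$-linearity of Gysin transformations (Proposition~\ref{prop:gysin-trans}(iii)), the null-sequence attached to $(X,Y,Z,\xi+\zeta)$ is the null-sequence attached to $(X,Y,Z,\xi)$ tensored with the invertible object $\Sigma^\zeta\1_S\in\MS_S$. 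Since tensoring with an invertible object preserves and reflects cofiber sequences, the equivalence follows.

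For part (iii), the assumption $Y\cap Z=\emptyset$ implies that the blowup $q\colon B\to X$ restricts to an isomorphism above $Z$, so $Z$ lifts canonically to $B$ and remains disjoint from $E$. The argument will then have two steps. First, I claim that the square of Thom spectra
\[
\begin{tikzcd}
\Th_E(\xi|_E) \ar{r} \ar{d} & \Th_B(\xi|_B) \ar{d} \\
\Th_Y(\xi|_Y) \ar{r} & \Th_X(\xi)
\end{tikzcd}
\]
is cocartesian in $\MS_S$. Reducing to the case where $\xi$ is represented by a vector bundle $\scr V$ (which suffices since invertibles preserve colimits) and using that blowups commute with flat base change, this follows by applying elementary blowup excision to the projective bundles of $\scr V\oplus\scr O$ and $\scr V$ and then taking the quotient. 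Hence the cofibers of the two inclusion maps coincide. Second, Theorem~\ref{thm:tang}(i) applied to the excess intersection square
\[
\begin{tikzcd}
Z \ar[hook]{r} \ar{d}{=} & B \ar{d}{q} \\
Z \ar[hook]{r} & X
\end{tikzcd}
\]
identifies the Gysin maps $\Th_B(\xi|_B)\to\Th_Z(\scr N_Z+\xi|_Z)$ and $\Th_X(\xi)\to\Th_Z(\scr N_Z+\xi|_Z)$ compatibly with $q$. Combined, the two null-sequences have isomorphic cofibers and common target, so they are cofiber sequences simultaneously.

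Part (iv) is the main obstacle. Applying Construction~\ref{ctr:thomtwist} produces a scheme $B$ with relative strict normal crossings divisor $\partial B$ and a zigzag of isomorphisms
\[
\Th_X(\scr E)/\Th_D(\scr E|_D) \xleftarrow{\sim} B/\partial B \xrightarrow{\sim} \Th_X(\scr E(D))/\Th_D(\scr E(D)|_D)
\]
in $\MS_S$. Since $D\cap Z=\emptyset$ and the blowups defining $B$ are both centered in the preimages of $D$ under $\P_X(\scr E\oplus\scr O)\to X$ and $\P_X(\scr E(D)\oplus\scr O)\to X$, the maps $b_1$ and $b_2$ restrict to isomorphisms over $Z$. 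Using the canonical trivialization $\scr O(D)|_Z\simeq\scr O_Z$, the zero sections $Z\into\P_Z(\scr E|_Z\oplus\scr O)$ and $Z\into\P_Z(\scr E(D)|_Z\oplus\scr O)$ are identified under the isomorphism $B_1\simeq B_2$, yielding a common closed immersion $Z\into B$ disjoint from $\partial B$ and with normal bundle $\scr N_Z+\scr E|_Z\simeq\scr N_Z+\scr E(D)|_Z$. Applying Theorem~\ref{thm:tang}(i) to the excess intersection squares given by $b_1$ and $b_2$ over this common lift of $Z$, the Gysin maps $\Th_X(\scr E)\to\Th_Z(\scr N_Z+\scr E|_Z)$ and $\Th_X(\scr E(D))\to\Th_Z(\scr N_Z+\scr E(D)|_Z)$ factor through a common map $B/\partial B\to\Th_Z$, so they are identified through the zigzag. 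Hence the two null-sequences of Construction~\ref{ctr:null-sequences} are isomorphic in $\MS_S$, so they are cofiber sequences simultaneously. The delicate point, and the one that requires the disjointness hypothesis $D\cap Z=\emptyset$, is precisely the identification of Gysin maps across the zigzag.
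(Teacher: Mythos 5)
Your argument matches the paper's proof in structure and in all four parts: (i) is exactly Theorem~\ref{thm:tang}(iii), (ii) is the $\MS_S$-linearity of Gysin transformations, (iii) is smooth blowup excision together with the excess intersection square for $q^{-1}(Z)=Z$, and (iv) uses the zigzag of Construction~\ref{ctr:thomtwist} and the disjointness $D\cap Z=\emptyset$ to identify the lifts of $Z$ in $B_1\simeq B_2$.

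The one place you are more informal than the paper is the coherence step in (iv). To make precise the claim that both Gysin maps ``factor through a common map $B/\partial B\to\Th_Z(\scr N_Z\oplus\scr E)$'' (and more importantly, that the two induced maps on cofibers agree under the zigzag isomorphisms), the paper lifts the entire $\Tw(\Delta^1)\times(\Delta^1)^3$-shaped diagram of cubes defining the zigzag to $\Pair_S$: the hypothesis $D\cap Z=\emptyset$ is used to map the degenerate cube on $\P_Z(\scr E)\into\P_Z(\scr E\oplus\scr O)$ compatibly into all three cubes, and then Tang's functor $\Pair_S\to\MS_S^{\Delta^1}$ of Theorem~\ref{thm:tang}(i) is applied to the whole lifted diagram at once, after which one takes total cofibers. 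This produces one coherent commutative ladder relating the two twisted Gysin maps to the zigzag, rather than the pointwise compatibilities you invoke from the two excess intersection squares over the lift of $Z$; the latter give the compatibility of the Gysin maps out of $B$, $\P_X(\scr E\oplus\scr O)$, and $\P_X(\scr E(D)\oplus\scr O)$, but not by themselves the descent to the quotients by the boundary divisors.

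Minor side remark: your parenthetical reduction in (iii) to $\xi$ a vector bundle ``since invertibles preserve colimits'' is not quite the right justification. The clean argument is that the square of $\MS_S$-valued functors $\MS_X\to\MS_S$ in question has zero total fiber on generators $\Sigma^\infty_{\P^1}U_+$ with $U\in\Sm_X$ by smooth blowup excision, and these generate $\MS_X$ under colimits, so the total fiber vanishes on $\Sigma^\xi\1_X$ for arbitrary $\xi\in\K(X)$.
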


\begin{proof}
	(i) This is exactly Theorem~\ref{thm:tang}(iii).
	
	(ii) This follows from the $\MS_S$-linearity of Gysin maps.
	
	(iii) This follows from smooth blowup excision.
	
	(iv) Consider the zigzag of isomorphisms
	\[
	\Th_X(\scr E)/\Th_D(\scr E) \xleftarrow{\sim} B/\partial B\xrightarrow{\sim} \Th_X(\scr E(D))/\Th_D(\scr E(D))
	\]
	from Construction~\ref{ctr:thomtwist}, which is obtained by taking the total cofibers of the following cubes:
	\begin{gather*}
	\begin{tikzcd}[row sep={25,between origins}, column sep={40,between origins}, ampersand replacement=\&]
		\emptyset \ar{rr} \ar{dr} \ar{dd} \&\& \emptyset \ar{dr} \ar{dd} \\
		\& \P_D(\scr E) \ar[crossing over]{rr} \& \& \P_D(\scr E(D)\oplus \scr O) \ar{dd}  \\
		\P_D(\scr E) \ar{rr} \ar{dr} \&\& \P_X(\scr E) \ar{dr} \\
		\& \P_{\P_D(\scr E)}(\scr O(1)\oplus\scr O) \ar{rr} \ar[from=uu,crossing over] \& \& B
	\end{tikzcd}
	\qquad\qquad
	\\
	\swarrow\qquad\qquad\qquad\qquad\qquad\searrow\qquad\qquad\qquad
	\\
	\begin{tikzcd}[row sep={25,between origins}, column sep={40,between origins}, ampersand replacement=\&]
		\emptyset \ar{rr} \ar{dr} \ar{dd} \&\& \emptyset \ar{dr} \ar{dd} \\
		\& \P_D(\scr O) \ar[crossing over]{rr} \& \& \P_D(\scr O) \ar{dd}  \\
		\P_D(\scr E) \ar{rr} \ar{dr} \&\& \P_X(\scr E) \ar{dr} \\
		\& \P_D(\scr E\oplus\scr O) \ar{rr} \ar[from=uu,crossing over] \& \& \P_X(\scr E\oplus\scr O)
	\end{tikzcd}
	\qquad\qquad
	\begin{tikzcd}[row sep={25,between origins}, column sep={40,between origins}, ampersand replacement=\&]
		\P_D(\scr E) \ar{rr} \ar{dr} \ar{dd} \&\& \P_D(\scr E) \ar{dr} \ar{dd} \\
		\& \P_D(\scr E) \ar[crossing over]{rr} \& \& \P_D(\scr E(D)\oplus \scr O) \ar{dd}  \\
		\P_D(\scr E) \ar{rr} \ar{dr} \&\& \P_X(\scr E) \ar{dr} \\
		\& \P_D(\scr E) \ar{rr} \ar[from=uu,crossing over] \& \& \P_X(\scr E(D)\oplus\scr O)\rlap.
	\end{tikzcd}
	\end{gather*}
	Since $D\cap Z=\emptyset$, the cube 
	\[
	\begin{tikzcd}[row sep={25,between origins}, column sep={40,between origins}]
		\emptyset \ar{rr} \ar{dr} \ar{dd} && \emptyset \ar{dr} \ar{dd} \\
		& \emptyset \ar[crossing over]{rr} & & \emptyset \ar{dd}  \\
		\emptyset \ar{rr} \ar{dr} && \P_Z(\scr E) \ar{dr} \\
		& \emptyset \ar{rr} \ar[from=uu,crossing over] & & \P_Z(\scr E\oplus\scr O)
	\end{tikzcd}
	\]
	maps compatibly via closed immersions to each of the cubes above, defining a lift of the above diagram $\Tw(\Delta^1)\times(\Delta^1)^3\to \Sm_S$ to $\Pair_S$.	
	Applying the functor $\Pair_S\to\MS_S^{\Delta^1}$ from Theorem~\ref{thm:tang}(i) to this diagram and taking the total cofibers of the resulting six cubes, we obtain a commutative diagram
	\[
	\begin{tikzcd}
		\Th_X(\scr E)/\Th_D(\scr E) \ar{d}{\gys} & B/\partial B \ar{l}[swap]{\sim} \ar{r}{\sim} \ar{d}{\gys} & \Th_X(\scr E(D))/\Th_D(\scr E(D)) \ar{d}{\gys} \\
		\Th_Z(\scr N_Z\oplus\scr E) & \Th_Z(\scr N_Z\oplus\scr E) \ar[equal]{l} \ar[equal]{r} & \Th_Z(\scr N_Z\oplus\scr E)
	\end{tikzcd}
	\]
	in $\MS_S$, where the first and last vertical maps are induced by the given null-sequence for $\xi=\scr E$ and $\xi=\scr E(D)$, respectively. Thus, the first vertical map is an isomorphism if and only if the last one is.
\end{proof}

\begin{corollary}\label{cor:gys-cofib}
	Let $\scr E$ and $\scr F$ be finite locally free sheaves on $S$ and let $\xi\in\K(\P(\scr E \oplus\scr F))$. Then the null-sequence 
	\[
	\Th_{\P(\scr F)}(\xi) \xrightarrow{\mathrm{inc}} \Th_{\P(\scr E\oplus\scr F)}(\xi) \xrightarrow{\gys} \Th_{\P(\scr E)}(\scr F(-1)+\xi)
	\]
	from Construction~\ref{ctr:null-sequences} is a cofiber sequence in the following cases:
	\begin{enumerate}
		\item $\xi=\scr H(n)$ for some $\scr H\in \Vect(S)$ and $n\in\Z$;
		\item $\xi=-\Omega_{\P(\scr E\oplus\scr F)}-\scr H(-1)$ for some $\scr H\in\Vect(S)$.
	\end{enumerate}
\end{corollary}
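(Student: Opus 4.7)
The plan is to apply Proposition~\ref{prop:gys-cofib}(iii) to pass to the blowup $B = \Bl_Y(X)$ with exceptional divisor $E$, where $X = \P(\scr E \oplus \scr F)$ and $Y = \P(\scr F)$, and then manipulate the twist using parts (ii) and (iv), ultimately reducing to the base case (i) applied over $Z := \P(\scr E)$. By Lemma~\ref{lem:projquot}, $B \simeq \P_Z(\scr O_Z(1) \oplus \scr F_Z)$ and $E \simeq \P_Z(\scr F_Z)$; twisting the input bundle by $\scr O_Z(-1)$ identifies $B$ with $\P_Z(\scr F_Z(-1) \oplus \scr O_Z)$, $E$ with $\P_Z(\scr F_Z(-1))$, and $Z \into B$ with the canonical section $\P_Z(\scr O_Z)$. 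Proposition~\ref{prop:gys-cofib}(i) applied over $Z$ with $\scr G = \scr F_Z(-1)$ then shows that $(B, E, Z, 0)$ is a Gysin quadruple in $\MS_Z$, hence in $\MS_S$ after pushforward since $f_\sharp$ preserves colimits. The key computation, obtained by comparing the two $\P$-bundle descriptions of $B$ with the identification of $E$ as the vanishing locus of a section of the new tautological line bundle, is the restriction formula $\scr O_X(1)|_B \simeq \scr O_B(E) \otimes q^* \scr O_Z(1)$, where $q \colon B \to Z$ denotes the projection.

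For case (i), the restriction formula gives $\xi|_B \simeq \scr H \otimes \scr O_B(nE) \otimes q^* \scr O_Z(n)$, a genuine vector bundle. Iterating Proposition~\ref{prop:gys-cofib}(iv) $|n|$ times with $D = E$ strips off the $\scr O_B(nE)$ twist, reducing to $(B, E, Z, \scr H \otimes q^* \scr O_Z(n))$; the remaining twist is pulled back from $Z$, so Proposition~\ref{prop:gys-cofib}(ii) applied over the base $Z$ reduces to the already-handled $(B, E, Z, 0)$.

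For case (ii), the Euler sequence $\Omega_X \simeq (\scr E \oplus \scr F)(-1) - \scr O$ in $\K(X)$ gives $\xi = \scr O - \scr K(-1)$ with $\scr K = \scr E \oplus \scr F \oplus \scr H \in \Vect(S)$, so that $\xi|_B = \scr O - \scr W \otimes \scr O_B(-E)$, where $\scr W = q^*(\scr K_Z(-1)) \in \Vect(B)$ is pulled back from $Z$. Since $\xi|_B$ is a virtual class, I need to extend Proposition~\ref{prop:gys-cofib}(iv) to arbitrary $\eta \in \K(X)$: the statement that $(X, D, Z, \eta)$ is a Gysin quadruple if and only if $(X, D, Z, \eta \cdot [\scr O(D)])$ is. Such an extension follows from the naturality of Construction~\ref{ctr:thomtwist} in $\scr E \in \Vect^\epi(X)$ combined with the J-homomorphism and the $\K$-theoretic Gysin functor of Remark~\ref{rmk:gysin-thom}; its careful justification is the main technical obstacle of the proof. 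Granting the extension, $\xi|_B \cdot [\scr O_B(E)] = \scr O_B(E) - \scr W$; Proposition~\ref{prop:gys-cofib}(ii) applied over $Z$ strips off the $-\scr W$ part, leaving $(B, E, Z, \scr O_B(E))$; another application of (iv) with genuine $\scr E = \scr O_B$ reduces to $(B, E, Z, \scr O_B)$; and Proposition~\ref{prop:gys-cofib}(ii) over $S$ strips $\scr O \in \K(S)$, returning us to the base case $(B, E, Z, 0)$.
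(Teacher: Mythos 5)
Your argument for case (i) is essentially the paper's: pass to the blowup $b\colon B\to\P(\scr E\oplus\scr F)$ via Proposition~\ref{prop:gys-cofib}(iii), observe $b^*\scr O(1)\simeq p^*\scr O(1)(E)$, and strip the twist using (ii) and iterated applications of (iv); whether you phrase this over the base $\P(\scr E)$ or over $S$ is immaterial.

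For case (ii) there is a genuine gap. You pull back to the blowup to get $\xi|_B = \scr O - \scr W(-E)$ and then need a version of Proposition~\ref{prop:gys-cofib}(iv) for a virtual class $\eta\in\K(X)$, which you flag as ``the main technical obstacle'' but do not prove. This does not follow from the tools you cite. The proof of (iv) rests entirely on Construction~\ref{ctr:thomtwist}, which produces an explicit zigzag in $\Sm_S^\sncd$ between $\P_X(\scr E\oplus\scr O)$ and $\P_X(\scr E(D)\oplus\scr O)$ via a common blowup $B_1\simeq B_2$; these projective bundles and their functor-of-points descriptions exist only when $\scr E$ is a genuine vector bundle. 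Remark~\ref{rmk:gysin-thom} does extend the Gysin \emph{map} to $\K$-theoretic twists by composing with the J-homomorphism, but the content of (iv) is a comparison of \emph{cofibers}, and the identification $B/\partial B\simeq \Th_X(\scr E)/\Th_D(\scr E)\simeq \Th_X(\scr E(D))/\Th_D(\scr E(D))$ is precisely what the geometric zigzag provides. For $\eta=\scr E_1-\scr E_2$, naturality of the construction in $\Vect^\epi$ gives no relation between the quadruples for $\scr E_1-\scr E_2$ and for $\scr E_1(D)-\scr E_2(D)$, since neither determines the other.

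The paper sidesteps the difficulty by applying the \emph{dual} Euler sequence $\scr O(-1)\to(\scr E\oplus\scr F)^\vee\to\Omega^\vee(-1)$ before, not after, the blowup and twist. Setting $\scr K=\scr E\oplus\scr F\oplus\scr H$, one rewrites
\[
-\Omega - \scr H(-1) = \scr K\otimes\Omega^\vee(-1) + \scr O - \scr K\otimes(\scr E\oplus\scr F)^\vee
\]
in $\K(\P(\scr E\oplus\scr F))$. The last two terms lie in $\K(S)$, so by (ii) the problem reduces to the quadruple $(\P(\scr E\oplus\scr F),\P(\scr F),\P(\scr E),\scr K\otimes\Omega^\vee(-1))$, whose twist is now a genuine vector bundle on $X$. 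Only then does the paper apply (iii) to pass to $B$ and (iv) to twist by $\scr O(E)$, always with genuine bundles; pulling the dual Euler sequence back to $B$ and tensoring with $\scr K(E)$ splits $b^*(\scr K\otimes\Omega^\vee(-1))(E)$ as a genuine $\scr O(E)$-twist plus a class pulled back from $\P(\scr E)$, and (ii), (iv) finish. To repair your route you would need to actually prove the $\K$-theoretic (iv); it is cleaner to adopt this rearrangement.
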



\begin{proof}
	In each case, we show that we can reach $\xi$ using the moves from Proposition~\ref{prop:gys-cofib}. Let $b\colon B=\P_{\P(\scr E)}(\scr F(-1)\oplus\scr O)\to\P(\scr E\oplus\scr F)$ be the blowup with exceptional divisor $E=\P_{\P(\scr E)}(\scr F)$ as in Lemma~\ref{lem:projquot}. By Proposition~\ref{prop:gys-cofib}(iii), it suffices to show that $(B,E,\P(\scr E),b^*(\xi))$ is a Gysin quadruple (which now lives over $\P(\scr E)$). By Proposition~\ref{prop:gys-cofib}(i), we know that $(B,E,\P(\scr E),0)$ is a Gysin quadruple.
		Note that \[b^*(\scr O(1))=p^*(\scr O(1))(E),\] 
		where $p\colon B\to \P(\scr E)$ is the structure map.
	
	(i) We have $b^*(\scr H(n))=p^*(\scr H(n))(nE)$, so we can conclude using Proposition~\ref{prop:gys-cofib}(ii,iv).
	
	(ii) Let $\Omega=\Omega_{\P(\scr E\oplus\scr F)/S}$. We consider the Euler fiber sequence over $\P(\scr E\oplus\scr F)$ and its dual:
	\begin{gather}
		\label{eqn:euler1}\Omega\to (\scr E\oplus\scr F)(-1) \to\scr O,\\
		\label{eqn:euler2}\scr O(-1) \to (\scr E\oplus\scr F)^\vee \to \Omega^\vee(-1).
	\end{gather}
	These imply the equations
	\begin{align*}
	-\Omega &= \scr O-(\scr E\oplus\scr F)(-1),\\
	 -\scr O(-1) &= \Omega^\vee(-1)-(\scr E\oplus\scr F)^\vee,
	\end{align*}
	whence
	\[
	-\Omega-\scr H(-1) = (\scr E\oplus\scr F\oplus\scr H)\otimes \Omega^\vee(-1)+\scr O-(\scr E\oplus\scr F\oplus\scr H)\otimes(\scr E\oplus\scr F)^\vee
	\]
	in $\K(\P(\scr E \oplus\scr F))$. By Proposition~\ref{prop:gys-cofib}(ii), it will suffice to show that \[(\P(\scr E\oplus\scr F),\P(\scr F),\P(\scr E),\scr H\otimes\Omega^\vee(-1))\] is a Gysin quadruple for any $\scr H\in\Vect(S)$. By Proposition~\ref{prop:gys-cofib}(iii,iv), this is equivalent to the statement that \[(B,E,\P(\scr E),b^*(\scr H\otimes\Omega^\vee(-1))(E))\] is a Gysin quadruple.
	If we pull back the fiber sequence~\eqref{eqn:euler2} to the blowup and tensor it with $\scr H(E)$, we obtain the equation
	\[
	b^*(\scr H\otimes \Omega^\vee(-1))(E)=(\scr H\otimes (\scr E\oplus\scr F)^\vee)(E)-p^*(\scr H\otimes \scr O(-1))
	\]
	in $\K(B)$. We now conclude using Proposition~\ref{prop:gys-cofib}(ii,iv).
\end{proof}

\begin{theorem}\label{thm:dualsofP}
The comparison maps
\[
\rm{comp}_{\P^n,\scr O(-1)^m} \colon \Th_{\P^n}(- \scr O(-1)^{m} - \Omega_{\P^n}) \to \Th_{\P^n}(\scr O(-1)^{m})^\vee
\]
are isomorphisms for all $n\geq -1$ and $m\geq 0$.
\end{theorem}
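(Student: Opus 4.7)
The plan is to induct on $n\geq -1$, handling all $m\geq 0$ uniformly at each stage. The base case $n=-1$ is vacuous, since $\P^{-1}=\emptyset$ makes both Thom spectra zero. For $n=0$, the smooth $S$-scheme $\P^0=S$ satisfies $\Omega_{\P^0}=0$ and $\scr O(-1)|_{\P^0}\simeq\scr O$, so the two Thom spectra reduce to $\SigmaP^{\pm m}\1_S$. The diagonal $\P^0\into\P^0\times_S\P^0$ is the identity, hence its Gysin map is the identity, and a direct unwinding identifies $\ev_{\P^0,\scr O^m}$ with the canonical pairing exhibiting $\SigmaP^{-m}\1_S$ as the dual of $\SigmaP^{m}\1_S$; hence $\mathrm{comp}_{\P^0,\scr O^m}$ is an isomorphism.

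For the inductive step $n-1\to n$, I would apply Proposition~\ref{prop:null-sequences} to the disjoint closed subschemes $Y=\P^0$ and $Z=\P^{n-1}$ of $X=\P^n$ with $\xi=\scr O(-1)^m$. This produces a commutative diagram whose three vertical maps are instances of $\mathrm{comp}$ and whose rows are the null-sequences of Construction~\ref{ctr:null-sequences}. The crux of the argument is that both rows are actually cofiber sequences, which is exactly Corollary~\ref{cor:gys-cofib}: writing $\P^n=\P(\scr E\oplus\scr F)$ with $\scr E=\scr O^n$ and $\scr F=\scr O$ (so $\P(\scr F)=\P^0$, $\P(\scr E)=\P^{n-1}$), the quadruple $(\P^n,\P^0,\P^{n-1},\scr O(-1)^m)$ is Gysin by part~(i) with $\scr H=\scr O^m$, and dualizing the resulting cofiber sequence produces the bottom row. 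Swapping $\scr E$ and $\scr F$, the quadruple $(\P^n,\P^{n-1},\P^0,-\scr O(-1)^m-\Omega_{\P^n})$ is Gysin by part~(ii) with $\scr H=\scr O^m$, producing the top row directly.

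It remains to identify the outer comparison maps. Using $\scr N_{\P^{n-1}/\P^n}\simeq \scr O(-1)$ together with the conormal fiber sequence $\scr N_{Z/\P^n}\to\Omega_{\P^n}|_Z\to\Omega_Z$, the left vertical map simplifies to $\mathrm{comp}_{\P^{n-1},\scr O(-1)^{m+1}}$, an isomorphism by the induction hypothesis. Similarly, $\scr O(-1)|_{\P^0}\simeq\scr O$ makes the right vertical map $\mathrm{comp}_{\P^0,\scr O^m}$, an isomorphism by the base case. Standard two-out-of-three reasoning in cofiber sequences in the stable $\infty$-category $\MS_S$ then forces the middle map $\mathrm{comp}_{\P^n,\scr O(-1)^m}$ to be an isomorphism, completing the induction.

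The main obstacle is the recognition that both rows of the diagram are genuine cofiber sequences. A Gysin null-sequence in $\MS_S$ generally fails to be one without $\A^1$-invariance (cf.~Remark~\ref{rmk:purity}), so the proof hinges on locating a restricted class of twists where the sequence nonetheless splits as a cofiber sequence. This is precisely the scope of Corollary~\ref{cor:gys-cofib}, and it is a pleasant circumstance that it covers both the linear twist $\scr O(-1)^m$ and the cotangent-dual twist $-\scr O(-1)^m-\Omega_{\P^n}$ needed here to close the induction.
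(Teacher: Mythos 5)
Your proof is correct and follows essentially the same path as the paper: induction on $n$ with all $m$ handled at once, the diagram from Proposition~\ref{prop:null-sequences} with $Y=\P^0$, $Z=\P^{n-1}$, and both rows recognized as cofiber sequences via Corollary~\ref{cor:gys-cofib}(i) and (ii) for the two respective twists. Your explicit identifications of $\scr E$, $\scr F$, and $\scr H$, and the simplification of the outer vertical maps, are accurate and spell out details the paper leaves implicit.
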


\begin{proof}
We use induction on $n$, the cases $n=-1$ and $n=0$ being trivial (the case $n=0$ uses that the Gysin map of the identity embedding is the identity).
By Proposition~\ref{prop:null-sequences}, the comparison maps assemble into a morphism of null-sequences
\[
\begin{tikzcd}
	\Th_{\P^{n-1}}(- \scr O(-1)^{m}-\Omega_{\P^n}) \ar{r}{\mathrm{inc}} \ar{d} & \Th_{\P^n}(- \scr O(-1)^{m}- \Omega_{\P^n}) \ar{r}{\gys} \ar{d} & \Th_S(-\scr O^m) \ar{d} \\
	\Th_{\P^{n-1}}(\scr O(-1)^{m+1})^\vee \ar{r}{\gys^\vee} & \Th_{\P^n}(\scr O(-1)^{m})^\vee \ar{r}{\mathrm{inc}^\vee} & \Th_S(\scr O^m)^\vee\rlap,
\end{tikzcd}
\]
in which both sequences are cofiber sequences by Corollary~\ref{cor:gys-cofib}. By induction hypothesis, the left and right vertical maps are isomorphisms, hence also the middle one.
\end{proof}

\section{Ambidexterity and Atiyah duality}

Let $f\colon X\to S$ be a smooth morphism of derived schemes. We define the functor $f_!\colon \MS_X\to \MS_S$ by
\[
f_!=f_\sharp \circ \Sigma^{-\Omega_{f}}.
\]
It has a right adjoint $f^!\colon \MS_S\to \MS_X$ given by
\[
f^!=\Sigma^{\Omega_f}\circ f^*.
\]
Using the naturality and monoidality of the J-homomorphism $\xi\mapsto \Sigma^\xi$, we deduce the following properties:
\begin{itemize}
	\item (Functoriality) Let $g\colon Y\to X$ be another smooth morphism. Then there is a canonical isomorphism
	\[
	(fg)_!\simeq f_!g_!.
	\]
	This uses the fiber sequence $g^*\Omega_f\to \Omega_{fg}\to \Omega_g$ in $\Perf(Y)$.
	\item (Base change) For any cartesian square
	\[
	\begin{tikzcd}
		X' \ar{r}{h} \ar{d}[swap]{f'} & X \ar{d}{f} \\
		S' \ar{r}{g} & S\rlap,
	\end{tikzcd}
	\]
	 there is a canonical isomorphism
	\[
	g^*f_!\simeq f'_!h^*.
	\]
	\item (Projection formula) The functor $f_!$ is $\MS_S$-linear. In particular, for any $A\in \MS_X$ and $B\in \MS_S$, there is a canonical isomorphism
	\[
	f_!(A\otimes f^*(B)) \simeq f_!(A)\otimes B.
	\]
\end{itemize}
These properties are clearly coherent, in the sense that they define a lax symmetric monoidal functor
\[
\MS^*_!\colon\Span(\Sch,\all,\mathrm{smooth})\to \Cat_\infty, \quad (X\xleftarrow{f} Z\xrightarrow{g} Y)\mapsto g_!f^*.
\]
This coherence will not be needed in the sequel, however.

\begin{construction}[Gysin transformation]
\label{ctr:gysin!}
Consider a commutative triangle
\[
\begin{tikzcd}
	Z \ar[hook]{r}{i} \ar{dr}[swap]{g} & X \ar{d}{f} \\
	& S\rlap,
\end{tikzcd}
\]
where $f$ and $g$ are smooth and $i$ is a closed immersion. There is then a canonical fiber sequence
\[
\scr N_i\to i^*(\Omega_f) \to \Omega_g
\]
in $\Perf(Z)$, inducing an isomorphism $i^*(\Omega_f)\simeq \scr N_i+\Omega_g$ in $\K(Z)$, whence and isomorphism
\[
\Sigma^{-\Omega_g} \simeq \Sigma^{\scr N_i}\Sigma^{-i^*(\Omega_f)}
\]
of endofunctors of $\MS_Z$. 
Using this isomorphism, the Gysin transformation $\mathrm{gys}(f,i)\colon f_\sharp\to g_\sharp \Sigma^{\scr N_i}i^*$ of Construction~\ref{ctr:gysin} may be rewritten as
\[
\mathrm{gys}(f,i)\colon f_! \to g_!i^*.
\]
\end{construction}

\begin{remark}[Properties of Gysin transformations]
	For convenience, we rewrite the main properties of Gysin transformations (Proposition~\ref{prop:gysin-trans}) using the shriek functors:
	\begin{enumerate}
		\item (Base change) Given a cartesian diagram
		\[
		\begin{tikzcd}
			W \ar[hook]{r}{k} \ar{d}{c} & Y \ar{r}{g} \ar{d}{b} & T \ar{d}{a} \\
			Z \ar[hook]{r}{i} & X \ar{r}{f} & S\rlap,
		\end{tikzcd}
		\]
		where $i$ is a closed immersion and $f$ and $fi$ are smooth, the following square commutes:
		\[
		\begin{tikzcd}[column sep=4em]
			g_!b^* \ar{r}{\mathrm{gys}(g,k)b^*} \ar{d}[sloped]{\sim}[swap]{\mathrm{BC}} & (gk)_!c^*i^* \ar{d}{\mathrm{BC}}[swap,sloped]{\sim} \\
			a^*f_! \ar{r}{a^*\mathrm{gys}(f,i)} & a^*(fi)_!i^*\rlap.
		\end{tikzcd}
		\]
		\item (Base independence) Given morphisms
		\[
		\begin{tikzcd}
			Z \ar[hook]{r}{i} & X \ar{r}{f} & S \ar{r}{a} & T\rlap,
		\end{tikzcd}
		\]
		where $i$ is a closed immersion and $f$, $fi$, and $a$ are smooth, the following square commutes:
		\[
		\begin{tikzcd}[column sep=4em]
			a_!f_! \ar{r}{a_!\mathrm{gys}(f,i)} \ar{d}[swap,sloped]{\sim} & a_!(fi)_!i^* \ar{d}[sloped]{\sim} \\
			(af)_! \ar{r}{\mathrm{gys}(af,i)} & (afi)_!i^*\rlap.
		\end{tikzcd}
		\]
		\item (Linearity) Given morphisms
		\[
		\begin{tikzcd}
			Z \ar[hook]{r}{i} & X \ar{r}{f} & S\rlap,
		\end{tikzcd}
		\]
		where $i$ is a closed immersion and $f$ and $fi$ are smooth, and given $A\in\MS_X$ and $B\in \MS_S$, the following square commutes:
		\[
		\begin{tikzcd}[column sep=4em]
			f_!(A\otimes f^*(B)) \ar{d}[sloped]{\sim}[swap]{\mathrm{PF}} \ar{r}{\mathrm{gys}(f,i)} & (fi)_!(i^*(A)\otimes (fi)^*(B)) \ar{d}{\mathrm{PF}}[swap,sloped]{\sim} \\
			f_!(A) \otimes B \ar{r}{\mathrm{gys}(f,i)\otimes\id} & (fi)_!i^*(A)\otimes B\rlap.
		\end{tikzcd}
		\]
		\item (Composition of closed immersions)
		The map $\mathrm{gys}(f,\id)$ is the identity, and given morphisms
		\[
		\begin{tikzcd}
			W \ar[hook]{r}{k} & Z \ar[hook]{r}{i} & X \ar{r}{f} & S\rlap,
		\end{tikzcd}
		\]
		where $i$ and $k$ are closed immersions and $f$, $fi$, and $fik$ are smooth, the following triangle commutes:
		\[
		\begin{tikzcd}[column sep=4em]
			f_! \ar{r}{\mathrm{gys}(f,i)} \ar{dr}[swap]{\mathrm{gys}(f,ik)} & (fi)_!i^* \ar{d}{\mathrm{gys}(fi,k)i^*} \\
			& (fik)_!k^*i^*\rlap.
		\end{tikzcd}
		\]
		Note that the isomorphism in Proposition~\ref{prop:gysin-trans}(iv) disappears due to the following composition in $\K(W)$ being the identity:
		\[
		-\Omega_{fik} \simeq -\Omega_f+\scr N_{ik} \simeq -\Omega_f+\scr N_i+\scr N_k \simeq -\Omega_{fi} +\scr N_k \simeq -\Omega_{fik}.
		\]
		(This relation is witnessed by the 3-simplex $\Omega_{f} \to \Omega_{fi} \to \Omega_{fik}$ in the $S_\bullet$-construction.)
	\end{enumerate}
\end{remark}

\begin{construction}[Trace map]
	\label{ctr:trace}
	Let $f\colon X\to S$ be a smooth separated morphism and consider the diagram
	\[
	\begin{tikzcd}
	X \ar[hook]{dr}[description]{\delta} \ar[bend left=20]{drr}[description]{\id_X} \ar[bend right]{ddr}[description]{\id_X} & & \\
	& X\times_SX \ar{d}[swap]{\pi_1} \ar{r}{\pi_2} & X \ar{d}{f} \\
	& X \ar{r}[swap]{f} & S\rlap,
	\end{tikzcd}
	\]
	where the diagonal $\delta$ is a closed immersion.
	We define the \emph{trace}
	\[
	\epsilon_f\colon f^*f_! \to \id_{\MS_X}
	\]
	as the composition
	\[
	f^*f_!\overset{\mathrm{BC}}{\simeq} \pi_{2!}\pi_1^*
	\xrightarrow{\mathrm{gys}(\pi_2,\delta)}
	\id_{X!}\delta^*\pi_1^*\simeq \id_{\MS_X}.
	\]
	Note that $\epsilon_f$ is an $\MS_S$-linear transformation.
	By adjunction, $\epsilon_f$ is equivalent to a natural transformation
	\[
	\alpha_f\colon f_!\to f_*.
	\]
\end{construction}

\begin{definition}
	Let $f\colon X\to S$ be a smooth separated morphism. We say that $f$ is \emph{$\MS$-ambidextrous} if the trace map $\epsilon_f\colon f^*f_!\to\id$ of Construction~\ref{ctr:trace} is the counit of an adjunction $f^*\dashv f_!$, or equivalently if $\alpha_f\colon f_!\to f_*$ is an isomorphism.
\end{definition}

\begin{lemma}\label{lem:duality-pairing}
	Let $f\colon X\to S$ be a smooth separated morphism and let $\xi\in\K(X)$. Consider the pairing
	\[
	\ev_{X,\xi}\colon \Th_X(\xi) \otimes \Th_X(-\xi-\Omega_f) \to \1_S
	\]
	of Construction~\ref{ctr:ev}.
	\begin{enumerate}
		\item The induced map $\mathrm{comp}_{X,\xi}\colon \Th_X(-\xi-\Omega_f)\to\Th_X(\xi)^\vee$ coincides with $(\alpha_f)_{\Sigma^{-\xi}\1_X}$.
		\item The other induced map $\Th_X(\xi)\to \Th_X(-\xi-\Omega_f)^\vee$ coincides with $(\alpha_f)_{\Sigma^{\xi+\Omega_f}\1_X} \langle -1\rangle^{\rk\Omega_f}$.
	\end{enumerate}
\end{lemma}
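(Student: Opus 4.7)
The plan is to establish (i) by unpacking both maps through the relevant adjunctions and recognising them as the same pairing, and to deduce (ii) from (i) via Proposition~\ref{prop:skew-symmetry}.

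For (i), I would first identify $f_*(\Sigma^{-\xi}\1_X)$ with $\Th_X(\xi)^\vee$: a routine manipulation with the projection formula and the adjunctions $f^*\dashv f_*$ and $\otimes\dashv\Hom$ yields the natural isomorphism $f_*\Hom(\Sigma^\xi\1_X,f^*(\ph))\simeq\Hom(f_\sharp\Sigma^\xi\1_X,\ph)$. Under this identification, $(\alpha_f)_{\Sigma^{-\xi}\1_X}$ corresponds by adjunction to a pairing $\Th_X(\xi)\otimes\Th_X(-\xi-\Omega_f)\to\1_S$. Since $\alpha_f$ is defined as the adjoint of $\epsilon_f$, this pairing admits the explicit description
\[
f_\sharp\Sigma^\xi\1_X\otimes f_!\Sigma^{-\xi}\1_X\overset{\mathrm{PF}}{\simeq} f_\sharp(\Sigma^\xi\1_X\otimes f^*f_!\Sigma^{-\xi}\1_X)\xrightarrow{\epsilon_f} f_\sharp\1_X\to\1_S.
\]
Substituting the definition $\epsilon_f=\mathrm{gys}(\pi_2,\delta)\circ\mathrm{BC}$ and commuting the projection formula past the base change isomorphism by means of Lemma~\ref{lem:PF-BC}, this composite matches the definition of $\ev_{X,\xi}$ from Construction~\ref{ctr:ev}. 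Since $\mathrm{comp}_{X,\xi}$ is by definition the map corresponding to $\ev_{X,\xi}$ under the same $\otimes\dashv\Hom$ adjunction, (i) follows.

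For (ii), denote the map in question by $\mathrm{comp}'_{X,\xi}\colon \Th_X(\xi)\to\Th_X(-\xi-\Omega_f)^\vee$; by construction it is adjoint to the pairing $\ev_{X,\xi}\circ\sigma$, where $\sigma$ swaps the two tensor factors. Proposition~\ref{prop:skew-symmetry} gives $\ev_{X,\xi}\circ\sigma=\ev_{X,-\xi-\Omega_f}\langle-1\rangle^{\rk\Omega_f}$, whence $\mathrm{comp}'_{X,\xi}=\mathrm{comp}_{X,-\xi-\Omega_f}\circ\langle-1\rangle^{\rk\Omega_f}$. Applying (i) with $\xi$ replaced by $-\xi-\Omega_f$ identifies $\mathrm{comp}_{X,-\xi-\Omega_f}$ with $(\alpha_f)_{\Sigma^{\xi+\Omega_f}\1_X}$, yielding the stated formula.

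The main obstacle is the diagram chase in (i). Both pairings are ultimately built from the same two ingredients—a projection formula isomorphism and the diagonal Gysin map—but they are assembled using different adjunctions, so the identification requires tracking various base change and projection formula isomorphisms. The decisive compatibility is provided by Lemma~\ref{lem:PF-BC}, and the needed rearrangement is essentially the top-left portion of the central diagram appearing in the proof of Proposition~\ref{prop:skew-symmetry}.
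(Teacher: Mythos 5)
Your argument is correct and follows essentially the same route as the paper: identify $\Th_X(\xi)^\vee$ with $f_*\Sigma^{-\xi}\1_X$, use the adjunction between $\alpha_f$ and $\epsilon_f$ to rewrite $(\alpha_f)_{\Sigma^{-\xi}\1_X}$ as the pairing $\mathrm{PF}$ followed by $\epsilon_f$, and match with $\ev_{X,\xi}$; part (ii) then follows from (i) via Proposition~\ref{prop:skew-symmetry}. One minor correction: Lemma~\ref{lem:PF-BC} is not actually needed in part (i) — after expanding $\epsilon_f=\mathrm{gys}(\pi_2,\delta)\circ\mathrm{BC}$, the composite $\mathrm{PF}$, then $\mathrm{BC}$, then $\mathrm{gys}(\pi_2,\delta)$, then the counit is already verbatim the expanded form of $\ev_{X,\xi}$ spelled out at the start of the proof of Proposition~\ref{prop:skew-symmetry}, so there is no need to commute $\mathrm{PF}$ past $\mathrm{BC}$; Lemma~\ref{lem:PF-BC} is only invoked in the skew-symmetry argument, not here.
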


\begin{proof}
	Note that there is an isomorphism $\Th_X(\xi)^\vee\simeq f_*\Sigma^{-\xi}\1_X$ induced by the following pairing:
	\begin{equation}\label{eqn:weak-dual}
	f_\sharp\Sigma^\xi\1_X \otimes f_*\Sigma^{-\xi}\1_X\overset{\mathrm{PF}}{\simeq} f_\sharp \Sigma^\xi f^*f_*\Sigma^{-\xi}\1_X \xrightarrow{\epsilon} f_\sharp\Sigma^{\xi}\Sigma^{-\xi}\1_X=f_\sharp f^*\1_S \xrightarrow{\epsilon} \1_S.
	\end{equation}
	Assertion (i) is thus equivalent to the commutativity of the triangle
	\[
	\begin{tikzcd}[row sep=1em]
		f_\sharp\Sigma^\xi\1_X \otimes f_!\Sigma^{-\xi}\1_X \ar{dr}{\ev_{X,\xi}} \ar{dd}[swap]{\id\otimes\alpha_f} & \\
		& \1_S\rlap, \\
		f_\sharp\Sigma^\xi\1_X \otimes f_*\Sigma^{-\xi}\1_X \ar{ur}[swap]{\eqref{eqn:weak-dual}} &
	\end{tikzcd}
	\]
	which follows at once from the definitions of $\ev_{X,\xi}$ and of $\alpha_f$.
	Assertion (ii) follows from (i) and Proposition~\ref{prop:skew-symmetry}.
\end{proof}

\begin{proposition}[From duality to ambidexterity]
	\label{prop:dual->ambi}
	Let $f\colon X\to S$ be a smooth separated morphism such that:
	\begin{enumerate}
		\item $\Sigma^\infty_{\P^1}X_+\in \MS_S$ is dualizable;
		\item the map $\mathrm{comp}_{X,0}\colon \Th_X(-\Omega_f)\to (\Sigma^\infty_{\P^1} X_+)^\vee$ is an isomorphism.
	\end{enumerate}
	Then $f$ is $\MS$-ambidextrous.
\end{proposition}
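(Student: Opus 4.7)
The plan is to prove that the natural transformation $\alpha_f\colon f_!\to f_*$ is an isomorphism of functors $\MS_X\to \MS_S$, which is equivalent to $\MS$-ambidexterity by definition.

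First, by Lemma~\ref{lem:duality-pairing}(i), under the canonical identification $(\Sigma^\infty_{\P^1}X_+)^\vee\simeq f_*\1_X$ (which always holds: $\Hom_{\MS_S}(f_\sharp\1_X,\1_S)=f_*f^*\1_S=f_*\1_X$), the map $\mathrm{comp}_{X,0}$ agrees with $(\alpha_f)_{\1_X}$. Assumption~(ii) therefore gives that $(\alpha_f)_{\1_X}\colon f_!\1_X \to f_*\1_X$ is an isomorphism.

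Next, I would verify the projection formula for $f_*$, namely that the canonical map $f_*(A)\otimes B\to f_*(A\otimes f^*B)$ is an isomorphism. Since $f^*$ sends compact generators $g_\sharp\1_Y$ of $\MS_S$ (for smooth $g\colon Y\to S$) to compact objects of $\MS_X$ via smooth base change, its right adjoint $f_*$ preserves filtered colimits and, being exact, all colimits. The displayed map is then colimit-preserving in $B$, so one may restrict to $B=g_\sharp\1_Y$, in which case both sides compute to $g_\sharp f'_*g'^*A$ using smooth base change and the projection formulas for $g_\sharp$ and $g'_\sharp$ (with $f',g'$ the base changes of $f,g$). Consequently $\alpha_f$ is a natural transformation of $\MS_S$-linear colimit-preserving functors, and by $\MS_S$-linearity $(\alpha_f)_{f^*E}\simeq (\alpha_f)_{\1_X}\otimes \id_E$ is an isomorphism for every $E\in \MS_S$.

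The main obstacle is extending $(\alpha_f)_A$ being an isomorphism from $A$ of the form $f^*E$ to arbitrary $A\in \MS_X$. My plan is to leverage hypothesis~(i) to promote the isomorphism at $\1_X$ into a full adjunction $f^*\dashv f_!$ with $\epsilon_f$ as the counit, by constructing a candidate unit
\[
\eta_f(E)\colon E\xrightarrow{\eta_*(\1_S)\otimes \id_E} f_*\1_X\otimes E\xrightarrow{(\alpha_f(\1_X))^{-1}\otimes \id_E} f_!\1_X\otimes E\simeq f_!f^*E,
\]
where $\eta_*$ is the unit of $f^*\dashv f_*$ and the last isomorphism is the projection formula for $f_!$. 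The first triangle identity for $(\eta_f,\epsilon_f)$ follows from the one for $(\eta_*,\epsilon_*)$ after cancellation of $\alpha_f(\1_X)$ and its inverse. The second triangle identity, evaluated at $A\in\MS_X$, is the delicate step: unwinding through the projection formulas, it reduces to a compatibility between $f_!(\epsilon_*(A))$ and $f_*(\epsilon_*(A))$ under the naturality of $\alpha_f$, and closing this loop is precisely where the triangle identities of the dual pair $(f_\sharp\1_X,f_!\1_X,\ev_{X,0},\coev)$ furnished by (i) and (ii) are essentially used; equivalently, one identifies $\eta_f(\1_S)$ with $(c\otimes\id_{f_!\1_X})\circ\coev$, where $c\colon f_\sharp\1_X\to \1_S$ is the counit of $f_\sharp\dashv f^*$ at $\1_S$, and then the duality triangle identities combine with the geometric definition of $\epsilon_f$ via the Gysin map on the diagonal to yield the desired cancellation. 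Once $(\eta_f,\epsilon_f)$ is verified to form an adjunction, uniqueness of right adjoints identifies $f_!\simeq f_*$ via $\alpha_f$, completing the proof.
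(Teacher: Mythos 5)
Your overall plan — promote the isomorphism $(\alpha_f)_{\1_X}=\mathrm{comp}_{X,0}$ into an adjunction $f^*\dashv f_!$ with counit $\epsilon_f$ by manufacturing a candidate unit $\eta_f$ — is not what the paper does, and you explicitly stop short of the step that carries all the weight. The first triangle identity does indeed reduce by $\MS_S$-linearity of $\epsilon_f$ to the triangle identity for $f^*\dashv f_*$ at the unit, exactly as you sketch. But the second triangle identity $f_!\xrightarrow{\eta_ff_!}f_!f^*f_!\xrightarrow{f_!\epsilon_f}f_!$ involves $\epsilon_f(A)$ for \emph{arbitrary} $A\in\MS_X$: since $\epsilon_f$ is only $\MS_S$-linear (not $\MS_X$-linear), you cannot peel off an $A$ to reduce to $\1_X$, and the duality data $(\Sigma^\infty_{\P^1}X_+,\Th_X(-\Omega_f),\ev_{X,0},\mathrm{coev})$ only pertains to the unit objects. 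Your phrase about ``closing this loop'' gestures at a real issue without resolving it; I don't see how the triangle identities for the dual pair finish it. (For comparison, when the paper \emph{does} construct a unit $\eta_f$ and verify this triangle identity, in Theorem~\ref{thm:ambidexterity}, it takes a page of diagram chasing that crucially uses the geometric properties of Gysin transformations — base change, linearity, base independence from Proposition~\ref{prop:gysin-trans} — which your argument never invokes.)

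The paper's proof of this proposition sidesteps all of that: it never constructs a new adjunction $f^*\dashv f_!$. It observes that $f_!f^*, f_!f^!, f_*f^*, f_*f^!$ are all strictly $\MS_S$-linear endofunctors of $\MS_S$ — the last two because hypotheses (i) and (ii) together make both $\Sigma^\infty_{\P^1}X_+$ and $\Th_X(-\Omega_f)$ dualizable, so $f_*f^*=\Hom(\Sigma^\infty_{\P^1}X_+,\ph)$ and $f_*f^!=\Hom(\Th_X(-\Omega_f),\ph)$ are $\MS_S$-linear. Therefore both $\alpha_ff^*$ and $\alpha_ff^!$ can be checked on $\1_S$: the first is $\mathrm{comp}_{X,0}$, an isomorphism by (ii); the second is, via Lemma~\ref{lem:duality-pairing}(ii) and the skew-symmetry Proposition~\ref{prop:skew-symmetry}, dual to the first up to a unit, hence also an isomorphism by (i). Then $\alpha_f$ acquires a right inverse $f_*\to f_*f^*f_*\xrightarrow{(\alpha_ff^*)^{-1}}f_!f^*f_*\to f_!$ via the triangle identity for $f^*\dashv f_*$, and a left inverse $f_*\to f_*f^!f_!\xrightarrow{(\alpha_ff^!)^{-1}}f_!f^!f_!\to f_!$ via the triangle identity for $f_!\dashv f^!$, both of which are already known adjunctions requiring no new verification. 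This two-sided inverse argument is the key technique you are missing; it uses both hypotheses in a sharp, symmetric way (note that you never genuinely invoke $f_!\dashv f^!$ or the skew-symmetry of $\ev_{X,0}$). Your digression on the projection formula for $f_*$ is correct but unused: the paper derives it as a \emph{consequence} of ambidexterity in Proposition~\ref{prop:smooth-proper-BC}, not as an ingredient.
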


\begin{proof}
	We adapt an argument of Ayoub \cite[Proposition 1.7.16]{Ayoub}.
	We first show that the maps
		\begin{align*}
		\alpha_f f^*&\colon f_!f^* \to f_*f^*, \\
		\alpha_f f^! &\colon f_!f^! \to f_*f^!
		\end{align*}
	are isomorphisms. Both are $\MS_S$-linear natural transformations between lax $\MS_S$-linear functors. We claim that all four functors $f_!f^*$, $f_!f^!$, $f_*f^*$, and $f_*f^!$ are in fact strictly $\MS_S$-linear. This is clear for the first two functors. For the last two functors, we have
	\[
	f_*f^* = \Hom(\Sigma^\infty_{\P^1}X_+,\ph)\quad\text{and}\quad f_*f^! = \Hom(\Th_X(-\Omega_f), \ph),
	\]
	so their $\MS_S$-linearity follows from the given dualizability of $\Sigma^\infty_{\P^1}X_+$ and $\Th_X(-\Omega_f)$.
	Hence, it is enough to show that the maps $\alpha_f f^*$ and $\alpha_f f^!$ are isomorphisms on the unit object $\1_S$. For the first map this follows from Lemma~\ref{lem:duality-pairing}(i) and Assumption (ii). By Lemma~\ref{lem:duality-pairing}(ii) and Assumption (i), the second map is dual to the first up to the automorphism $\langle -1\rangle^{\rk\Omega_f}$ of $f_!f^!\1_S$, hence it is also an isomorphism.
	
	We now show that $\alpha_f\colon f_!\to f_*$ has an inverse on both sides. 
	On the one hand, the composite
	\[
	f_*\xrightarrow{\eta f_*} f_* f^*f_* \xrightarrow{(\alpha_f f^*)^{-1}} f_!f^*f_*\xrightarrow{f_!\epsilon} f_!
	\]
	is right inverse to $\alpha_f$ by the triangle identity for $f^*\dashv f_*$:
	\[
	\begin{tikzcd}
		f_* \ar{r}{\eta f_*} \ar{dr}[swap]{\id} & f_*f^*f_* \ar[<-]{r}{\alpha_f}[swap]{\sim} \ar{d}{f_*\epsilon} & f_!f^*f_* \ar{d}{f_!\epsilon} \\
		& f_* \ar[<-]{r}[swap]{\alpha_f} & f_!\rlap.
	\end{tikzcd}
	\]
	Similarly, the composite
	\[
	f_* \xrightarrow{f_*\eta} f_* f^!f_! \xrightarrow{(\alpha_f f^!)^{-1}} f_!f^!f_! \xrightarrow{\epsilon f_!} f_!
	\]
	is left inverse to $\alpha_f$ by the triangle identity for $f_!\dashv f^!$:
	\[
	\begin{tikzcd}[baseline=(LastCell.base)]
		f_* \ar[<-]{r}{\alpha_f} \ar{d}[swap]{f_*\eta} & f_! \ar{d}[swap]{f_!\eta} \ar{dr}{\id} \\
		f_* f^! f_! \ar[<-]{r}{\sim}[swap]{\alpha_f} & f_!f^!f_! \ar{r}[swap]{\epsilon f_!} & |[alias=LastCell]| f_!\rlap.
	\end{tikzcd}\qedhere
	\]
\end{proof}

\begin{lemma}\label{lem:PF-BC-!}
	Consider a cartesian square
	\[
	\begin{tikzcd}
	Z \ar{d}[swap]{\bar g} \ar{r}{\bar f} \ar{dr}[description]{h} & Y \ar{d}{g} \\
	X \ar{r}[swap]{f} & S\rlap,
	\end{tikzcd}
	\]
	where $f$ and $g$ are smooth.
	Then the following diagram commutes for all $A\in \MS_X$ and $B\in\MS_Y$:
	\[
	\begin{tikzcd}[column sep=2.5em]
	f_! (A\otimes f^*g_! (B)) \ar{d}[sloped]{\sim}[swap]{\mathrm{PF}} \ar[<-]{r}{\mathrm{BC}}[swap]{\sim} & f_! (A\otimes \bar g_! \bar f^*(B))  \\
	f_! (A)\otimes g_! (B) & h_! (\bar g^*(A)\otimes \bar f^*(B)) \ar{d}{\mathrm{PF}}[swap,sloped]{\sim} \ar{u}[sloped]{\sim}[swap]{\mathrm{PF}}  \\
	g_! (g^*f_! (A)\otimes B) \ar{u}{\mathrm{PF}}[swap,sloped]{\sim} \ar[<-]{r}{\mathrm{BC}}[swap]{\sim} & g_! (\bar f_! \bar g^*(A)\otimes B)\rlap.
	\end{tikzcd}
	\]
\end{lemma}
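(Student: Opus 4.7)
The plan is to reduce the shriek hexagon to its sharp counterpart, Lemma~\ref{lem:PF-BC}, by writing $f_!=f_\sharp\Sigma^{-\Omega_f}$ (and analogously for $g_!$, $h_!$, $\bar f_!$, $\bar g_!$) and absorbing the resulting Thom twists into the arguments.

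First I would record the basic compatibilities supplied by the cartesian square: smooth base change gives canonical isomorphisms $\Omega_{\bar f}\simeq \bar g^*\Omega_f$ and $\Omega_{\bar g}\simeq \bar f^*\Omega_g$ in $\Perf(Z)$, and the fiber sequence $\bar g^*\Omega_f\to \Omega_h\to \Omega_{\bar g}$ yields a decomposition $\Omega_h \simeq \bar g^*\Omega_f + \bar f^*\Omega_g$ in $\K(Z)$. Combined with the naturality and symmetric monoidality of the J-homomorphism, these produce canonical commutation isomorphisms
\[
\bar f^*\Sigma^{-\Omega_g}\simeq \Sigma^{-\Omega_{\bar g}}\bar f^*,\qquad \bar g^*\Sigma^{-\Omega_f}\simeq \Sigma^{-\Omega_{\bar f}}\bar g^*,
\]
together with a matching factorization of $\Sigma^{-\Omega_h}$.

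Setting $\tilde A = \Sigma^{-\Omega_f}A$ and $\tilde B = \Sigma^{-\Omega_g}B$, each of the six vertices of the shriek hexagon becomes, upon unpacking, the corresponding vertex of the sharp hexagon of Lemma~\ref{lem:PF-BC} applied to $(\tilde A,\tilde B)$: for instance,
\[
h_!(\bar g^*(A)\otimes \bar f^*(B))\simeq h_\sharp(\bar g^*(\tilde A)\otimes \bar f^*(\tilde B)), \qquad f_!(A\otimes \bar g_!\bar f^*(B))\simeq f_\sharp(\tilde A\otimes \bar g_\sharp\bar f^*(\tilde B)),
\]
and the remaining four are analogous (using that $\Sigma^{-\Omega_f}$ commutes with tensoring on the first factor, by monoidality of the J-homomorphism). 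Under these identifications, every PF edge of the shriek hexagon reduces to the PF edge of the sharp hexagon, because the Thom twist functor is $\MS_S$-linear; and every BC edge reduces to the sharp BC edge composed with one of the commutation isomorphisms displayed above. Commutativity of the shriek hexagon then follows immediately from Lemma~\ref{lem:PF-BC}.

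The main difficulty is not logical but a matter of coherent bookkeeping: one must verify that the six identifications fit together around the hexagon, and not merely pointwise. The cleanest way to sidestep an explicit coherence check, which is how I would actually organize the proof, is to invoke the lax symmetric monoidal functor $\MS^*_!\colon\Span(\Sch,\all,\mathrm{smooth})\to \Cat_\infty$ recorded at the start of the section; both hexagons (sharp and shriek) are then manifestations of this lax monoidal structure, and the identification between them is built into the derivation of $\MS^*_!$ from $\MS^*_\sharp$ via the J-homomorphism.
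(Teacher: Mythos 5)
Your final paragraph is exactly the paper's proof: the lemma follows from the lax monoidal structure of $\h\MS^*_!\colon \Span(\Sch,\all,\mathrm{smooth})\to\Cat_1$, $(X\xleftarrow{f}Z\xrightarrow{g}Y)\mapsto g_!f^*$, just as Lemma~\ref{lem:PF-BC} followed from the corresponding structure on $\h\MS^*_\sharp$. The preliminary reduction via $f_!=f_\sharp\Sigma^{-\Omega_f}$ is a reasonable sanity check but, as you yourself observe, it incurs exactly the coherence bookkeeping that the span-category formulation is designed to package once and for all.
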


\begin{proof}
This follows from the lax monoidal structure of 
\[
\h\MS^*_{!}\colon \Span(\Sch,\all,\mathrm{smooth})\to\Cat_1, \quad (X\xleftarrow{f} Z\xrightarrow{g} Y)\mapsto g_{!} f^*.\qedhere
\]
\end{proof}

Following \cite[Définition 5.5.2]{EGA2}, we say that a morphism of derived schemes $f\colon X\to S$ is \emph{projective} if it factors as
\[
X\into \P(\scr E) \to S,
\]
where the first map is a closed immersion and $\scr E\in \QCoh(S)_{\geq 0}$ is a quasi-coherent sheaf of finite type (i.e., perfect to order $0$).
We say that $f\colon X\to S$ is \emph{locally projective} if this holds locally on $S$. Note that a locally projective morphism is proper.
Note also that if $\scr E\in\QCoh(S)_{\geq 0}$ is of finite type, there exists a surjection $\scr O^{n+1}\onto \scr E$ locally on $S$.
Hence, if $f$ is locally projective, then it factors as $X\into \P^n_S\to S$ locally on $S$.

\begin{theorem}[Ambidexterity for smooth projective morphisms]
	\label{thm:ambidexterity}
	Let $f\colon X\to S$ be smooth and locally projective. Then $f$ is $\MS$-ambidextrous.
\end{theorem}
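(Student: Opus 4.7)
The plan is to first reduce to the case where $f$ factors as $X\xrightarrow{i}\P^n_S\xrightarrow{\pi}S$ with $i$ a closed immersion, and then to transfer the ambidexterity of $\pi$ (already in hand) to $f$ via the Gysin machinery. The reduction rests on the observation that $\MS$-ambidexterity is Zariski local on $S$: the trace $\epsilon_f$ of Construction~\ref{ctr:trace} is built from the base change isomorphism $f^*f_!\simeq \pi_{2\sharp}\pi_1^*\Sigma^{-\Omega_f}$ and the Gysin map for the diagonal, both of which are compatible with open base change on $S$ via smooth base change and Proposition~\ref{prop:gysin-trans}(i). Hence $\alpha_f$ being an isomorphism can be checked after passing to a Zariski cover of $S$, and since $f$ is locally projective we may assume $f=\pi\circ i$ as above.

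Next I would apply Proposition~\ref{prop:dual->ambi}: it suffices to show that $\Sigma^\infty_{\P^1}X_+\in\MS_S$ is dualizable with dual $\Th_X(-\Omega_f)$ via the comparison map $\mathrm{comp}_{X,0}$. The corresponding statement for $Y=\P^n_S$ (with any twist $\xi\in\K(Y)$) follows from the ambidexterity of $\pi$ by the argument of Corollary~\ref{cor:atiyah}, which invokes only the ambidexterity already available. To transfer this to $X$, I would exploit the graph embedding $\Gamma_i\colon X\to X\times_S Y$, a closed immersion which is simultaneously a section of the smooth projection $p\colon X\times_S Y\to X$. The map $p$ is $\MS$-ambidextrous: it is the projection $\P^n_X\to X$, that is, an instance of ambidexterity for projective space applied with base $X$. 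The Gysin transformation $\mathrm{gys}(p,\Gamma_i)\colon p_!\to\Gamma_i^*$, combined with Atiyah duality for $Y$ and the base change isomorphism $\pi^*f_!\simeq q_!p^*$ (where $q\colon X\times_SY\to Y$), then yields a candidate coevaluation map $\1_S\to \Th_X(-\Omega_f)\otimes\Sigma^\infty_{\P^1}X_+$.

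The hard part will be verifying that this candidate coevaluation is genuinely inverse to the geometric evaluation $\ev_{X,0}$ of Construction~\ref{ctr:ev}. Proposition~\ref{prop:null-sequences} does not apply directly, since $X$ and a disjoint copy of itself do not sit naturally inside an ambient scheme. Instead, the verification must weave together the Gysin transformations for $\Gamma_i$, for the diagonal $\delta_f\colon X\to X\times_SX$, and for the closed immersion $\id\times i\colon X\times_SX\to X\times_SY$, using the factorization $\Gamma_i=(\id\times i)\circ\delta_f$. The key technical ingredients will be the composition law for Gysin (Proposition~\ref{prop:gysin-trans}(iv)), the base change law (i), and the linearity (iii). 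The core of the proof is then a coherent diagram chase identifying the composition of the candidate coevaluation with $\ev_{X,0}$ as the identity on $\1_S$ and on $\Sigma^\infty_{\P^1}X_+$, thereby completing the verification of the hypotheses of Proposition~\ref{prop:dual->ambi}.
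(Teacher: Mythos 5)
Your proposal reaches the theorem by a genuinely different route than the paper, and the divergence is worth spelling out. Both proofs agree on the Zariski reduction to a factorization $X\xrightarrow{i}\P^n_S\xrightarrow{\pi}S$ and on using Proposition~\ref{prop:dual->ambi} together with Theorem~\ref{thm:dualsofP} to dispose of the projective space case. But from that point the paper does \emph{not} try to verify the hypotheses of Proposition~\ref{prop:dual->ambi} for general $X$. Instead, it directly manufactures a unit $\eta_f\colon\id_{\MS_S}\to f_!f^*$ by composing the unit $\eta_p$ of the known adjunction $\pi^*\dashv\pi_!$ with the Gysin transformation $\gys(\pi,i)\colon\pi_!\to f_!i^*$, and then checks the two triangle identities for the pair $(\eta_f,\epsilon_f)$. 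The second triangle identity is cleverly reduced to the first using the $\MS_S$-linearity of $\eta_f$, so effectively only one diagram needs to be chased. Atiyah duality for general smooth projective $X$ (Corollary~\ref{cor:atiyah}) is then a \emph{consequence} of ambidexterity, because Lemma~\ref{lem:duality-pairing} identifies $\mathrm{comp}_{X,\xi}$ with $\alpha_f$ on $\Sigma^{-\xi}\1_X$.

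Your proposal reverses this logical order: you want to establish Atiyah duality for $X$ first (i.e., dualizability of $X_+$ with dual $\Th_X(-\Omega_f)$ via $\mathrm{comp}_{X,0}$), and then invoke Proposition~\ref{prop:dual->ambi} once more. This is asking for strictly more than the paper proves before applying the proposition, and the transfer you sketch---building a candidate coevaluation from $\gys(p,\Gamma_i)$, Atiyah duality for $\P^n_S$, and smooth base change---is where the real work lies. As written, the construction of the map $\1_S\to\Th_X(-\Omega_f)\otimes\Sigma^\infty_{\P^1}X_+$ is not actually pinned down: the ingredients you list live in different fiber categories ($\gys(p,\Gamma_i)$ is a transformation of functors $\MS_{\P^n_X}\to\MS_X$, while the coevaluation for $\P^n_S$ lives over $S$), and it is not obvious how to assemble them into the desired map, let alone verify the zig-zag identities against $\ev_{X,0}$. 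In the absence of $\A^1$-invariance one cannot lean on purity or on a Pontryagin--Thom collapse to produce the coevaluation cheaply, which is precisely why the paper chose to bypass dualizability of $X_+$ and instead argue at the level of the adjunction $f^*\dashv f_!$. Your route is plausible in outline but leaves the hardest step unproved; the paper's route gets by with a single triangle-identity diagram plus a linearity trick, and is the more economical of the two.
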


\begin{proof}
	For $X=\P^n_S$, the claim follows from Theorem~\ref{thm:dualsofP} (with $m=0$) and Proposition~\ref{prop:dual->ambi}, noting that $\P^n_+$ is dualizable in $\MS_S$ (since $\P^n/\P^{n-1}$ is invertible).
	By Zariski descent, we may thus assume given a factorization
	\[
	\begin{tikzcd}
		X \ar[hook]{r}{i} \ar{dr}[swap]{f} & P \ar{d}{p} \\ 
		& S\rlap,
	\end{tikzcd}
	\]
	where $p$ is $\MS$-ambidextrous and $i$ is a closed immersion. 
	By definition of $\MS$-ambidexterity, the trace map $\epsilon_p\colon p^*p_!\to\id_{\MS_P}$ is the counit of an adjunction $p^*\dashv p_!$.
	Let $\eta_p\colon \id_{\MS_S}\to p_!p^*$ be a compatible unit. Since $\epsilon_p$ is $\MS_S$-linear, so is $\eta_p$.
	We then define
	\[
	\eta_f\colon \id_{\MS_S} \to f_!f^*
	\]
	as the composition
	\[
	\id_{\MS_S}\xrightarrow{\eta_p} p_!p^*\xrightarrow{\mathrm{gys}(p,i)} f_!f^*.
	\]
	As both $\eta_p$ and $\mathrm{gys}(p,i)$ are $\MS_S$-linear, $\eta_f$ is $\MS_S$-linear.
	We now show that $\eta_f$ and $\epsilon_f$ are the unit and counit of an adjunction $f^*\dashv f_!$ by verifying the triangle identities, i.e., that the following composites are equal to the identity:
	\begin{gather}
		\label{eqn:triangle-1} f^* \xrightarrow{f^*\eta_f} f^*f_!f^* \xrightarrow{\epsilon_f f^*} f^*, \\
		\label{eqn:triangle-2} f_! \xrightarrow{\eta_ff_!} f_!f^*f_! \xrightarrow{f_!\epsilon_f} f_!.
	\end{gather}
	
	We first show that~\eqref{eqn:triangle-1} is the identity.
	By assumption, the composition
	\[
	p^* \xrightarrow{p^*\eta_p} p^*p_!p^* \xrightarrow{\epsilon_pp^*} p^*
	\]
	is the identity. The lower composition in the following diagram is thus the identity, while the upper composition is~\eqref{eqn:triangle-1}:
	\[
	\begin{tikzcd}[column sep=3em,row sep=2em]
		f^* \ar{dr}[swap]{f^*\eta_p} \ar{r}{f^*\eta_f} & f^*f_!f^* \ar[<-]{r}{\mathrm{BC}}[swap]{\sim} \ar[bend left=45]{drr}{\epsilon_ff^*} & \pi_{2!}\pi_1^*f^* \ar{dr}[sloped]{\mathrm{gys}} & \\
		 & f^*p_!p^* \ar{u}{\mathrm{gys}} \ar[<-]{r}{\mathrm{BC}}[swap]{\sim} \ar[<-]{dr}[sloped]{\sim}[swap,sloped]{\mathrm{BC}} \ar[bend right=45]{drr}[swap]{i^*\epsilon_pp^*} & \bar p_!\bar f^*p^* \ar{u}{\mathrm{gys}} \ar{r}[swap]{\mathrm{gys}} \ar{d}{\mathrm{BC}}[swap,sloped]{\sim} & f^* \ar[equal]{d}{\mathrm{BC}} \\
		& & i^*\pi_{2!}\pi_1^*p^* \ar{r}[swap]{\mathrm{gys}} & i^*p^*\rlap.
	\end{tikzcd}
	\]
	This diagram commutes, showing that~\eqref{eqn:triangle-1} is the identity:
	\begin{itemize}
		\item the left triangle commutes by definition of $\eta_f$;
		\item the two squares commute by the compatibility of Gysin maps with base change, applied to the following two cartesian diagrams:
		\[
		\begin{tikzcd}[column sep=1.5em]
			X\times_SX \ar[hook]{r} \ar{d}[swap]{\pi_1} & P\times_SX \ar{r}{\bar p} \ar{d}[swap]{\bar f} & X \ar{d}{f} \\
			X \ar[hook]{r}[swap]{i} & P \ar{r}[swap]{p} & S\rlap,
		\end{tikzcd}
		\qquad\qquad
		\begin{tikzcd}[column sep=1.5em]
			X \ar[hook]{r} \ar{d}[swap]{i} & P\times_SX \ar{r}{\bar p} \ar{d}[swap]{\id\times i} & X \ar{d}{i} \\
			P \ar[hook]{r}[swap]{\delta} & P\times_SP \ar{r}[swap]{\pi_2} & P\rlap;
		\end{tikzcd}
		\]
		\item the triangle of Gysin maps is induced by the composition of closed immersions
		\[
		\begin{tikzcd}
			X \ar[hook]{r}{\delta} \ar[bend right=12]{drr}{\id} & X\times_SX \ar[hook]{r}{i\times\id} \ar{dr}{\pi_2} & P\times_S X \ar{d}{\bar p} \\
			& & X\rlap;
		\end{tikzcd}
		\]
		\item the triangle of base change isomorphisms is induced by the cartesian squares
		\[
		\begin{tikzcd}
			P\times_SX \ar{r}{\bar p} \ar{d}{\id\times i} \ar[bend right=50]{dd}[swap]{\bar f} & X \ar{d}[swap]{i} \ar[bend left=50]{dd}{f} \\
			P\times_SP \ar{r}{\pi_2} \ar{d}{\pi_1} & P \ar{d}[swap]{p} \\
			P \ar{r}{p} & S\rlap.
		\end{tikzcd}
		\]
	\end{itemize}
	
	We now show that~\eqref{eqn:triangle-2} is the identity. Instead of proving this directly, we will reduce the claim to the first triangle identity using formal properties of Gysin transformations. 
	Let $\iota$ be the transformation~\eqref{eqn:triangle-1} evaluated on $\1_S$:
	\[
	\iota\colon \1_X \xrightarrow{f^*\eta_f} f^*f_!(\1_X)\xrightarrow{\epsilon_f} \1_X.
	\]
	We claim that the endomorphism~\eqref{eqn:triangle-2} can be identified with $f_!(\iota\otimes\mathord{-})$. As we have already shown that $\iota$ is the identity, this will complete the proof.
	
	In the following diagram, the upper composition is~\eqref{eqn:triangle-2} while the lower composition is $f_!(\iota\otimes\ph)$:
	\[
	\begin{tikzcd}[column sep=4.2em]
	f_! \ar{d}[sloped]{\sim} \ar[end anchor={[shift={(-3.8em,0)}]west}]{r}{\eta_ff_!} & \llap{$f_!f^*f_!\simeq\,$} f_!(\1_X\otimes f^*f_!(\ph)) \ar{d}{\mathrm{PF}}[swap,sloped]{\sim} \ar[<-]{r}{\mathrm{BC}}[swap]{\sim} \ar[bend left=15]{rr}{f_!\epsilon_f} & f_!(\1_X\otimes \pi_{2!}\pi_1^*(\ph)) \ar{r}{\mathrm{gys}(\pi_2,\delta)} & f_!(\1_X\otimes \ph) \\
	\1_S\otimes f_!(\ph) \ar{r}{\eta_f\otimes \id} & f_!(\1_X)\otimes f_!(\ph) & h_!(\pi_2^*(\1_X)\otimes \pi_1^*(\ph)) \ar{r}{\mathrm{gys}(h,\delta)} \ar{d}{\mathrm{PF}}[swap,sloped]{\sim} \ar{u}[sloped]{\sim}[swap]{\mathrm{PF}} & f_!(\1_X\otimes \ph) \ar[equal]{u} \ar[equal]{d} \\
	f_!(f^*(\1_S)\otimes \ph) \ar{u}{\mathrm{PF}}[swap,sloped]{\sim} \ar{r}[swap]{f_!(f^*\eta_f\otimes\id)} & f_!(f^*f_!(\1_X)\otimes \ph) \ar{u}[sloped]{\sim}[swap]{\mathrm{PF}} \ar[<-]{r}{\mathrm{BC}}[swap]{\sim} \ar[equal]{d} & f_!(\pi_{1!}\pi_2^*(\1_X)\otimes \ph) \ar{r}{\mathrm{gys}(\pi_1,\delta)} \ar{d}[sloped]{\sim}[swap]{\mathrm{BC}} & f_!(\1_X\otimes \ph) \ar[equal]{d} \\
	& f_!(f^*f_!(\1_X)\otimes \ph) \ar[<-]{r}{\mathrm{BC}}[swap]{\sim} \ar[bend right=15]{rr}[swap]{f_!(\epsilon_f\otimes \id)} & f_!(\pi_{2!}\pi_1^*(\1_X)\otimes \ph) \ar{r}{\mathrm{gys}(\pi_2,\delta)} & f_!(\1_X\otimes \ph)\rlap.
	\end{tikzcd}
	\]
	This diagram commutes:
	\begin{itemize}
		\item the top left rectangle commutes by the $\MS_S$-linearity of the transformation $\eta_f$;
		\item the rectangle directly below commutes by the naturality of the projection isomorphisms;
		\item the large rectangle commutes by Lemma~\ref{lem:PF-BC-!};
		\item the two upper right rectangles commute by the $\MS$-linearity and base independence of Gysin transformations;
		\item the rectangle of base change isomorphisms is induced by the cartesian squares
		\[
		\begin{tikzcd}
			X\times_SX \ar{d}[swap]{\sigma} \ar{r}{\pi_1} \ar[bend right=50]{dd}[swap]{\pi_2} & X \ar{d}{\id} \ar[bend left=50]{dd}{f} \\
			X\times_SX \ar{d}[swap]{\pi_1} \ar{r}{\pi_2} & X \ar{d}{f} \\
			X \ar{r}[swap]{f} & S\rlap,
		\end{tikzcd}
		\]
		where $\sigma$ is the swap map;
		\item the bottom right rectangle commutes by the compatibility of Gysin transformations with base change, applied to the following cartesian diagram:
		\[
		\begin{tikzcd}
			X \ar[hook]{r}{\delta} \ar{d}[swap]{\id} & X\times_SX \ar{d}[swap]{\sigma} \ar{r}{\pi_1} & X \ar{d}{\id} \\
			X \ar[hook]{r}{\delta} & X\times_SX \ar{r}{\pi_2} & X\rlap.
		\end{tikzcd}
		\]
	\end{itemize}
	This completes the proof.
\end{proof}

\begin{lemma}
	\label{lem:smooth-proper-BC}
	Let $f\colon X\to S$ be smooth and separated.
	\begin{enumerate}
		\item For any cartesian square
		\[
		\begin{tikzcd}
			Y \ar{r}{v} \ar{d}[swap]{g} & X \ar{d}{f} \\
			T \ar{r}{u} & S\rlap,
		\end{tikzcd}
		\]
		the following diagram commutes:
		\[
		\begin{tikzcd}
			u^* f_! \ar{r}{\alpha_f} & u^* f_* \ar{d}{\mathrm{BC}} \\
			g_! v^* \ar{u}{\mathrm{BC}}[sloped,swap]{\sim} \ar{r}{\alpha_g} & g_*v^*\rlap.
		\end{tikzcd}
		\]
		\item For any $A\in\MS_X$ and $B\in\MS_S$, the following diagram commutes:
		\[
		\begin{tikzcd}
			f_!(A)\otimes B \ar{r}{\alpha_f} & f_*(A)\otimes B \ar{d}{\mathrm{PF}} \\
			f_!(A\otimes f^*(B)) \ar{u}{\mathrm{PF}}[sloped,swap]{\sim} \ar{r}{\alpha_f} & f_*(A\otimes f^*(B))\rlap.
		\end{tikzcd}
		\]
	\end{enumerate}
\end{lemma}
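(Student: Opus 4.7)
The plan is to prove both compatibilities by passing to adjoints and reducing to the corresponding compatibilities for the trace $\epsilon_f \colon f^*f_! \to \id$, which unwinds as $f^*f_! \xrightarrow{\BC} \pi_{2!}\pi_1^* \xrightarrow{\gys(\pi_2,\delta)} \id$ where $\pi_1,\pi_2\colon X\times_S X\rightrightarrows X$ and $\delta$ is the diagonal. The heart of the matter is then an instance of Proposition~\ref{prop:gysin-trans}(i) for part~(i) and an instance of Proposition~\ref{prop:gysin-trans}(iii) for part~(ii).

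For (ii), by the adjunction $f^*\dashv f_*$, the square of the lemma is equivalent to the statement that the composition
\[
f^*f_!(A\otimes f^*B) \xrightarrow{\PF^{-1}} f^*(f_!A\otimes B) \simeq f^*f_!A\otimes f^*B \xrightarrow{\epsilon_f\otimes\id} A\otimes f^*B
\]
agrees with $\epsilon_f$ on $A\otimes f^*B$. Replacing $f^*f_!$ by $\pi_{2!}\pi_1^*$ via smooth base change (which is strictly $\MS_S$-linear and compatible with $\PF$ by Lemma~\ref{lem:PF-BC-!}) and using $\pi_1^*f^*\simeq \pi_2^*f^*$, this reduces to the $\MS_S$-linearity of $\gys(\pi_2,\delta)\colon \pi_{2!}\pi_1^*\to \id$, which is exactly Proposition~\ref{prop:gysin-trans}(iii).

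For (i), write $\bar v\colon Y\times_T Y\to X\times_S X$ for the base change of $v$ and denote by $\pi_i^g, \pi_i^f, \delta_g, \delta_f$ the projections and diagonals. Under the adjunction $g^*\dashv g_*$, the square of the lemma amounts to an equality of two maps $g^*g_!v^* \to v^*$: the map through $\alpha_g$ is by definition $\epsilon_g v^*$, while the map through $\alpha_f$ is $v^*\epsilon_f$ conjugated by the base change isomorphisms $g^*g_!v^* \simeq v^*f^*f_!$ and $g^*u^*\simeq v^*f^*$. Unfolding the definitions of $\epsilon_g$ and $\epsilon_f$, this equality becomes the outer rectangle of
\[
\begin{tikzcd}[column sep=3em]
g^*g_!v^* \ar{r}{\BC}[swap]{\sim} \ar{d}[sloped]{\sim} & \pi_{2!}^g\pi_1^{g*}v^* \ar{r}{\gys(\pi_2^g,\delta_g)} \ar{d}[sloped]{\sim}{\BC} & v^* \ar[equal]{d}\\
v^*f^*f_! \ar{r}{\BC}[swap]{\sim} & v^*\pi_{2!}^f\pi_1^{f*} \ar{r}{v^*\gys(\pi_2^f,\delta_f)} & v^*\rlap,
\end{tikzcd}
\]
whose right square is Proposition~\ref{prop:gysin-trans}(i) applied to the cartesian diagram
\[
\begin{tikzcd}
Y \ar[hook]{r}{\delta_g} \ar{d}[swap]{\id} & Y\times_T Y \ar{r}{\pi_2^g} \ar{d}{\bar v} & Y \ar{d}{v} \\
X \ar[hook]{r}{\delta_f} & X\times_S X \ar{r}{\pi_2^f} & X\rlap,
\end{tikzcd}
\]
and whose left square is pseudofunctoriality of smooth base change for $\MS^*_!$ applied to the composable cartesian squares relating $g,\pi_2^g,\pi_1^g$ to $f,\pi_2^f,\pi_1^f$ along $u,v,\bar v$.

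The only delicate point is the bookkeeping of the base change isomorphisms in (i): one must verify that the ``$g$-side'' and ``$f$-side'' unfoldings of $\epsilon$ are glued by the same base change isomorphism $g^*g_!v^*\simeq v^*f^*f_!$ that appears in the original square. This is a routine coherence verification using the lax monoidality of $\MS^*_!$ on $\Span(\Sch,\all,\mathrm{smooth})$ recorded just before Construction~\ref{ctr:gysin!}; no new input from Gysin maps is needed.
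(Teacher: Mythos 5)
Your proof is correct and takes essentially the same approach as the paper: reduce by adjunction to properties of the trace map $\epsilon_f = \gys(\pi_2,\delta)\circ\mathrm{BC}$, and then appeal to Proposition~\ref{prop:gysin-trans} for the behavior of the Gysin transformation. The paper's proof is terser — it simply cites ``the compatibility with base change and the $\MS_S$-linearity of the trace map'' (the latter already asserted in Construction~\ref{ctr:trace}) — so your argument amounts to unpacking those two facts one level deeper, into Proposition~\ref{prop:gysin-trans}(i)/(iii) together with the coherence of base change and projection formula isomorphisms coming from $\h\MS^*_!$ on $\Span(\Sch,\all,\mathrm{smooth})$.
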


\begin{proof}
	By adjunction, this follows from the compatibility with base change and the $\MS_S$-linearity of the trace map $\epsilon_f\colon f^*f_!\to\id_{\MS_X}$.
\end{proof}

\begin{proposition}[Base change and projection formula for smooth projective morphisms]
	\label{prop:smooth-proper-BC}
	Let $f\colon X\to S$ be smooth and locally projective.
	\begin{enumerate}
		\item For any cartesian square
		\[
		\begin{tikzcd}
			Y \ar{r}{v} \ar{d}[swap]{g} & X \ar{d}{f} \\
			T \ar{r}{u} & S\rlap,
		\end{tikzcd}
		\]
		the base change transformation
		\[
		u^*f_*\to g_* v^*
		\]
		is an isomorphism.
		\item For every $A\in \MS_X$ and $B\in \MS_S$, the canonical map
		\[
		f_*(A)\otimes B\to f_*(A\otimes f^*(B))
		\]
		is an isomorphism.
	\end{enumerate}
\end{proposition}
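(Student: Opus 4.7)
The plan is to deduce both statements by a two-out-of-three argument from Theorem~\ref{thm:ambidexterity} together with Lemma~\ref{lem:smooth-proper-BC}, using the fact that the corresponding statements for $f_!$ (in place of $f_*$) are built into the construction of $f_!$ via the J-homomorphism and have already been recorded at the start of this section.

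For part (i), observe that $g = u^*f$ remains smooth and locally projective, since both smoothness and local projectivity are stable under arbitrary base change (the factorization $X\into \P(\scr E)\to S$ locally on $S$ pulls back to an analogous factorization locally on $T$). Hence Theorem~\ref{thm:ambidexterity} applies to both $f$ and $g$, so the horizontal maps $\alpha_f$ and $\alpha_g$ in the commutative square of Lemma~\ref{lem:smooth-proper-BC}(i) are isomorphisms. The left-hand vertical map, the base change transformation for $f_!$, is an isomorphism by construction of $f_! = f_\sharp\Sigma^{-\Omega_f}$ (combining smooth base change for $f_\sharp$ with the compatibility of the J-homomorphism with pullback). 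The commutativity of the square then forces the right-hand vertical map, the base change transformation for $f_*$, to be an isomorphism.

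For part (ii), we argue analogously using Lemma~\ref{lem:smooth-proper-BC}(ii): the two horizontal maps $\alpha_f$ are isomorphisms by Theorem~\ref{thm:ambidexterity}, while the left-hand vertical projection formula map for $f_!$ is an isomorphism because $f_!$ is $\MS_S$-linear by construction (it is the composite of the $\MS_X$-linear functor $\Sigma^{-\Omega_f}$, which is $\MS_S$-linear by restriction along $f^*$, with the $\MS_S$-linear left adjoint $f_\sharp$). Commutativity of the square then yields that the projection formula map for $f_*$ is an isomorphism.

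The argument is essentially formal once Theorem~\ref{thm:ambidexterity} and Lemma~\ref{lem:smooth-proper-BC} are in hand, so no step is really an obstacle; the only point requiring care is verifying that the pullback $g$ of a smooth locally projective morphism $f$ remains smooth locally projective, which is standard, and that the ``always-an-isomorphism'' sides of the two commutative squares really do refer to base change and projection formula for $f_!$ and not to some auxiliary structure, which is clear from the definitions of these transformations via the adjunction $f_! \dashv f^!$ induced by the identification $f_! = f_\sharp\Sigma^{-\Omega_f}$.
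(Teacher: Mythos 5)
Your proof is correct and is exactly what the paper intends: the paper's proof of this proposition is the one-line statement ``Combine Lemma~\ref{lem:smooth-proper-BC} and Theorem~\ref{thm:ambidexterity},'' and you have simply spelled out the two-out-of-three argument that this amounts to, including the (correct, standard) observation that smooth locally projective morphisms are stable under base change so that Theorem~\ref{thm:ambidexterity} applies to $g$ as well.
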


\begin{proof}
	Combine Lemma~\ref{lem:smooth-proper-BC} and Theorem~\ref{thm:ambidexterity}.
\end{proof}

\begin{corollary}\label{cor:dualizable-pf}
	Let $f\colon X\to S$ be smooth and locally projective and let $A\in \MS_X$ be dualizable. Then $f_\sharp(A)\in \MS_S$ is dualizable with dual $f_*(A^\vee)$.
\end{corollary}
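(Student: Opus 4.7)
The plan is to identify $f_\sharp A$ as dualizable with dual $f_*(A^\vee)$ by producing a natural isomorphism
\[
f_*(A^\vee) \otimes B \simto \Hom(f_\sharp A, B)
\]
for every $B \in \MS_S$. Setting $B = \1_S$ will then identify the dual, while the naturality in $B$ provides the compatibility with arbitrary tensor products that characterizes dualizability in a closed symmetric monoidal $\infty$-category.

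First, I would use the adjunction $f_\sharp \dashv f^*$ together with the smooth projection formula for $f_\sharp$ to obtain a canonical isomorphism $\Hom(f_\sharp A, B) \simeq f_* \Hom(A, f^* B)$; this is a formal consequence of the two adjunctions, which one verifies by testing against arbitrary $C \in \MS_S$ and rewriting mapping anima. Next, dualizability of $A$ in $\MS_X$ gives $\Hom(A, f^* B) \simeq A^\vee \otimes f^* B$. Finally, the crucial input is the projection formula for $f_*$, which holds for smooth locally projective $f$ by Proposition~\ref{prop:smooth-proper-BC}(ii), yielding $f_*(A^\vee) \otimes B \simto f_*(A^\vee \otimes f^* B)$. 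Composing the three isomorphisms produces the desired natural equivalence.

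Essentially no real obstacle remains here, since the hard analytic input --- the projection formula for $f_*$, which is what distinguishes smooth locally projective morphisms from general smooth ones --- has already been established as a consequence of the ambidexterity theorem. The present statement is then a purely formal corollary of this projection formula, the smooth projection formula for $f_\sharp$, and the hypothesis that $A$ is dualizable in $\MS_X$.
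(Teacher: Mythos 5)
Your proof is correct and follows exactly the same route as the paper: identify $\Hom(f_\sharp A,-) \simeq f_*(A^\vee \otimes f^*(-))$ via the smooth projection formula and dualizability of $A$, then apply the projection formula for $f_*$ from Proposition~\ref{prop:smooth-proper-BC}(ii) to pull $A^\vee$ out. The only cosmetic difference is that you break the first isomorphism into two explicit steps (the adjunction $f_\sharp\dashv f^*$ and then dualizability of $A$ in $\MS_X$), whereas the paper states it as a single line.
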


\begin{proof}
	Let $f\colon X\to S$ be the structure map. We have $\Hom(f_\sharp A,\ph)\simeq f_*(A^\vee\otimes f^*(\ph))$ by the smooth projection formula. On the other hand, we have $f_*(A^\vee\otimes f^*(\ph))\simeq f_*(A^\vee)\otimes (\ph)$ by Proposition~\ref{prop:smooth-proper-BC}(ii). This shows that $f_\sharp A$ is dualizable with dual $f_*(A^\vee)$.
\end{proof}

\begin{corollary}[Atiyah duality]
	\label{cor:atiyah}
	Let $S$ be a derived scheme, let $X$ be smooth and locally projective over $S$, and let $\xi\in\K(X)$.
	Then the Thom spectrum $\Th_X(\xi)$ is dualizable in $\MS_S$. Moreover, the pairing
	\[
	\ev_{X,\xi}\colon \Th_X(\xi)\otimes \Th_X(-\xi-\Omega_f) \to \1_S
	\]
	of Construction~\ref{ctr:ev} exhibits $\Th_X(-\xi-\Omega_f)$ as the dual of $\Th_X(\xi)$. 
\end{corollary}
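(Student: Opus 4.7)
The plan is to deduce this as a direct synthesis of Theorem~\ref{thm:ambidexterity} (ambidexterity), Corollary~\ref{cor:dualizable-pf} (which already gives dualizability of $f_\sharp$ applied to any dualizable object, together with an identification of its dual), and Lemma~\ref{lem:duality-pairing}(i) (which identifies the comparison map with $\alpha_f$ evaluated on $\Sigma^{-\xi}\1_X$). The J-homomorphism takes values in the Picard $\infty$-groupoid of $\MS_X$, so $\Sigma^\xi\1_X$ is invertible, hence dualizable, with dual $\Sigma^{-\xi}\1_X$.

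First, I would apply Corollary~\ref{cor:dualizable-pf} to $A=\Sigma^\xi\1_X$: this gives that $\Th_X(\xi)=f_\sharp\Sigma^\xi\1_X$ is dualizable in $\MS_S$ with dual $f_*\Sigma^{-\xi}\1_X$, and this identification of the dual is induced by the pairing~\eqref{eqn:weak-dual} of Lemma~\ref{lem:duality-pairing}. Next, ambidexterity (Theorem~\ref{thm:ambidexterity}) says that the natural transformation $\alpha_f\colon f_!\to f_*$ is an isomorphism; evaluating on $\Sigma^{-\xi}\1_X$ and unwinding $f_!=f_\sharp\Sigma^{-\Omega_f}$ yields a canonical isomorphism
\[
\Th_X(-\xi-\Omega_f)=f_!\Sigma^{-\xi}\1_X \xrightarrow{\alpha_f} f_*\Sigma^{-\xi}\1_X\simeq \Th_X(\xi)^\vee.
\]
By Lemma~\ref{lem:duality-pairing}(i), this composite isomorphism is precisely the comparison map $\mathrm{comp}_{X,\xi}$ induced by $\ev_{X,\xi}$, so the latter is adjoint to an isomorphism and thus exhibits $\Th_X(-\xi-\Omega_f)$ as the dual of $\Th_X(\xi)$.

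There is essentially no obstacle remaining: the substantive work has been done in establishing Theorem~\ref{thm:ambidexterity} (via the case of $\P^n$ in Theorem~\ref{thm:dualsofP} together with Proposition~\ref{prop:dual->ambi} and the reduction argument), and in the formal identifications of Lemma~\ref{lem:duality-pairing}. Writing the corollary is a matter of assembling these three ingredients, and explicitly noting the dualizability of $\Sigma^\xi\1_X$ so that Corollary~\ref{cor:dualizable-pf} applies.
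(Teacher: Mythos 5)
Your proof matches the paper's exactly: both deduce dualizability from Corollary~\ref{cor:dualizable-pf} applied to the invertible object $\Sigma^\xi\1_X$, then use Theorem~\ref{thm:ambidexterity} to see that $\alpha_f$ is an isomorphism, and identify this with $\mathrm{comp}_{X,\xi}$ via Lemma~\ref{lem:duality-pairing}(i). Your write-up is a little more explicit about the bookkeeping, but it is the same argument.
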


\begin{proof}
	Dualizability is a special case of Corollary~\ref{cor:dualizable-pf}. Since $\alpha_f\colon f_!\Sigma^{-\xi}\1_X\to f_*\Sigma^{-\xi}\1_X$ is an isomorphism (Theorem~\ref{thm:ambidexterity}), it follows from Lemma~\ref{lem:duality-pairing}(i) that the comparison map $\mathrm{comp}_{X,\xi}\colon \Th_X(-\xi-\Omega_f)\to \Th_X(\xi)^\vee$ induced by the pairing $\ev_{X,\xi}$ is an isomorphism, so that the latter is a duality pairing.
\end{proof}

\begin{remark}
	If $i\colon Z\into X$ is a closed embedding between smooth and locally projective $S$-schemes, then the dual of $\Sigma^\infty_{\P^1} i_+$ can be identified under Atiyah duality with the $(-\Omega_X)$-twisted Gysin map
	\[
	\Th_X(-\Omega_X) \to \Th_Z(-\Omega_X+\scr N_{i}) \simeq \Th_Z(-\Omega_Z).
	\]
	In particular, since the formation of duals is functorial, we indirectly obtain a functoriality of Gysin maps with respect to composition of closed immersions. 
\end{remark}

\part{Applications}

\section{$\A^1$-colocalization and logarithmic cohomology theories}
\label{sec:logarithmic}

Let $\MS_S^{\A^1}$ denote the full subcategory of $\MS_S$ spanned by $\A^1$-invariant motivic spectra, which coincides with Voevodsky's motivic stable homotopy category \cite{VoevodskyICM}. 
We will write \[\rm L_{\A^1}\colon \MS_S\to\MS_S^{\A^1}\] for the left adjoint to the inclusion, which is a symmetric monoidal functor, and $\1_{\A^1}=\rm L_{\A^1}\1$ for the unit of $\MS_S^{\A^1}$. 
The purpose of this section is to study \textit{$\A^1$-colocalization} of $\1_{\A^1}$-modules in $\MS_S$ (Definition~\ref{def:coloc}) and relate it to logarithmic cohomology theories.

\begin{proposition}\label{prop:A1-colocalization}
	Let $S$ be a derived scheme. The inclusion $\MS^{\A^1}_S\subset\MS_S$ admits a right adjoint.
\end{proposition}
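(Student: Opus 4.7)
The plan is to invoke the adjoint functor theorem: since both $\MS_S$ and $\MS_S^{\A^1}$ are presentable $\infty$-categories and the inclusion is accessible, it suffices to show that the inclusion $\MS_S^{\A^1}\into\MS_S$ preserves small colimits. Equivalently, we must check that the full subcategory $\MS_S^{\A^1}$ is closed under small colimits computed in $\MS_S$.

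First, I would recall from the construction that $\MS_S$ is compactly generated: it is presentable, stable, and generated under colimits by the objects $\Sigma^{p,q}\Sigma^\infty_{\P^1} X_+$ for $X\in\Sm_S^\fp$ and $p,q\in\Z$, all of which are compact. In particular, $\MS_S^{\A^1}$ is, by definition, the full subcategory of objects local with respect to the set of morphisms
\[
W = \bigl\{\,\Sigma^\infty_{\P^1}(\A^1\times X)_+ \to \Sigma^\infty_{\P^1}X_+\ \bigm|\ X\in\Sm_S^\fp\,\bigr\},
\]
which is a set of morphisms between compact objects of $\MS_S$. The associated cofibers $C_X=\cofib(\Sigma^\infty_{\P^1}(\A^1\times X)_+ \to \Sigma^\infty_{\P^1}X_+)$ are therefore also compact in $\MS_S$, and they generate, under colimits and desuspensions, the full subcategory of $\A^1$-acyclic objects (i.e., those $F\in\MS_S$ with $\L_{\A^1}F\simeq 0$).

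Now let $(E_i)_{i\in I}$ be a small diagram in $\MS_S^{\A^1}$ and set $E=\colim_i E_i$ computed in $\MS_S$. For any compact $\A^1$-acyclic object $C_X$, we have
\[
\map(C_X,E)\simeq \colim_i\map(C_X,E_i)\simeq\colim_i 0=0,
\]
because each $E_i$ is $\A^1$-invariant. Since the $\A^1$-acyclic objects form the localizing subcategory of $\MS_S$ generated by the compact set $\{C_X : X\in \Sm_S^\fp\}$, it follows by a standard argument (writing a general acyclic as a colimit built from the $C_X$) that $\map(W,E)=0$ for every acyclic $W$, hence $E\in\MS_S^{\A^1}$.

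The main (indeed essentially only) subtle point is the compactness of the generators $C_X$; everything else is formal. Once the inclusion is seen to preserve small colimits, the presentable adjoint functor theorem (e.g.\ \cite[Corollary 5.5.2.9]{HTT}) produces the desired right adjoint $(\ph)^\dagger\colon \MS_S\to \MS_S^{\A^1}$.
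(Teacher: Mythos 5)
Your argument is essentially correct when $S$ is qcqs, but it has a gap for a general derived scheme: the assertion that $\MS_S$ is compactly generated with compact generators $\Sigma^{p,q}\Sigma^\infty_{\P^1} X_+$ is only valid when $S$ is qcqs. For a general (e.g.\ not quasi-compact) derived scheme, $\MS_S$ is built by Zariski descent as a limit of the categories $\MS_U$ over a Zariski cover by qcqs opens, and such a limit need not be compactly generated; in particular the objects $\Sigma^\infty_{\P^1} X_+$ are still generators but need no longer be compact. Since the whole mechanism of your argument — showing that $\map(C_X,\ph)$ commutes with colimits and hence that the $W$-local objects are closed under colimits — rests on compactness of the $C_X$, the argument breaks down at exactly this point outside the qcqs case. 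The paper handles this by first proving the qcqs case and then deducing the general case from the fact that both $\MS$ and $\MS^{\A^1}$ are Zariski sheaves of $\infty$-categories; you would need a similar descent step.

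A smaller remark: even in the qcqs case, the paragraph about the $\A^1$-acyclics being the localizing subcategory generated by the $C_X$ is an unnecessary detour. Once you know that $E\in\MS_S$ is $\A^1$-invariant if and only if $\map(C_X,E)=0$ for all $X$ (right-orthogonality to a set of compact objects), the compactness of the $C_X$ immediately gives closure of $\MS_S^{\A^1}$ under colimits; you do not need to identify the entire acyclic subcategory. Finally, note that the paper's route to the same colimit-preservation statement in the qcqs case is slightly different: rather than using compact generation, it observes that the defining conditions (Nisnevich excision, elementary blowup excision, $\A^1$-invariance) are all colimit-stable conditions inside the ambient category of lax $\P^1$-spectra in presheaves of spectra, so that both $\MS_S$ and $\MS_S^{\A^1}$ are closed under colimits there. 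Both routes are valid for qcqs $S$; yours is marginally less elementary but equally effective once restricted to that case.
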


\begin{proof}
	If $S$ is qcqs, then Nisnevich sheaves of spectra on $\Sm_S^\fp$ are closed under colimits in $\scr P(\Sm_S^\fp,\Sp)$, since Nisnevich descent is equivalent to Nisnevich excision. Hence, both $\MS_S$ and $\MS^{\A^1}_S$ are closed under colimits in the $\infty$-category of lax symmetric $\P^1$-spectra in $\scr P(\Sm_S^\fp,\Sp)$. This shows that a right adjoint exists when $S$ is qcqs. The existence of a right adjoint in general follows as both $\MS^{\A^1}$ and $\MS$ are Zariski sheaves.
\end{proof}

\begin{corollary}\label{cor:A1-colocalization}
The inclusion $\MS^{\A^1}_S\subset\Mod_{\1_{\A^1}}(\MS_S)$ admits a right adjoint.
\end{corollary}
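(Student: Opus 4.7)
The plan is to deduce this from Proposition~\ref{prop:A1-colocalization} via the $\infty$-categorical adjoint functor theorem. I would first factor the inclusion in question as
\[
\MS^{\A^1}_S \overset{\iota_1}{\hookrightarrow} \Mod_{\1_{\A^1}}(\MS_S) \overset{U}{\to} \MS_S,
\]
with composite equal to the inclusion $\iota$ of Proposition~\ref{prop:A1-colocalization}.

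The key step will be to show that $\iota_1$ preserves colimits, which I plan to argue by a standard cancellation. The ingredients are: (a) $\iota = U \circ \iota_1$ preserves colimits, by Proposition~\ref{prop:A1-colocalization}; (b) the forgetful functor $U$ preserves colimits, since the monad $\1_{\A^1}\otimes(\ph)$ is a left adjoint and hence preserves colimits, so colimits in $\Mod_{\1_{\A^1}}(\MS_S)$ are created by $U$; and (c) $U$ is conservative. Given a small diagram $p$ in $\MS^{\A^1}_S$, the comparison map $\iota_1(\colim p) \to \colim \iota_1 p$ is taken by $U$ to an isomorphism using (a) and (b), hence is itself an isomorphism by (c).

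Since both $\MS^{\A^1}_S$ and $\Mod_{\1_{\A^1}}(\MS_S)$ are presentable $\infty$-categories, applying the adjoint functor theorem to the colimit-preserving inclusion $\iota_1$ then yields the desired right adjoint. I do not foresee any substantive obstacle; the entire argument is formal, reducing to the existence of the right adjoint in Proposition~\ref{prop:A1-colocalization} together with the standard creation-of-colimits property for modules over a commutative algebra in a presentable symmetric monoidal $\infty$-category.
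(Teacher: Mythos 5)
Your proof is correct and is essentially the paper's own argument, just written out more explicitly: the paper likewise notes that the forgetful functor $\Mod_{\1_{\A^1}}(\MS_S)\to\MS_S$ preserves and reflects colimits and then invokes Proposition~\ref{prop:A1-colocalization}, which is precisely your cancellation argument followed by the adjoint functor theorem.
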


\begin{proof}
Since the forgetful functor $\Mod_{\1_{\A^1}}(\MS_S)\to\MS_S$ preserves and reflects colimits, the existence of a right adjoint follows from Proposition~\ref{prop:A1-colocalization}.
\end{proof}

\begin{definition}\label{def:coloc}
We write
\[
	(\ph)^\dagger \colon \Mod_{\1_{\A^1}}(\MS_S) \to \MS_S^{\A^1}
\]
for the right adjoint to the canonical inclusion and call it \textit{$\A^1$-colocalization}.
\end{definition}

We begin by listing primary examples of $\1_{\A^1}$-modules.

\begin{example}[Localizing invariants]
	\label{ex:locinv}
Let $E$ be a spectrum-valued localizing invariant of $\Z$-linear stable $\infty$-categories (which we do not require to be finitary, as in \cite[Definition 1.2]{LT}). Let $S$ be a qcqs derived scheme and let $E_S$ denote the motivic spectrum over $S$ representing $E$, as defined in \cite[Remark 5.2.4]{AnnalaIwasa2}.
Note that $E_S$ is canonically a module over $\K_S=\KGL$ in $\MS_S$ by the universality of $\K$-theory as a localizing invariant proved in \cite{blumberg2013universal}. Then:
\begin{enumerate}
\item If $S$ is regular noetherian, then $E_S$ is an $\1_{\A^1}$-module.
\item If $E$ is truncating, then $E_S$ is an $\1_{\A^1}$-module.
\end{enumerate}
Indeed, (i) follows from the fact that $\K$-theory is $\A^1$-invariant for regular noetherian schemes \cite[Proposition~6.8]{TT}, and (ii) follows from the fact that every truncating localizing invariant satisfies cdh-descent on classical qcqs schemes \cite[Theorem~A.2]{LT} and that the cdh-sheafification of $\K$-theory on classical qcqs schemes is homotopy invariant $\K$-theory \cite[Theorem~6.3]{KST}\footnote{There is no need for the finite-dimensional noetherian assumption in \emph{loc.\ cit.} since $\K$-theory is finitary.}.
\end{example}

Next we would like to discuss examples of $\1_{\A^1}$-modules arising from ``motivic filtrations'' of localizing invariants.
We first give a general recipe for constructing motivic spectra out of filtered spectra.

\begin{construction}[Motivic filtrations and Bott elements]
	\label{cst:motivic-filtration}
	Let $S$ be a derived scheme and set $\CC=\scr{P}_\Nis(\Sm_S,\Sp)$ for notational simplicity. Write 
	\[
	\Fil(\scr C)=\Fun((\Z,\leq)^\op,\scr C) \quad\text{and}\quad \SSeq(\scr C)=\Fun(\Fin^\simeq,\scr C).
	\]
	The symmetric monoidal functors $\Fin^\simeq\to\N\to (\Z,\leq)^\op$ induce lax symmetric monoidal functors
	\[
	\Fil(\scr C)\to \scr C^\N \to \SSeq(\scr C),
	\]
	associating to any filtered object a symmetric sequence.
	Let $\P^1$ denote the object $\Sigma^\infty(\P^1,\infty)\in\scr C$ and let $F^*\P^1$ be the filtered object with $F^{> 1}\P^1=0$ and $F^{\leq 1}\P^1=\P^1$. The associated symmetric sequence is $(\P^1,\P^1,0,0,\dotsc)$.
	Note that:
	\begin{itemize}
		\item A commutative $\Sym(F^*\P^1)$-algebra in $\Fil(\scr C)$ is a commutative algebra $F^*A$ in $\Fil(\scr C)$ with an element $c\colon \Sigma^\infty\P^1\to F^1A$.
		\item A commutative $\Sym(\P^1,\P^1,0,\dotsc)$-algebra in $\SSeq(\scr C)$ is a commutative algebra $A$ in $\Sp_{\P^1}^\lax(\scr C)$ with an element $\beta\colon \Sigma^\infty\P^1\to A(\emptyset)$.
	\end{itemize}
	Let $F^*E\in\CAlg(\Fil(\scr C))$. Given a map $c\colon \Sigma^\infty\Pic\to F^1E$, we say that $F^*E$ \emph{satisfies the projective bundle formula} (with respect to $c$) if for every $n\in\Z$, $r\geq 1$, and $X\in\Sm_S$, the map
	\[
		\sum_{i=0}^r c(\scr O(1))^i \colon \bigoplus_{i=0}^r F^{n-i}E(X) \to F^{n}E(\P^r_X)
	\]
	is an isomorphism. The case $r=1$ implies that the lax $\P^1$-spectrum $e$ defined by $(F^*E,c|_{\P^1})$ is strict, so that $(e,c)$ is an oriented object in $\CAlg(\Sp_{\P^1}(\scr C))$. It then follows from \cite[Lemma 3.3.5]{AnnalaIwasa2} that $e$ also satisfies elementary blowup excision, so that $e\in\CAlg(\MS_S)$. 
	Similarly, the lax symmetric monoidal functor $\Fil(\scr C)\to\SSeq(\scr C)$ sends any $F^*E$-module satisfying the projective bundle formula (with respect to $c$) to an $e$-module in $\MS_S$. In particular, this functor sends the cofiber sequence of $F^*E$-modules
	\[
	F^{*+1}E\to F^*E \to \gr_F^*E
	\]
	to the cofiber sequence of $e$-modules
	\[
	\Sigma_{\P^1}e \xrightarrow{\beta} e \to e/\beta,
	\]
	and it sends the span of filtered $\E_\infty$-rings
	\[
	F^{-\infty}E \leftarrow F^*E \to \gr_F^* E,
	\]
	where $F^{-\infty}E=\colim_{n\to-\infty} F^nE$, to a span of motivic $\E_\infty$-ring spectra
	\[
	e[\beta^{-1}] \leftarrow e \to e/\beta.
	\]
	Note that $e$ \emph{represents} $F^*E$ in the sense that $\Omega_{\P^1}^{\infty-n}e\simeq F^nE$ for all $n\in \Z$, and that the maps $F^{n+1}E\to F^nE$ correspond to multiplication by the Bott element $\beta\in e^{-1}(S)$. The motivic $\E_\infty$-ring spectra $e[\beta^{-1}]$ and $e/\beta$ represent $F^{-\infty}E$ and $\gr_F^* E$ in the same sense.
\end{construction}

\begin{remark}\label{rmk:filtered-E-infinity}
	The motivic spectrum $e\in\CAlg(\MS_S)$ alone does not contain the data of the $\Z$-graded $\E_\infty$-ring structure on $(\Omega_{\P^1}^{\infty-n}e)_{n\in \Z}\simeq (F^nE)_{n\in \Z}$; it only remembers the $\mathbb S$-graded $\E_\infty$-ring (hence the $\Z$-graded $\E_1$-ring).
	 However, we can apply the construction to the filtered $\E_\infty$-$F^*E$-algebra $F^{*+\star}E$ in $\Fil(\scr C)$, which yields an $\E_\infty$-$e$-algebra structure on the filtered motivic spectrum $(\Sigma_{\P^1}^\star e,\beta)$. 
	 In particular, we have a $\Z$-graded $\E_\infty$-$e$-algebra structure on $(\Sigma_{\P^1}^ne)_{n\in\Z}$ and, as a special case, a $\Z$-graded $\E_\infty$-$e/\beta$-algebra structure on $(\Sigma_{\P^1}^ne/\beta)_{n\in\Z}$.
\end{remark}

We will apply Construction~\ref{cst:motivic-filtration} to the motivic filtration of $p$-complete topological cyclic homology $F^*\TC_p$ defined by Bhatt, Morrow, and Scholze~\cite[Theorem 1.12]{BMS} (see \cite[Construction 5.33]{AMMN} for the extension to general $p$-complete animated rings).
We will regard $F^*\TC_p$ as an étale sheaf of $p$-complete spectra on all derived schemes by precomposing with the $p$-completion functor.
We also consider the motivic filtrations of $\TP_p$, $\TC_p^-$, and $\THH_p$, as defined in general in \cite[Section 6.2]{BhattLurie}.
To formulate the projective bundle formula in these examples, recall that $\TC_p/F^1\TC_p\simeq \Z_p$, so that there is a unique lift
\[
\begin{tikzcd}
	\Sigma^\infty\Pic \ar{r}{1-\scr L^\vee} \ar[dashed]{drr}[swap]{c} &  \K \ar{r}{\mathrm{Tr}} & \TC_p \\
	& & F^1\TC_p\rlap, \ar{u}
\end{tikzcd}
\]
where $\Tr$ is the cyclotomic trace.

\begin{proposition}\label{prop:filtered-PBF-TC}
Let $S$ be a derived scheme. 
Then $F^*\TC_p$ satisfies the projective bundle formula over $S$, i.e., the map
\[
	\sum_{i=0}^r c(\scr O(1))^{i} \colon \bigoplus_{i=0}^r F^{*-i}\TC_p(S) \to F^*\TC_p(\P^r_S)
\]
is an isomorphism for every $r\ge 0$. 
Moreover, the $\E_\infty$-$F^*\TC_p$-algebras $F^*\TP_p$, $F^*\TC_p^-$, $F^*\THH_p$ also satisfy the projective bundle formula over $S$.
\end{proposition}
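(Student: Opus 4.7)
The strategy is to reduce the filtered projective bundle formula to the analogous statement on the associated graded pieces, where it becomes the PBF for syntomic (resp.\ prismatic, Hodge) cohomology, which is known from the work of Bhatt--Lurie. First, I would observe that both $\bigoplus_{i=0}^r F^{*-i}\TC_p(S)$ and $F^*\TC_p(\P^r_S)$ carry natural filtrations in $*$, that the Chern-class map is a morphism of filtered spectra, and that both filtered spectra are complete. Completeness of $F^*\TC_p(\P^r_S)$ follows from the construction of $F^*\TC_p$ by quasisyntomic descent: on quasi-regular semiperfectoid affines, $F^n\TC_p$ is the double-speed Postnikov filtration, hence complete; a finite Zariski cover of $\P^r_S$ together with Zariski descent of $F^*\TC_p$ exhibits its value on $\P^r_S$ as a finite limit of the affine values, and completeness of filtered spectra is preserved by finite limits.

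Given completeness of both sides, it suffices to verify PBF on each associated graded piece. By the Bhatt--Morrow--Scholze identification $\mathrm{gr}^n_F\TC_p \simeq \Z_p(n)^\syn[2n]$, the claim reduces to showing that
\[
\bigoplus_{i=0}^r \Z_p(n-i)^\syn(S)[-2i] \xrightarrow{\sum c_1(\scr O(1))^i} \Z_p(n)^\syn(\P^r_S)
\]
is an isomorphism of spectra. I would deduce this from the Bhatt--Lurie PBF for absolute prismatic cohomology,
\[
\bigoplus_{i=0}^r \prism_S\{n-i\}[-2i] \xrightarrow{\sim} \prism_{\P^r_S}\{n\},
\]
via the defining fiber sequence $\Z_p(n)^\syn \simeq \fib(\mathrm{can}-\phi_n\colon \mathcal{N}^{\geq n}\prism\{n\}\to\prism\{n\})$, using that the prismatic Chern class $c_1^\prism(\scr O(1))$ lies in Nygaard weight $\geq 1$ so that the prismatic PBF is compatible with the Nygaard filtration.

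The $F^*\TC_p$-algebras $F^*\TP_p$, $F^*\TC_p^-$, $F^*\THH_p$ are handled by the same reduction, the graded pieces being $\prism\{n\}[2n]$, $\mathcal{N}^{\geq n}\prism\{n\}[2n]$, and Hodge cohomology respectively; PBF holds for each by prismatic PBF (suitably restricted) or classical Hodge PBF, and completeness on $\P^r_S$ is proved by the same finite Zariski descent argument.

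The step I expect to be the main obstacle is the completeness claim: rigorously verifying that $F^*\TC_p(\P^r_S)$ is a complete filtered spectrum for an arbitrary derived scheme $S$, given the indirect definition of the BMS filtration by quasisyntomic descent, and checking that pullback along $\P^r_S\to S$ interacts well with this descent presentation. Once completeness is in hand, the rest is a formal consequence of PBF in Nygaard-filtered prismatic cohomology.
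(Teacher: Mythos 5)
Your overall reduction (pass to associated graded via completeness of both sides, then invoke the Bhatt--Lurie projective bundle formula for syntomic/prismatic cohomology) matches the paper's strategy, and the completeness step, which you flag as the main obstacle, is in fact the easy part: \cite[Construction 5.33]{AMMN} shows directly that $F^n\TC_p$ is $(n-1)$-connective on animated rings, giving completeness on affines with no descent gymnastics, and then Zariski descent handles general schemes. Completeness of the other filtrations is \cite[Corollary 6.2.15]{BhattLurie}. Your descent-from-quasi-regular-semiperfectoids route to completeness is more roundabout but not wrong.

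The genuine gap is elsewhere: you never address which Chern class is being used. The class $c$ in the statement is \emph{not} the Bhatt--Lurie syntomic first Chern class $c_1^\syn$ of \cite[Section 7.5]{BhattLurie}; it is the map $c\colon\Sigma^\infty\Pic\to F^1\TC_p$ constructed just before the proposition by lifting $\Tr\circ(1-\scr L^\vee)$ through $F^1\TC_p\to\TC_p$, where $\Tr$ is the cyclotomic trace. This particular choice is essential later in the paper (Proposition~\ref{prop:motfil} and Corollary~\ref{cor:motfil}), because it is what makes the filtered cyclotomic trace $F^*\K\to F^*\TC_p$ a map of filtered rings compatible with Chern classes, and hence yields the $\kgl\to\tc_p$ map. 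The Bhatt--Lurie PBF (\cite[Lemma 9.1.4]{BhattLurie}) is proved with respect to $c_1^\syn$, and your proposed proof of the graded PBF via the Nygaard fiber sequence also only gives the result for $c_1^\syn$ (or $c_1^\prism$). To conclude the statement as written you still need to show that $c$ and $c_1^\syn$ differ by an automorphism of $\gr^1_F\TC_p=\Z_p^\syn(1)[2]$. The paper does this by combining \cite[Theorem 7.5.6]{BhattLurie}, which says $c_1^\syn$ exhibits $\gr^1_F\TC_p$ as the $p$-completion of $\Pic$, with \cite[Proposition 7.17]{BMS}, which shows that $c$ (factored through $\det$) also induces an isomorphism after $p$-completion. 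Without this comparison the proposal proves PBF for the wrong Chern class, and none of the downstream applications go through.
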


\begin{proof}
As explained in \cite[Construction~5.33]{AMMN}, the filtration $F^*\TC_p$ is complete on all animated rings (in fact, $F^n\TC_p$ is $(n-1)$-connective).
By Zariski descent, it follows that the filtration $F^*\TC_p$ is complete on all derived schemes.
Hence, it suffices to show that the associated graded algebra $\gr^*_F\TC_p$ satisfies the projective bundle formula. 
The same reduction applies to $F^*\TP_p$, $F^*\TC_p^-$, and $F^*\THH_p$, as these filtrations are complete by \cite[Corollary 6.2.15]{BhattLurie}.
The associated graded algebras are respectively syntomic cohomology, Nygaard-completed prismatic cohomology, the Nygaard filtration thereon, and its associated graded. The desired projective bundle formulas are then proved in \cite[Lemma~9.1.4]{BhattLurie} with respect to the syntomic first Chern class $c_1^\syn$ defined in \emph{op.\ cit.} To conclude, we need to compare $c_1^\syn$ with the map $c\colon \Sigma^\infty\Pic\to \gr^1_F\TC_p=\Z_p^\syn(1)[2]$ induced by the cyclotomic trace.
More precisely, to deduce that the projective bundle formulas also hold with respect to $c$, it will suffice to show that $c_1^\syn$ and $c$ differ by some automorphism of $\gr^1_F\TC_p$. By \cite[Theorem~7.5.6]{BhattLurie}, the first Chern class $c_1^\syn$ exhibits $\gr^1_F\TC_p$ as the $p$-completion of $\Pic$.
On the other hand, the map $c$ factors as
\[
\begin{tikzcd}
\Sigma^\infty\Pic \ar{r}{1-\scr L^\vee} \ar{dr}[swap]{\mathrm{can}} &  \K_{\rk=0} \ar{r}{\mathrm{Tr}} \ar{d}{\det} & F^1\TC_p \ar{d} \\
 & \Pic \ar[dashed]{r} & \gr^1_F\TC_p\rlap,
\end{tikzcd}
\]
and the lower horizontal map induces an isomorphism after $p$-completion by \cite[Proposition~7.17]{BMS} (this proposition establishes the claim on quasiregular semiperfectoid rings, but it then automatically holds on $p$-quasisyntomic rings by $p$-quasisyntomic descent, and hence on all animated rings as both sides are left Kan extended from smooth $\Z$-algebras).
\end{proof}

Using Construction~\ref{cst:motivic-filtration}, we therefore obtain a motivic $\E_\infty$-ring spectrum $\tc_p$ over any derived scheme $S$ representing the filtration $F^*\TC_p$, together with a diagram of motivic $\E_\infty$-ring spectra over $S$
\[
\begin{tikzcd}
	& \tc_p \ar[rd, "(\ph)/\beta"] \ar[ld, "{(\ph)[\beta^{-1}]}" swap] & \\
	\TC_p & & \HH\Z^\syn_p \rlap.
\end{tikzcd}
\]
Here, $\HH\Z^\syn_p$ is the motivic $\E_\infty$-ring spectrum representing the syntomic cohomology of $p$-adic formal schemes.
In the same way, the motivic filtrations on $\TP_p$, $\TC_p^-$, and $\THH_p$ give morphisms of motivic $\E_\infty$-$\tc_p$-algebras in $\MS_S$
\[
\tp_p\leftarrow \tc_p^- \to \thh_p.
\]

\begin{remark}
	The motivic filtrations on $\TC_p$ and $\THH_p$ are nonnegative and hence trivially exhaustive, so that $\tc_p[\beta^{-1}]$ and $\thh_p[\beta^{-1}]$ represent $\TC_p$ and $\THH_p$, respectively. On the other hand, the filtrations on $\TC_p^-$ and $\TP_p$ are exhaustive on $\QSyn$ \cite[Theorem 1.12]{BMS} but not in general, so that $\tc_p^-[\beta^{-1}]$ and $\tp_p[\beta^{-1}]$ do not always represent $\TC_p^-$ and $\TP_p$.
\end{remark}

\begin{remark}
	It is expected that Example~\ref{ex:locinv} can be refined to motivic filtrations of localizing invariants; in particular, $\tc_p$ should be a $\1_{\A^1}$-module over any $p$-complete regular noetherian scheme. 
	We will prove this conjecture over Dedekind domains (Corollary~\ref{cor:tc}).
\end{remark}

Let us consider the motivic filtration of algebraic $\K$-theory $F^*\K$ as a Nisnevich sheaf of spectra on smooth schemes over Dedekind domains, obtained by the $\A^1$-local slice filtration of the motivic $\E_\infty$-ring spectrum $\KGL$:
\[
F^n\K = \Omega^\infty_{\P^1} f_n\KGL \simeq \Omega^\infty_{\P^1} \Sigma_{\P^1}^n \kgl.
\]
In particular, $F^*\K$ is canonically $\E_\infty$-multiplicative by \cite[Section 13.4]{norms}.

For a commutative ring $A$, let $\QSyn_A$ denote the category of $A$-algebras that are $p$-quasisyntomic in the absolute sense \cite[Definition C.6]{BhattLurie}.
Recall also the $p$-quasisyntomic topology on $\QSyn_A$ from \cite[Definition C.9]{BhattLurie}.

\begin{lemma}\label{lem:motfil}
Let $D$ be a Dedekind domain and let $n\in\Z$.
Then the $p$-completed left Kan extension of $F^n\K$ from smooth $D$-algebras to $\QSyn_D$ is $p$-quasisyntomic-locally $2n$-connective.
\end{lemma}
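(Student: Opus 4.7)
The strategy is to combine Bachmann's description of the motivic filtration on smooth $D$-algebras with the comparison between $p$-adic motivic cohomology and syntomic cohomology, and then exploit the explicit form of syntomic cohomology on quasiregular semiperfectoid rings, which form a basis for the $p$-quasisyntomic topology.

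First, by \cite{Bac22}, on smooth $D$-algebras the filtration $F^*\K$ coincides with the $\A^1$-local slice filtration of $\KGL$, and it is complete with graded pieces $\gr^m_F\K \simeq \Z(m)[2m]$, the Bloch--Levine motivic complexes shifted by $2m$. Since $p$-completion and left Kan extension both commute with filtered limits of uniformly bounded-below spectra, and since a limit of $c$-connective spectra is $c$-connective, I will reduce to showing that for each $m \geq n$ the $p$-completed left Kan extension of $\Z(m)[2m]$ from smooth $D$-algebras to $\QSyn_D$ is $p$-quasisyntomic-locally $2m$-connective; the desired conclusion for $F^n\K$ then follows by writing $F^n\K = \lim_{N\to\infty} F^n\K / F^{N}\K$.

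Second, I will identify the $p$-completed left Kan extension of $\Z(m)$ from smooth $D$-algebras to $\QSyn_D$ with the syntomic (equivalently, motivic in the sense of \cite{BhattLurie}) cohomology complex $\Z_p^\syn(m)$. Concretely, both functors are left Kan extended from polynomial $D$-algebras, so the identification reduces to the known agreement of $p$-completed Bloch--Levine motivic cohomology with $p$-adic syntomic cohomology on smooth $D$-algebras; this combines the work of Geisser--Levine in equal characteristic with the Bhatt--Morrow--Scholze and Antieau--Mathew--Morrow--Nikolaus comparisons in mixed characteristic, as packaged in \cite{AMMN}.

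Third, having reduced to the connectivity of $\Z_p^\syn(m)[2m]$ on $\QSyn_D$, I will test this on a basis of $p$-quasisyntomic covers by quasiregular semiperfectoid $D$-algebras $R$. On such $R$, prismatic cohomology $\prism_R$ is concentrated in degree $0$ by \cite{BMS}, the Nygaard filtration $\Fil^m_N$ consists of actual submodules, and the defining fiber sequence for $\Z_p^\syn(m)$ exhibits it in cohomological degrees $\le 1$ (and in fact $\le 0$ on further perfectoid-style covers), so $\Z_p^\syn(m)[2m]$ acquires the required $2m$-fold connectivity after passing to a $p$-quasisyntomic refinement.

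The main obstacle is the middle step: establishing the agreement of the $p$-completed Kan extension from smooth $D$-algebras with the globally defined syntomic complex on all of $\QSyn_D$. Once that comparison is in hand, both the completeness argument at the top and the explicit QRSP computation at the bottom are standard.
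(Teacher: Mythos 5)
Your decomposition into graded pieces, the identification with syntomic cohomology, and the verification on quasiregular semiperfectoid rings all match the strategy of the paper; the gap is in the reduction to graded pieces. Left Kan extension is a colimit and does \emph{not} commute with cofiltered limits of towers, even for uniformly bounded-below spectra, and a cofiltered limit of $c$-connective spectra is in general only $(c-1)$-connective because of $\lim^1$. To relate $\mathrm{LKE}(F^n\K)^\wedge_p$ to the tower $(F^n\K/F^N\K)_N$ you need its transition maps to become increasingly connective, which in turn requires an a priori connectivity bound on $F^N\K$ (equivalently, on $\gr^N_F\K$) as $N\to\infty$ that your plan never establishes. This is not a side issue: it is exactly the subtlety the argument has to be organized around, and the assertion ``the desired conclusion for $F^n\K$ then follows by writing $F^n\K=\lim_N F^n\K/F^N\K$'' does not close as stated.

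The paper supplies the missing input and then sidesteps the limit-commutation problem entirely. It first shows that $F^n\K$ is Zariski-locally $n$-connective on $\Sm_D$ --- using that $f_n\KGL$ is very $n$-effective in the complete homotopy $t$-structure, that $\gr_F^n\K\simeq\Z(n)[2n]$ is $n$-connective by Geisser, and Postnikov completeness of the Zariski $\infty$-topos --- so that the $p$-completed left Kan extension is $p$-quasisyntomic-locally $n$-connective. It then applies the purely finitary bootstrap of Lemma~\ref{lem:2n-connective}: given fiber sequences $f^{m+1}\to f^m\to z^m$ with $z^m$ being $2m$-connective and $f^m$ being $\phi(m)$-connective for some $\phi$ with $\sup\phi=\infty$, one truncates below $2n$ and descends in finitely many steps from an $m$ with $\phi(m)\geq 2n$ back to $n$, with no infinite limit in sight. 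You should establish the a priori estimate and restructure your step one along these lines. A secondary imprecision in your middle step: the $p$-completion of $\Z(n)$ on smooth $D$-schemes is not the Bhatt--Lurie syntomic complex itself --- only its \'etale sheafification is, via Geisser's comparison with Sato's twist and the Bachmann--Mathew theorem; this is harmless after $p$-quasisyntomic sheafification, since \'etale covers are $p$-quasisyntomic covers, but the intermediate \'etale sheafification should not be elided.
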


\begin{proof}
It follows from \cite[Example 1.3]{Bac22} and \cite[Theorem 7.18 and Corollary 5.22]{SpitzweckHZ} that the $n$th graded piece $\gr_F^n\K$ is isomorphic to the weight $n$ motivic complex $\Z(n)[2n]$, which is by definition the Zariski sheafification of Bloch's cycle complex (and is zero for $n<0$).
In \cite{Gei}, Geisser identified the étale sheafification of the $p$-completed motivic complex $\Z_p(n)$ on smooth $D$-schemes with Sato's $p$-adic étale Tate twist $\mathfrak T_p(n)$ \cite[(1.3.3)]{SatoTwists}. More precisely, \cite[Corollary 4.4 and Theorem 1.3]{Gei} provide a comparison map $\Z_p(n)\to\mathfrak T_p(n)$, and \cite[Theorem 1.2(4) and Theorem 1.3]{Gei} imply that it exhibits $\mathfrak T_p(n)$ as the étale sheafification of $\Z_p(n)$.
By \cite[Theorem~5.8]{BM}, $\mathfrak T_p(n)$ is in turn isomorphic to the syntomic complex $\mathrm{R}\Gamma_\syn(\ph,\Z_p(n))$ of Bhatt and Lurie \cite[Section~8.4]{BhattLurie}.
The latter presheaf on $\QSyn_D$ is $p$-completely left Kan extended from smooth $D$-algebras by \cite[Proposition~8.4.10]{BhattLurie} and it is $p$-quasisyntomic-locally connective by \cite[Theorem~14.1]{BS:2022}.
Therefore, the $p$-completed left Kan extension of $\gr_F^n\K$ from smooth $D$-algebras to $\QSyn_D$ is $p$-quasisyntomic-locally $2n$-connective.

On the other hand, we claim that $F^n\K$ is Zariski-locally $n$-connective on $\Sm_D$. By \cite[Theorem 1.1(1)]{Bac22}, the $\A^1$-invariant motivic spectrum $f_n\KGL$ is very $n$-effective and hence $n$-connective in the homotopy $t$-structure. Since the latter is complete \cite[Corollary 3.8]{SchmidtStrunk}, the motivic filtration $F^*\K$ is also complete. By \cite[Corollary 4.4]{Gei}, $\gr_F^n\K\simeq \Z(n)[2n]$ is $n$-connective. The Postnikov completeness of the Zariski $\infty$-topos now implies that $F^n\K=\lim_i F^n\K/F^{n+i}\K$ is $n$-connective.
Hence, the left Kan extension of $F^n\K$ from smooth $D$-algebras to $\QSyn_D$ is Zariski-locally, and thus $p$-quasisyntomic-locally, $n$-connective.
Now we can formally conclude that its $p$-completion is $p$-quasisyntomic-locally $2n$-connective by Lemma~\ref{lem:2n-connective} below.
\end{proof}

\begin{lemma}\label{lem:2n-connective}
Let $\CC$ be a stable $\infty$-category equipped with a $t$-structure.
Suppose that there is a fiber sequence
\[
	f^{n+1} \to f^n \to z^n
\]
for each $n\ge 0$.
Assume that $z^n$ is $2n$-connective and that $f^n$ is $\phi(n)$-connective for some function $\phi$ with $\sup_{n\geq 0}\phi(n)=\infty$.
Then $f^n$ is $2n$-connective.
\end{lemma}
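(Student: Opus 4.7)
The plan is to fix $n \geq 0$ and prove that $f^n$ is $2n$-connective by a finite downward induction along the tower $\dotsb \to f^{N}\to f^{N-1}\to\dotsb\to f^{n+1}\to f^n$. The only basic fact about $t$-structures I would use is that in a fiber sequence $X\to Y\to Z$ of $\CC$, if both $X$ and $Z$ are $k$-connective, then so is $Y$; this follows immediately from the long exact sequence of homotopy objects.

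First I would use the hypothesis $\sup_{n\geq 0}\phi(n)=\infty$ to choose an integer $N\geq n$ with $\phi(N)\geq 2n$, so that by assumption $f^N$ is $2n$-connective. This is the only place the unbounded connectivity hypothesis enters, and it reduces the problem to a finite descent.

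Next I would run downward induction on $k$ from $N$ to $n$ to show that each $f^k$ with $n\leq k\leq N$ is $2n$-connective. The base case $k=N$ is the choice above. For the inductive step, assuming $f^{k+1}$ is $2n$-connective with $k\geq n$, I consider the given fiber sequence
\[
	f^{k+1}\to f^k\to z^k.
\]
By hypothesis, $z^k$ is $2k$-connective, and since $k\geq n$ we have $2k\geq 2n$, so $z^k$ is $2n$-connective as well. Applying the basic fact above to this fiber sequence yields that $f^k$ is $2n$-connective. After $N-n$ steps the induction reaches $k=n$, giving the desired conclusion.

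The argument is essentially formal and I do not anticipate a substantive obstacle; the only subtle point is the role of $\sup_{n\geq 0}\phi(n)=\infty$, which is what makes the descent finite and allows one to avoid any limit/completeness argument on $\CC$.
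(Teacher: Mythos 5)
Your proof is correct and follows essentially the same strategy as the paper's: choose $N\ge n$ with $\phi(N)\ge 2n$, then descend finitely from $f^N$ to $f^n$ using the fiber sequences and the vanishing of $z^k$ below degree $2n$. The only cosmetic difference is that you argue the inductive step via the long exact sequence of homotopy objects, whereas the paper applies the left adjoint $(\ph)_{<2n}$ to the fiber sequence and observes that the resulting map $(f^{m+1})_{<2n}\to(f^m)_{<2n}$ is an equivalence; both are equivalent elementary formulations of the same extension-closure of $\CC_{\geq 2n}$.
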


\begin{proof}
The goal is to show that $(f^n)_{<2n}=0$.
Fix $n\ge 0$ and let $m\ge n$.
Since the truncation $(\ph)_{<2n}\colon\CC\to\CC_{<2n}$ is a left adjoint, the sequence
\[
	(f^{m+1})_{<2n} \to (f^m)_{<2n} \to (z^m)_{<2n}=0
\]
is a cofiber sequence in $\CC_{<2n}$.
By induction on $m$, it suffices to find $m\ge n$ such that $(f^m)_{<2n}=0$, but this holds if $\phi(m)\ge 2n$.
\end{proof}

The following result originally came out of discussions between Dustin Clausen, Akhil Mathew, and the third author, and our proof here follows the same line of thought.

\begin{proposition}[Filtered cyclotomic trace]\label{prop:motfil}
Let $D$ be a Dedekind domain.
Then the cyclotomic trace $\K\to\TC_p$ on smooth $D$-schemes lifts uniquely to a morphism of $\E_\infty$-multiplicative filtrations $F^*\K\to F^*\TC_p$.
\end{proposition}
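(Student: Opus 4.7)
The plan is to extend both filtrations from smooth $D$-algebras to the larger category $\QSyn_D$ of $p$-quasisyntomic $D$-algebras, where $F^*\TC_p$ admits an explicit description as a double-speed Postnikov filtration on quasiregular semiperfectoid (qrsp) rings. Since $\TC_p$ is $p$-complete, the cyclotomic trace factors as $\K\to\K^\wedge_p\to\TC_p$. Applying $p$-completed left Kan extension from smooth $D$-algebras to $\QSyn_D$ (which is symmetric monoidal, hence preserves $\E_\infty$-multiplicative filtrations), and composing with the canonical counit into the ambient $\TC_p$ on $\QSyn_D$, produces filtered $\E_\infty$-rings of presheaves on $\QSyn_D$ together with an underlying (unfiltered) $\E_\infty$-map from the Kan extension of $\K^\wedge_p$ to $\TC_p$.

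For a qrsp ring $R$, the construction of the BMS filtration (cf.\ \cite[Construction 5.33]{AMMN}) identifies $F^n\TC_p(R)$ with the $n$-th stage of the double-speed Postnikov filtration on the $\E_\infty$-ring $\TC_p(R)$, which is a canonically $\E_\infty$-multiplicative filtration (as the connective cover functor is lax symmetric monoidal and tensor products of connective objects are connective). By Lemma~\ref{lem:motfil}, the $p$-completed left Kan extension of $F^n\K$ evaluated on $R$ is $2n$-connective. The universal property of the double-speed Postnikov tower — as the terminal $\E_\infty$-multiplicative filtration of $\TC_p$ with prescribed connectivity — then produces a unique $\E_\infty$-filtered lift of the underlying cyclotomic trace on the full subcategory of qrsp rings in $\QSyn_D$.

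The final step is to descend from qrsp rings to all of $\QSyn_D$ using $p$-quasisyntomic descent: the target $F^*\TC_p$ is a $p$-quasisyntomic sheaf by construction, and the $p$-completed Kan-extended $F^*\K$ can also be $p$-quasisyntomic sheafified, making the lift on qrsp rings descend uniquely to $\QSyn_D$; restriction to smooth $D$-algebras gives the desired filtered cyclotomic trace. Uniqueness follows from the same chain of reductions: the space of $\E_\infty$-multiplicative filtered lifts of the cyclotomic trace is controlled qrsp-locally by the Postnikov universal property, which makes it contractible at each filtration level.

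The main delicate point I anticipate is organizing the Postnikov tower of a presheaf of $\E_\infty$-ring spectra with enough $\infty$-categorical coherence that its universal property is functorial in a way compatible with $p$-quasisyntomic descent; in particular, ensuring that the comparison is carried out in $\CAlg(\Fil(\cdots))$ rather than only at the level of individual filtration pieces. This is essentially formal given the standard machinery for Postnikov towers of sheaves of $\E_\infty$-rings, but it is where the $\E_\infty$-multiplicative promotion of the obvious pointwise construction lives.
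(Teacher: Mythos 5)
Your proposal follows essentially the same approach as the paper's proof: the paper also observes that $\TC_p$ on $\QSyn_D$ is $p$-completely left Kan extended from smooth $D$-algebras, that $F^*\TC_p$ on $\QSyn_D$ is the double-speed $p$-quasisyntomic-local Postnikov filtration, and then invokes Lemma~\ref{lem:motfil} to produce (and uniquely characterize) the filtered lift via the universal property of the Postnikov tower. Your version spells out the intermediate steps (the reduction to qrsp rings and the descent back, and the $\E_\infty$-coherence concern) which the paper compresses into a couple of sentences, but the argument is the same.
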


\begin{proof}
Recall that $\TC_p$ on $\QSyn_D$ is $p$-completely left Kan extended from smooth $D$-algebras and that the motivic filtration of $\TC_p$ on $\QSyn_D$ is the double-speed $p$-quasisyntomic-local Postnikov filtration by definition and by \cite[Theorem~14.1]{BS:2022}.
Hence, we obtain the desired lift by Lemma~\ref{lem:motfil}.
\end{proof}

We say that a morphism of derived schemes $Y\to X$ is \emph{$p$-completely smooth} if the induced morphism $Y/p\to X/p$ is smooth.

\begin{corollary}\label{cor:motfil}
	Let $S$ be $p$-completely smooth over a Dedekind domain. Then the cyclotomic trace on $\Sm_S$ is refined by a morphism of motivic $\E_\infty$-ring spectra $\kgl\to\tc_p$ in $\MS_S$.
\end{corollary}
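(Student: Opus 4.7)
The plan is to apply Construction~\ref{cst:motivic-filtration} to the filtered cyclotomic trace of Proposition~\ref{prop:motfil}, working first over $D$ itself and then pulling back along the structure map $f\colon S\to\Spec D$.

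Over $D$, Proposition~\ref{prop:motfil} provides a morphism $F^*\K\to F^*\TC_p$ of $\E_\infty$-multiplicative filtrations on $\Sm_D$. Both filtrations satisfy the projective bundle formula with respect to first-Chern-class maps $\Sigma^\infty\Pic\to F^1(\ph)$: on the $\K$-theory side this follows from Bachmann's identification of $F^*\K$ with the slice filtration of $\KGL$, and on the $\TC_p$ side this is Proposition~\ref{prop:filtered-PBF-TC}. One then checks that the filtered trace carries the $\K$-theoretic Chern class to the $\TC_p$ Chern class: by construction the latter is characterized as the unique lift of $\Sigma^\infty\Pic\xrightarrow{1-\scr L^\vee}\K\xrightarrow{\Tr}\TC_p$ through $F^1\TC_p\to\TC_p$ (using $\TC_p/F^1\TC_p\simeq\Z_p$), and the filtered trace applied to the $\K$-theoretic Chern class provides such a lift. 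Hence the filtered map is a morphism of $\Sym(F^*\P^1)$-algebras in filtered Nisnevich sheaves, and feeding it into the lax symmetric monoidal functor of Construction~\ref{cst:motivic-filtration} yields a morphism of motivic $\E_\infty$-ring spectra $\kgl\to\tc_p$ in $\MS_D$.

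To descend to $S$, apply the symmetric monoidal pullback $f^*\colon\MS_D\to\MS_S$ to obtain $f^*\kgl\to f^*\tc_p$ in $\CAlg(\MS_S)$. The source $f^*\kgl$ is taken as the definition of $\kgl$ on a $p$-completely smooth base, so one is reduced to identifying $f^*\tc_{p,D}$ with $\tc_{p,S}$. The latter identification is the main obstacle: it amounts to verifying that the BMS motivic filtration $F^*\TC_p$ is compatible with the $p$-completely smooth base change along $f$. This can be extracted from the description of $F^*\TC_p$ on $\QSyn$ as a double-speed $p$-quasisyntomic-local Postnikov filtration together with the $p$-complete left Kan extension of $\TC_p$ from smooth $\Z$-algebras, both of which behave well under $p$-completely smooth base change, combined with the naturality of Construction~\ref{cst:motivic-filtration} in the underlying site. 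All other ingredients are immediate consequences of Propositions \ref{prop:filtered-PBF-TC} and~\ref{prop:motfil}.
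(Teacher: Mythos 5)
Your opening step — feeding the filtered cyclotomic trace of Proposition~\ref{prop:motfil} into Construction~\ref{cst:motivic-filtration} to produce $\kgl_D\to\tc_{p,D}$ over the Dedekind domain — matches the paper, and your observation that one should check the filtered map carries the $\K$-theoretic Chern class to the $\TC_p$ Chern class is a sensible compatibility to flag.

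The descent from $D$ to $S$ is where your proposal both diverges from the paper and introduces a genuine gap. You cast the problem as establishing an \emph{isomorphism} $f^*\tc_{p,D}\simeq\tc_{p,S}$, i.e.\ base-change invariance of the BMS motivic filtration. This is not needed and is harder than what the paper does: since $\tc_p$ is defined naturally over all derived schemes, naturality already provides a canonical comparison map $f^*(\tc_{p,D})\to\tc_{p,S}$ (the adjunct of $\tc_{p,D}\to f_*\tc_{p,S}$), and composing $\kgl_S\simeq f^*(\kgl_D)\to f^*(\tc_{p,D})\to\tc_{p,S}$ suffices. You only need the natural transformation in one direction because $\tc_p$ is the target; you do not need it to be an equivalence. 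Your sketch that the equivalence ``can be extracted from the description of $F^*\TC_p$ on $\QSyn$, \ldots combined with the naturality of Construction~\ref{cst:motivic-filtration}'' is circular — the construction is only as natural as the input filtration, and the base-change invariance of that filtration is precisely what is in question.

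There is a second gap on the $\kgl$ side. You declare that $f^*\kgl_D$ ``is taken as the definition of $\kgl$ on a $p$-completely smooth base,'' but this is backwards: $\kgl_S$ is defined intrinsically via the slice filtration of $\KGL_S$, and the identification $\kgl_S\simeq f^*(\kgl_D)$ is a non-trivial fact. The paper proves it by first reducing to $S$ affine and $p$-complete (using $\tc_p\simeq u_*(\tc_p)$ along the $p$-completion $u$), writing $S$ as the $p$-completion of a smooth affine $D$-scheme, and invoking N\'eron--Popescu to see $f$ is pro-smooth, so that the slice filtration and $\A^1$-inclusion commute with $f^*$. Your proposal does not address why the pullback is the ``right'' $\kgl_S$, nor does it handle the $p$-completion reduction that makes the structure theory of $p$-completely smooth schemes available.
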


\begin{proof}
	The construction will be natural in $S$, so that we can assume $S$ affine.
	If $u\colon S^\wedge_p\to S$ is the $p$-completion of $S$, then $\tc_p\simeq u_*(\tc_p)$ by definition. We may therefore assume that $S$ is $p$-complete.
	Let $f\colon S\to \Spec(D)$ be a $p$-completely smooth morphism, where $D$ is a Dedekind domain.
	Then $S$ is the $p$-completion of a smooth affine $D$-scheme $S'$ \cite[Remark 5.13]{AMM}.
	The morphism $S\to S'$ is regular by \cite[Tags 07PX and 0AH2]{stacks}, so that $f$ is pro-smooth by Popescu's theorem \cite[Tag 07GC]{stacks}. Since the slice filtration and the inclusion $\MS^{\smash[t]{\A^1}}\into\MS$ are compatible with pro-smooth base change, we have $\kgl_S\simeq f^*(\kgl_D)$ in $\MS_S$. Applying Construction~\ref{cst:motivic-filtration} to the filtered $\E_\infty$-map of Proposition~\ref{prop:motfil}, we obtain a morphism of motivic $\E_\infty$-ring spectra $\kgl_D\to\tc_p$ in $\MS_D$.
	The desired morphism in $\MS_S$ is then the composite
	\[
	\kgl_S\simeq f^*(\kgl_D) \to f^*(\tc_p) \to \tc_p.\qedhere
	\]
\end{proof}

\begin{remark}
There is an integral refinement of the motivic filtration of topological cyclic homology due to Bouis~\cite{Bou}.
This gives an integral motivic $\E_\infty$-ring spectrum $\tc$ such that the cyclotomic trace induces a morphism of $\E_\infty$-ring spectra $\kgl\to\tc$ over Dedekind domains.
\end{remark}

\begin{corollary}\label{cor:tc}
Let $S$ be $p$-completely smooth over a Dedekind domain. Then the motivic spectrum $\tc_p$ is an $\E_\infty$-$\1_{\A^1}$-algebra in $\MS_S$.
\end{corollary}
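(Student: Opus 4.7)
The plan is to leverage the morphism of motivic $\E_\infty$-ring spectra $\kgl \to \tc_p$ in $\MS_S$ furnished by Corollary~\ref{cor:motfil}. If $\kgl$ admits the structure of an $\E_\infty$-$\1_{\A^1}$-algebra, then so does $\tc_p$: the desired unit map $\1_{\A^1} \to \tc_p$ is obtained by composition with $\1_{\A^1} \to \kgl$. Thus the statement reduces to showing that $\kgl_S$ is an $\E_\infty$-$\1_{\A^1}$-algebra. Since $\1_{\A^1} = \L_{\A^1}\1$ is the initial $\A^1$-local object of $\CAlg(\MS_S)$, this is in turn equivalent to $\kgl_S$ being $\A^1$-invariant.

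To verify the $\A^1$-invariance of $\kgl_S$, I would follow the same chain of reductions used in the proof of Corollary~\ref{cor:motfil}. By Zariski descent we may assume $S$ is affine, and using the fact that $\kgl$ on a $p$-completely smooth base is pulled back from its $p$-completion (in parallel to $\tc_p \simeq u_*\tc_p$ for $u\colon S^\wedge_p \to S$), we may further assume $S$ is $p$-complete. By Popescu's theorem, $S$ is then pro-smooth over a Dedekind domain $D$, and the identification $\kgl_S \simeq f^*\kgl_D$ along the pro-smooth morphism $f\colon S \to \Spec D$ combined with the fact that $f^*\colon \MS_D \to \MS_S$ preserves $\A^1$-invariant objects reduces the problem to showing $\kgl_D \in \MS_D$ is $\A^1$-invariant.

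Over a Dedekind domain $D$, the motivic spectrum $\kgl_D$ is characterized by the property $\Omega^{\infty-n}_{\P^1}\kgl_D \simeq F^n\K$ for every $n\in\Z$, so it suffices to check that each $F^n\K$ is an $\A^1$-invariant presheaf of spectra on $\Sm_D$. This follows from three inputs: algebraic $\K$-theory is $\A^1$-invariant on regular noetherian schemes \cite[Proposition~6.8]{TT}; each associated graded piece $\gr_F^n\K \simeq \Z(n)[2n]$ is $\A^1$-invariant as a Bloch--Levine motivic complex; and the filtration is Zariski-locally complete (as established in the proof of Lemma~\ref{lem:motfil}), so $\A^1$-invariance propagates from the graded pieces to every $F^n\K$. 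The main technical subtlety is the initial reduction from a general $p$-completely smooth base $S$ to the $p$-complete case, where one must carefully track how $\kgl$ is defined over such bases; but once this bookkeeping is in place, the argument is essentially formal, resting on Bachmann's identification of the motivic filtration of $\K$-theory with the slice filtration of $\KGL$ in $\MS_D^{\A^1}$.
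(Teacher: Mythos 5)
Your opening step matches the paper's: reduce via the $\E_\infty$-ring map $\kgl\to\tc_p$ from Corollary~\ref{cor:motfil} to the fact that $\kgl$ is a $\1_{\A^1}$-algebra. But the remainder of your argument re-derives something that holds by construction. The spectrum $\kgl_D$ is \emph{defined} as the effective cover $f_0\KGL$ taken inside the $\A^1$-invariant category $\MS_D^{\A^1}$ (this is what ``$\A^1$-local slice filtration'' means in the displayed formula before Lemma~\ref{lem:motfil}), and the proof of Corollary~\ref{cor:motfil} already records that the pullback $\kgl_S\simeq f^*(\kgl_D)$ along a pro-smooth $f$ remains $\A^1$-invariant because the inclusion $\MS^{\A^1}\into\MS$ is compatible with pro-smooth base change. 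So ``$\kgl$ is $\A^1$-invariant'' is immediate, and the paper's proof stops there. Your detour through completeness of the filtration, $\A^1$-invariance of the motivic complexes, and the Zariski/$p$-completion/Popescu reductions (which duplicate those already carried out in the proof of Corollary~\ref{cor:motfil}) is therefore correct but superfluous. One more small point: you write that being an $\E_\infty$-$\1_{\A^1}$-algebra is ``equivalent to'' being $\A^1$-invariant. The paper keeps $\MS_S^{\A^1}$ and $\Mod_{\1_{\A^1}}(\MS_S)$ as distinct categories (see Corollary~\ref{cor:A1-colocalization}), and only the implication ``$\A^1$-invariant $\Rightarrow$ $\1_{\A^1}$-algebra'' (via the symmetric monoidal localization $\L_{\A^1}$) is needed or asserted; the converse is not claimed.
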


\begin{proof}
	This follows from Corollary~\ref{cor:motfil}, as $\kgl$ is $\A^1$-invariant by definition.
\end{proof}

\begin{example}[Syntomic cohomology and étale motivic cohomology]
	\label{ex:syn}
By Corollary~\ref{cor:tc}, the syntomic cohomology spectrum $\HH\Z_p^\syn=\tc_p/\beta$ is an $\E_\infty$-$\1_{\A^1}$-algebra in $\MS_S$ if $S$ is $p$-completely smooth over a Dedekind domain.
Note that this spectrum represents the syntomic cohomology of $p$-adic formal schemes, not the syntomic cohomology of schemes.
To clarify, let $\HH\Z_p^\et$ denote the motivic $\E_\infty$-ring spectrum over a derived scheme $S$ representing the syntomic cohomology of smooth $S$-schemes (obtained by applying Construction~\ref{cst:motivic-filtration} to the $\Z$-graded $\E_\infty$-algebra $\mathrm{R}\Gamma_\syn(\ph,\Z_p(*))[2*]$ from \cite[Section 8.4]{BhattLurie}). As we recalled in the proof of Lemma~\ref{lem:motfil}, $\HH\Z_p^\et$ is stable under arbitrary base change as a $p$-complete motivic spectrum, and over Dedekind domains it is the (degreewise) étale sheafification of the $p$-adic motivic cohomology spectrum $\HH\Z_p$, which explains the notation.
In particular, $\HH\Z_p^\et$ is an $\E_\infty$-$\HH\Z_p$-algebra and hence an $\E_\infty$-$\1_{\A^1}$-algebra over Dedekind domains.
In general, the $p$-complete motivic spectrum $\HH\Z_p^\syn$ coincides with $i_*i^*\HH\Z_p^\et$, where $i$ denotes the inclusion of the $p$-adic formal scheme $S^\wedge_p$ into $S$ and $\MS_{S^\wedge_p}=\lim_n \MS_{S/p^n}$.
Later we will also define an integral étale motivic cohomology spectrum $\HH\Z^\et$ over any base, which is an $\E_\infty$-$\1_{\A^1}$-algebra over Dedekind domains (see Example~\ref{ex:HZetale}).
\end{example}

\begin{example}[Prismatic and crystalline cohomology]
	\label{ex:prism}
Syntomic cohomology is practically initial among other important $p$-adic étale cohomology theories, which are thus also $\1_{\A^1}$-modules over Dedekind domains.
Let us take prismatic cohomology as an example \cite{BS:2022,BhattLurie,AKN}.
Let $\HH\Z_p^\prism$ denote the motivic $\E_\infty$-ring spectrum representing absolute prismatic cohomology.
It is defined over any derived scheme and is an $\E_\infty$-algebra over the syntomic cohomology spectrum $\HH\Z_p^\syn$.\footnote{This $\E_\infty$-algebra structure exists before the Nygaard completion thanks to \cite[Proposition~7.12]{AKN}.}
In particular, $\HH\Z_p^\prism$ is an $\E_\infty$-$\1_{\A^1}$-algebra in $\MS_S$ if $S$ is $p$-completely smooth over a Dedekind domain.
If $(A,I)$ is a prism, relative prismatic cohomology $\mathrm{R}\Gamma_\prism(\ph/A)$ is represented by a motivic spectrum $\HH A^\prism$ over any $A/I$-scheme $S$.
It is an $\E_\infty$-algebra over $\HH\Z_p^\prism$, and thus over $\1_{\A^1}$ if $S$ is $p$-completely smooth over a Dedekind domain.
Taking the crystalline prism $(W(k),(p))$ for a perfect field $k$ of characteristic $p$, we see that the motivic spectrum $\HH W(k)^\crys=\HH W(k)^\prism$ representing crystalline cohomology is an $\E_\infty$-$\1_{\A^1}$-algebra over $k$.
\end{example}

\begin{example}[Rational orientable ring spectra]
\label{ex:Q-orientable}
Let $S$ be a regular noetherian scheme. As we will see later in Proposition~\ref{prop:HQ-modules}, every $\Q$-linear orientable object in $\CAlg(\h\MS_S)$ is canonically a $\1_{\A^1}$-module, and every $\Q$-linear orientable $\E_\infty$-algebra in $\MS_S$ is canonically an $\E_\infty$-$\1_{\A^1}$-algebra.
\end{example}

With these numerous examples of $\1_{\A^1}$-modules in mind, we now investigate the $\A^1$-colocalization of $\1_{\A^1}$-modules.

\begin{lemma}\label{lem:dual0}
Let $\CC$ be a symmetric monoidal $\infty$-category in which every object is dualizable.
Let $F\colon\DD\to\DD'$ be a $\CC$-linear functor between $\infty$-categories tensored over $\CC$.
Then a left or right adjoint of $F$ is $\CC$-linear if it exists.
\end{lemma}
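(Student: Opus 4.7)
The plan is to reduce the statement to the case of a right adjoint and then construct the $\CC$-linear structure on it via the calculus of mates, with dualizability providing all needed adjointability.

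First, the left adjoint case reduces to the right adjoint case by passing to opposite $\infty$-categories: $\CC^\op$ is again symmetric monoidal with every object dualizable (same objects and same duals, only the roles of $\coev$ and $\ev$ swap), the $\CC$-module structures on $\DD$ and $\DD'$ induce $\CC^\op$-module structures on $\DD^\op$ and $(\DD')^\op$, a $\CC$-linear $F$ gives a $\CC^\op$-linear $F^\op\colon\DD^\op\to(\DD')^\op$, and a left adjoint of $F$ is the same datum as a right adjoint of $F^\op$. So I may assume from now on that $G\colon\DD'\to\DD$ is right adjoint to $F$.

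The key observation is that, since every $c\in\CC$ is dualizable, the action endofunctor $L_c=c\otimes(-)$ on any $\CC$-module admits a right adjoint $L_{c^\vee}=c^\vee\otimes(-)$, with unit and counit obtained from the coevaluation and evaluation of $c$; in particular, the $\CC$-action on each of $\DD$ and $\DD'$ is by functors with right adjoints. Given this, the construction of the $\CC$-linearity data on $G$ is standard: the $\CC$-linearity of $F$ packages, for each $c\in\CC$, a natural equivalence $\lambda_c\colon F\circ L_c^{\DD}\simeq L_c^{\DD'}\circ F$, and passing to right adjoints on both sides using $F\dashv G$ and $L_c\dashv L_{c^\vee}$ produces a natural equivalence $G\circ L_{c^\vee}^{\DD'}\simeq L_{c^\vee}^{\DD}\circ G$. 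As $c$ ranges over $\CC$, so does $c^\vee$, and each resulting structure map is an equivalence because the mate of an equivalence along adjunctions is an equivalence; this gives the pointwise $\CC$-linearity data on $G$.

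The main obstacle is to upgrade this pointwise construction to a coherent $\CC$-linear structure, i.e., to a lift of $G$ to a morphism in $\Mod_\CC$, rather than a mere collection of compatible isomorphisms. I expect to handle this by appealing to the $\infty$-categorical formalism of mates (as developed in Lurie's \emph{Higher Algebra}), which promotes the passage to right adjoints to a functor between $\infty$-categories of functors carrying adjointable structure, compatible with the module-theoretic data at hand. Equivalently, one can observe that a right adjoint of any strong $\CC$-linear functor always carries a canonical lax $\CC$-linear structure whose structure maps are the mates described above; the dualizability hypothesis then forces this lax structure to be strong, since each of those mates is an equivalence. Either formulation produces a coherent $\CC$-linear refinement of $G$ for which $F\dashv G$ lifts to an adjunction in $\Mod_\CC$, completing the proof.
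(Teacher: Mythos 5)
Your proposal is correct and, in its second formulation (observe that the right adjoint $G$ is automatically lax $\CC$-linear, then use dualizability to show the lax structure maps $c\otimes G(-)\to G(c\otimes-)$ are equivalences), is essentially the paper's argument; the paper verifies the equivalence by checking on mapping spaces using $F\dashv G$, dualizability of $c$, and $\CC$-linearity of $F$, while you verify it by identifying it as a mate of an equivalence, which is the same content. Your first, opposite-category reduction of the left-adjoint case is a reasonable alternative to the paper's implicit use of oplax structures, but is not a substantive difference.
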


\begin{proof}
Suppose that $F$ admits a right adjoint $G$.
Then $G$ is lax $\CC$-linear and thus we have a natural transformation $x\otimes G(\ph)\to G(x\otimes\ph)$ for every $x\in\CC$.
It suffices to show that the induced map
\[
	\Map(y,x\otimes G(z)) \to \Map(y,G(x\otimes z))
\]
is an isomorphism for every $y\in\DD$ and $z\in\DD'$, but this follows straightforwardly from the dualizability of $x\in\CC$ and the $\CC$-linearity of $F$.
The case of a left adjoint is proved in the same way.
\end{proof}

\begin{lemma}\label{lem:dual}
The adjoint functors
\[
\begin{tikzcd}
	\Mod_{\1_{\A^1}}(\MS_S) \ar[r, shift left=2, "\L_{\A^1}"] \ar[r, shift right=2, "(\ph)^\dagger" swap] &  \MS_S^{\A^1} \ar[l]
\end{tikzcd}
\]
are all $\MS_S^\dual$-linear, where $\MS_S^\dual$ is the full subcategory of $\MS_S$ spanned by dualizable objects.
\end{lemma}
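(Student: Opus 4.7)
The strategy is to establish the $\MS_S^\dual$-linearity of the middle functor, namely the canonical inclusion $\iota\colon \MS_S^{\A^1}\into \Mod_{\1_{\A^1}}(\MS_S)$, and then to invoke Lemma~\ref{lem:dual0} to propagate this $\MS_S^\dual$-linearity to both its left adjoint $\L_{\A^1}$ and its right adjoint $(\ph)^\dagger$. The hypothesis of Lemma~\ref{lem:dual0} is satisfied by $\CC=\MS_S^\dual$ essentially by construction, since every object there is dualizable.

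The key input is that $\MS_S^{\A^1}\subset\MS_S$ is stable under tensoring with dualizable motivic spectra. To see this, recall that the $\A^1$-acyclic objects---that is, the left orthogonal of $\MS_S^{\A^1}$---form a $\otimes$-ideal of $\MS_S$, being generated under colimits and tensor products by objects of the form $\cofib((Y\times\A^1)_+\to Y_+)$ for $Y\in\Sm_S$. Therefore, for any $X\in\MS_S^\dual$ with dual $X^\vee$ and any $\A^1$-acyclic object $F\in\MS_S$, the tensor product $F\otimes X^\vee$ is again $\A^1$-acyclic, and hence for every $E\in\MS_S^{\A^1}$ one has
\[
\map(F, X\otimes E)\simeq \map(F\otimes X^\vee, E)=0,
\]
showing that $X\otimes E\in\MS_S^{\A^1}$.

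Consequently, the $\MS_S^\dual$-action on $\MS_S^{\A^1}$ induced from the symmetric monoidal localization $\L_{\A^1}$ coincides with the restriction to $\MS_S^\dual$ of the $\MS_S$-action on $\MS_S$: for $X\in\MS_S^\dual$ and $E\in\MS_S^{\A^1}$, the spectrum $X\otimes \iota E$ is already $\A^1$-invariant and hence $\L_{\A^1}(X\otimes\iota E)\simeq X\otimes\iota E$. Since the $\MS_S^\dual$-action on $\Mod_{\1_{\A^1}}(\MS_S)$ is likewise simply the restriction of the $\MS_S$-action, the inclusion $\iota$ is tautologically $\MS_S^\dual$-linear. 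Invoking Lemma~\ref{lem:dual0} now yields the desired linearity of $\L_{\A^1}$ and of $(\ph)^\dagger$.

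The main obstacle is a coherence issue: one must ensure that both $\MS_S^\dual$-module structures genuinely arise by restriction from a common ambient $\MS_S$-module structure, so that the linearity of $\iota$ is not merely pointwise. This is handled by noting that both module structures descend from the canonical $\MS_S$-module structure on $\MS_S$ itself---via the fully faithful right adjoint $\iota$ on one side and via extension of scalars along $\1\to\1_{\A^1}$ on the other---and the coherent compatibility of actions then reduces to the closure property established above.
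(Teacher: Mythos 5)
Your proof is correct and relies on the same key lemma as the paper, namely Lemma~\ref{lem:dual0}; the difference is where you enter the chain of adjoints $\L_{\A^1}\dashv\iota\dashv(\ph)^\dagger$. The paper's proof is a one-liner: it simply notes that the symmetric monoidal localization $\L_{\A^1}$ is $\MS_S$-linear (hence $\MS_S^\dual$-linear), and then Lemma~\ref{lem:dual0}, applied twice along the chain, transfers this first to $\iota$ and then to $(\ph)^\dagger$. You instead start from the middle: you directly prove that $\iota$ is $\MS_S^\dual$-linear by showing that $\MS_S^{\A^1}\subset\MS_S$ is closed under tensoring with dualizable objects (using that the $\A^1$-acyclic objects form a $\otimes$-ideal), and then a single application of Lemma~\ref{lem:dual0} gives linearity of both adjoints simultaneously. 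The two arguments are logically near-equivalent: your explicit closure computation is exactly what makes $\L_{\A^1}$ $\MS_S$-linear in the first place, so you are essentially re-deriving a fact the paper treats as standard. Your coherence discussion at the end is also the right concern and is resolved in the right way, but it is worth noting that the paper's formulation avoids this subtlety entirely by working with $\L_{\A^1}$, whose $\MS_S$-linearity is already a coherent structure coming from its being a symmetric monoidal functor between $\MS_S$-modules.
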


\begin{proof}
Apply Lemma~\ref{lem:dual0} to $\CC=\MS_S^\dual$ and note that the $\A^1$-localization $\L_{\A^1}$ is $\MS_S$-linear.
\end{proof}

\begin{proposition}\label{prop:coloc}
Let $E$ be a $\1_{\A^1}$-module in $\MS_S$ and let $A\in\MS_S^\dual$ be a dualizable motivic spectrum.
Then the counit $E^\dagger\to E$ induces an isomorphism of spectra
\[
	E^\dagger(A) \simeq E(A).
\]
This holds in particular if $A=\Th_X(\xi)$ for some smooth projective $S$-scheme $X$ and some $\xi\in\K(X)$.
\end{proposition}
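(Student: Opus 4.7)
The plan is to use the dualizability of $A$ together with Lemma~\ref{lem:dual} to reduce the claim to the case $A=\1_S$, which then follows formally from the adjunctions defining $(\ph)^\dagger$.

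More precisely, dualizability of $A$ gives a natural identification $\map(A,F)\simeq\map(\1_S,A^\vee\otimes F)$ for any motivic spectrum $F$. Applying this to both $E^\dagger$ and $E$, the claim reduces to showing that the natural map $A^\vee\otimes E^\dagger\to A^\vee\otimes E$ induces an isomorphism on $\map(\1_S,\ph)$. Since $A^\vee\in\MS_S^\dual$, Lemma~\ref{lem:dual} identifies $A^\vee\otimes E^\dagger$ with $(A^\vee\otimes E)^\dagger$ compatibly with the counit, and $A^\vee\otimes E$ is again a $\1_{\A^1}$-module (tensoring in $\MS_S$ preserves the module structure). So the problem is reduced to the following: for every $\1_{\A^1}$-module $F$, the counit $F^\dagger\to F$ induces an isomorphism $\map_{\MS_S}(\1_S,F^\dagger)\simeq\map_{\MS_S}(\1_S,F)$.

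This reduced statement follows from chaining three adjunctions connecting $\MS_S$, $\MS_S^{\A^1}$, and $\Mod_{\1_{\A^1}}(\MS_S)$: base change along $\1_S\to\1_{\A^1}$ gives $\map_{\MS_S}(\1_S,F)\simeq\map_{\Mod_{\1_{\A^1}}(\MS_S)}(\1_{\A^1},F)$; the adjunction $\L_{\A^1}\dashv(\MS_S^{\A^1}\subset\MS_S)$ gives $\map_{\MS_S}(\1_S,F^\dagger)\simeq\map_{\MS_S^{\A^1}}(\1_{\A^1},F^\dagger)$; and the defining adjunction of $(\ph)^\dagger$ yields $\map_{\MS_S^{\A^1}}(\1_{\A^1},F^\dagger)\simeq\map_{\Mod_{\1_{\A^1}}(\MS_S)}(\1_{\A^1},F)$. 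The composite equivalence is realized by the counit $F^\dagger\to F$, as required. The final assertion about $\Th_X(\xi)$ for smooth projective $X$ then follows at once from Atiyah duality (Corollary~\ref{cor:atiyah}), which supplies the dualizability. The argument is essentially formal once Lemma~\ref{lem:dual} is available; the only subtlety is that each identification must be natural in the counit map, but this is built into the $\MS_S^\dual$-linearity assertion of Lemma~\ref{lem:dual}.
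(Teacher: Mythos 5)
Your proof is correct and rests on the same two ingredients as the paper's: Lemma~\ref{lem:dual} and Corollary~\ref{cor:atiyah}. The route is slightly different, though. You first reduce to the case $A=\1_S$ by pulling $A^\vee$ inside the colocalization, which uses the $\MS_S^\dual$-linearity of the functor $(\ph)^\dagger$ from Lemma~\ref{lem:dual} to identify $A^\vee\otimes E^\dagger$ with $(A^\vee\otimes E)^\dagger$; you then run the chain of three adjunctions with the unit object. The paper instead keeps $A$ throughout and uses the $\MS_S^\dual$-linearity of the inclusion $\MS_S^{\A^1}\into\Mod_{\1_{\A^1}}(\MS_S)$ to produce the single identification $\L_{\A^1}A\simeq A\otimes\1_{\A^1}$, which slots directly into the adjunction chain $\map(A,E^\dagger)\simeq\map(\L_{\A^1}A,E^\dagger)\simeq\map_{\1_{\A^1}}(\L_{\A^1}A,E)\simeq\map_{\1_{\A^1}}(A\otimes\1_{\A^1},E)\simeq\map(A,E)$. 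Both deploy the same lemma, just a different one of its three consequences, and yours introduces a detour via $A^\vee\otimes E$ that the paper avoids. The one point you should make sure you are comfortable with is the compatibility-with-the-counit claim in your reduction step; this is indeed legitimate because the $\MS_S^\dual$-linear structure on $(\ph)^\dagger$ in Lemma~\ref{lem:dual0} is constructed as the adjoint of the linear structure on the left adjoint, hence is automatically compatible with the unit and counit of that adjunction.
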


\begin{proof}
It follows from the $\MS_S^\dual$-linearity of the inclusion $\MS_S^{\A^1}\into \Mod_{\1_{\A^1}}(\MS_S)$ (Lemma~\ref{lem:dual}) that
\[
	A\otimes\1_{\A^1} \simeq \L_{\A^1}A \rlap.
\]
Then we see that
\[
\begin{split}
	E^\dagger(A)  = \map(A,E^\dagger)
	&\simeq \map(\L_{\A^1}A,E^\dagger) \\
	&\simeq \map_{\1_{\A^1}}(\L_{\A^1}A,E) \\
	&\simeq \map_{\1_{\A^1}}(A\otimes\1_{\A^1},E) \\
	&\simeq \map(A,E) = E(A) \rlap,
\end{split}
\]
where $\map_{\1_{\A^1}}(\ph,\ph)$ denotes the spectrum of $\1_{\A^1}$-linear maps.
Finally, $\Th_X(\xi)$ is dualizable in $\MS_S$ by Corollary~\ref{cor:atiyah}.
\end{proof}

Informally, Proposition~\ref{prop:coloc} says that $\A^1$-colocalization is a universal machinery that converts a $\1_{\A^1}$-module into some $\A^1$-invariant cohomology theory without changing its values on smooth projective schemes.
The following is how it is calculated in practice; the conclusion is Proposition~\ref{prop:log}.

\begin{construction}\label{cst:sncd}
Let $\Box^n$ denote the poset of subsets of $\{1,\dotsc,n\}$.
Let $X$ be a smooth projective $S$-scheme and $\partial X$ a relative strict normal crossings divisor on $X$ with smooth components $\partial_1X,\dotsc,\partial_nX$. Consider the $n$-cube $\Box^{n,\op}\to\Sm_S$ sending $I$ to the intersection $\partial_IX$ of all $\partial_iX$ with $i\in I$.
This induces an $n$-cube
\[
	\Box^{n,\op}\to \Fun(\MS_X,\MS_S),\quad I \mapsto f_{I\sharp}i_I^*,
\]
where $i_I$ denotes the closed immersion $\partial_IX\into X$ and $f_I$ the structure map $\partial_IX\to S$.
Then, for each $\xi\in\K(X)$, we define the $n$-cube $\Th_{(X,\partial X)}(\xi)$ in $\MS_S$ by
\[
	\Th_{(X,\partial X)}(\xi)\colon \Box^{n}\to\MS_S,\quad I\mapsto (f_{I\sharp}i_I^*\Sigma^{-\xi-\Omega_X}\1_X)^\vee.
\]
\end{construction}

\begin{remark}
Under Atiyah duality, the cube $\Th_{(X,\partial X)}(\xi)$ sends $\emptyset\to I$ to the $\xi$-twisted Gysin map
\[
	\Th_X(\xi) \to \Th_{\partial_IX}(\xi+\mathcal{N}_{i_I}).
\]
The cube itself should be given by ``coherent Gysin maps'', but we will not need such a description.
\end{remark}

\begin{lemma}\label{lem:purity}
Let $X$ be a smooth projective $S$-scheme, $\partial X$ a relative strict normal crossings divisor on $X$, and $\xi\in\K(X)$.
Then there is a canonical morphism 
\[
	\Th_{X\setminus\partial X}(\xi) \to \Th_{(X,\partial X)}(\xi)
\]
in $\MS_S$ that exhibits $\L_{\A^1}\Th_{X\setminus\partial X}(\xi)$ as the total fiber of the cube $\L_{\A^1}\Th_{(X,\partial X)}(\xi)$.
\end{lemma}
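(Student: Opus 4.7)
The plan is to construct the comparison map using the coherent functoriality of Gysin maps in pairs, and then to identify its target with $\L_{\A^1}\Th_U(\xi)$ by induction on the number $n$ of smooth components of $\partial X$, using the Morel--Voevodsky purity theorem at each step.

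For the construction of the comparison map, the key observation is that $U\cap\partial_IX=\emptyset$ for every nonempty $I\subset\{1,\ldots,n\}$, so the square
\[
\begin{tikzcd}
\emptyset \ar[hook]{r} \ar{d} & U \ar{d} \\
\partial_IX \ar[hook]{r} & X
\end{tikzcd}
\]
is an excess intersection square, i.e., a morphism in $\Pair_S$ from $(\emptyset\into U)$ to $(\partial_IX\into X)$. Applying the $\xi$-twisted Gysin functor from Remark~\ref{rmk:gysin-thom} produces, for each nonempty $I$, a null-homotopy of the composition $\Th_U(\xi)\to\Th_X(\xi)\to\Th_{\partial_IX}(\xi+\scr N_{i_I})$, since the source pair yields the zero Thom spectrum $\Th_\emptyset=0$. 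I would package these null-homotopies coherently in $I$ using the $\Sm_S$-linear functoriality of the Gysin construction, thereby obtaining a cone from $\Th_U(\xi)$ to the cube $\Th_{(X,\partial X)}(\xi)$ that factors through zero at every nonempty vertex. By the universal property of total fibers in a stable $\infty$-category, this is exactly the data of a canonical comparison map from $\Th_U(\xi)$ to the total fiber of $\Th_{(X,\partial X)}(\xi)$.

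For the identification after $\A^1$-localization, I argue by induction on $n$. The case $n=0$ is trivial, and $n=1$ is Morel--Voevodsky purity in the form of Remark~\ref{rmk:purity}: the Gysin null-sequence $\Th_U(\xi)\to\Th_X(\xi)\xrightarrow{\gys}\Th_{\partial_1X}(\xi+\scr N_{\partial_1X/X})$ is a cofiber sequence in $\MS_S^{\A^1}$. For the inductive step, write $\partial X=\partial X'\cup\partial_nX$ with $\partial X'=\partial_1X\cup\dotsb\cup\partial_{n-1}X$, and set $U'=X\setminus\partial X'$, so $U=U'\setminus(U'\cap\partial_nX)$. The $n$-cube $\Th_{(X,\partial X)}(\xi)$ decomposes as a morphism of $(n-1)$-cubes from $\Th_{(X,\partial X')}(\xi)$ (vertices with $n\notin I$) to $\Th_{(\partial_nX,\partial X'\cap\partial_nX)}(\xi+\scr N_n)$ (vertices with $n\in I$), where the second identification uses the conormal fiber sequence for $\partial_{J\cup\{n\}}X\into\partial_nX\into X$. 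The induction hypothesis identifies the $\A^1$-local total fibers of these subcubes with $\L_{\A^1}\Th_{U'}(\xi)$ and $\L_{\A^1}\Th_{U'\cap\partial_nX}(\xi+\scr N_n)$ respectively, and by naturality of the comparison map the induced morphism between them is the Gysin map for the smooth closed embedding $U'\cap\partial_nX\hookrightarrow U'$. One further application of purity identifies the $\A^1$-local fiber of this Gysin map with $\L_{\A^1}\Th_U(\xi)$, completing the induction.

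The main obstacle will be verifying the naturality statement used in the inductive step: that the induced map of $\A^1$-local total fibers genuinely coincides with the Gysin map for $U'\cap\partial_nX\hookrightarrow U'$. This requires setting up the coherent construction of the comparison map functorially in the strict normal crossings divisor (allowing $\partial X$ to be refined to a subset $\partial X'$ of its components), so that its behaviour under the above decomposition is transparent. The needed coherences should follow from the $\Sm_S$-linear functoriality recorded in Theorem~\ref{thm:tang}, but the bookkeeping to carry these through the induction is the technical crux of the proof.
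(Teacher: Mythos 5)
Your proposal takes a genuinely different route from the paper's. The paper's proof avoids the coherence problems you flag by exploiting how Construction~\ref{cst:sncd} defines the cube, namely as the pointwise \emph{dual} of the cube $I\mapsto\Th_{\partial_IX}(-\xi-\Omega_X)$ of closed-immersion maps. The total fiber of this cube of duals is the fiber of a single map $\Th_X(-\xi-\Omega_X)^\vee\to\bigl(\colim_{I\neq\emptyset}\Th_{\partial_IX}(-\xi-\Omega_X)\bigr)^\vee$, and by Zariski descent $\Th_{X\setminus\partial X}(\xi)$ is the limit of $I\mapsto\Th_{X\setminus\partial_IX}(\xi)$. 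The comparison map is then produced from one pairing $\Th_{X/(X\setminus Z)}(\xi)\otimes\Th_Z(-\xi-\Omega_X)\to\1_S$ that is bivariantly natural in a \emph{single} smooth closed subscheme $Z\subset X$; using Theorem~\ref{thm:tang}(ii), this pairing factors through $\gys\otimes\id$ followed by $\ev_{Z,\xi+\scr N_Z}$, and $\A^1$-local perfectness then follows from Atiyah duality (Corollary~\ref{cor:atiyah}) and Morel--Voevodsky purity in a single step, with no induction on $n$.

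The gap in your proposal lies in constructing the comparison map. You want to ``package the null-homotopies coherently in $I$ using the $\Sm_S$-linear functoriality of the Gysin construction,'' but the coherence available from Theorem~\ref{thm:tang} stops at triples (the functor $\mathrm{Triple}_S\to\MS_S^{\Delta^1\times\Delta^1}$). Producing a cone from $\Th_U(\xi)$ to the $n$-cube that factors through $0$ at every nonempty vertex requires a coherent $n$-cube of Gysin maps together with a compatible $n$-cube of null-homotopies, which is strictly more data than $\Pair_S$ and $\mathrm{Triple}_S$ provide. The remark after Construction~\ref{cst:sncd} is explicit about this: ``The cube itself should be given by `coherent Gysin maps', but we will not need such a description.'' Moreover, $\Th_{(X,\partial X)}(\xi)$ is \emph{defined} as a dual, not via Gysin maps, and the identification of even a single edge $\emptyset\to I$ with a Gysin map goes through Atiyah duality; so to state your coherent construction you would first have to verify that the whole dual cube agrees with a (yet-to-be-built) Gysin cube, which is precisely what the paper declines to undertake. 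The naturality you invoke in the inductive step (that the induced morphism of total fibers equals the Gysin map for $U'\cap\partial_nX\into U'$) rests on the same unestablished coherence, so what you describe as ``bookkeeping'' is in fact where the argument would need to be created from scratch.
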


\begin{proof}
By definition, the total fiber of $\Th_{(X,\partial X)}(\xi)$ is the fiber of
\[
	\Th_X(-\xi-\Omega_X)^\vee \to \Bigl( \colim_{I\in\Box^n\setminus\{\varnothing\}}\Th_{\partial_IX}(-\xi-\Omega_X) \Bigr)^\vee \rlap.
\]
On the other hand, by Zariski descent, we have an isomorphism
\[
	\Th_{X\setminus\partial X}(\xi) \simeq \lim_{I\in\Box^n\setminus\{\varnothing\}}\Th_{X\setminus\partial_IX}(\xi) \rlap.
\]
Hence, it suffices to construct a pairing in $\MS_S$ 
\[
	\Th_{X/(X\setminus Z)}(\xi)\otimes \Th_Z(-\xi-\Omega_X) \to \1_S
\]
that is (bivariantly) natural in the smooth closed subscheme $Z\subset X$ and becomes a perfect pairing after $\A^1$-localization.
We define it as the composition
\begin{align*}
	\Th_{X/(X\setminus Z)}(\xi)\otimes \Th_Z(-\xi-\Omega_X)
	& \simeq \Th_{X\times Z/(X\setminus Z)\times Z}(\xi\boxplus(-\xi-\Omega_X)) \\
	&\to \Th_{X\times Z/(X\times Z\setminus\delta(Z))}(\xi\boxplus(-\xi-\Omega_X)) \\
	&\to \Th_{X\times X/(X\times X\setminus\delta(X))}(\xi\boxplus(-\xi-\Omega_X)) \\
	&\to \Th_{X}(-\Omega_X+\scr N_\delta)\simeq X_+ \to \1_S,
\end{align*}
where the first map collapses the complement of $\delta(Z)$, the second map is induced by the closed immersion $X\times Z\into X\times X$, and the third map is the Gysin map with respect to the diagonal $\delta\colon X\into X\times X$ (cf.\ Construction~\ref{ctr:ev}).
This pairing is obviously natural in $Z$, as the first two maps are and the third one does not depend on $Z$.
Using Theorem~\ref{thm:tang}(ii), we can identify this pairing with
\[
\Th_{X/(X\setminus Z)}(\xi)\otimes \Th_Z(-\xi-\Omega_X) \xrightarrow{\gys\otimes\id}
\Th_{Z}(\xi+\scr N_Z)\otimes \Th_Z(-\xi-\Omega_X) \xrightarrow{\ev_{Z,\xi+\scr N_Z}} \1_S.
\]
The second map is a perfect pairing in $\MS_S$ by Atiyah duality (Corollary~\ref{cor:atiyah}), and the first map becomes an isomorphism after $\A^1$-localization, by the Morel--Voevodsky purity theorem \cite[Section~3, Theorem~2.23]{MV}.
\end{proof}

\begin{proposition}\label{prop:log}
Let $E$ be a $\1_{\A^1}$-module in $\MS_S$ and $U$ a smooth $S$-scheme.
Suppose that there is an open immersion $U\into X$ of $S$-schemes such that $X$ is smooth projective and $\partial X=X\setminus U$ is a relative strict normal crossings divisor.
Then for every $\xi\in\K(X)$, there is a canonical isomorphism
\[
	E^\dagger(\Th_U(\xi)) \simeq \operatorname{tcofib} E(\Th_{(X,\partial X)}(\xi)),
\]
where $\operatorname{tcofib}$ denotes the total cofiber. 
\end{proposition}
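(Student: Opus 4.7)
The plan is to combine three ingredients: the $\A^1$-locality of $E^\dagger$, the motivic purity identification packaged in Lemma~\ref{lem:purity}, and the compatibility of $E^\dagger$ with $E$ on dualizable motivic spectra established in Proposition~\ref{prop:coloc}.

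First, since $E^\dagger$ is $\A^1$-local, the mapping spectrum $E^\dagger(\Th_U(\xi))=\map(\Th_U(\xi),E^\dagger)$ depends only on $\L_{\A^1}\Th_U(\xi)$. By Lemma~\ref{lem:purity}, the latter is the total fiber of the cube $\L_{\A^1}\Th_{(X,\partial X)}(\xi)$. Applying $\map(\ph,E^\dagger)$ and using that it is a contravariant exact functor between stable $\infty$-categories — so it converts total fibers to total cofibers (formally, $\operatorname{tfib}\simeq\Omega^n\operatorname{tcofib}$ in a stable $\infty$-category, and $\map(\ph,E^\dagger)$ swaps $\Sigma$ and $\Omega$) — I obtain the total cofiber of the cube whose $I$th entry is $\map(\L_{\A^1}\Th_{(X,\partial X)}(\xi)(I),E^\dagger)$. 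The $\A^1$-locality of $E^\dagger$ once more allows me to drop $\L_{\A^1}$ termwise, yielding
\[
E^\dagger(\Th_U(\xi))\simeq \operatorname{tcofib}\,E^\dagger(\Th_{(X,\partial X)}(\xi)).
\]

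Next, I would identify the entries of the cube $\Th_{(X,\partial X)}(\xi)$ as dualizable Thom spectra. Each $\partial_IX$ is smooth and projective over $S$, so by Atiyah duality (Corollary~\ref{cor:atiyah}) the entry
\[
\Th_{(X,\partial X)}(\xi)(I)=(f_{I\sharp}i_I^*\Sigma^{-\xi-\Omega_X}\1_X)^\vee
\]
is isomorphic to $\Th_{\partial_IX}(\xi+\scr N_{i_I})$ (using the fiber sequence $\scr N_{i_I}\to i_I^*\Omega_X\to\Omega_{\partial_IX}$ to rewrite the duality-twisted exponent), and in particular dualizable in $\MS_S$. By Proposition~\ref{prop:coloc}, the counit $E^\dagger\to E$ evaluates to an isomorphism on each entry of the cube, naturally in $I$, so passing to total cofibers gives
\[
\operatorname{tcofib}\,E^\dagger(\Th_{(X,\partial X)}(\xi))\simeq \operatorname{tcofib}\,E(\Th_{(X,\partial X)}(\xi)),
\]
which combined with the previous identification completes the proof.

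The nontrivial motivic content — purity across the boundary $\partial X$ — is already supplied by Lemma~\ref{lem:purity}, and the identification of the entries of the cube as dualizable Thom spectra rests on Atiyah duality for each stratum $\partial_IX$; the remaining steps are formal manipulations of finite (co)limits of cubes in a stable $\infty$-category. The only bookkeeping point requiring some care is the conversion of the total fiber of the covariant cube $\Th_{(X,\partial X)}(\xi)$ into the total cofiber of the contravariant cube $E(\Th_{(X,\partial X)}(\xi))$ obtained after applying $\map(\ph,E)$, but this is a purely formal consequence of stability.
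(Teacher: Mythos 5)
Your proof is correct and follows the same route as the paper's (which is much terser but unpacks to exactly your argument): apply $E^\dagger(-)=\map(-,E^\dagger)$ to the $\A^1$-equivalence of Lemma~\ref{lem:purity}, use exactness and $\A^1$-locality to convert the total fiber into a total cofiber of $E^\dagger$-values, and then invoke Proposition~\ref{prop:coloc} termwise since each entry of the cube is, by Atiyah duality, a dualizable Thom spectrum over the smooth projective stratum $\partial_IX$. Your identification of the $I$th entry as $\Th_{\partial_IX}(\xi+\scr N_{i_I})$ and the bookkeeping on orientations of the cube are both handled correctly, though (as you note) the dualizability of the entries alone already suffices without making the identification explicit.
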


\begin{proof}
Under Proposition~\ref{prop:coloc}, this is the $\A^1$-equivalence of Lemma~\ref{lem:purity} applied to the $\A^1$-invariant motivic spectrum $E^\dagger$.
\end{proof}

\begin{remark}
The right hand side of the isomorphism in Proposition~\ref{prop:log} can be regarded as the ``logarithmic $E$-cohomology'' of the logarithmic pair $(X,\partial X)$.
Hence, the proposition says that $E^\dagger$ of a smooth $S$-scheme $U$ is calculated as the logarithmic $E$-cohomology of a good compactification, if it exists.
On the other hand, it implies that the logarithmic $E$-cohomology does not depend on the choice of compactification.
\end{remark}

Let us take crystalline cohomology as an example.
Consider the crystalline cohomology spectrum $\HH W(k)^\crys$ over a perfect field $k$, which is an $\E_\infty$-$\1_{\A^1}$-algebra in $\MS_k$ by Example~\ref{ex:prism}.
Then its $\A^1$-colocalization gives an integral refinement of Berthelot's rigid cohomology \cite{Ber}. 
More precisely:

\begin{proposition}[$\A^1$-colocalized crystalline cohomology]\label{prop:crys}
Let $k$ be a perfect field of characteristic $p>0$.
\begin{enumerate}
\item The motivic spectrum $\HH W(k)^{\crys,\dagger}[1/p]$ represents Berthelot's rigid cohomology.
\item For a smooth $k$-scheme $U$, suppose that there is a smooth projective compactification $X$ such that $\partial X=X\setminus U$ is a strict normal crossings divisor.
Then there is a $W(k)$-linear isomorphism
\[
	\HH W(k)^{\crys,\dagger}(U) \simeq \mathrm{R}\Gamma_\crys((X,\partial X)/W(k)) \rlap,
\]
where the right hand side is logarithmic crystalline cohomology in the sense of Kato~\cite{Kat}; in particular, the latter does not depend on the choice of compactification if one exists.
\end{enumerate}
\end{proposition}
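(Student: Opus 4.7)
The plan is to deduce (ii) from Proposition~\ref{prop:log} together with Atiyah duality, and then to bootstrap (i) from (ii) via the classical comparison between logarithmic crystalline and rigid cohomology. For (ii), I apply Proposition~\ref{prop:log} with $\xi=0$ and $E=\HH W(k)^\crys$ (an $\1_{\A^1}$-algebra by Example~\ref{ex:prism}) to obtain
\[
\HH W(k)^{\crys,\dagger}(U)\simeq \operatorname{tcofib}\HH W(k)^\crys(\Th_{(X,\partial X)}(0)).
\]
By Construction~\ref{cst:sncd}, the $I$th vertex of this cube is $\HH W(k)^\crys$ applied to the dual of $\Th_{\partial_IX}(-\Omega_X\vert_{\partial_IX})$. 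As $\partial_IX$ is smooth and projective, Atiyah duality (Corollary~\ref{cor:atiyah}) combined with the conormal fiber sequence $\scr N_{i_I}\to i_I^*\Omega_X\to\Omega_{\partial_IX}$ identifies this dual with $\Th_{\partial_IX}(\scr N_{i_I})$. Since $\HH W(k)^\crys$ is an oriented motivic ring spectrum, the Thom isomorphism reduces the $I$th vertex to $\mathrm{R}\Gamma_\crys(\partial_IX/W(k))(-|I|)[-2|I|]$, with the edges becoming classical crystalline Gysin maps along the smooth closed inclusions $\partial_JX\into\partial_IX$ for $I\subset J$.

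It then remains to identify the total cofiber of this cube with Kato's logarithmic crystalline cohomology $\mathrm{R}\Gamma_\crys((X,\partial X)/W(k))$. This is essentially classical: the weight filtration on the logarithmic de~Rham--Witt complex of $(X,\partial X)$ has graded pieces computed via the Poincaré residue as the crystalline cohomologies of the strata $\partial_IX$ with the appropriate Tate twists, and the associated spectral-sequence differentials are alternating sums of crystalline Gysin maps. The calculation unconditionally assembles these data into an $n$-cube with Gysin differentials whose total cofiber is $\mathrm{R}\Gamma_\crys((X,\partial X)/W(k))$; the independence-from-resolution proved here is exactly what Mokrane~\cite{Mok} obtains only after assuming resolution of singularities. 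For (i), the theorem of Baldassarri--Chiarellotto and Berthelot identifies $\mathrm{R}\Gamma_\crys((X,\partial X)/W(k))[1/p]$ with Berthelot's rigid cohomology $\mathrm{R}\Gamma_\rig(U/K)$ (where $K=W(k)[1/p]$), which combined with (ii) gives the asserted agreement whenever $U$ admits a good compactification. To extend to an arbitrary smooth $k$-scheme, I would use that $\HH W(k)^{\crys,\dagger}$ is an $\A^1$-invariant Nisnevich motivic spectrum and hence satisfies proper cdh descent on smooth schemes via smooth blowup excision, while rigid cohomology satisfies proper cdh descent by Chiarellotto--Tsuzuki; de~Jong's alterations then provide cdh covers by smooth schemes admitting strict normal crossings compactifications, so the two theories agree in general.

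The main obstacle is the identification in the second paragraph: concretely matching the cube of motivic Gysin maps produced by Construction~\ref{cst:sncd}, after evaluation on $\HH W(k)^\crys$ and transport through the Thom isomorphism, with the Gysin/residue cube implicit in the weight filtration computing logarithmic crystalline cohomology. The compatibility of our motivic Gysin maps with classical crystalline Gysin maps on each edge should be straightforward from the orientation, but checking that this compatibility is coherent across the whole $n$-cube, so that the total cofibers agree and not merely all finite subcubes, is the delicate point.
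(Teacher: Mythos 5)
Your strategy for (ii) begins correctly: apply Proposition~\ref{prop:log} with $\xi=0$, use Atiyah duality and the conormal sequence to identify the vertices with Thom spectra $\Th_{\partial_IX}(\scr N_{i_I})$, and invoke orientedness of $\HH W(k)^\crys$ to read off the vertices of the resulting cube. But the ``main obstacle'' you flag in your last paragraph is a genuine gap, not a detail to be filled in later, and the paper sidesteps it rather than solving it. Concretely, you are proposing to match a cube of \emph{Gysin maps} (equivalently, a cube in $\HH W(k)^\crys$-\emph{homology} of the strata $\partial_I X$) against the weight filtration on the logarithmic de Rham--Witt complex, and you would then need to coherently identify all the higher faces of that $n$-cube, not merely the edges. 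This is exactly the kind of coherence check that Mokrane~\cite{Mok} could only carry out under resolution of singularities, and you do not indicate how to do it unconditionally. The paper instead \emph{dualizes}: using the Künneth formula, the $\HH W(k)^\crys$-homology cube of the strata is $W(k)$-linearly dual to the $\HH W(k)^\crys$-\emph{cohomology} cube, whose edges are plain restriction maps $\mathrm{R}\Gamma_\crys(\partial_IX)\to\mathrm{R}\Gamma_\crys(\partial_JX)$ (no Gysin maps, no residues). On the log-crystalline side, Poincaré duality for $\mathrm{R}\Gamma_\crys((X,\partial X)/W(k))$ due to Tsuji replaces the target by compactly supported log crystalline cohomology, and the needed isomorphism
\[
\mathrm{R}\Gamma_{\crys,c}((X,\partial X)/W(k)) \simeq \operatorname{tfib}_I\mathrm{R}\Gamma_\crys(\partial_IX/W(k))
\]
is then available off the shelf from~\cite{NS}. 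The dualization is the key move you are missing; without it your second paragraph is a statement of what needs to be proved rather than a proof.

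For (i), your order of deduction is also inverted relative to the paper, and the descent step as you state it is not justified. The paper does not deduce (i) from (ii): it constructs a direct comparison map $\HH W(k)^\rig\to\HH W(k)^\crys[1/p]$ via the overconvergent de Rham--Witt complex~\cite{DLZ}, checks it is an isomorphism on smooth projective schemes, and concludes using the generation of $\MS_k^{\A^1}[1/p]$ by smooth projective schemes~\cite{LYZ}. This avoids any descent argument. Your alternative route --- deduce agreement for $U$ with an SNC compactification from (ii) and Baldassarri--Chiarellotto/Berthelot, then extend by proper cdh descent and alterations --- has a real issue: the assertion that $\HH W(k)^{\crys,\dagger}$ ``satisfies proper cdh descent on smooth schemes via smooth blowup excision'' conflates two different properties. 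Smooth blowup excision (descent for blowups along smooth centers) is built into $\MS_k$ and is preserved by $\A^1$-colocalization, but proper cdh descent requires handling arbitrary abstract blowup squares; the two are not the same, and passing from one to the other for an $\A^1$-invariant spectrum over a field is a nontrivial theorem that you would need to cite and verify applies. Moreover, after an alteration one needs not just a cdh cover but agreement for the (generically finite, possibly inseparable) alterations themselves, which introduces further complications that the paper's generation argument avoids entirely.
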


\begin{proof}
(i) Let $\HH W(k)^\rig$ denote the motivic spectrum over $k$ representing rigid cohomology.
Then there is a morphism of motivic spectra over $k$
\begin{equation}\label{eq:rig}
	\HH W(k)^\rig \to \HH W(k)^\crys[1/p] \rlap, 
\end{equation}
that induces an isomorphism on $\Sigma^{\infty-n}_{\P^1}X_+$ for every smooth projective $k$-scheme $X$ and $n\in\Z$.
One way to construct such a morphism is to use the overconvergent de Rham--Witt complex \cite{DLZ}: it is a differential graded subalgebra $W^\dagger\Omega_{\ph/k}$ of the de Rham--Witt complex $W\Omega_{\ph/k}$, which is rationally isomorphic to rigid cohomology.
Hence, the inclusion $W^\dagger\Omega_{\ph/k}\to W\Omega_{\ph/k}$ rationally induces \eqref{eq:rig}.

By \cite[Proposition~B.1]{LYZ}, the $\infty$-category $\MS_k^{\A^1}[1/p]$ is generated under colimits and $\P^1$-desus\-pen\-sions by smooth projective $k$-schemes.
Combining this with Proposition~\ref{prop:coloc}, we see that \eqref{eq:rig} induces an isomorphism after $\A^1$-colocalization.
Since rigid cohomology is $\A^1$-invariant, $\A^1$-colocalization does not change $\HH W(k)^\rig$, and thus we obtain
\[
	\HH W(k)^\rig \simeq (\HH W(k)^\crys[1/p])^\dagger \simeq \HH W(k)^{\crys,\dagger}[1/p] \rlap.
\]
The second isomorphism holds as both sides have the same values on smooth projective $k$-schemes; see also Corollary~\ref{cor:invchar} below.

(ii) By Proposition~\ref{prop:log} (and the fact that $\HH W(k)^\crys$ is oriented), the assertion is equivalent to an isomorphism
\begin{equation}\label{eqn:log-crys}
	\mathrm{R}\Gamma_\crys((X,\partial X)/W(k)) \simeq \operatorname{tcofib}_I\HH W(k)^\crys((\Sigma_{\P^1}^{\infty-d}\partial_IX_+)^\vee),
\end{equation}
where $d=\dim(X)$.
Since crystalline cohomology satisfies the Künneth formula \cite[Remark 4.1.8]{BhattLurie}, the functor
\[
\HH W(k)^\crys(\ph)\colon \MS_k^\op \to \Mod_{W(k)}(\Sp)
\]
is the unique symmetric monoidal limit-preserving extension of $\mathrm{R}\Gamma_\crys(\ph/W(k))\colon \Sm_k^\op\to \Mod_{W(k)}(\Sp)$,
and it sends $\Sigma_{\P^1}\1$ to $W(k)[-2]$.
Hence, the right-hand side of~\eqref{eqn:log-crys} is a dualizable $W(k)$-module with dual given by the total fiber $\operatorname{tfib}_I\mathrm{R}\Gamma_\crys(\partial_IX/W(k))[2d]$. By Poincaré duality for logarithmic crystalline cohomology \cite{Tsuji}, the left-hand side of~\eqref{eqn:log-crys} is also dualizable with dual given by the logarithmic crystalline cohomology of $(X,\partial X)$ with compact supports $\mathrm{R}\Gamma_{\crys,c}((X,\partial X)/W(k))[2d]$. Thus, it suffices to produce an isomorphism
\[
	\mathrm{R}\Gamma_{\crys,c}((X,\partial X)/W(k)) \simeq \operatorname{tfib}_I\mathrm{R}\Gamma_\crys(\partial_IX/W(k)) \rlap,
\]
and this is done in \cite[(2.11.9.1)]{NS}.
\end{proof}

\begin{remark}
The quest for such an integral refinement of rigid cohomology has been a topic of interest in the recent literature.
Ertl, Shiho, and Sprang~\cite{ESS} constructed an integral $p$-adic cohomology theory under the assumption of resolution of singularities, which coincides with $\A^1$-colocalized crystalline cohomology by Proposition~\ref{prop:crys}(ii).
Merici~\cite{Mer} constructed such a theory without resolution of singularities,  and we expect that it coincides with $\A^1$-colocalized crystalline cohomology, but we do not pursue the comparison here; under resolution of singularities, it does, as he compared his theory with Ertl--Shiho--Sprang's.
\end{remark}

After inverting the characteristic exponent, we have better control of $\A^1$-colocalization.

\begin{proposition}\label{prop:invchar}
Let $k$ be a field of characteristic exponent $e$.
Then the lax symmetric monoidal inclusion
\[
	\MS_k^{\A^1}[1/e] \into \Mod_{\1_{\A^1}}(\MS_k)[1/e]
\]
is strict symmetric monoidal and preserves compact objects.
\end{proposition}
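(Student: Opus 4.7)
The inclusion $i\colon \MS_k^{\A^1}[1/e]\into \Mod_{\1_{\A^1}}(\MS_k)[1/e]$ is fully faithful, with left adjoint the $\A^1$-localization $\L_{\A^1}$ and right adjoint the $\A^1$-colocalization $(\ph)^\dagger$ of Corollary~\ref{cor:A1-colocalization}; in particular it preserves all colimits. The plan combines three ingredients: Atiyah duality (Corollary~\ref{cor:atiyah}) making every smooth projective Thom spectrum $\Th_X(\xi)$ dualizable in the full category $\MS_k$ (not merely in $\MS_k^{\A^1}$); the generation theorem \cite[Proposition~B.1]{LYZ}, which after inverting $e$ presents every object of $\MS_k^{\A^1}[1/e]$ as a colimit of $\P^1$-desuspensions of objects $\L_{\A^1}\Th_X(\xi)$ for $X$ smooth projective; and Proposition~\ref{prop:coloc}, which asserts that for every $\1_{\A^1}$-module $E$ and dualizable $A\in\MS_k$, the counit $E^\dagger(A)\to E(A)$ is an isomorphism.

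For strong monoidality the task is to show that $M:=i(A)\otimes_{\1_{\A^1}} i(B)$ is $\A^1$-invariant for all $A,B\in \MS_k^{\A^1}[1/e]$. Since $i$ and $(\ph)\otimes_{\1_{\A^1}}(\ph)$ are colimit-preserving in each variable, the class of pairs $(A,B)$ for which this holds is closed under colimits, and by \cite[Proposition~B.1]{LYZ} it suffices to treat $A=\L_{\A^1}\Th_X(\xi)$, $B=\L_{\A^1}\Th_Y(\eta)$ with $X,Y$ smooth projective. The $\A^1$-invariance of $M$ is then equivalent to the counit $M^\dagger\to M$ being an isomorphism in $\Mod_{\1_{\A^1}}(\MS_k)[1/e]$; testing against the compact generators $\1_{\A^1}\otimes F$ of the latter (with $F$ a compact generator of $\MS_k[1/e]$), this reduces by adjunction to the equality $\Map_{\MS_k}(F,M^\dagger)\simeq \Map_{\MS_k}(F,M)$. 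Proposition~\ref{prop:coloc} yields this whenever $F$ is dualizable in $\MS_k$, and one extends from dualizable $F$ to arbitrary compact $F$ after inverting $e$ by expressing both sides as $\A^1$-invariant functors of $F$ (using Atiyah duality on the dualizable generators $A,B$) and then appealing once more to \cite[Proposition~B.1]{LYZ} to reduce the variable $F$ to its smooth projective relatives.

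For preservation of compact objects, \cite[Proposition~B.1]{LYZ} exhibits every compact object of $\MS_k^{\A^1}[1/e]$ as a retract of a finite colimit of $\P^1$-desuspensions of $\L_{\A^1}\Th_X(\xi)$'s, which are dualizable in $\MS_k^{\A^1}[1/e]$ because $\L_{\A^1}$ is symmetric monoidal and the $\Th_X(\xi)$ are dualizable in $\MS_k$ by Atiyah duality. The strong monoidality established above then makes their images dualizable in $\Mod_{\1_{\A^1}}(\MS_k)[1/e]$; since the unit $\1_{\A^1}$ is compact there, dualizables are compact, giving~(2).

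The chief obstacle is showing that $\otimes_{\1_{\A^1}}$ preserves $\A^1$-invariance in the strong monoidality step. This cannot be automatic: $\A^1$-localization on $\Mod_{\1_{\A^1}}(\MS_k)$ is not smashing, as witnessed by the nontrivial action of $(\ph)^\dagger$ on crystalline cohomology in Proposition~\ref{prop:crys}. The way out is the reduction to dualizable-in-$\MS_k$ generators supplied by \cite[Proposition~B.1]{LYZ}, which places Proposition~\ref{prop:coloc} and Atiyah duality at one's disposal; without the characteristic-inversion input this reduction fails and the proposition genuinely may not hold.
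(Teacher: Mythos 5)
Your proof rests on the same three pillars as the paper's (the LYZ generation theorem, Atiyah duality, and the $\MS_k^\dual$-linearity of the $\A^1$-(co)localization adjoints, encoded in Lemma~\ref{lem:dual} and Proposition~\ref{prop:coloc}), and your compactness argument is correct and essentially identical to the paper's. The issue is in the strong-monoidality step, which takes a detour that is at best redundant and as written risks circularity.

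After reducing to $A=\L_{\A^1}\Th_X(\xi)$, $B=\L_{\A^1}\Th_Y(\eta)$ with $X,Y$ smooth projective, you test the counit $M^\dagger\to M$ of $M=A\otimes_{\1_{\A^1}}B$ against compact generators $\1_{\A^1}\otimes F$, handle dualizable $F$ via Proposition~\ref{prop:coloc}, and then try to extend to general compact $F$ by ``expressing both sides as $\A^1$-invariant functors of $F$.'' But to know that $\Map_{\MS_k}(F,M)$ factors through $\L_{\A^1}F$ you would need to already know $M$ is $\A^1$-invariant, which is exactly what is being proved — the extension step is not justified as stated. Presumably what you have in mind (hinted at by ``using Atiyah duality on the dualizable generators $A,B$'') is to first invoke the $\MS_k^\dual$-linearity of the inclusion (Lemma~\ref{lem:dual}) to write $A\simeq \Th_X(\xi)\otimes\1_{\A^1}$ and then rewrite $\Map(F,M)\simeq\Map(F\otimes\Th_X(\xi)^\vee,B)$. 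But once you make that identification you should apply it twice and observe directly that
\[
M\simeq \Th_X(\xi)\otimes\1_{\A^1}\otimes_{\1_{\A^1}}\Th_Y(\eta)\otimes\1_{\A^1}\simeq \Th_{X\times Y}(\xi\boxplus\eta)\otimes\1_{\A^1}\simeq \L_{\A^1}\Th_{X\times Y}(\xi\boxplus\eta)\rlap,
\]
which is manifestly $\A^1$-invariant. This is the paper's one-line argument; the whole counit/generator-testing scaffolding is unnecessary once you have the $\MS_k^\dual$-linearity in hand, and leaving that scaffolding in without closing the extension step leaves the proof incomplete.
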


\begin{proof}
By definition, the inclusion carries the unit to the unit.
Recall that $\MS_k^{\A^1}[1/e]$ is generated under colimits and $\P^1$-desuspensions by smooth projective $k$-schemes, by~\cite[Proposition~B.1]{LYZ} (supplemented with \cite[Corollary 2.1.7]{ElmantoKhan} and \cite[Lemma 1.12]{Suslin2017} if $k$ is not perfect).
Hence, to prove that the inclusion is symmetric monoidal, it suffices to show that
\[
	(\L_{\A^1}\Sigma_{\P^1}^\infty X_+)\otimes_{\1_{\A^1}}(\L_{\A^1}\Sigma_{\P^1}^\infty Y_+)
	\simeq \L_{\A^1}\Sigma_{\P^1}^\infty (X\times Y)_+
\]
for smooth projective $k$-schemes $X$ and $Y$, but this follows from Corollary~\ref{cor:atiyah} and Lemma~\ref{lem:dual}.
Similarly, to prove that the inclusion preserves compact objects, it suffices to show that $\L_{\A^1}\Sigma_{\P^1}^\infty X_+$ is compact in $\Mod_{\1_{\A^1}}(\MS_k)$ for every smooth projective $k$-scheme $X$, which also follows from Lemma~\ref{lem:dual}.
\end{proof}

\begin{corollary}\label{cor:invchar}
Let $k$ be a field of characteristic exponent $e$.
Then the $\A^1$-colocalization functor
\[
	(\ph)^\dagger \colon \Mod_{\1_{\A^1}}(\MS_k)[1/e] \to \MS_k^{\A^1}[1/e]
\]
is lax symmetric monoidal and preserves colimits.
\end{corollary}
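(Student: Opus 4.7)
The plan is to apply general adjunction formalism on top of Proposition~\ref{prop:invchar}. Write $\iota\colon \MS_k^{\A^1}[1/e] \into \Mod_{\1_{\A^1}}(\MS_k)[1/e]$ for the inclusion, which by definition is left adjoint to $(\ph)^\dagger$. By Proposition~\ref{prop:invchar}, $\iota$ is symmetric monoidal and preserves compact objects, and both conclusions of the corollary should follow formally from these two properties, with no genuine geometric input remaining.

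For the lax symmetric monoidal structure, I would invoke the standard principle that the right adjoint to a symmetric monoidal functor of symmetric monoidal $\infty$-categories canonically inherits a lax symmetric monoidal refinement. Applied to $\iota$, this endows $(\ph)^\dagger$ with the required lax structure, compatibly with the counit.

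For preservation of colimits, my strategy is to verify preservation of filtered colimits and of finite colimits separately. Preservation of filtered colimits for $(\ph)^\dagger$ follows from the standard adjunction argument using that $\iota$ preserves compact objects: for any compact $c$ in the source,
\[
\Map(c,(\ph)^\dagger(\colim_i E_i)) \simeq \Map(\iota c,\colim_i E_i) \simeq \colim_i \Map(\iota c, E_i) \simeq \colim_i \Map(c,(\ph)^\dagger(E_i)),
\]
and compact objects generate $\MS_k^{\A^1}[1/e]$ under colimits. For finite colimits, I would use that both $\infty$-categories are stable: a right adjoint always preserves finite limits, and in a stable $\infty$-category preservation of finite limits is equivalent to exactness, hence to preservation of finite colimits. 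Since any small colimit can be built from filtered colimits and finite colimits (small coproducts as filtered colimits of finite coproducts, and general colimits as coequalizers of coproducts), this would give preservation of all small colimits.

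There is no serious obstacle at this stage: the substantive content has been packed into Proposition~\ref{prop:invchar}, and the present corollary is a formal consequence. The only detail worth checking is that the compact-generation needed for the filtered colimit argument is preserved under the localization $[1/e]$, which is automatic since $\iota$ sends a set of compact generators of the source to compact objects of the target.
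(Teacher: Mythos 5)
Your argument is correct and is exactly the formal unpacking the paper has in mind when it says ``this is a formal consequence of Proposition~\ref{prop:invchar}'': the lax symmetric monoidal structure comes from right-adjointness to a symmetric monoidal functor, filtered colimits are preserved because $\iota$ preserves compact objects, and finite colimits are preserved by exactness of right adjoints between stable $\infty$-categories. The only wobble is the final sentence: compact generation of $\MS_k^{\A^1}[1/e]$ does not follow from $\iota$ preserving compact generators (that concerns the target, not the source); rather, $\MS_k^{\A^1}[1/e]$ is compactly generated because it is obtained from the compactly generated $\MS_k$ by successive Bousfield localizations at sets of maps between compact objects.
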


\begin{proof}
This is a formal consequence of Proposition~\ref{prop:invchar}.
\end{proof}

\begin{remark}
If $k$ is perfect and satisfies resolution of singularities, the inversion of the characteristic exponent is unnecessary in Proposition~\ref{prop:invchar} and Corollary~\ref{cor:invchar}, by \cite[Théorème 1.4]{Riou}.
\end{remark}

Over $\F_p$, crystalline cohomology arises as the graded pieces of the motivic filtration of $p$-complete topological periodic cyclic homology as established by Bhatt, Morrow, and Scholze~\cite[Theorem 1.12(4)]{BMS}.
Through $\A^1$-colocalization, we see that rigid cohomology arises from a motivic filtration by the same principle:

\begin{proposition}\label{prop:tp}
$\A^1$-colocalizing the motivic filtration of $\TP_p$ gives an exhaustive $\E_\infty$-multiplicative filtration $F^*\TP_p^\dagger[1/p]$ in étale sheaves of spectra over $\Sm_{\F_p}$ such that the graded piece $\gr_F^n\TP_p^\dagger[1/p]$ is isomorphic to rigid cohomology $\mathrm{R}\Gamma_\rig(\ph/\Z_p)[2n]$.
\end{proposition}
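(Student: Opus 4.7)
The plan is to apply the $\A^1$-colocalization functor $(\ph)^\dagger[1/p]$ to the filtered motivic $\E_\infty$-algebra refining $\tp_p$ over $\F_p$, and then to identify its graded pieces using Proposition~\ref{prop:crys}. Three facts will drive the argument: by Corollary~\ref{cor:invchar}, the functor $(\ph)^\dagger[1/p]\colon \Mod_{\1_{\A^1}}(\MS_{\F_p})[1/p] \to \MS_{\F_p}^{\A^1}[1/p]$ is lax symmetric monoidal and preserves colimits; by Lemma~\ref{lem:dual} it is $\MS_{\F_p}^\dual$-linear and in particular commutes with $\Sigma_{\P^1}$; and by Corollary~\ref{cor:tc} composed with the map $\tc_p\to \tc_p^-\to \tp_p$, the motivic spectrum $\tp_p$ is a $\1_{\A^1}$-module in $\MS_{\F_p}$.

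First I would produce the filtered object. Applying Construction~\ref{cst:motivic-filtration} together with Remark~\ref{rmk:filtered-E-infinity} to the Bhatt--Morrow--Scholze filtered $\E_\infty$-algebra $F^{*+\star}\TP_p$ yields a filtered motivic $\E_\infty$-algebra $(\Sigma_{\P^1}^\star \tp_p, \beta)$ over $\F_p$. Since $(\ph)^\dagger[1/p]$ is lax symmetric monoidal and $\Sigma_{\P^1}$-linear, this descends to a filtered motivic $\E_\infty$-algebra $(\Sigma_{\P^1}^\star \tp_p^\dagger[1/p], \beta^\dagger)$ in $\MS_{\F_p}^{\A^1}[1/p]$; taking $\Omega_{\P^1}^{\infty-n}$ on $\Sm_{\F_p}$ then yields the claimed $\E_\infty$-multiplicative filtration $F^*\TP_p^\dagger[1/p]$ of étale sheaves of spectra.

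Next, for the graded pieces: since $(\ph)^\dagger[1/p]$ preserves cofiber sequences and commutes with $\Sigma_{\P^1}^n$, there is a natural isomorphism
\[
\gr_F^n \tp_p^\dagger[1/p] \simeq \Sigma_{\P^1}^n (\tp_p/\beta)^\dagger[1/p].
\]
Over $\F_p$, the absolute prismatic cohomology spectrum $\HH\Z_p^\prism = \tp_p/\beta$ coincides with the crystalline cohomology spectrum $\HH W(\F_p)^\crys$, because the crystalline prism $(\Z_p,(p))$ is initial among prisms over $\F_p$. Proposition~\ref{prop:crys}(i) then identifies $(\HH W(\F_p)^\crys)^\dagger[1/p]$ with rigid cohomology $\mathrm{R}\Gamma_\rig(\ph/\Z_p)$. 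Combined with the $\Sigma_{\P^1}^n$-shift, and using that the residual Breuil--Kisin/Tate twist becomes trivial after inverting $p$ via rational invertibility of the crystalline Frobenius, this gives $\gr_F^n \TP_p^\dagger[1/p] \simeq \mathrm{R}\Gamma_\rig(\ph/\Z_p)[2n]$.

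Finally, for exhaustiveness: the Bhatt--Morrow--Scholze filtration on $\TP_p$ is exhaustive on smooth $\F_p$-algebras (see \cite[Theorem~1.12]{BMS}), so $\tp_p[\beta^{-1}]$ represents $\TP_p$ as an étale sheaf on $\Sm_{\F_p}$. Since $(\ph)^\dagger[1/p]$ commutes with filtered colimits, we conclude $\colim_n F^n\TP_p^\dagger[1/p] \simeq \TP_p^\dagger[1/p]$. The hard part will be the identification of the graded pieces with the correct rigid cohomology shifts: carefully unpacking the motivic bigrading against the Breuil--Kisin-twisted graded pieces of prismatic cohomology, and verifying that the Bott element $\beta$ trivializes the residual twist after rationalization, is the most delicate step, and should be done by reducing to Proposition~\ref{prop:crys}(i) via the first syntomic Chern class appearing in the construction of $\beta$.
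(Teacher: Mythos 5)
Your proposal takes essentially the same route as the paper — producing a filtered motivic $\E_\infty$-algebra from Construction~\ref{cst:motivic-filtration}/Remark~\ref{rmk:filtered-E-infinity}, applying $(\ph)^\dagger[1/p]$ using Corollary~\ref{cor:tc} (to ensure the module structure is there), Corollary~\ref{cor:invchar} (lax symmetric monoidal, colimit-preserving), Lemma~\ref{lem:dual} (commutes with $\Sigma_{\P^1}$), and then invoking Proposition~\ref{prop:crys}(i) for the graded pieces. Two points, though, are not handled correctly.

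First, you assert without justification that the resulting filtration consists of \emph{étale} sheaves of spectra. This is not automatic: the $\A^1$-colocalization lands in $\MS^{\A^1}_{\F_p}[1/p]$, which is a priori only a Nisnevich-local category. The paper invokes \cite[Theorem~14.3.4]{CD} at exactly this step, using the fact that rational orientable $\A^1$-invariant motivic spectra over a field are Beilinson motives and hence satisfy étale descent. Your sketch has a genuine gap here.

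Second, the discussion about unpacking a ``residual Breuil--Kisin/Tate twist'' and appealing to ``rational invertibility of the crystalline Frobenius'' is a misdiagnosis of where (if any) difficulty lies. In the motivic formalism, the Breuil--Kisin twist is already absorbed into the Bott element $\beta$ and the orientation of $\HH\Q_p^\rig$: the $n$th graded piece of the filtration on the motivic spectrum is $\Sigma_{\P^1}^n\HH\Q_p^\rig$, and since $\HH\Q_p^\rig$ is an oriented (indeed $\Q$-linear orientable) motivic ring spectrum, the Thom isomorphism gives $\Omega^\infty_{\P^1}\Sigma_{\P^1}^n\HH\Q_p^\rig\simeq\mathrm{R}\Gamma_\rig(\ph/\Z_p)[2n]$ directly. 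No additional argument about Frobenius is needed or used, and your ``hard part'' as you describe it does not actually arise.
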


\begin{proof}
The motivic filtration of $\TP_p$ gives a diagram of motivic $\E_\infty$-ring spectra over $\mathbb{F}_p$ 
\[
\begin{tikzcd}
	& \tp_p \ar[rd, "(\ph)/\beta"] \ar[ld, "{(\ph)[\beta^{-1}]}" swap] & \\
	\TP_p & & \HH\Z^\crys_p \rlap,
\end{tikzcd}
\]
as well as an $\E_\infty$-$\tp_p$-algebra structure on the filtration $(\Sigma_{\P^1}^*\tp_p,\beta)$ (see Remark~\ref{rmk:filtered-E-infinity}).
Moreover, $\tp_p$ is an $\E_\infty$-$\1_{\A^1}$-algebra by Corollary~\ref{cor:tc}, so we can apply $\A^1$-colocalization.
If we apply $(\ph)^\dagger[1/p]$ to this diagram, then:
\begin{itemize}
\item It remains a diagram of motivic $\E_\infty$-ring spectra, as $(\ph)^\dagger[1/p]$ is lax symmetric monoidal by Corollary~\ref{cor:invchar}.
\item It becomes a diagram of étale motivic spectra by \cite[Theorem~14.3.4]{CD}.
\item $\HH\Z^\crys_p$ becomes the rigid cohomology spectrum $\HH\Q^\rig_p$ by Proposition~\ref{prop:crys}.
\item The right leg remains the quotient by $\beta$, as $(\ph)^\dagger$ commutes with $\P^1$-suspensions by Lemma~\ref{lem:dual}.
\item The left leg remains the inversion of $\beta$, as $(\ph)^\dagger[1/p]$ preserves filtered colimits by Corollary~\ref{cor:invchar}.
\end{itemize}
Therefore, the sequence
\[
	\cdots \xrightarrow{\beta} \Sigma_{\P^1}^{n+1}\tp_p^\dagger[1/p]
	\xrightarrow{\beta} \Sigma_{\P^1}^n\tp_p^\dagger[1/p] 
	\xrightarrow{\beta} \Sigma_{\P^1}^{n-1}\tp_p^\dagger[1/p] \xrightarrow{\beta} \cdots
\]
is an $\E_\infty$-multiplicative, exhaustive, and étale-local filtration of the motivic spectrum $\TP_p^\dagger[1/p]$, whose $n$th graded piece is $\Sigma_{\P^1}^n\HH\Q_p^\rig$. Taking $\Omega^\infty_{\P^1}$ gives the desired filtration.
\end{proof}

\begin{remark}\label{rmk:robalo}
	Let $k$ be a field of characteristic exponent $e$. By the proof of \cite[Corollary~4.12]{Robalo}, every $\A^1$-invariant $\KGL[1/e]$-module in $\MS_{k}$ has a canonical extension to a finitary localizing invariant of $k$-linear stable $\infty$-categories.
	In particular, $\TP_p^\dagger[1/p]$ extends to a localizing invariant of $\F_p$-linear stable $\infty$-categories. However, we do not know an a priori definition of such a localizing invariant.
\end{remark}

\begin{remark}
Presumably, $\A^1$-colocalization should also recover existing logarithmic cohomology theories in mixed characteristic.
Let $K$ be a $p$-adic field and $\mathcal{O}_K$ the ring of integers in $K$.
Let us consider the $\A^1$-colocalization of the $\1_{\A^1}$-module $\HH\Z_p^\syn$ in $\MS_{\mathcal{O}_K}$ representing the syntomic cohomology of smooth $\mathcal{O}_K$-schemes (see Example~\ref{ex:syn}).
For a smooth proper $K$-scheme $X$, suppose that there is a semi-stable model $\mathfrak{X}$ of $X$ over $\mathcal{O}_K$.
Then we expect that there is an isomorphism
\[
	(\Sigma_{\P^1}^n\HH\Z_p^{\syn,\dagger})(X) \simeq \mathrm{R}\Gamma_\syn(\mathfrak{X},\Z_p(n))[2n] \rlap,
\]
where the right hand side is the weight $n$ logarithmic syntomic cohomology of $\mathfrak{X}$.
In particular, the motivic spectrum $j^*(\HH\Z_p^{\syn,\dagger})$, where $j$ denotes the open embedding $\Spec(K)\into\Spec(\mathcal{O}_K)$, should give an integral refinement of Neková\v{r}--Nizio\l's arithmetic syntomic cohomology over $K$ defined in \cite{NN}.
\end{remark}

\section{Algebraic cobordism via Grassmannians}
\label{sec:lisse}

We show that the motivic spectrum $\MGL\in\MS_S$ is a colimit of Thom spectra of Grassmannians as in $\A^1$-homotopy theory, supplying the details to \cite[Remark 7.7]{AHI}.

We denote by
\[
\SSeq(\scr C) = \Fun(\Fin^\simeq, \scr C)
\]
the $\infty$-category of symmetric sequences in a symmetric monoidal $\infty$-category $\scr C$, which is a symmetric monoidal $\infty$-category under Day convolution.

Recall that the symmetric monoidal $\infty$-category $\MS_S=\Sp_{\P^1}(\scr P_{\Nis,\ebu}(\Sm_S,\Sp))$ is a left Bousfield localization of the symmetric monoidal $\infty$-category $\Sp_{\P^1}^\mathrm{lax}(\scr P_{\Nis,\ebu}(\Sm_S,\Sp))$ of \emph{lax symmetric $\P^1$-spectra}, which are by definition modules in $\SSeq(\scr P_{\Nis,\ebu}(\Sm_S,\Sp))$ over the free commutative algebra generated by the symmetric sequence $(0,\P^1,0,0,\dotsc)$ \cite[Definition 1.3.4]{AnnalaIwasa2}.

\begin{construction}
	\label{ctr:MGr}
	We construct a functor
	\[
	\MGr\colon (\Vect^\epi(S)_{/\scr O})^\op \to \CAlg(\MS_S)_{/\MGL},\quad\scr E\mapsto \MGr(\scr E).
	\]
	Given a finite locally free sheaf $\scr E$ on $S$, there is a lax symmetric monoidal functor
	\[
	\Fin^\simeq\to(\Sm_S)_{/\Vect}, \quad I\mapsto \Gr_{\lvert I\rvert}(\scr E^I).
	\]
	Indeed, it is a subfunctor of $I\mapsto \Gr(\scr E^I)$, which is the composition of the lax symmetric monoidal functors $I\mapsto\scr E^I$ and $\scr E\mapsto\Gr(\scr E)$. As everything is natural in $\scr E$, this defines a functor
	\begin{equation*}\label{eqn:MGr}
	\Vect^\epi(S)^\op \to \CAlg(\SSeq((\Sm_S)_{/\Vect})),\quad \scr E\mapsto (I\mapsto \Gr_{\lvert I\rvert}(\scr E^I)).
	\end{equation*}
	This functor sends $\scr O$ to $I\mapsto (S,\scr O^I)$, which is the free commutative algebra generated by the symmetric sequence $(\emptyset,(S,\scr O),\emptyset,\emptyset,\dotsc)$. Passing to slice categories, we thus get a functor
	\[
	(\Vect^\epi(S)_{/\scr O})^\op \to \CAlg(\Sp^\mathrm{lax}_{(S,\scr O)}((\Sm_S)_{/\Vect})).
	\]
	Composing with the Thom space functor $\Th\colon (\Sm_S)_{/\Vect}\to \scr P_\ebu(\Sm_S)_*$, which is symmetric monoidal \cite[Section 3]{AHI} and sends $(S,\scr O)$ to $\P^1$, we obtain the functor
	\[
	\MGr\colon (\Vect^\epi(S)_{/\scr O})^\op \to \CAlg(\Sp^\mathrm{lax}_{\P^1}(\scr P_\ebu(\Sm_S)_*))\to \CAlg(\MS_S).
	\]
	
	To see that this construction comes with a natural $\E_\infty$-map to $\MGL$, we observe that the symmetric sequence $I\mapsto \Gr_{\lvert I\rvert}(\scr E^I)$ maps to the sequence $I\mapsto\Vect_{\lvert I\rvert}$ in $\CAlg(\SSeq(\scr P(\Sm_S)_{/\Vect}))$ (which factors through $\Fin^\simeq\to\N$), which maps further to $I\mapsto \K_{\rk=\lvert I\rvert}$ in $\CAlg(\SSeq(\scr P(\Sm_S)_{/\K}))$. We then use the extension of the Thom space functor to K-theory: there is a commutative square of symmetric monoidal $\infty$-categories
 \[
 \begin{tikzcd}
 \scr P(\Sm_S)_{/\Vect} \ar{r}{\Th} \ar{d} & \scr P_{\Nis,\ebu}(\Sm_S)_* \ar{d} \\
 \scr P(\Sm_S)_{/\K} \ar{r}{\Th}  &  \Sp_{\P^1}(\scr P_{\Nis,\ebu}(\Sm_S)_*)\rlap,
 \end{tikzcd}
 \]
 inducing a commutative square
 \[
 \begin{tikzcd}
 \CAlg(\Sp^\mathrm{lax}_{(S,\scr O)}(\scr P(\Sm_S)_{/\Vect})) \ar{r}{\Th} \ar{d} & \CAlg(\Sp_{\P^1}(\scr P_{\Nis,\ebu}(\Sm_S)_*)) \ar{d}[sloped]{\sim} \\
 \CAlg(\Sp^\mathrm{lax}_{(S,\scr O)}(\scr P(\Sm_S)_{/\K})) \ar{r}{\Th}  &  \CAlg(\Sp_{\P^1}(\Sp_{\P^1}(\scr P_{\Nis,\ebu}(\Sm_S)_*)))\rlap.
 \end{tikzcd}
 \]
 The right vertical map is now an isomorphism, since $\Sp_{\P^1}$ is idempotent, and its inverse sends a $\P^1$-spectrum (in $\P^1$-spectra) to its $\emptyset$th term. The sequence $I\mapsto \K_{\rk=\lvert I\rvert}$ in $\CAlg(\Sp^\mathrm{lax}_{(S,\scr O)}(\scr P(\Sm_S)_{/\K}))$ is therefore sent to $\Th(\K_{\rk=0}\to\K)=\MGL$ in the upper right corner.
\end{construction}

\begin{proposition}[Grassmannian model for $\MGL$]
	\label{prop:MGr}
	Let $\scr E$ be a finite locally free sheaf on $S$ with an epimorphism $\scr E\onto\scr O$. Then the $\E_\infty$-map 
	\[
	\colim_{n\to\infty}\MGr(\scr E^n)\to\MGL
	\] 
	is an isomorphism in $\MS_S$.
\end{proposition}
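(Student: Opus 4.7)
The plan is to reduce the claim to a Nisnevich-local computation of the form $\colim_n \Gr_m(\scr F^n) \simeq \Vect_m$, and then to invoke the identification of $\MGL$ with the Thom spectrum of $\K_{\rk=0}\to\K$.

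First, every functor appearing in Construction~\ref{ctr:MGr} after the assignment $\scr E\mapsto (I\mapsto \Gr_{|I|}(\scr E^I))$ preserves filtered colimits: the Thom space functor $\Th$ is a left adjoint on pointed presheaves, the passage from symmetric sequences to lax $\P^1$-spectra is the free module functor over the fixed algebra generated by $(\emptyset,(S,\scr O),\emptyset,\dotsc)$, and the localization $\Sp^{\lax}_{\P^1}(\scr P_{\Nis,\ebu}(\Sm_S)_*)\to \MS_S$ is a left adjoint. It therefore suffices to show that the colimit of the maps of symmetric sequences
\[
	\bigl(I\mapsto \Gr_{|I|}(\scr E^{nI})\bigr) \longrightarrow \bigl(I\mapsto \K_{\rk=|I|}\bigr)
\]
induces an isomorphism after colimit in $n$ in the $\infty$-category of commutative algebras in $\SSeq(\scr P_{\Nis,\ebu}(\Sm_S)_{/\K})$.

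Since colimits of symmetric sequences are computed pointwise, and since $\CAlg$ of a presentably symmetric monoidal $\infty$-category is closed under filtered colimits, the claim reduces further to showing that for each $m\geq 0$ the canonical map
\[
	\colim_n \Gr_m(\scr E^{nm}) \longrightarrow \Vect_m
\]
is an isomorphism in $\scr P_{\Nis,\ebu}(\Sm_S)$ (over $\Vect$), where we identify $\K_{\rk=m}$ with the rank-$m$ vector bundle classifier $\Vect_m$. This is in turn a standard computation: on an affine $S$-scheme $X$, every rank-$m$ finite locally free sheaf on $X$ is Nisnevich-locally a direct summand of some $\scr O_X^n$, i.e.\ comes from a point of $\Gr_m(\scr O^n)$; the given surjection $\scr E\onto\scr O$ induces $\scr E^{nm}\onto\scr O^n$, which lifts such a presentation to a point of $\Gr_m(\scr E^{nm})$. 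Combined with the fact that the fibers of $\Gr_m(\scr E^{nm})\to\Vect_m$ become contractible in the colimit (the space of presentations of a fixed rank-$m$ bundle as a quotient of $\scr E^{nm}$), this yields the required Nisnevich-local equivalence.

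The main obstacle is the careful bookkeeping in the last step. One needs to verify that the transition maps $\Gr_m(\scr E^{nI})\to\Gr_m(\scr E^{(n+1)I})$—induced by the projections $\scr E^{n+1}\onto\scr E^n$ that arise from $\scr E\onto\scr O$ in $(\Vect^\epi(S)_{/\scr O})^\op$—are the ones that stabilize in the expected way, and that the identifications preserve the tautological bundle structure so that the Thom space functor carries the equivalence of symmetric sequences to the equivalence of lax $\P^1$-spectra defining $\MGL$. All of this is formal once the pointwise Nisnevich-local equivalence above is established.
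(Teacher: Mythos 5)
The key step of your proposal—that $\colim_n \Gr_m(\scr E^{nm}) \to \Vect_m$ (equivalently $\Gr_m(\scr E^\infty)\to\Vect_m$) is an isomorphism in $\scr P_{\Nis,\ebu}(\Sm_S)$—is false, and the failure is exactly the crux of why a non-$\A^1$-invariant proof of this proposition requires a different argument. The source $\Gr_m(\scr E^\infty)$ is a filtered colimit of schemes, so its values on any $X\in\Sm_S$ are discrete anima (sets of rank-$m$ quotients of $\scr E^\infty_X$). The target $\Vect_m$ has genuinely $1$-truncated values: on a Henselian local ring (or even a field $k$), $\Vect_m(\Spec k)$ is the one-object groupoid with automorphism group $\GL_m(k)$. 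A map from a $0$-truncated presheaf to $\Vect_m$ can never be a sectionwise, Nisnevich-local, or even cdh-local equivalence, because it is the zero map on $\pi_1$ at every basepoint. The ``space of presentations of a fixed rank-$m$ bundle as a quotient of $\scr E^{nm}$'' that you invoke is itself a discrete set, and there is no mechanism in the non-$\A^1$-invariant world to make it contract: the standard proof of its weak contractibility in $\A^1$-homotopy theory uses explicit $\A^1$-homotopies (Jouanolou-type arguments or straight-line homotopies between surjections). The paper in fact flags this obstruction in the remark immediately following the proof: ``After $\A^1$-localization, the statement \dots follows more directly from the fact that the Thom spectrum functor inverts unstable motivic equivalences \dots We do not know an analogue of this fact in our non-$\A^1$-invariant theory, which is purely stable.''

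The correct statement is weaker: $\Sigma^\infty_{\P^1}\Gr_m(\scr E^\infty)_+ \to \Sigma^\infty_{\P^1}(\Vect_m)_+$ is an isomorphism in $\MS_S$ (this is \cite[Theorem 5.3]{AHI}), i.e.\ a $\P^1$-stable equivalence. But even granted this, your argument still breaks: the Thom spectrum functor sends presheaves over $\K$ to $\MS_S$, and it is not known to invert $\P^1$-stable equivalences of presheaves over $\K$. So one cannot push the stable equivalence through $\Th$ to obtain $\Th_{\Gr_m(\scr E^\infty)}(\scr T_m)\simeq\Th_{\Vect_m}(\scr U_m)$. The paper's proof sidesteps both issues: it expresses $\MGr(\scr E^\infty)$ and $\MGL$ as colimits of ($\P^1$-desuspended) Thom spectra, observes that the map of base presheaves $\Gr_m(\scr E^\infty)\to\Vect_m$ becomes an isomorphism after $\Sigma^\infty_{\P^1}(\ph)_+$, and then tests the map $\phi\colon\MGr(\scr E^\infty)\to\MGL$ against arbitrary \emph{oriented} ring spectra $E\in\CAlg(\h\MS_S)$: the Thom isomorphism for $E$ reduces the comparison of $E$-cohomology of Thom spectra to $E$-cohomology of the (unsuspended) base presheaves, where the $\P^1$-stable equivalence now suffices. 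Since $\MGr(\scr E^\infty)$ itself is oriented (\cite[Remark 6.10]{AHI}), the Yoneda lemma lets one conclude. Your reduction via filtered colimits and pointwise symmetric sequences is correct, but the geometric input needed is this Thom-isomorphism-plus-Yoneda argument, not an unstable Nisnevich-local equivalence.
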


\begin{proof}
	Let $\phi\colon \MGr(\scr E^\infty)\to\MGL$ be this map. 
	Forgetting the $\E_\infty$-ring structure, $\MGr(\scr E^\infty)$ is the colimit of the Thom spectra $\Sigma_{\P^1}^{-n}\Th_{\Gr_n(\scr E^\infty)}(\scr T_n)$, where $\scr T_n$ is the tautological sheaf, and $\MGL$ is similarly the colimit of the Thom spectra $\Sigma_{\P^1}^{-n}\Th_{\Vect_n}(\scr U_n)$, where $\scr U_n$ is the universal sheaf \cite[Proposition 7.1]{AHI}.
	One then obtains $\phi$ by applying the Thom spectrum functor to the forgetful map
	\[
	\Gr_n(\scr E^\infty) \to \Vect_n.
	\]
	This map becomes an isomorphism in $\MS_S$ by \cite[Theorem 5.3]{AHI}. Using the Thom isomorphism, it follows that for any oriented ring spectrum $E\in\CAlg(\h\MS_S)$, the map $\phi$ induces an isomorphism
	\[
	\phi^*\colon \Map_{\MS_S}(\MGL,E) \simto \Map_{\MS_S}(\MGr(\scr E^\infty),E).
	\]
	Since the ring spectrum $\MGr(\scr E^\infty)$ is oriented by \cite[Remark 6.10]{AHI}, the Yoneda lemma implies that $\phi$ is an isomorphism in $\MS_S$.
\end{proof}

\begin{remark}
	\leavevmode
	\begin{enumerate}
		\item After $\A^1$-localization, the statement of Proposition~\ref{prop:MGr} follows more directly from the fact that the Thom spectrum functor inverts unstable motivic equivalences \cite[Remark 16.11]{norms}. We do not know an analogue of this fact in our non-$\A^1$-invariant theory, which is purely stable.
		\item Construction~\ref{ctr:MGr} shows that the $\E_\infty$-structure on $\L_{\A^1}\MGL$ constructed by Panin, Pimenov, and Röndigs in \cite[Section 2.1]{Panin:2008} is canonically isomorphic to the one constructed by Bachmann and Hoyois in \cite[Section 16]{norms}.
	\end{enumerate}
\end{remark}

\begin{definition}
	Let $\scr C$ be a presentably symmetric monoidal $\infty$-category. 
	We denote by $\scr C^\dual\subset\scr C$ the full subcategory of dualizable objects and by $\scr C^\lisse\subset\scr C$ the full subcategory generated under colimits by $\scr C^\dual$. Objects of $\scr C^\lisse$ are called \emph{lisse}, and we denote by
	\[
	\lisse\colon \scr C\to\scr C^\lisse
	\]
	the right adjoint to the inclusion (which exists since $\scr C^\dual$ is small).
\end{definition}

\begin{observation}\label{obs:lisse-proj}
	\leavevmode
	\begin{enumerate}
		\item Let $\scr C$ and $\scr D$ be presentably symmetric monoidal $\infty$-categories and let $F\colon\scr C\to\scr D$ be a symmetric monoidal functor with a colimit-preserving right adjoint $G$. For any $X\in\scr C^\lisse$ and $Y\in\scr D$, the canonical map
	\[
	X\otimes G(Y) \to G(F(X)\otimes Y)
	\]
	is an isomorphism.
	\item Let $\scr C$ be a stable presentably symmetric monoidal $\infty$-category with $\1\in\scr C^\omega$. Then the functor $\lisse(\ph)$ preserves colimits.
	As a special case of (i), we have a natural isomorphism
	 \[
	 X\otimes\lisse(Y)\simeq \lisse(X\otimes Y)
	 \]
	 for any $X\in\scr C^\lisse$ and $Y\in\scr C$.
	\end{enumerate}
\end{observation}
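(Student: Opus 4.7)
The plan is to prove (i) by a colimit-closure argument and (ii) by identifying $\scr C^\lisse$ with $\Ind(\scr C^\dual)$.

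For (i), I will consider the full subcategory $\scr C_0 \subseteq \scr C$ spanned by those $X$ for which the canonical map $X \otimes G(Y) \to G(F(X) \otimes Y)$ is an isomorphism, and show that $\scr C_0$ is closed under colimits and contains $\scr C^\dual$, hence contains $\scr C^\lisse$. Closure under colimits follows because both sides are colimit-preserving in $X$: the left because $\otimes$ preserves colimits in each variable, the right because $F$ is a left adjoint, $G$ preserves colimits by hypothesis, and tensor preserves colimits in each variable. The inclusion $\scr C^\dual \subseteq \scr C_0$ is the usual projection formula for dualizable objects: since $F$ is symmetric monoidal it sends the dual of $X$ to a dual of $F(X)$, and both sides of the map corepresent the functor $Z \mapsto \Map_{\scr D}(F(Z), F(X) \otimes Y)$.

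For (ii), my first task is to show $\scr C^\dual \subseteq \scr C^\omega$. For any dualizable $X$ there is a natural isomorphism $\map(X, \ph) \simeq \map(\1, X^\vee \otimes \ph)$, which preserves filtered colimits since $\1$ does by hypothesis and $X^\vee \otimes \ph$ does by presentability of the symmetric monoidal structure. Now $\scr C^\dual$ is an essentially small, stable, idempotent-complete full subcategory of $\scr C^\omega$, so the canonical functor $\Ind(\scr C^\dual) \to \scr C$ is fully faithful; its essential image agrees with $\scr C^\lisse$ because finite colimits of dualizables are dualizable, so $\scr C^\lisse$ is obtained from $\scr C^\dual$ by adjoining only filtered colimits. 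Consequently the inclusion $\scr C^\lisse \into \scr C$ preserves compact objects, so its right adjoint $\lisse(\ph)$ preserves filtered colimits; as a right adjoint between stable presentable $\infty$-categories it is automatically exact, and therefore preserves all colimits.

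The special case is then immediate: I apply (i) to the symmetric monoidal inclusion $F \colon \scr C^\lisse \into \scr C$, which is symmetric monoidal because $\scr C^\dual$ contains $\1$ and is closed under $\otimes$, and this closure extends to $\scr C^\lisse$ since tensor preserves colimits; its right adjoint $G = \lisse(\ph)$ preserves colimits by what we just showed, and $(\scr C^\lisse)^\lisse = \scr C^\lisse$. The main substantive step is the identification $\scr C^\lisse \simeq \Ind(\scr C^\dual)$, i.e.\ that the compact objects of $\scr C^\lisse$ are precisely the dualizables in $\scr C$; everything else reduces to formal manipulations with compact objects and adjoints in presentable stable $\infty$-categories.
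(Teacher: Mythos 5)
The paper states Observation~\ref{obs:lisse-proj} without proof, so there is nothing to compare your argument against directly; I will simply assess it. Your proof is correct and is the expected argument. For (i), the colimit-closure reduction plus the corepresentability check on dualizable objects (using that a symmetric monoidal $F$ preserves duals) is exactly the right strategy, and both sides do preserve colimits in $X$ for the reasons you give. For (ii), the chain is sound: $\1$ compact and $X^\vee \otimes \ph$ colimit-preserving imply $\scr C^\dual \subseteq \scr C^\omega$; $\scr C^\dual$ is a small idempotent-complete stable subcategory of $\scr C^\omega$ (closed under (co)fibers because $\fib(Y^\vee\to X^\vee)$ furnishes a dual of $\cofib(X\to Y)$), so $\Ind(\scr C^\dual)\to\scr C$ is fully faithful with essential image $\scr C^\lisse$; compact-preservation of the inclusion then gives filtered-colimit-preservation of the right adjoint $\lisse(\ph)$, and exactness (automatic for a right adjoint between stable $\infty$-categories) upgrades this to preservation of all colimits. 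The deduction of the projection formula by applying (i) to the symmetric monoidal inclusion $\scr C^\lisse \into \scr C$ is also correct, since $\scr C^\lisse$ is closed under $\otimes$ (dualizables are, and $\otimes$ preserves colimits in each variable). Your framing of $\scr C^\lisse \simeq \Ind(\scr C^\dual)$ as the substantive step is accurate; it is the key point that would likely be left implicit in a terse in-text justification.
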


\begin{example}\label{ex:KGL-lisse}
	The motivic spectrum $\KGL\in\MS_S$ representing algebraic K-theory is lisse. This follows from the Snaith presentation $\KGL\simeq\Sigma^\infty_{\P^1}\Pic_+[\beta^{-1}]$ \cite[Theorem 5.3.3]{AnnalaIwasa2}, the isomorphism $\Sigma^\infty_{\P^1}\Pic_+\simeq \Sigma^\infty_{\P^1}\P^\infty_+$ \cite[Theorem 5.3]{AHI}, and the fact that $\Sigma^\infty_{\P^1}\P^n_+$ is dualizable.
\end{example}

\begin{corollary}\label{cor:MGL-lisse}
	The motivic spectrum $\MGL\in\MS_S$ is lisse.
\end{corollary}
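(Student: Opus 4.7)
The plan is to combine Proposition~\ref{prop:MGr} with Atiyah duality, with essentially no further work required. Fix any finite locally free sheaf $\scr E$ on $S$ equipped with an epimorphism $\scr E \onto \scr O$ (for instance $\scr E = \scr O^2$ with projection onto the first factor). Proposition~\ref{prop:MGr} identifies
\[
\MGL \simeq \colim_{n\to\infty} \MGr(\scr E^n)
\]
in $\MS_S$, so it suffices to show that each $\MGr(\scr E^n)$ is lisse, since the class of lisse objects is closed under colimits by definition.

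Unwinding Construction~\ref{ctr:MGr}, $\MGr(\scr E^n)$ is the strict $\P^1$-spectrum associated to the lax symmetric $\P^1$-spectrum whose $k$-th term is $\Th_{\Gr_k(\scr E^{nk})}(\scr T_k)$, where $\scr T_k$ denotes the rank-$k$ tautological bundle. Spectrifying along the structure maps yields
\[
\MGr(\scr E^n) \simeq \colim_{k\to\infty}\Sigma^{-k}_{\P^1}\Th_{\Gr_k(\scr E^{nk})}(\scr T_k).
\]

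Since $\scr E^{nk}$ is a finite locally free sheaf on $S$, the Grassmannian $\Gr_k(\scr E^{nk})$ is smooth and projective over $S$. Atiyah duality (Corollary~\ref{cor:atiyah}) therefore guarantees that $\Th_{\Gr_k(\scr E^{nk})}(\scr T_k)$ is dualizable in $\MS_S$, and since $\Sigma_{\P^1}$ is an autoequivalence, dualizability is preserved by $\Sigma^{-k}_{\P^1}$. Consequently, $\MGL$ is exhibited as a colimit of dualizable motivic spectra, i.e.\ as a lisse motivic spectrum.

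There is no substantive obstacle: all the work has already been done in Proposition~\ref{prop:MGr} (identifying $\MGL$ via finite-dimensional Grassmannians) and in Atiyah duality (dualizability of Thom spectra over smooth projective schemes). The only minor point of care is to observe that the Grassmannians appearing in the presentation are genuinely smooth projective $S$-schemes rather than ind-schemes or stacks — which is precisely why Proposition~\ref{prop:MGr} (rather than, say, the $\Vect_n$-model) is the right input.
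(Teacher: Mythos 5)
Your argument is correct and is essentially the paper's proof: both invoke Proposition~\ref{prop:MGr} to write $\MGL$ as a colimit of (shifts of) Thom spectra over the Grassmannians $\Gr_k(\scr E^{nk})$, then cite Atiyah duality (Corollary~\ref{cor:atiyah}) for their dualizability. The only cosmetic difference is that the paper takes $\scr E=\scr O$ (so the Grassmannians are $\Gr_m(\scr O^{nm})$) rather than $\scr O^2$, but any choice of $\scr E$ with a surjection to $\scr O$ works equally well.
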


\begin{proof}
	Proposition~\ref{prop:MGr} presents $\MGL$ as a colimit of Thom spectra of Grassmannians $\Gr_{m}(\scr O^{nm})$, which are dualizable by Corollary~\ref{cor:atiyah}.
\end{proof}

\begin{corollary}[Homological Conner–Floyd isomorphism]
	\label{cor:conner-floyd}
	Let $S$ be a qcqs derived scheme. The orientation map $\MGL\to\KGL$ induces an isomorphism of bigraded multiplicative homology theories
	\[
	\MGL_{**}(\ph)\otimes_\L\Z[\beta^{\pm 1}]\simeq \KGL_{**}(\ph)\colon \MS_S\to \Ab^{\Z\times\Z},
	\]
	where $\L\to\Z[\beta^{\pm 1}]$ classifies the graded formal group law $x+y-\beta xy$.
\end{corollary}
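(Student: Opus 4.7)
The strategy is to reduce the homological statement to the cohomological Conner--Floyd isomorphism already established in \cite{AHI}, exploiting Atiyah duality together with the fact that $\MGL$ and $\KGL$ are lisse.

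The key observation is that for any lisse motivic spectrum $E\in\MS_S^\lisse$ and any $X\in\MS_S$, there is a natural isomorphism
\[
	E_{**}(X) \simeq E_{**}(\lisse X).
\]
Indeed, $E_{**}(X)=\pi_{**}(E\otimes X)$, and the bigraded sphere $\Sigma^{p-2q}\Sigma_{\P^1}^q\1$ representing $\pi_{p,q}$ is dualizable, hence lisse; mapping out of a lisse object is insensitive to the lisse replacement, so $\pi_{**}(E\otimes X)\simeq \pi_{**}(\lisse(E\otimes X))$. When $E$ is lisse, Observation~\ref{obs:lisse-proj}(ii) identifies $\lisse(E\otimes X)\simeq E\otimes\lisse(X)$. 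Applying this reduction to both $E=\MGL$ (Corollary~\ref{cor:MGL-lisse}) and $E=\KGL$ (Example~\ref{ex:KGL-lisse}), we may replace $X$ by $\lisse(X)$ on both sides of the claimed isomorphism.

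By definition, $\lisse(X)$ is a colimit of dualizable objects. Both $\MGL_{**}(\ph)$ and $\KGL_{**}(\ph)$ commute with colimits (they are exact functors composed with the colimit-preserving $\pi_{**}$), and the base change $(\ph)\otimes_\L\Z[\beta^{\pm 1}]$ is also exact, since $\Z[\beta^{\pm 1}]$ is flat over $\L$ by the Landweber exactness of the multiplicative formal group. Hence all three functors appearing in the statement preserve colimits, and it suffices to verify the isomorphism when $X$ is dualizable. For such $X$, we have a canonical isomorphism $E_{**}(X)\simeq E^{**}(X^\vee)$ natural in $E\in\MS_S$, coming from the identification $E\otimes X\simeq \Hom(X^\vee,E)$. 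Applying this to $\MGL$ and $\KGL$, the statement reduces to the cohomological Conner--Floyd isomorphism $\MGL^{**}(X^\vee)\otimes_\L\Z[\beta^{\pm 1}]\simeq \KGL^{**}(X^\vee)$ from \cite{AHI}.

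The multiplicative structure is preserved throughout: the reduction $E_{**}(X)\simeq E_{**}(\lisse X)$ is manifestly natural in the ring $E$, the identification $E_{**}(X)\simeq E^{**}(X^\vee)$ on dualizables intertwines the homological and cohomological products (via the diagonal of $X$ and the multiplication on $X^\vee$), and the cohomological Conner--Floyd isomorphism is multiplicative by construction. The main conceptual content of the argument is the reduction to the lisse replacement of $X$, which is precisely what the lisse-ness of $\MGL$ and $\KGL$---a consequence of Atiyah duality---buys us; the rest is a formal transition between homology and cohomology via duality and the input from \cite{AHI}.
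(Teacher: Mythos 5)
Your proof is correct and follows essentially the same strategy as the paper's: reduce via Atiyah duality to the cohomological Conner–Floyd isomorphism on dualizable objects, extend to lisse objects by exactness, and then to all of $\MS_S$ using the lisse-ness of $\MGL$ and $\KGL$ together with Observation~\ref{obs:lisse-proj}(ii). The paper presents the chain in the other direction (cohomology on compacts $\Rightarrow$ homology on dualizables $\Rightarrow$ lisse $\Rightarrow$ all), but the ingredients and logical content are identical.
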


\begin{proof}
	By the cohomological Conner–Floyd isomorphism \cite[Theorem~8.11]{AHI}, the map
	\[
	\MGL^{**}(\ph)\otimes_\L\Z[\beta^{\pm 1}] \to \KGL^{**}(\ph)\colon \MS_S^\op\to \Ab^{\Z\times\Z}
	\]
	is an isomorphism on $\Sm_S$ and hence on $\MS_S^\omega$. By duality, the map
	\[
	\MGL_{**}(\ph)\otimes_\L\Z[\beta^{\pm 1}]\to \KGL_{**}(\ph) \colon \MS_S\to \Ab^{\Z\times\Z}.
	\]
	is an isomorphism on $\MS_S^\dual$ and hence on $\MS_S^\lisse$. As both $\MGL$ and $\KGL$ are lisse (Corollary~\ref{cor:MGL-lisse} and Example~\ref{ex:KGL-lisse}), it follows from Observation~\ref{obs:lisse-proj}(ii) that this map is an isomorphism on all of $\MS_S$.
\end{proof}

\section{Motivic Landweber exactness}
\label{sec:LEFT}

We now prove the Landweber exact functor theorem in our setting, which is entirely similar to its incarnation in $\A^1$-homotopy theory \cite{Naumann:2009} (which is in turn similar to the classical theorem in ordinary homotopy theory).
To guide the reader, we note that the theorem only uses the following three facts from the theory of non-$\A^1$-invariant motivic spectra:
\begin{enumerate}
	\item the fact that the $\MGL$-cohomology ring of a scheme carries a formal group law, together with the computation of the $\MGL$-module $\MGL\otimes\MGL$ \cite[Proposition 7.9]{AHI}, which implies that the $\MGL$-homology of any motivic spectrum defines a quasi-coherent sheaf on the stack $\scr M_\fg$ of formal groups;
	\item the fact that $\MGL$ is lisse (Corollary~\ref{cor:MGL-lisse}), which implies that $\MGL$-homology is determined on dualizable motivic spectra;
	\item the countability of the $\infty$-category $\MS_\Z^\mathrm{dual}$ of dualizable motivic spectra over $\Z$ (Lemma~\ref{lem:countable}), which allows us to use the representability theorem of Adams to produce motivic spectra from homology theories.
\end{enumerate}

We start with some categorical preliminaries.

\begin{definition}
	Let $\scr C$ be a stable compactly generated $\infty$-category. A morphism $f\colon A\to B$ in $\scr C$ is called \emph{phantom} if, for every compact object $K\in \scr C^\omega$ and every map $g\colon K\to A$, the composite $f\circ g$ is nullhomotopic. We denote by $\w\scr C$ the $1$-category whose morphisms are those of $\h\scr C$ modulo phantom maps.
\end{definition} 

In other words, $\w\scr C$ is uniquely determined by the factorization
\[
\h\scr C\to\w\scr C\to \Fun(\scr C^{\omega,\op},\Ab),\quad E\mapsto [\ph,E],
\]
where the first functor is essentially surjective and full, and the second functor is faithful.
Note that the functor $\h\scr C\to\w\scr C$ is also conservative, since the above composite is. 

\begin{lemma}\label{lem:phantom}
	Let $\scr C$ be a commutative algebra in stable compactly generated $\infty$-categories. For a morphism $f\colon A\to B$ in $\scr C^\lisse$, the following are equivalent:
	\begin{enumerate}
		\item $f$ is phantom in $\scr C^\lisse$, i.e., induces the zero map $[\ph,A]\to[\ph,B]\colon \scr C^{\mathrm{dual},\op}\to\Ab$.
		\item $f$ induces the zero map $[\1,A\otimes(\ph)]\to [\1,B\otimes(\ph)]\colon \scr C^\lisse\to \Ab$.
		\item $f$ induces the zero map $[\1,A\otimes(\ph)]\to [\1,B\otimes(\ph)]\colon \scr C\to \Ab$.
	\end{enumerate}
\end{lemma}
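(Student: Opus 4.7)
The plan is to observe that (i), (ii), and (iii) are three different parameterizations of the same vanishing condition, so I would translate each into a common form.

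First, I would unpack (i). Since $\scr C^\lisse$ is generated under colimits by $\scr C^\mathrm{dual}$, and in the stable setting $\scr C^\mathrm{dual}$ is closed under finite colimits and retracts in $\scr C$, the compact objects of $\scr C^\lisse$ coincide with $\scr C^\mathrm{dual}$. So (i) says $[K,f]=0$ for every $K\in\scr C^\mathrm{dual}$. Applying the adjunction $(\ph)\otimes K\dashv(\ph)\otimes K^\vee$ valid for dualizable $K$ gives $[K,A]\cong[\1,A\otimes K^\vee]$, and the involution $K\leftrightarrow K^\vee$ on $\scr C^\mathrm{dual}$ shows that (i) is precisely (ii) restricted to $X\in\scr C^\mathrm{dual}$.

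Next, I would upgrade from dualizable to lisse: any $Y\in\scr C^\lisse$ may be written as a colimit $Y=\colim_iY_i$ with $Y_i\in\scr C^\mathrm{dual}$, and since $\1$ is compact and $A\otimes(\ph)$ preserves colimits, $[\1,A\otimes Y]\cong\colim_i[\1,A\otimes Y_i]$. Thus the vanishing on dualizables propagates to all of $\scr C^\lisse$, yielding (i) $\Leftrightarrow$ (ii).

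The implication (iii) $\Rightarrow$ (ii) is immediate. For the converse, I would use Observation~\ref{obs:lisse-proj}: for any $X\in\scr C$, the unit $\1$ is dualizable and hence lisse, so the counit of the adjunction $\lisse\dashv$(inclusion) induces an isomorphism $[\1,\lisse(A\otimes X)]\simto [\1,A\otimes X]$, and Observation~\ref{obs:lisse-proj}(ii) applied to $A\in\scr C^\lisse$ gives $\lisse(A\otimes X)\simeq A\otimes\lisse(X)$. Performing the same manipulation for $B$, the map in (iii) is identified with $[\1,A\otimes\lisse(X)]\to[\1,B\otimes\lisse(X)]$, which vanishes by (ii) since $\lisse(X)\in\scr C^\lisse$. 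The whole argument is bookkeeping of adjunctions with no real obstacle; the only subtlety to keep in mind is the identification of $(\scr C^\lisse)^\omega$ with $\scr C^\mathrm{dual}$ that justifies the initial translation of (i).
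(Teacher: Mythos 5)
Your proof is correct and takes the same route as the paper's (which is stated very tersely): translate (i) into (ii) on dualizable objects via the adjunction $[K,A]\cong[\1,A\otimes K^\vee]$ and the fact that $(\scr C^\lisse)^\omega=\scr C^\dual$, then extend to all of $\scr C^\lisse$ using that $[\1,A\otimes(\ph)]$ preserves filtered colimits, and deduce (iii) from (ii) via Observation~\ref{obs:lisse-proj}(ii). One small slip: the adjunction is $\iota\dashv\lisse$ (the inclusion is the left adjoint, $\lisse$ the right adjoint), not the other way around as written; the computation you carry out is the correct one, so this is only a typo.
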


\begin{proof}
	The equivalence of (i) and (ii) follows by duality and the fact that $[\1,A\otimes(\ph)]$ preserves filtered colimits. 
	That (ii) implies (iii) follows from Observation~\ref{obs:lisse-proj}(ii).
\end{proof}

\begin{warning}
	A phantom map in $\scr C^\lisse$ need not be phantom in $\scr C$. In the sequel, we consider phantom maps in $\MS_S^\lisse$ but never in $\MS_S$.
\end{warning}

\begin{corollary}\label{cor:phantom}
	Let $\scr C$ be a commutative algebra in stable compactly generated $\infty$-categories. Then:
	\begin{enumerate}
		\item The symmetric monoidal structure on $\scr C^\lisse$ descends to $\w\scr C^\lisse$.
		\item The lax symmetric monoidal functor 
		\[
		\scr C^\lisse\to \Fun(\scr C,\Ab),\quad E\mapsto [\1,E\otimes(\ph)],
		\]
		factors through $\w\scr C^\lisse$.
	\end{enumerate}
\end{corollary}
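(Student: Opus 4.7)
The plan is to deduce both statements directly from Lemma~\ref{lem:phantom}, reading off each part as a formal consequence of the three equivalent characterizations of phantom maps recorded there.

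For part (i), I would verify that the collection of phantom maps in $\scr C^\lisse$ forms a two-sided tensor ideal, which is precisely what is needed in order for the symmetric monoidal structure to descend to the quotient $\w\scr C^\lisse$. Given a phantom $f\colon A\to B$ and any $C\in\scr C^\lisse$, I want $f\otimes\id_C$ to be phantom. The efficient route is via characterization (iii): phantomness of $f$ means that the natural transformation $[\1,f\otimes(\ph)]\colon [\1,A\otimes(\ph)]\to[\1,B\otimes(\ph)]$ vanishes when evaluated on every $X\in\scr C$. Restricting the family of vanishings to the objects of the form $C\otimes X$ then yields the vanishing of $[\1,(A\otimes C)\otimes(\ph)]\to[\1,(B\otimes C)\otimes(\ph)]$ on $\scr C$, which by the equivalence of (i) and (iii) in Lemma~\ref{lem:phantom} identifies $f\otimes\id_C$ as phantom. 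Symmetry of the tensor product handles the opposite side, so the tensor product descends, together with its coherence data, to $\w\scr C^\lisse$.

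For part (ii), the factorization is tautological: the statement that the functor $E\mapsto[\1,E\otimes(\ph)]$ annihilates phantom maps is literally condition (iii) of Lemma~\ref{lem:phantom}. Once (i) is in hand, the lax symmetric monoidal structure on the functor descends through the symmetric monoidal quotient $\scr C^\lisse\to \w\scr C^\lisse$ without any additional input, since the structure maps and their coherences live in the target $\Fun(\scr C,\Ab)$ and are insensitive to the identifications being made.

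The only real subtlety here is bookkeeping, namely keeping track of which of the equivalent formulations (i)--(iii) of Lemma~\ref{lem:phantom} is being used at each point; no substantive obstacle arises beyond the content already established in that lemma.
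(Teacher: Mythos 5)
Your proof is correct and follows the same strategy as the paper's, namely reducing both assertions to the characterization of phantom maps given in Lemma~\ref{lem:phantom}(iii). You merely spell out the tensor-ideal argument that the paper compresses into a single sentence.
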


\begin{proof}
	To prove (i), we must show that if $f$ is a phantom map and $g$ is any map in $\scr C^\lisse$, then $f\otimes g$ is phantom. This claim as well as Assertion (ii) follow immediately from the characterization of phantom maps in Lemma~\ref{lem:phantom}(iii).
\end{proof}

\begin{lemma}\label{lem:phantom-PB}
	Let $\scr C$ and $\scr D$ be commutative algebras in stable compactly generated $\infty$-categories and let $F\colon \scr C\to\scr D$ be a symmetric monoidal functor with a colimit-preserving right adjoint $G$. Then $F$ sends phantom maps in $\scr C^\lisse$ to phantom maps in $\scr D^\lisse$, and hence induces a functor $\w\scr C^\lisse\to\w\scr D^\lisse$.
\end{lemma}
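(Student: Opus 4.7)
The plan is to apply the characterization of phantom maps from Lemma~\ref{lem:phantom}(iii): a morphism $f\colon A\to B$ in $\scr C^\lisse$ is phantom if and only if it induces the zero natural transformation $[\1,A\otimes(\ph)]\Rightarrow [\1,B\otimes(\ph)]$ of functors $\scr C\to\Ab$. Everything then reduces to the projection formula for lisse objects (Observation~\ref{obs:lisse-proj}(i)).

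First I would verify that $F$ sends $\scr C^\lisse$ to $\scr D^\lisse$, which justifies even asking the question. Indeed, $F$ preserves dualizable objects because it is symmetric monoidal, and it preserves all colimits because it is a left adjoint; since $\scr C^\lisse$ (resp.\ $\scr D^\lisse$) is by definition the smallest colimit-closed subcategory of $\scr C$ (resp.\ $\scr D$) containing the dualizable objects, $F$ restricts to a functor $\scr C^\lisse\to\scr D^\lisse$.

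Next, fix a phantom map $f\colon A\to B$ in $\scr C^\lisse$, and let $Y\in\scr D$ be arbitrary. Combining the adjunction $F\dashv G$, the identity $F(\1_\scr C)\simeq\1_\scr D$ (which uses the symmetric monoidality of $F$), and the projection formula isomorphism $A\otimes G(Y)\simto G(F(A)\otimes Y)$ of Observation~\ref{obs:lisse-proj}(i) (which applies precisely because $A$ is lisse), one obtains a natural isomorphism
\[
[\1_\scr D,F(A)\otimes Y]\simeq [\1_\scr C,G(F(A)\otimes Y)]\simeq [\1_\scr C,A\otimes G(Y)],
\]
and the analogous isomorphism with $B$ in place of $A$. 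Functoriality in $A$ identifies $[\1_\scr D,F(f)\otimes Y]$ with the map induced by $f$ on $[\1_\scr C,(\ph)\otimes G(Y)]$, which vanishes by Lemma~\ref{lem:phantom}(iii) applied to $f$ at the object $G(Y)\in\scr C$. A second invocation of Lemma~\ref{lem:phantom}(iii), this time in $\scr D$, yields that $F(f)$ is phantom in $\scr D^\lisse$. Since the phantom morphisms form a two-sided ideal in $\h\scr C^\lisse$ and $F$ preserves this ideal, it descends to the claimed functor $\w\scr C^\lisse\to\w\scr D^\lisse$.

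There is no real obstacle: the argument is formal once one notices that Lemma~\ref{lem:phantom}(iii) tests phantoms against the whole category $\scr C$, which is essential because $G(Y)$ need not be lisse, and that the projection formula of Observation~\ref{obs:lisse-proj}(i) holds with $A$ merely lisse (not just dualizable), so that the key identification $[\1_\scr D,F(A)\otimes Y]\simeq[\1_\scr C,A\otimes G(Y)]$ is available for every $A\in\scr C^\lisse$ and every $Y\in\scr D$.
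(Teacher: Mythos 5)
Your proof is correct and follows essentially the same route as the paper: combine the projection formula of Observation~\ref{obs:lisse-proj}(i), namely $G(F(A)\otimes(\ph))\simeq A\otimes G(\ph)$ for $A$ lisse, with the characterization of phantoms in Lemma~\ref{lem:phantom}(iii). You usefully make explicit the preliminary point (implicit in the paper) that $F$ restricts to $\scr C^\lisse\to\scr D^\lisse$ because it preserves both dualizable objects and colimits.
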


\begin{proof}
	If $A\in\scr C^\lisse$, then $G(F(A)\otimes(\ph))\simeq A\otimes G(\ph)$ as functors on $\scr D$ by Observation~\ref{obs:lisse-proj}(i).
	The claim then follows immediately using the characterization of phantom maps from Lemma~\ref{lem:phantom}(iii).
\end{proof}

For a cocomplete stable $\infty$-category $\scr C$ and a cocomplete abelian category $\scr A$, we denote by
\[
\Fun^\mathrm{hom}(\scr C,\scr A)\subset\Fun(\scr C,\scr A)
\]
the full subcategory of \emph{homological functors}, i.e., functors that preserve filtered colimits, finite products, and send cofiber sequences to exact sequences. 
If $\scr C$ and $\scr A$ have symmetric monoidal structures, then $\Fun(\scr C,\scr A)$ is also symmetric monoidal via the Day convolution. While the Day convolution need not preserve homological functors, $\Fun^\mathrm{hom}(\scr C,\scr A)$ inherits a structure of $\infty$-operad as a full subcategory of a symmetric monoidal $\infty$-category.

If $\scr C$ is presentably symmetric monoidal with $\1\in\scr C^\omega$, then $[\1,E\otimes(\ph)]\colon\scr C\to\Ab$ is a homological functor for any $E\in\scr C$. The representability theorem of Adams states that in some cases all homological functors $\scr C\to\Ab$ are of this form:

\begin{proposition}[Adams representability for homology theories]
	\label{prop:adams}
	Let $\scr C$ be a commutative algebra in stable compactly generated $\infty$-categories with $\scr C^\dual=\scr C^\omega$, and suppose that the latter $\infty$-category is countable. Then the lax symmetric monoidal functor
	\[
	\scr C\to \Fun(\scr C,\Ab),\quad E\mapsto [\1,E\otimes(\ph)],
	\]
	induces an isomorphism of $\infty$-operads
	\[
	\w\scr C\simeq \Fun^\mathrm{hom}(\scr C,\Ab).
	\]
\end{proposition}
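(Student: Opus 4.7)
The plan is to recognize the statement as a homology-theoretic incarnation of Adams' classical representability theorem. Since $\scr C$ is compactly generated and $\scr C^\dual=\scr C^\omega$, we have $\scr C^\lisse=\scr C$, so $\w\scr C$ is defined on all of $\scr C$. The functor $E\mapsto[\1,E\otimes(\ph)]$ lands in $\Fun^{\mathrm{hom}}(\scr C,\Ab)$ because $\1\in\scr C^\omega$ and $\otimes$ preserves colimits in each variable, and it factors through $\w\scr C$ by Corollary~\ref{cor:phantom}(ii), yielding a functor $\Phi\colon\w\scr C\to\Fun^{\mathrm{hom}}(\scr C,\Ab)$. The goal is to show that $\Phi$ is an equivalence of $\infty$-operads.

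Faithfulness is immediate from Lemma~\ref{lem:phantom}: a morphism in $\h\scr C$ killed by $\Phi$ induces the zero map $[\1,E\otimes(\ph)]\to[\1,F\otimes(\ph)]$, so is phantom by the equivalence (i)$\Leftrightarrow$(iii) there, hence zero in $\w\scr C$.

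For essential surjectivity and fullness I would execute the classical Adams tower. Given a homological $H\colon\scr C\to\Ab$, enumerate the countably many objects of $\scr C^\omega$ (with each isomorphism class repeated cofinally often) and inductively build a sequential diagram $E_0\to E_1\to\dotsb$ in $\scr C^\omega$ together with compatible classes $e_n\in H(E_n)$. At each stage one attaches compact objects along suitable maps so as to eventually achieve, for every compact $K$, both surjectivity of the Yoneda map $[K,E_\infty]\to H(K)$ (by wedging in copies of $K$ to hit missing elements) and injectivity (by taking cofibers to kill classes landing in zero). The countability of $\scr C^\omega$ is exactly what permits all these conditions, parameterized by pairs consisting of a compact object and a missing element or a kernel element, to be enumerated in a single sequence indexed by $\N$, so that $E=\colim E_n$ satisfies $\Phi(E)\simeq H$ (checked on compacts, and extended to all of $\scr C$ by the fact that both sides preserve filtered colimits and $\scr C=\Ind(\scr C^\omega)$). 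Fullness is obtained either by a relative version of the same construction, or by applying essential surjectivity to a cone and arguing that a candidate natural transformation $\Phi(E)\to\Phi(F)$ can be lifted to a morphism $E\to F$ modulo phantoms.

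Finally, the $\infty$-operadic enhancement follows from the already-established equivalence of underlying 1-categories: both sides carry canonical lax symmetric monoidal structures (from Corollary~\ref{cor:phantom}(i) and from the Day convolution on $\Fun(\scr C,\Ab)$, respectively), and the lax structure maps of $\Phi$ are tautologically the pairings
\[
[\1,E\otimes X]\otimes[\1,F\otimes Y]\to[\1,E\otimes F\otimes X\otimes Y]
\]
induced by $\otimes$; since these are detected on compact test inputs where both sides already agree in $\w\scr C$, the functor $\Phi$ is compatible with the operadic structures. The main obstacle of the proof is the Adams tower construction itself, and in particular the verification that a single sequential colimit suffices: this genuinely requires the countability hypothesis, without which one would be forced to invoke the heavier well-generated-category machinery of Neeman or Krause.
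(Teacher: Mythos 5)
Your treatment of the underlying $1$-categorical equivalence (faithfulness via Lemma~\ref{lem:phantom}, essential surjectivity and fullness via a countable Adams tower interleaving the ``hit every class'' and ``kill every kernel element'' steps) is in the spirit of the classical argument that the paper delegates to the references; the sketch is plausible, though you would need to be more careful that the stages $E_n$ are merely objects of $\scr C$ and not of $\scr C^\omega$, and that the colimit computes correctly against a homological functor.

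The genuine gap is in the $\infty$-operadic enhancement. You assert that it ``follows from the already-established equivalence of underlying $1$-categories'' because both sides carry lax symmetric monoidal structures whose comparison maps are ``detected on compact test inputs.'' This conflates being a \emph{morphism} of $\infty$-operads (which is automatic, since $\Phi$ is lax symmetric monoidal) with being an \emph{isomorphism} of $\infty$-operads, which requires that for all $n\geq 2$ the induced map on multimapping sets
\[
\Map_{\w\scr C}(E_1\otimes\dotsb\otimes E_n,E) \to \Map\bigl((E_1)_0(\ph)\otimes\dotsb\otimes (E_n)_0(\ph),E_0(\ph)\bigr)
\]
be a bijection. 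This does not follow from the $n=1$ case: the Day convolution $(E_1)_0(\ph)\otimes\dotsb\otimes(E_n)_0(\ph)$ is in general not the homology theory $(E_1\otimes\dotsb\otimes E_n)_0(\ph)$, as the lax comparison map is typically not an isomorphism, so it is not a priori clear that a natural transformation out of the Day convolution comes from a unique map out of $E_1\otimes\dotsb\otimes E_n$ in $\w\scr C$. The paper addresses this by an explicit construction of an inverse: it dualizes the natural transformation to one between cohomology theories on $\scr C^{\dual,\op}$, extends to a cofiltered-limit-preserving functor $\widehat E^0$ on $\scr C^\op$ satisfying $\widehat E^0(\ph)\simeq\Map_{\w\scr C}(\ph,E)$, and evaluates on $(\id_{E_1},\dotsc,\id_{E_n})$ to land in $\widehat E^0(E_1\otimes\dotsb\otimes E_n)\simeq\Map_{\w\scr C}(E_1\otimes\dotsb\otimes E_n,E)$. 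Your proposal contains no analogue of this step, and the phrase ``compatible with the operadic structures'' is the conclusion to be proven, not a consequence of the $1$-categorical statement.
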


\begin{proof}
	The fact that it induces an isomorphism of categories is a special case of \cite[Corollary 19(2)]{AdamsRep} (which is a modern exposition of a classical theorem of Adams \cite{AdamsBrown}; see also \cite[Theorem 5.1 and Proposition 4.11]{NeemanAdams} for another proof). The claim that it is in fact an isomorphism of $\infty$-operads means the following: for any $E_1,\dotsc,E_n, E\in\scr C$, the canonical map
	\[
	\Map_{\w\scr C}(E_1\otimes\dotsb\otimes E_n,E) \to \Map((E_1)_0(\ph)\otimes\dotsb\otimes (E_n)_0(\ph),E_0(\ph))
	\]
	is a bijection, where $E_0(\ph)=[\1,E\otimes(\ph)]$.
	We define an inverse as follows. By duality, a natural transformation $\alpha$ in the target can be viewed as a morphism of cohomology theories
	\[
	\alpha\colon E_1^0(\ph)\otimes \dotsb \otimes E_n^0(\ph)\to E^0(\ph)\colon \scr C^{\dual,\op}\to \Ab,
	\]
	where $E^0(\ph)=[\ph,E]$.
	Denoting by $\widehat E^0\colon \scr C^\op\to\Ab$ the extension of $E^0$ that preserves cofiltered limits, we have an isomorphism $\Map_{\w\scr C}(\ph,E)\simeq \widehat E^0(\ph)$ by \cite[Lemma 17]{AdamsRep}. Plugging in the objects $E_1,\dotsc,E_n$ into $\alpha$, we obtain an element
	\[
	\alpha(\id_{E_1},\dotsc,\id_{E_n}) \in \widehat E^0(E_1\otimes\dotsb\otimes E_n)\simeq \Map_{\w\scr C}(E_1\otimes\dotsb\otimes E_n,E).
	\]
	It is straightforward to check that the map $\alpha\mapsto \alpha(\id_{E_1},\dotsc,\id_{E_n})$ is the desired inverse.
\end{proof}

Let now $S$ be a derived scheme. To formulate the Landweber exact functor theorem, we will need to consider \emph{graded} homology theories on $\MS_S$.
To that end, let $\bb S$ be the free $\E_\infty$-group on one element (i.e., the sphere spectrum). The invertible object $\P^1$ in $\MS_S$ determines a symmetric monoidal functor $\Sigma_{\P^1}^*\1\colon \bb S\to\MS_S$, which induces a lax symmetric monoidal functor
\[
\MS_S \to \Fun^\mathrm{hom}(\MS_S,\Ab^\bb S),\quad E\mapsto E_*(\ph)= [\Sigma_{\P^1}^*\1,E\otimes (\ph)],
\]
where $\Ab^\bb S$ and $\Fun(\MS_S,\Ab^\bb S)$ are equipped with the Day convolution. Of course, the $\bb S$-graded functor $E_*(\ph)$ is completely determined by $E_0(\ph)$, as $E_*=E_0\circ \Sigma^{-*}_{\P^1}$.

\begin{remark}\label{rmk:Z-grading}
	In general, the $\bb S$-grading on $E_*(\ph)$ does not descend to a $\Z$-grading, as the automorphism of $E_2(\ph)$ induced by the swap map on $\P^1\otimes\P^1$ is not necessarily the identity. It does however descend to a $\Z$-grading if $E$ is orientable, by the naturality of the Thom isomorphism.
\end{remark}

If $S$ is qcqs, then $\MS_S$ is compactly generated with $\1\in\MS_S^\omega$. Moreover, if $f\colon T\to S$ is any morphism of qcqs schemes, then $f_*\colon \MS_T\to\MS_S$ preserves colimits. Applying Observation~\ref{obs:lisse-proj}, we obtain the following commutative squares:
\begin{equation}
\label{eqn:LEFT-lisse}
\begin{tikzcd}[ampersand replacement=\&]
	\MS_S^\lisse \ar{r} \ar[hook]{d} \& \Fun^\mathrm{hom}(\MS_S^\lisse,\Ab^\bb S) \ar[hook]{d}{(\ph)\circ \lisse} \\
	\MS_S \ar{r} \& \Fun^\mathrm{hom}(\MS_S,\Ab^\bb S)\rlap,
\end{tikzcd}
\qquad
\begin{tikzcd}[ampersand replacement=\&]
	\MS_S^\lisse \ar{r} \ar{d}[swap]{f^*} \& \Fun^\mathrm{hom}(\MS_S,\Ab^\bb S) \ar{d}{(\ph)\circ f_*} \\
	\MS_T^\lisse \ar{r} \& \Fun^\mathrm{hom}(\MS_T,\Ab^\bb S)\rlap.
\end{tikzcd}
\end{equation}

Let $\scr M_\mathrm{fg}$ denote the stack of (smooth, $1$-dimensional, connected, and commutative) formal groups and $\scr M_\mathrm{fg}^s$ that of formal groups with trivialized Lie algebra. The stack $\scr M_\fg^s$ is represented (as a presheaf on classical affine schemes) by the Hopf algebroid $(\L,\LB)$, where $\L$ is the Lazard ring and $\LB=\L[b_0,b_1,\dotsc]/(b_0-1)$. The usual grading on $(\L,\LB)$ defines an action of $\G_m$ on $\scr M_\mathrm{fg}^s$ such that $\scr M_\mathrm{fg}=\scr M_\mathrm{fg}^s/\G_m$. Thus, we have a cartesian square of faithfully flat maps
\begin{equation}\label{eqn:Mfg}
\begin{tikzcd}
	\Spec(\L) \ar{r} \ar{d} & \scr M_\mathrm{fg}^s \ar{d} \\
	\Spec(\L)/\G_m \ar{r} & \scr M_\mathrm{fg}\rlap.
\end{tikzcd}
\end{equation}

\begin{remark}\label{rmk:equivariant-FGL}
	If $R$ is a $\Z$-graded commutative ring, then any $\G_m$-equivariant map $\Spec(R)\to\scr M_\fg^s$ factors $\G_m$-equivariantly through $\Spec(L)$.
	Indeed, such a map classifies a graded formal group $\G=\mathrm{Spf}(A)$ over $R$ with a trivialization $\omega_{\G}\simeq R(1)$. Choosing a lift of $1\in R$ in $A_{-1}$ defines an isomorphism of graded $R$-algebras $R[[t]]\simeq A$, hence a graded formal group law over $R$.
\end{remark}

\begin{construction}\label{ctr:Phi-star}
Let $S$ be a qcqs derived scheme and let $X\in\MS_S$. As explained following \cite[Lemma 8.7]{AHI}, the $\Z$-graded abelian group $\MGL_*(X)$ has a structure of comodule over the $\Z$-graded Hopf algebroid $(\L,\LB)$, i.e., it is a quasi-coherent sheaf on $\scr M_\mathrm{fg}$, which we shall denote by $\MGL_\mathrm{fg}(X)$. This defines a lax symmetric monoidal homological functor
\[
\MGL_\mathrm{fg}\colon \MS_S \to \QCoh(\scr M_\mathrm{fg})^\heart.
\]
Accordingly, there is a lax symmetric monoidal functor
\[
\Phi_*\colon \QCoh(\scr M_\mathrm{fg})^\heart \to \Fun^{\mathrm{filt},\times}(\MS_S,\Ab^\Z), \quad \scr F \mapsto \Gamma(\scr M_\mathrm{fg}^s,\MGL_\mathrm{fg}(\ph)\otimes \scr F).
\]
When $\scr F$ is the pushforward of a graded $\L$-module $M$, we have
\begin{equation*}\label{eqn:MGL-tensor}
\Phi_*(\scr F) \simeq \MGL_*(\ph)\otimes_{\L} M
\end{equation*}
by the projection formula and base change for the cartesian square~\eqref{eqn:Mfg}. Also, if $\omega\in \Pic(\scr M_\mathrm{fg})$ is the pullback of the universal invertible sheaf on $\rm B\G_m$, then
\[
\Phi_n(\scr F) = \Gamma(\scr M_\mathrm{fg},\MGL_\mathrm{fg}(\ph)\otimes \scr F\otimes\omega^{\otimes n}).
\]
The functor $\Phi_*$ is thus determined by $\Phi_0$, since $\MGL_\mathrm{fg}(\ph)\otimes\omega^{\otimes n}=\MGL_\mathrm{fg}\circ \Sigma_{\P^1}^{-n}$.
\end{construction}

\begin{remark}\label{rmk:topological-to-motivic}
	In the context of Construction~\ref{ctr:Phi-star}, the unit section $1\in \Gamma(\scr M_\fg^s,\MGL_\fg(\1_S))$ defines a symmetric monoidal natural transformation 
	\[
	\Gamma(\scr M_\fg^s,\ph)\to \Gamma(\scr M_\fg^s,\MGL_\fg(\1_S)\otimes (\ph)) = \Phi_*(\ph)(\1_S)\colon \QCoh(\scr M_\mathrm{fg})^\heart\to \Ab^\Z.
	\]
	For example, evaluating this natural transformation on the structure sheaf of $\Spec(\L)/\G_m$ gives the graded ring homomorphism $\L\to\MGL_*(\1_S)$ classifying the formal group law of $\MGL$.
\end{remark}

\begin{lemma}\label{lem:LEFT}
	\leavevmode
	\begin{enumerate}
		\item For any $\scr F\in \QCoh(\scr M_\mathrm{fg})^\heart$, the functor $\Phi_*(\scr F)\colon \MS_S\to \Ab^\Z$ factors through the localization $\lisse\colon\MS_S\to\MS_S^\lisse$.
		\item For any morphism of qcqs derived schemes $f\colon T\to S$, there is a commuting triangle
		\[
		\begin{tikzcd}[row sep=5pt,column sep=1.5em]
			 & \Fun(\MS_S,\Ab^\Z) \ar{dd}{(\ph)\circ f_*} \\ 
			\QCoh(\scr M_\mathrm{fg})^\heart \ar[start anchor={[shift={(-10pt,0)}]north east},end anchor={[shift={(0,-5pt)}]west}]{ur}[above=1pt]{\Phi_*} \ar[start anchor={[shift={(-10pt,0)}]south east},end anchor={[shift={(0,5pt)}]west}]{dr}[below=1pt]{\Phi_*} & \\
			& \Fun(\MS_T,\Ab^\Z)\rlap.
		\end{tikzcd}
		\]
	\end{enumerate}
\end{lemma}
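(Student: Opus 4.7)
The plan is to reduce both statements to the behavior of the intermediate functor $\MGL_\fg\colon \MS_S \to \QCoh(\scr M_\fg)^\heart$ under $\lisse$-replacement and under pushforward, exploiting the fact that $\MGL$ is lisse (Corollary~\ref{cor:MGL-lisse}) and that tensor powers of lisse motivic spectra are again lisse (since dualizable objects are closed under tensor product and $\scr C^\lisse$ is generated by them under colimits).

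For part (i), I would show that $\MGL_\fg$ itself already factors through the localization $\lisse\colon \MS_S \to \MS_S^\lisse$. The counit $\lisse(E)\to E$ induces a chain of isomorphisms
\[
\MGL_n(\lisse E) = [\Sigma_{\P^1}^n\1_S, \MGL \otimes \lisse(E)] \simeq [\Sigma_{\P^1}^n\1_S, \lisse(\MGL \otimes E)] \simeq [\Sigma_{\P^1}^n\1_S, \MGL \otimes E] = \MGL_n(E),
\]
where the first isomorphism is Observation~\ref{obs:lisse-proj}(ii) applied to the lisse object $\MGL$, and the second uses that $\Sigma_{\P^1}^n\1_S \in \MS_S^\lisse$ together with the adjunction between the inclusion $\MS_S^\lisse \into \MS_S$ and $\lisse$. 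Running the same argument with $\MGL^{\otimes k}$ in place of $\MGL$ shows that the $(\L,\LB)$-comodule structure on $\MGL_*(E)$ is also unchanged by $\lisse$-replacement, so $\MGL_\fg$ factors through $\lisse$. Tensoring with $\scr F$ and taking global sections on $\scr M_\fg^s$ then yields the conclusion for $\Phi_*(\scr F)$.

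For part (ii), I would establish the stronger natural isomorphism $\MGL_\fg^T(E) \simeq \MGL_\fg^S(f_*E)$ for $E\in\MS_T$. The spectrum $\MGL$ is natural in the base, so $\MGL_T \simeq f^*\MGL_S$; moreover $f^*\colon \MS_S \to \MS_T$ sends compact generators $\Sigma_{\P^1}^{-n}X_+$ to compact generators (pullback preserves finite-presentation smooth schemes), so $f_*$ preserves colimits. Observation~\ref{obs:lisse-proj}(i) applied to the symmetric monoidal adjunction $f^*\dashv f_*$ and the lisse object $\MGL_S$ then gives
\[
f_*(\MGL_T \otimes E) \simeq f_*(f^*\MGL_S \otimes E) \simeq \MGL_S \otimes f_*E.
\]
Taking $[\Sigma_{\P^1}^n\1_S, -]$ and using the adjunction $f^*\dashv f_*$ together with $f^*\Sigma_{\P^1}^n\1_S \simeq \Sigma_{\P^1}^n\1_T$ produces the desired isomorphism $\MGL_n^T(E) \simeq \MGL_n^S(f_*E)$. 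Applying the same projection formula to each tensor power $\MGL^{\otimes k}$ shows compatibility with the Hopf algebroid comodule structure. Tensoring with $\scr F$ and taking $\Gamma(\scr M_\fg^s,-)$ then gives the commuting triangle.

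There is no genuine obstacle; the content of the lemma is simply the systematic propagation of the lisse projection formulas of Observation~\ref{obs:lisse-proj} through the definition of $\Phi_*$, with the only subtlety being to ensure naturality at the level of the $(\L,\LB)$-comodule structure, which is formal once one notes that $\MGL^{\otimes k}$ remains lisse for every $k$.
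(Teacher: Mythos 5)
Your proof is correct and is essentially the paper's argument unpacked: the commutative squares~\eqref{eqn:LEFT-lisse} that the paper cites are precisely the lisse projection formulas of Observation~\ref{obs:lisse-proj} applied to the homology-theory functor $E\mapsto E_*(\ph)$, and your explicit verification for $\MGL$ and its tensor powers $\MGL^{\otimes k}$ (to track the $(\L,\LB)$-comodule structure) is what instantiating those squares at $E=\MGL^{\otimes\bullet}$ accomplishes.
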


\begin{proof}
	This follows from the fact that $\MGL$ is lisse (Corollary~\ref{cor:MGL-lisse}) and the commutative squares~\eqref{eqn:LEFT-lisse}.
\end{proof}

\begin{construction}\label{ctr:Mod-flat}
	Consider the $\infty$-category $\Mod_\fg$ of pairs $(\scr X,\scr F)$ where $\scr X$ is a small presheaf on classical affine schemes with a map $\pi\colon\scr X\to\scr M_\fg$ and $\scr F\in \QCoh(\scr X)^\heart$; morphisms are contravariant in $\scr X$ and covariant in $\scr F$. It has a symmetric monoidal structure with
	\[
	(\scr X,\scr F)\otimes(\scr Y,\scr G)=(\scr X\times_{\scr M_\fg}\scr Y,\scr F\boxtimes\scr G),
	\]
	and there is a lax symmetric monoidal functor
	\[
	\Mod_\fg\to\QCoh(\scr M_\fg)^\heart,\quad (\scr X,\scr F)\mapsto\pi_*(\scr F).
	\]	
	Let $\Mod_\fg^\flat\subset \Mod_\fg$ be the full subcategory consisting of pairs $(\scr X,\scr F)$ such that:
	\begin{enumerate}
		\item $\scr X^s=\scr X\times_{\scr M_\fg}\scr M_\fg^s$ is affine;
		\item $\scr F$ is flat over $\scr M_\fg$.
	\end{enumerate}
	Note that Condition (i) excludes the unit $(\scr M_\fg,\scr O)$ of $\Mod_\fg$, but both conditions are preserved by binary tensor products. Thus, $\Mod_\fg^\flat$ is a nonunital symmetric monoidal subcategory of $\Mod_\fg$. More generally, $\Mod_\fg^\flat$ inherits the structure of an $\infty$-operad as a full subcategory of $\Mod_\fg$ (so that we can still talk about unital algebraic structures).

	The subcategory of $\Mod_\fg$ satisfying (i) can alternatively be described as the $1$-category of triples $(R,\G,M)$, where $R$ is a $\Z$-graded commutative ring, $\G$ is a $\G_m$-equivariant map $\Spec(R)\to\scr M_\fg^s$, and $M$ is a $\Z$-graded $R$-module. By Remark~\ref{rmk:equivariant-FGL}, any such $\G$ comes from a graded formal group law over $R$.
\end{construction}

\begin{lemma}\label{lem:exact}
	If $(\scr X,\scr F)\in \Mod_\fg^\flat$, then the functor
	\[
	\Gamma(\scr X^s,\pi^*(\ph)\otimes \scr F)\colon \QCoh(\scr M_\fg)^\heart\to \Mod_{\scr O(\scr X^s)}(\Ab^\Z)
	\]
	is exact.
\end{lemma}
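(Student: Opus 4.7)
The plan is to factor the functor as a composition of three exact functors. Writing $\pi\colon\scr X\to\scr M_\fg$ for the structure map and $q\colon\scr X^s\to\scr X$ for the base change of $\scr M_\fg^s\to\scr M_\fg$, the functor $\Gamma(\scr X^s,\pi^*(\ph)\otimes\scr F)$ may be decomposed as
\[
\QCoh(\scr M_\fg)^\heart \xrightarrow{\pi^*(\ph)\otimes_{\scr O_\scr X}\scr F} \QCoh(\scr X)^\heart \xrightarrow{q^*} \QCoh(\scr X^s)^\heart \xrightarrow{\Gamma(\scr X^s,\ph)} \Mod_{\scr O(\scr X^s)}(\Ab^\Z),
\]
so it will suffice to check the exactness of each of the three factors.

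The first factor is exact by the very definition of flatness of $\scr F$ over $\scr M_\fg$, which is condition (ii) in Construction~\ref{ctr:Mod-flat}. The second factor is exact because $q$ is the base change of the $\G_m$-torsor $\scr M_\fg^s\to\scr M_\fg$, hence is itself an affine faithfully flat morphism, and pullback along such a morphism is exact on the hearts of the quasi-coherent $t$-structures. The third factor is exact because $\scr X^s$ is affine by condition (i) of Construction~\ref{ctr:Mod-flat}. I expect no real obstacle: the lemma is essentially packaging the two conditions defining $\Mod_\fg^\flat$ into a single exactness statement, and the $\Z$-grading plays no role since all the functors involved are manifestly $\G_m$-equivariant.
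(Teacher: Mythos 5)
Your factorization is valid, and each of your three factors is indeed exact: $q^*$ is exact because $q$ is the base change of the fpqc (indeed $\G_m$-torsor) map $\scr M_\fg^s\to\scr M_\fg$, and $\Gamma(\scr X^s,\ph)$ is exact because $\scr X^s$ is affine. The route, however, is genuinely different from the paper's. The paper does not factor the functor at all; instead, it forms the cartesian square
\[
\begin{tikzcd}
\Spec(A) \ar{d}[swap]{q} \ar{r}{f} & \Spec(\L) \ar{d}{p} \\
\Spec(R) \ar{r} & \scr M_\fg^s\rlap,
\end{tikzcd}
\]
(using the affine diagonal of $\scr M_\fg^s$ to know $\Spec(A)$ is affine), notes that $f_*q^*$ is exact and conservative, and then identifies $f_*q^*(\pi^*(\ph)\otimes M)\simeq p^*(\ph)\otimes_\L f_*q^*(M)$ via the projection formula; exactness then follows because $p$ is flat and $f_*q^*(M)$ is a flat $\L$-module.

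The one place where you should be more careful is your first factor. You invoke exactness of $\pi^*(\ph)\otimes\scr F\colon\QCoh(\scr M_\fg)^\heart\to\QCoh(\scr X)^\heart$ as being ``the very definition of flatness,'' but the paper's proof (and the remark recalling Landweber's theorem that follows it) makes clear that the authors take as their working formulation of Condition~(ii) the concrete statement that the $\L$-module $f_*q^*(M)$ is flat, equivalently that the sequences $(v_0,v_1,\dotsc)$ are regular on $M$. That this is equivalent to exactness of the functor $\pi^*(\ph)\otimes\scr F$ is precisely the content hidden in your ``by definition,'' and proving it amounts to the same fpqc-descent plus projection-formula computation that the paper's proof carries out. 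So your outline is correct and arguably cleaner to read, but it shifts the real work into the unpacking of the flatness hypothesis; the paper's proof does that unpacking explicitly, which is also useful because it exhibits the specific $\L$-module $f_*q^*(M)$ to which the Landweber regular-sequence criterion is applied in the subsequent remark.
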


\begin{proof}
	Let $\scr X^s=\Spec(R)$ and let $M$ be the graded $R$-module corresponding to $\scr F$. Since $\scr M_\fg^s$ has affine diagonal, we have a cartesian square of the form
	\[
	\begin{tikzcd}
		\Spec(A) \ar{d}[swap]{q} \ar{r}{f} & \Spec(L) \ar{d}{p} \\
		\Spec(R) \ar{r}{\pi} & \scr M_\fg^s\rlap,
	\end{tikzcd}
	\]
	where the vertical maps are faithfully flat. Since $q$ is faithfully flat and $f$ is affine, it suffices to show that the functor $f_*q^*(\pi^*(\ph)\otimes M)$ is exact.
	We have natural isomorphisms
	\[
	 f_*q^*(\pi^*(\ph)\otimes M)\simeq  f_*(f^*p^*(\ph)\otimes q^*(M)) \simeq p^*(\ph)\otimes f_*q^*(M).
	\]
	The assumption that $\scr F$ is flat over $\scr M_\fg$ means that $f_*q^*(M)$ is a flat $\L$-module. As $p$ is also flat, the above functor is exact.
\end{proof}

\begin{remark}
	Landweber's theorem \cite[Lecture 16]{Lurie:2010} gives a necessary and sufficient condition for a pair $(\scr X,\scr F)\in\Mod_\fg$ to satisfy Condition (ii) of Construction~\ref{ctr:Mod-flat}. For completeness, we recall it here. For each prime $p$, there is a canonical sequence of sections
	\[
	v_n\in \Gamma(\scr M_\fg^{\geq n},\omega^{\otimes (p^n-1)}), \quad n\geq 0,
	\]
	starting with $v_0=p$, where $\scr M_\fg^{\geq n}\subset\scr M_\fg$ is the vanishing locus of the sections $v_0,\dotsc,v_{n-1}$ (which depends on $p$).
	Denote by $\scr F^{\geq n}$ the restriction of $\scr F$ to $\scr X^{\geq n}=\scr X\times_{\scr M_\fg}\scr M_\fg^{\geq n}$. Then $\scr F$ is flat over $\scr M_\fg$ if and only if, for each prime $p$ and each $n\geq 0$, the map $v_n\colon \scr F^{\geq n}\to \scr F^{\geq n}\otimes\omega^{\otimes (p^n-1)}$ is injective.
	
	For a formal group law $F$ over a commutative ring $R$, we can take $v_n\in R$ to be the coefficient of $x^{p^n}$ in the $p$-series of $F$. An $R$-module $M$ is then flat over $\scr M_\fg$ if and only if $(v_0,v_1,\dotsc)$ is a regular sequence for $M$ (for all primes $p$).
\end{remark}

\begin{lemma}\label{lem:countable}
	Let $S$ be a qcqs derived scheme. Suppose that $S$ is countable, i.e., admits an open covering by spectra of animated rings with countable homotopy groups. Then the $\infty$-category $\MS_S^\omega$ is countable, i.e., its anima of objects and all its mapping anima have countable homotopy groups.
\end{lemma}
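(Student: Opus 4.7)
The strategy is to propagate countability through the successive constructions that define $\MS_S$, beginning with the geometric input $\Sm_S^\fp$.

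First I would show that $\Sm_S^\fp$ itself is countable. Using that $S$ is qcqs, choose a finite Zariski atlas $\{\Spec(A_i)\}$ with each $A_i$ an animated ring whose homotopy groups are countable. For each $i$, the anima of finitely presented animated $A_i$-algebras is countable, since such algebras admit cell presentations with countably much attaching data over $A_i$; smoothness is a condition valued in small anima, and mapping anima between finitely presented objects are countable by the same cell argument. Countability then passes to $\Sm_S^\fp$ via finitely many Zariski glueings along finitely presented open inclusions, the glueing data being itself countable. Once $\Sm_S^\fp$ is known to be countable, it is formal that the compact subcategory of $\scr P(\Sm_S^\fp, \Sp)$ is countable: its objects are finite colimits of shifted representables, both of which range over countable collections, and mapping spectra are assembled via finite (co)limits from mapping spectra between representables, which by Yoneda are values of target presheaves at objects of $\Sm_S^\fp$.

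Next I would propagate countability through the localization $\scr P(\Sm_S^\fp, \Sp) \to \scr P_{\Nis,\ebu}(\Sm_S, \Sp)$, which is a left Bousfield localization at a countable set of maps between compact objects (the generating Nisnevich covers and elementary blowup squares are countably many because $\Sm_S^\fp$ is). Such a localization preserves countability of the compact subcategory: by a small object argument, any $L$-acyclic replacement of a compact object is built as a transfinite composition of countably many pushouts along maps with compact source and target, so $\map(X, LY)$ is computed as a countable colimit of countable spectra. To handle $\P^1$-stabilization, I would use that $\Sp_{\P^1}^\lax(\scr P_{\Nis,\ebu}(\Sm_S, \Sp))$ is the $\infty$-category of modules in $\SSeq(\scr P_{\Nis,\ebu}(\Sm_S, \Sp))$ over the free commutative algebra on the symmetric sequence $(0,\P^1,0,\dotsc)$, which is compact; its compact subcategory is therefore countable by the same formal reasoning. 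Finally, $\MS_S$ is obtained from this lax version by one further Bousfield localization at a countable set of maps between compact objects (those imposing strictness of the lax structure), and the preceding argument then yields countability of $\MS_S^\omega$.

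The main obstacle is the first step, which depends on a genuinely derived-algebraic input: the statement that the anima of finitely presented animated $A$-algebras is countable whenever $A$ has countable homotopy groups, i.e., that compact objects of the $\infty$-category of animated $A$-algebras form a countable subcategory. Everything afterward is formal propagation of countability through presentable left Bousfield localizations at countable sets of maps between compact objects, together with the module construction over a compact algebra in symmetric sequences.
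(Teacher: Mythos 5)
Your proposal follows essentially the same strategy as the paper: countability of $\Sm_S^\fp$ implies countability of $\SSeq(\scr P(\Sm_S^\fp,\Sp))^\omega$, then one passes through the lax-$\P^1$-spectra step (modules over the free commutative algebra on the symmetric sequence $(0,\P^1,0,\dotsc)$) and finishes by propagating countability through a Bousfield localization at a small set of maps between compact objects, via the transfinite/small-object description of the localization. The paper does the Nisnevich, elementary-blowup, and strictification localizations in a single step at the very end, rather than splitting them as you do, but that is only cosmetic.

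One point in your argument is imprecise and hides the subtlety the paper addresses explicitly. Taking modules over the free commutative algebra on $(0,\P^1,0,\dotsc)$ does not preserve countability of the compact subcategory ``by the same formal reasoning'' for free: that free algebra is $A=(\1,\P^1,(\P^1)^{\otimes 2},\dotsc)$, which is an \emph{infinite} direct sum and hence not a compact object. Your parenthetical ``which is compact'' is either false (if it refers to the free algebra) or beside the point (if it refers to the generating symmetric sequence). What one actually needs is the observation that $A$ is a sequential colimit of compact objects, so that for compact $X,Y$ one has $[A\otimes X,A\otimes Y]\simeq[X,A\otimes Y]\simeq\colim_n[X,A_n\otimes Y]$, a countable colimit of countable sets. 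The paper spells this out; without it, the module step of your argument does not go through as stated. The rest of the proposal, including the explicit cell/small-object description of $L_E$ restricted to maps between compacts and the observation that the local replacement of a compact object is a countable sequential colimit of pushouts along maps with compact source and target, matches the paper's argument.
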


\begin{proof}
	If $\scr C$ is a presentable $\infty$-category and $E$ is a small collection of maps in $\scr C$, the left Bousfield localization $L_E\colon\scr C\to\scr C$ can be obtained as follows. For $F\in\scr C$, choose a surjection from a well-ordered set to the anima of all pairs $(f\colon X\to Y,X \to F)$ with $f$ in the smallest collection containing $E$ and closed under codiagonals, and let $TF$ be the transfinite composition of the pushouts along $f$. Iterate such a construction to obtain an ordinal sequence
	\[
	F\to TF\to T^2F\to\dotsb.
	\]
	Then it is clear that $L_EF=T^\kappa F$ if $E$ is contained in $\scr C^\kappa$.
	Suppose now that $\scr C$ is stable, that $E$ is closed under shifts and contained in $\scr C^\omega$, and that $\scr C^\omega$ is countable. 
	Then, if $F\in\scr C$ is such that the sets $[X,F]$ are countable for all $X\in\scr C^\omega$, the above description of $L_EF$ immediately shows that $[X,L_EF]$ is also countable for all $X\in\scr C^\omega$.

	We now apply this observation with $\scr C=\Sp_{\P^1}^\mathrm{lax}(\scr P(\Sm_S^\fp,\Sp))$. 
	The countability of $S$ implies that $\Sm_S^\fp$ is countable. As also $\Fin^\simeq$ and $\Sp^\omega$ are countable, $\SSeq(\scr P(\Sm_S^\fp,\Sp))^\omega$ is countable. If $A$ is a commutative algebra in a symmetric monoidal compactly generated $\infty$-category, then the $\infty$-category of $A$-modules is compactly generated, with compact objects generated under finite colimits and retracts by the free $A$-modules $A\otimes X$ with $X$ compact.
	In our situation, $A$ is the commutative algebra $(\1,\P^1,(\P^1)^{\otimes 2},\dotsc)$ in $\SSeq(\scr P(\Sm_S^\fp,\Sp))$, which is a sequential colimit of compact objects. Hence, for any compact symmetric sequences $X$ and $Y$, the set $[X,A\otimes Y]$ is countable. This shows that $\scr C^\omega$ is countable. Finally, $\MS_S$ is a left Bousfield localization of $\scr C$ at a collection of maps between compact objects, so that $\MS_S^\omega$ is countable.
\end{proof}

\begin{theorem}[Motivic Landweber exact functor theorem]
	\label{thm:LEFT}
	For any qcqs derived scheme $S$, there is a morphism of $\infty$-operads $\Phi$ making the triangle
	\[
	\begin{tikzcd}
		& \w\MS_S^\lisse \ar{d}{E\mapsto E_*(\ph)} \\
		\Mod_\fg^\flat \ar{r}[swap]{\Phi_*} \ar[dashed]{ur}{\Phi} & \Fun(\MS_S,\Ab^\bb S)
	\end{tikzcd}
	\]
	commute, which is natural in $S$ and uniquely determined as such. Moreover, $\Phi$ is a strict nonunital symmetric monoidal functor.
\end{theorem}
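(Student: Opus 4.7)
The plan is to first construct $\Phi$ over $\Spec(\Z)$ using Adams representability (Proposition~\ref{prop:adams}), and then obtain $\Phi^S$ for general qcqs $S$ by pullback along the structure map $f\colon S\to\Spec(\Z)$. Since $\Z$ is countable, Lemma~\ref{lem:countable} gives that $\MS_\Z^\omega$ is countable. As $\MS_\Z^\lisse$ is the colimit-closure of $\MS_\Z^\dual$ in $\MS_\Z$ and $\1_\Z$ is compact, every dualizable object of $\MS_\Z$ is compact, while any compact object of $\MS_\Z^\lisse$ is a retract of a finite colimit of dualizables and hence dualizable. Thus $(\MS_\Z^\lisse)^\omega=(\MS_\Z^\lisse)^\dual$, and this is a countable $\infty$-category. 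Proposition~\ref{prop:adams} therefore yields an equivalence of $\infty$-operads $\w\MS_\Z^\lisse\simeq\Fun^\mathrm{hom}(\MS_\Z^\lisse,\Ab)$.

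For each $(\scr X,\scr F)\in\Mod_\fg^\flat$, the functor $\Phi_*(\scr X,\scr F)$ factors through $\MS_\Z^\lisse$ by Lemma~\ref{lem:LEFT}(i) and is homological in each $\Z$-degree: it is the composition of the homological lax symmetric monoidal functor $\MGL_\fg\colon\MS_\Z\to\QCoh(\scr M_\fg)^\heart$ with the exact functor $\Gamma(\scr X^s,\pi^*(\ph)\otimes\scr F)$ from Lemma~\ref{lem:exact}. The lax symmetric monoidal structure of $\Phi_*$ from Construction~\ref{ctr:Phi-star} promotes $(\scr X,\scr F)\mapsto\Phi_*(\scr X,\scr F)$ to a morphism of $\infty$-operads $\Mod_\fg^\flat\to\Fun^\mathrm{hom}(\MS_\Z^\lisse,\Ab^\bb S)$ (passing from $\Z$- to $\bb S$-grading via the fact that $\MGL$ is oriented, cf.\ Remark~\ref{rmk:Z-grading}). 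Applying the Adams equivalence of the previous paragraph produces a morphism of $\infty$-operads $\Phi^\Z\colon\Mod_\fg^\flat\to\w\MS_\Z^\lisse$, i.e., a priori only a lax symmetric monoidal functor, making the triangle commute over $\Spec(\Z)$.

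For general qcqs $S$, the functor $f^*\colon\MS_\Z\to\MS_S$ is symmetric monoidal with a colimit-preserving right adjoint (since $f^*$ sends the compact generators $\Sigma^{p,q}X_+$, $X\in\Sm_\Z^\fp$, to compact objects of $\MS_S$). It therefore preserves dualizables and colimits, restricts to a symmetric monoidal functor on lisse motivic spectra, and descends to a morphism $\w\MS_\Z^\lisse\to\w\MS_S^\lisse$ by Lemma~\ref{lem:phantom-PB}. Define $\Phi^S:=f^*\circ\Phi^\Z$. Commutativity of the triangle over $S$ follows from Lemma~\ref{lem:LEFT}(ii), naturality in $S$ from $g^*f^*\simeq(fg)^*$, and uniqueness from naturality together with the Adams equivalence over $\Spec(\Z)$.

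Finally, the upgrade to strict nonunital symmetric monoidality of $\Phi^\Z$ reduces, via Adams, to showing that the Künneth comparison
\[
\MGL_\fg\bigl(\Phi^\Z(\scr X_1,\scr F_1)\bigr)\otimes_{\scr O_{\scr M_\fg}}\MGL_\fg\bigl(\Phi^\Z(\scr X_2,\scr F_2)\bigr)\longrightarrow\MGL_\fg\bigl(\Phi^\Z(\scr X_1,\scr F_1)\otimes\Phi^\Z(\scr X_2,\scr F_2)\bigr)
\]
is an isomorphism of quasi-coherent sheaves on $\scr M_\fg$. This is the main obstacle: it is to be verified by unwinding the definition of $\Phi_*$, applying flat base change along the cartesian square~\eqref{eqn:Mfg} and the projection formula, and using crucially the flatness of each $\scr F_i$ over $\scr M_\fg$---the precise analogue of the classical Künneth isomorphism for Landweber exact spectra, repackaged in the non-$\A^1$-invariant motivic setting thanks to the lisseness of $\MGL$ (Corollary~\ref{cor:MGL-lisse}), which makes Adams representability available in operadic form.
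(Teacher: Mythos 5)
Your construction of $\Phi$ is exactly the paper's: build it over $\Spec(\Z)$ via the operadic Adams representability of Proposition~\ref{prop:adams} (the paper phrases this as ``for countable $S$'', but $\Spec(\Z)$ is the only case used), then descend to arbitrary qcqs $S$ by base change $f^*\colon\w\MS_\Z^\lisse\to\w\MS_S^\lisse$, invoking Lemma~\ref{lem:phantom-PB} and Lemma~\ref{lem:LEFT}(ii) to propagate the commutativity of the triangle. Your identification $(\MS_\Z^\lisse)^\omega=(\MS_\Z^\lisse)^\dual=\MS_\Z^\dual$ and the countability input are also correct.

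The gap is in the final step. You claim that, ``via Adams,'' strict monoidality reduces to the K\"unneth comparison
\[
\MGL_\fg(E_1)\otimes_{\scr O_{\scr M_\fg}}\MGL_\fg(E_2)\to\MGL_\fg(E_1\otimes E_2)
\]
being an isomorphism in $\QCoh(\scr M_\fg)^\heart$. This is not the right reduction. What Adams representability actually buys you is that the induced map $E_1\otimes E_2\to E:=\Phi((\scr X_1,\scr F_1)\otimes(\scr X_2,\scr F_2))$ in $\w\MS_\Z^\lisse$ is an isomorphism if and only if the induced transformation $(E_1\otimes E_2)_*(\ph)\to E_*(\ph)$ of homology theories on $\MS_\Z$ is an isomorphism. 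Knowing the single comodule $\MGL_\fg(E_1\otimes E_2)$ does not by itself determine the homology theory $(E_1\otimes E_2)_*(\ph)$: the object $E_1\otimes E_2$ is not a priori known to be of the form $\Phi(\ldots)$, so you cannot recover its homology from its $\MGL$-homology by the Landweber formula. (The classical proof that a tensor of Landweber exact spectra is again Landweber exact uses exactly such a K\"unneth step, but only as input to a further argument, not as the whole story.)

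What the paper does instead is produce the natural isomorphism $(E_1\otimes E_2)_*(X)\simeq E_*(X)$ directly, via a chain that repeatedly exploits the already-established identity $E_{i*}(\ph)\simeq\MGL_*(\ph)\otimes_\L M_i$ of \emph{homology theories}: first $(E_1\otimes E_2)_*(X)\simeq E_{1*}(E_2\otimes X)\simeq M_1\otimes_\L\MGL_*(E_2\otimes X)$, then re-shuffle to $M_1\otimes_\L E_{2*}(\MGL\otimes X)\simeq M_1\otimes_\L\MGL_*(\MGL\otimes X)\otimes_\L M_2$, then identify this with $(\MGL\otimes\MGL)_*(X)\otimes_{\LB}(M_1\boxtimes M_2)\simeq E_*(X)$, and finally verify that the resulting isomorphism is compatible with the lax monoidal structure. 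This bookkeeping is essential and is not supplied by (nor obviously reducible to) your K\"unneth statement. Flatness of the $\scr F_i$ is indeed used, but only implicitly, to ensure $\Phi_*$ is well-defined; the strictness computation itself needs the naturality of $E_{i*}(\ph)\simeq\MGL_*(\ph)\otimes_\L M_i$ evaluated on arbitrary $X\in\MS_\Z$, not a K\"unneth isomorphism of sheaves on $\scr M_\fg$.
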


\begin{proof}
	The vertical functor is well-defined and lax symmetric monoidal by Corollary~\ref{cor:phantom}, and it lands in the subcategory of lisse-extended homological functors $\Fun^\mathrm{hom}(\MS_S^\lisse,\Ab^\bb S)$ by the first square in~\eqref{eqn:LEFT-lisse}.
	By Lemmas~\ref{lem:LEFT}(i) and~\ref{lem:exact}, the restriction of $\Phi_*$ to $\Mod_\fg^\flat$ also lands in this subcategory.
	By Proposition~\ref{prop:adams}, the functor
	\[
	\w\MS_S^\lisse \to \Fun^\mathrm{hom}(\MS_S^\lisse,\Ab), \quad E\mapsto E_0(\ph),
	\]
	is an isomorphism of $\infty$-operads if $\MS_S^\mathrm{dual}$ is countable, and this holds if $S$ is countable by Lemma~\ref{lem:countable}. Hence, for countable $S$, there is a unique lift $\Phi$ of $\Phi_0$ (and hence of $\Phi_*$) as indicated, which is automatically a morphism of $\infty$-operads.
	We then obtain such a lift for general $S$ using the commutativity of the second square in~\eqref{eqn:LEFT-lisse}, Lemma~\ref{lem:phantom-PB}, and Lemma~\ref{lem:LEFT}(ii).
	
	It remains to check that the lax nonunital symmetric monoidal structure of $\Phi$ is actually strict.
	Let $(\Spec(R_1)/\G_m,M_1)$ and $(\Spec(R_2)/\G_m,M_2)$ be objects of $\Mod_\fg^\flat$. Their tensor product is 
	\[
	(\Spec(R)/\G_m,M_1\boxtimes M_2),\quad \text{where}\quad \Spec(R)=\Spec(R_1)\times_{\scr M_\fg^s}\Spec(R_2).
	\]
	Let $E_1$, $E_2$, and $E$ be the images of these pairs by $\Phi$.
	We must show that the induced map $E_1\otimes E_2\to E$ in $\w\MS_S^\lisse$ is an isomorphism, and we may assume $S$ countable.
	In this case, it is by definition the unique map making the triangle
	\[
	\begin{tikzcd}
		E_{1*}(\ph) \otimes E_{2*}(\ph) \ar{r} \ar{d} & E_*(\ph) \\
		(E_1\otimes E_2)_*(\ph) \ar[dashed]{ur}
	\end{tikzcd}
	\]
	commute, where the horizontal map is the lax monoidal structure of $\Phi_*$.
	It will thus suffice to construct a natural isomorphism $(E_1\otimes E_2)_*(\ph)\simeq E_*(\ph)$ making the triangle commute.
	
	By Remark~\ref{rmk:equivariant-FGL}, we can choose $\G_m$-equivariant factorizations of $\Spec(R_i)\to\scr M_\fg^s$ through $\Spec(\L)$. They induce isomorphisms $E_{i*}(X) \simeq \MGL_*(X)\otimes_\L M_i$ and
	\[
	E_*(X) \simeq (\MGL\otimes\MGL)_*(X)\otimes_{\LB} (M_1\boxtimes M_2),
	\]
	since $(\MGL\otimes\MGL)_*(X)$ is by definition the pullback of $\MGL_\fg(X)$ to $\Spec(\LB)$.
	We then have a sequence of natural isomorphisms:
	\begin{align*}
		(E_1\otimes E_2)_*(X) & \simeq E_{1*}(E_2\otimes X) \\
		&\simeq M_1\otimes_\L\MGL_*(E_2\otimes X)\\
		& \simeq M_1\otimes_\L E_{2*}(\MGL\otimes X)\\
		& \simeq M_1\otimes_\L\MGL_*(\MGL\otimes X)\otimes_\L M_2   \\
		& \simeq M_1\otimes_\L(\MGL\otimes\MGL)_*(X)\otimes_\L M_2  \\
		& \simeq (\MGL\otimes\MGL)_*(X)\otimes_\LB(M_1\boxtimes M_2) \\
		& \simeq E_*(X).
	\end{align*}
	The commutativity of the above triangle can be checked using the following claim twice: if $F=\Phi(M)$ for some $\Z$-graded $\L$-module $M$, then the following square commutes for all $X,Y\in\MS_S$:
	\[
	\begin{tikzcd}[column sep=4em]
		F_*(X)\otimes \1_*(Y) \ar{d}[swap,sloped]{\sim} \ar{r}{\eta} & F_*(X\otimes Y) \ar{d}[sloped]{\sim} \\
		M\otimes_\L\MGL_*(X)\otimes \1_*(Y) \ar{r}{\id_M\otimes \eta} & M\otimes_\L\MGL_*(X\otimes Y).
	\end{tikzcd}
	\]
	Here, the maps $\eta$ are the evident ``assembly maps''. The commutativity of the square on some element $a\in \1_{*}(Y)$ is exactly the naturality of the isomorphism $F_*(\ph)\simeq M\otimes_\L \MGL_*(\ph)$ with respect to the map $\id_X\otimes a\colon \Sigma_{\P^1}^*X\to X\otimes Y$.
\end{proof}

\begin{remark}
	The multiplicative properties of $\Phi$ can also be stated as follows. If $\Mod_{\smash[b]{\fg}}^{\flat,+}$ denotes the symmetric monoidal full subcategory of $\Mod_\fg$ consisting of $\Mod_{\smash[b]{\fg}}^\flat$ and the unit $(\scr M_\fg,\scr O)$, then $\Phi$ extends uniquely to a symmetric monoidal functor $\Phi\colon \Mod_\fg^{\flat,+}\to\w\MS_S^\lisse$.
\end{remark}

\begin{example}[Graded formal group laws]
	\label{ex:graded-FGL}
	\leavevmode
	\begin{enumerate}
		\item We recover $\MGL$ as a commutative monoid in $\w\MS_S^\lisse$ from $(\Spec(\L)/\G_m,\scr O)\in\Mod_\fg^\flat$.
		\item A graded formal group law $F$ over a $\Z$-graded commutative ring $R$ defines a map $\Spec(R)/\G_m\to\scr M_\fg$ over $\rm B\G_m$.
		Any $\Z$-graded $R$-module $M$ that is flat over $\scr M_\fg$ then defines an $\MGL$-module $\Phi(R,F,M)$ in $\w\MS_S^\lisse$. If moreover $R$ itself is flat over $\scr M_\fg$, then $\Phi(R,F,M)$ is a module over the commutative $\MGL$-algebra $\Phi(R,F)$ in $\w\MS_S^\lisse$.
	\end{enumerate}
\end{example}

\begin{example}[Weakly periodic Landweber exact motivic spectra]
	\label{ex:weakly-periodic}
	Any formal group $(R,\G)$ classified by a flat map $\Spec(R)\to\scr M_\fg$ gives a commutative monoid $\Phi(R,\G)$ in $\w\MS_S^\lisse$. 
	These motivic spectra are weakly periodic in the following sense: if $L\in\Pic(R)$ is the pullback of the invertible sheaf $\omega$, then $\Phi(R,\G)_*(\ph)=\Phi(R,\G)_0(\ph)\otimes_R\bigoplus_{n\in \Z}L^{\otimes n}$.
	For example:
	\begin{enumerate}
		\item The universal formal group law over $\L$ gives the commutative monoid $\PMGL$.
		\item The multiplicative formal group law over $\Z$ gives the commutative monoid $\KGL$ (Corollary~\ref{cor:conner-floyd}).
		\item Let $k$ be a perfect field of positive characteristic and let $\G$ be a formal group of finite height over $k$. Then $\G$ admits a universal deformation $\hat\G$ defined over the \emph{Lubin–Tate ring} $\mathrm{LT}(\G)$, which is classified by a flat map $\Spec\mathrm{LT}(\G)\to\scr M_\fg$. The resulting commutative monoid $\rm E(\G)$ in $\w\MS_S^\lisse$ is the motivic analogue of the \emph{Morava E-theory} associated with $\G$.
	\end{enumerate}
\end{example}

\begin{remark}\label{rmk:LEFT-oriented}
	Let $F$ be a graded formal group law over a $\Z$-graded commutative ring $R$ and let $M$ be a $\Z$-graded $R$-module that is flat over $\scr M_\fg$.
	Since the motivic spectrum $\Phi(R,F,M)$ is an $\MGL$-module in $\w\MS_S^\lisse$, it admits an orientation that is canonical modulo phantom maps.
\end{remark}

\begin{remark}
	In \cite[Proposition 8.9]{Naumann:2009}, it is claimed that Landweber exact motivic spectra can be refined to $\MGL$-modules (in the $\infty$-category of motivic spectra), but the proof is flawed.
	The mistake originates in \cite[Proposition 7.9]{Naumann:2009}, where it is claimed that the homotopy groups of any $\MGL$-module have a structure of $(\L,\LB)$-comodule, following the analogous claim for $\MU$-modules made in \cite[Lemma 11]{MayWrong}. These claims are wrong, since the $(\L,\LB)$-comodule structure encodes in essence the descent data to the sphere spectrum.
	This invalidates \cite[Theorem 9.7]{Naumann:2009} (unless the motivic spectrum $F$ in \emph{loc.\ cit.}\ is a priori an $\MGL$-module), as well as the main result of \cite{Spitzweck:2012}, which uses this $\MGL$-module structure in an essential way.
\end{remark}

\section{Operations in algebraic K-theory and rational motivic cohomology}
\label{sec:HQ}

As applications of the motivic Landweber exact functor theorem, we compute the algebra of $\P^1$-stable operations in algebraic K-theory and we show that rational motivic cohomology is an idempotent algebra in $\MS_S$. These are non-$\A^1$-invariant enhancements of some of the results from \cite[Section~5.3]{RiouK}, \cite[Sections 9 and~10]{Naumann:2009}, and \cite[Section~14.1]{CD}, and our proofs are essentially the same.

Our first goal is to explicitly describe the endomorphism ring of $\KGL$.
We make some preliminary observations about Hopf algebroids. 

\begin{digression}[Dualizing and extending Hopf algebroids]
	Let $(A,\Gamma)$ be a cocategory object in commutative rings with left unit $\eta_L$, right unit $\eta_R$, counit $\epsilon$ and comultiplication $\Delta$. We denote by $\Gamma^\vee$ the $A$-linear of $\Gamma$, viewed as an $A$-module via $\eta_L$.
	Then $\Gamma^\vee$ is an associative algebra in $A$-bimodules, with unit $\epsilon^\vee\colon A\to \Gamma^\vee$ and multiplication $\circ$ given by
	\[
	f\circ g\colon \Gamma \xrightarrow{\Delta} \Gamma\otimes_A\Gamma \xrightarrow{\id\otimes g} \Gamma\otimes_AA\simeq \Gamma \xrightarrow{f} A.
	\]
	Assume that $\Gamma$ is flat as a left $A$-module, so that it is a filtered colimit of dualizable $A$-modules.
	If we equip $\Gamma^\vee$ with the inverse limit topology, we then have $(\Gamma^{\otimes_A n})^\vee \simeq (\Gamma^\vee)^{\hatotimes_A n}$. Dualizing the commutative algebra structure of $\Gamma$ (in left $A$-modules), we obtain a structure of cocommutative coalgebra on $\Gamma^\vee$ (in left $A$-modules), whose comultiplication $\Gamma^\vee \to (\Gamma^\vee)^{\hatotimes_A n}$ is moreover right $A^{\otimes n}$-linear.
	There is a further compatibility between the algebra and coalgebra structures of $\Gamma^\vee$, but we do not spell it out.

	Let now $R$ be a commutative $A$-algebra. The data of a cocartesian morphism of cocategory objects \[(A,\Gamma)\to (R,R\otimes_A\Gamma)\] is equivalent to the data of a ring map $\rho\colon R\to R\otimes_A\Gamma$ that is a right coaction of $(A,\Gamma)$ on $R$: the left unit, counit, and comultiplication of $(R,R\otimes_A\Gamma)$ are extended from those of $(A,\Gamma)$, and the right unit is the coaction $\rho$. 
	Dualizing $\rho$, we find a left action of $\Gamma^\vee$ on $R$ given by the left $A$-linear map
	\[
	\lambda\colon\Gamma^\vee\otimes_A R\to R,\quad f\otimes r\mapsto f(r):=(\id_R\otimes f)(\rho(r)),
	\]
	which extends $\eta_L^\vee\colon\Gamma^\vee\to A$.
	Assuming $\Gamma$ flat over $A$, this action is continuous (and it intertwines the algebra structure of $R$ and the coalgebra structure of $\Gamma^\vee$, but we do not spell this out). Furthermore, there is then an isomorphism of rings
	\begin{equation}\label{eqn:twisted-ring}
	R\hatotimes_A\Gamma^\vee \simto \Hom_A(\Gamma, R) = (R\otimes_A\Gamma)^\vee
	\end{equation}
	with the following multiplication on $R\hatotimes_A\Gamma^\vee$:
	\[
	(R\hatotimes_A\Gamma^\vee)\hatotimes_A (R\hatotimes_A\Gamma^\vee) \to  R\hatotimes_A\Gamma^\vee,\quad (r\cdot f)\otimes (s\cdot g) \mapsto r\cdot \Delta(f)(s)\circ g.
	\]
	In other words, this multiplication is left $R$-linear and right $\Gamma^\vee$-linear, and in the middle it is given by the composition
	\[
	\Gamma^\vee\hatotimes_A R\xrightarrow{\Delta\otimes\id} \Gamma^\vee_\ell\hatotimes_A\Gamma^\vee \hatotimes_A R \xrightarrow{\id\otimes \lambda} \Gamma^\vee_\ell\hatotimes_AR = R\hatotimes_A\Gamma^\vee,
	\]
	where the subscript $\ell$ means that the tensor product uses the left $A$-module structure of $\Gamma^\vee$.
	To make the isomorphism~\eqref{eqn:twisted-ring} more useful, we note that if $R_0\subset R$ is the equalizer of the left and right units, then $\Delta(f)(s)=s\cdot f$ for any $s\in R_0$. This means that the isomorphism~\eqref{eqn:twisted-ring} restricts to a ring map
	\[
	R_0\hatotimes_{A_0}\Gamma^\vee \to (R\otimes_A\Gamma)^\vee,
	\]
	where the multiplication on the left-hand side is computed componentwise. In particular, the map 
	\[
	\Gamma^\vee\to (R\otimes_A\Gamma)^\vee, \quad f\mapsto\id_R\otimes f,
	\]
	is a morphism of associative rings.
\end{digression}

We recall the structure of the $\G_m$-stack of multiplicative formal groups with trivialized Lie algebra.
It is presented by the graded Hopf algebroid $(\Z[\beta^{\pm 1}],\Gamma_m)$ with
\[
\Gamma_m=\Z[\beta^{\pm 1}]\otimes_\L\LB\otimes_\L\Z[\beta^{\pm 1}],
\]
where the ring map $\L\to\Z[\beta^{\pm 1}]$ classifies the graded formal group law $x+y-\beta xy$.
It is a standard fact that $\Gamma_m$ is a free $\Z[\beta^{\pm 1}]$-module on countably many generators. 
The associative ring $\Gamma_m^\vee$ can be described explicitly as follows: it is a twisted Laurent polynomial algebra over a sequential limit of rings
\begin{equation}\label{eqn:Gamma_m}
\Gamma_m^\vee=\lim\left(\dotsb \xrightarrow{\omega} \Z[[x]] \xrightarrow{\omega} \Z[[x]]\right)[\beta^{\pm 1}], \quad \omega(f)=(1-x)\frac{df}{dx},\quad \beta^{-1}a\beta = \omega(a),
\end{equation}
where the ring structure $\circ$ on the topological abelian group $\Z[[x]]$ is uniquely determined by the formula
\[
(1-x)^{-k}\circ (1-x)^{-l}=(1-x)^{-kl}
\]
for all $k,l\in\Z$. Note that $\Gamma_m^\vee$ is \emph{not} a $\Z[\beta^{\pm 1}]$-algebra, since $\beta$ is not central.\footnote{For this reason, the statement of \cite[Theorem 9.3]{Naumann:2009} is incorrect. We give a corrected statement in Proposition~\ref{prop:End(KGL)}(ii) below.}

\begin{remark}
	From the perspective of stable homotopy theory, these claims can be understood as follows. We have $\Gamma_m=\KU_{2*}\KU$, which is a free $\KU_{2*}$-module by \cite[Theorem 2.1]{AdamsClarke}, and its dual is $\Gamma_m^\vee=\KU^{2*}\KU$ with ring structure given by composition of endomorphisms. To obtain the above description of $\Gamma_m^\vee$, let $x=\beta c_1\in \KU^0(\rm B\C^\times)$, so that $\KU^{0}(\rm B\C^\times)\simeq \Z[[x]]$.\footnote{Following our algebro-geometric conventions, the element $1-x\in \KU^0(\rm B\C^\times)$ is the dual of the universal line bundle, and the Bott element $\beta\in \KU^{-2}(*)\subset \KU^0(\C\P^1)$ is the restriction of $x$ along the map $\C\P^1\to \rm B\C^\times$ classifying the tautological bundle.} The latter group can be identified with the subgroup of $\KU^{0}(\Omega^\infty\KU)$ consisting of additive maps, which is a ring under composition.
	Under this identification, the power series $(1-x)^{-k}$ corresponds to the (unstable) Adams operation $\psi^k$, and the formula for $\circ$ corresponds to the equation $\psi^k\circ\psi^l=\psi^{kl}$. Finally, the limit along $\omega$ corresponds to the Snaith presentation $\KU=\Sigma^\infty_+ \rm B\C^\times[\beta^{-1}]$ (and the $\lim^1$ obstruction to this limit computing $\KU^{0}(\KU)$ vanishes since $\KU^{-1}=0$).
\end{remark}

\begin{proposition}
	\label{prop:End(KGL)}
	Let $S$ be a qcqs derived scheme and $\Lambda\subset\Q$ a subring. 
	\begin{enumerate}
		\item \textnormal{($\KGL$-homology cooperations)} There is a right coaction of the Hopf algebroid $(\Z[\beta^{\pm}],\Gamma_m)$ on the $\Z[\beta^{\pm 1}]$-algebra $\KGL_{\star}(S)$ and an isomorphism of $\Pic(\MS_S)$-graded Hopf algebroids
		\[
		(\KGL_{\Lambda\star},\KGL_{\Lambda\star}\KGL)\simeq \KGL_{\Lambda\star}(S) \otimes_{\Z[\beta^{\pm 1}]} (\Z[\beta^{\pm 1}],\Gamma_m),
		\]
		where on the right-hand side the right unit is the coaction while the left unit, counit, and comultiplication are extended.
		\item \textnormal{($\KGL$-cohomology operations)} There is a continuous 
		left action of the $\Z[\beta^{\pm 1}]$-bimodule algebra $\Gamma_m^\vee$ on the left $\Z[\beta^{\pm 1}]$-module $\KGL^{\star}(S)$ 
		extending $\eta_L^\vee\colon \Gamma_m^\vee\to \Z[\beta^{\pm 1}]$ 
		and an isomorphism of $\Pic(\MS_S)$-graded rings
		\[
		\KGL_{\Lambda}^{\star}\KGL\simeq \KGL_\Lambda^{\star}(S)\hatotimes_{\Z[\beta^{\pm 1}]} \Gamma_m^\vee,
		\]
		where the ring structure on the right-hand side is given by: $(u\cdot\phi)\circ(v\cdot\psi)=u\cdot\Delta(\phi)(v)\circ \psi$.
		In particular, there is a ring homomorphism $\Lambda\hatotimes\Gamma_m^\vee\to \KGL_{\Lambda}^{*}\KGL$, which is an isomorphism when $\K_0(S)\otimes\Lambda=\Lambda$.
	\end{enumerate}
\end{proposition}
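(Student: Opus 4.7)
The key input is the motivic Landweber exact functor theorem (Theorem~\ref{thm:LEFT}). By Example~\ref{ex:weakly-periodic}(ii) combined with Corollary~\ref{cor:conner-floyd}, the motivic spectrum $\KGL$ is identified with the Landweber spectrum $\Phi((\Spec(\Z[\beta^{\pm 1}])/\G_m,\scr O))$ attached to the multiplicative formal group law, as a commutative monoid in $\w\MS_S^\lisse$. Since the tensor product in $\Mod_\fg^{\flat,+}$ is the fiber product over $\scr M_\fg$, and since the fiber product $(\Spec(\Z[\beta^{\pm 1}])/\G_m)\times_{\scr M_\fg}(\Spec(\Z[\beta^{\pm 1}])/\G_m)$ presents the isomorphism groupoid of the multiplicative formal group as $\Spec(\Gamma_m)/\G_m$, the strict symmetric monoidal structure on $\Phi$ yields
\[
\KGL\otimes\KGL \simeq \Phi((\Spec(\Gamma_m)/\G_m,\scr O))\quad\text{in } \w\MS_S^\lisse.
\]
Applying the formula $\Phi(M)_\star(S)\simeq\MGL_\star(S)\otimes_\L M$ from Theorem~\ref{thm:LEFT}, and rewriting $\MGL_\star(S)\otimes_\L\Z[\beta^{\pm 1}]\simeq \KGL_\star(S)$ via Corollary~\ref{cor:conner-floyd}, gives
\[
\KGL_\star\KGL \simeq \MGL_\star(S)\otimes_\L\Gamma_m \simeq \KGL_\star(S)\otimes_{\Z[\beta^{\pm 1}]}\Gamma_m.
\]
The right coaction $\rho$ on $\KGL_\star(S)$ and the identification of this as a base-changed Hopf algebroid follow by naturality of $\Phi$ applied to the tautological groupoid $\Spec(\Gamma_m)\rightrightarrows\Spec(\Z[\beta^{\pm 1}])$. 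Tensoring with $\Lambda$ yields the stated isomorphism.

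\textbf{Plan for part (ii).} The left action of the bimodule algebra $\Gamma_m^\vee$ on $\KGL^\star(S)$ and its continuity come from the preamble's general dualization procedure: for $f\in\Gamma_m^\vee$ and $u\in\KGL^\star(S)$, set $f(u)=(\id\otimes f)(\rho(u))$, which is well defined because $\Gamma_m$ is a flat (in fact free) $\Z[\beta^{\pm 1}]$-module (Adams--Clarke). The ring isomorphism $\KGL_\Lambda^\star\KGL\simeq \KGL_\Lambda^\star(S)\hatotimes_{\Z[\beta^{\pm 1}]}\Gamma_m^\vee$ will be obtained by dualizing (i). Explicitly, choose a homogeneous $\Z[\beta^{\pm 1}]$-basis $\{x_i\}$ of $\Gamma_m$ with $|x_i|=n_i$; the functoriality and strict monoidality of $\Phi$ promote this to a splitting
\[
\KGL\otimes\KGL = \Phi(\Gamma_m)\simeq \bigoplus_i \Sigma_{\P^1}^{n_i}\KGL
\]
of $\KGL$-modules in $\w\MS_S^\lisse$. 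Applying $\map(-,\KGL)$ converts the direct sum into a product and produces
\[
\KGL^\star\KGL \simeq \prod_i \Sigma_{\P^1}^{-n_i}\KGL^\star(S) \simeq \KGL^\star(S)\hatotimes_{\Z[\beta^{\pm 1}]}\Gamma_m^\vee,
\]
with the product-of-components topology on $\Gamma_m^\vee$. The ring-structure formula is exactly the dual multiplication recorded in equation~\eqref{eqn:twisted-ring} of the preamble, specialized to the Hopf algebroid $(\Z[\beta^{\pm 1}],\Gamma_m)$ and the right comodule algebra $\KGL^\star(S)$ of part (i). For the final statement, restrict to the Bott-degree grading $\KGL^*=\KGL^{2*,*}$; the hypothesis $\K_0(S)\otimes\Lambda=\Lambda$ gives $\KGL_\Lambda^*(S)=\Lambda[\beta^{\pm 1}]$, and the tensor product collapses to $\Lambda\hatotimes\Gamma_m^\vee$.

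\textbf{Main obstacle.} The decisive technical step is lifting the additive decomposition $\Gamma_m\simeq\bigoplus_i\Z[\beta^{\pm 1}]\cdot x_i$ to a $\KGL$-module splitting of $\KGL\otimes\KGL=\Phi(\Gamma_m)$ in $\w\MS_S^\lisse$, and thereby computing $\KGL$-cohomology via a product. Since Theorem~\ref{thm:LEFT} guarantees only that $\Phi$ is a morphism of $\infty$-operads, one has to verify that it converts this particular direct sum decomposition (in the target of the functor $\Phi_*$, i.e.\ on homological functors) into a direct sum decomposition in $\w\MS_S^\lisse$. This can be done by naturality of $\Phi$ applied to the split monomorphisms $\Z[\beta^{\pm 1}]\cdot x_i\hookrightarrow\Gamma_m$ together with the fact, inherent in the Adams representability isomorphism of Proposition~\ref{prop:adams}, that $\w\MS_S^\lisse\simeq\Fun^{\mathrm{hom}}(\MS_S^\lisse,\Ab^\bb S)$ detects such splittings; one also needs the relevant $\lim^1$ terms to vanish when passing cohomology through the decomposition, which is ensured by the Mittag--Leffler condition coming from the grading on $\Gamma_m$.
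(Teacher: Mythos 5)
Your argument traces essentially the same route as the paper: identify $\KGL$ with $\Phi$ applied to the multiplicative formal group, use strict monoidality of $\Phi$ to get $\KGL\otimes\KGL\simeq\Phi(\Gamma_m)$ as a free $\KGL$-module, compute homology via $\Phi_*$, and dualize to obtain cohomology operations. Two points need to be repaired.

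First, the $\lim^1$/Mittag--Leffler concern in your ``Main obstacle'' paragraph is a red herring, and the actual subtlety lies elsewhere. The genuine step is promoting the splitting $\KGL\otimes\KGL\simeq\bigoplus_i\Sigma_{\P^1}^{n_i}\KGL$ from $\w\MS_S^\lisse$ to $\MS_S$: the summand inclusions lift to $\h\MS_S^\lisse$ because $\h\to\w$ is full, the resulting coproduct map is an isomorphism in $\w\MS_S^\lisse$ (Adams representability over a countable base, e.g.\ $\Spec\Z$, then base change), and hence it is an isomorphism in $\MS_S$ by conservativity of $\h\to\w$. Once this is in hand, mapping spectra turn the coproduct into a product, $\map(\KGL,\KGL_\Lambda)\simeq\map_\KGL\bigl(\bigoplus_i\KGL,\KGL_\Lambda\bigr)\simeq\prod_i\KGL_\Lambda(S)$, and $\pi_*$ commutes with products, so there is no $\lim^1$ issue at all. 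What you do need, and omit, is the compactness of $\1_S$, which is what identifies $\pi_\star\KGL_\Lambda(S)$ with $\KGL_\star(S)\otimes\Lambda$ and makes the canonical map to $\Hom_{\KGL_\star}(\KGL_\star\KGL,\KGL_{\Lambda\star})$ an isomorphism.

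Second, part (i) asserts a $\Pic(\MS_S)$-graded statement, but $\Phi_*$ and everything in your plan is $\Z$-graded (via $\bb S$). You need the additional observation that the $\Pic(\MS_S)$-graded Hopf algebroid $(\KGL_\star,\KGL_\star\KGL)$ obtained from the Čech conerve of $\1\to\KGL$ contains the $\Z$-graded subalgebroid $(\KGL_*,\KGL_*\KGL)$ cocartesianly, precisely because the free decomposition of $\KGL\otimes\KGL$ involves no grading shifts; this is what lets one reduce the claim to the $\Z$-graded case your argument actually covers. Relatedly, ``naturality of $\Phi$ applied to the tautological groupoid'' should be made precise as in the paper: $(\Z[\beta^{\pm 1}],\Gamma_m)$ is the Čech conerve in $\CAlg(\Mod_\fg)$ of the classifying map of $\hat\G_m$, $\Phi$ sends this to the Čech conerve of $\1\to\KGL$ in $\CAlg(\w\MS_S^\lisse)$, and evaluating the unit transformation of Remark~\ref{rmk:topological-to-motivic} on this diagram produces the required cocartesian morphism of Hopf algebroids.
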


\begin{proof}
	We use the symmetric monoidal functor $\Phi$ from Theorem~\ref{thm:LEFT}. Recall from Example~\ref{ex:weakly-periodic}(ii) that $\KGL$ is the image by $\Phi$ of the graded $\L$-algebra $\Z[\beta^{\pm 1}]$. The $\KGL$-module $\KGL\otimes\KGL$ is then the image by $\Phi$ of the $\Z[\beta^{\pm 1}]$-module $\Gamma_m$, so it is a sum of copies of $\KGL$ (indexed by a basis of $(\Gamma_m)_0$). 
	
	(i) As observed above, this statement is equivalent to the existence of a cocartesian morphism of Hopf algebroids $(\Z[\beta^{\pm 1}],\Gamma_m)\to(\KGL_{\star},\KGL_{\star}\KGL)$. 
	First of all, the Čech conerve of $\1\to\KGL$ in $\CAlg(\h\MS_S)$ does induce a Picard-graded Hopf algebroid $(\KGL_\star,\KGL_\star \KGL)$ by \cite[Lemma 8.7]{AHI},
	and since the decomposition of $\KGL\otimes\KGL$ does not involve any grading shifts, the inclusion of the $\bb S$-graded subalgebroid $(\KGL_*,\KGL_* \KGL)$ is cocartesian.
	Thus, it will suffice to define a cocartesian morphism $(\Z[\beta^{\pm 1}],\Gamma_m)\to(\KGL_{*},\KGL_{*}\KGL)$.
	The graded Hopf algebroid $(\Z[\beta^{\pm 1}],\Gamma_m)$ is the Čech conerve in $\CAlg(\Mod_\fg)$ of the map $\Spec\Z\to\scr M_\fg$ classifying $\hat\G_m$.
	Since $\Phi$ is symmetric monoidal, it sends this Čech conerve to that of $\1\to\KGL$ in $\CAlg(\w\MS_S^\lisse)$.
	Given the commutative triangle of Theorem~\ref{thm:LEFT}, we now obtain the desired morphism $(\Z[\beta^{\pm 1}],\Gamma_m)\to (\KGL_*,\KGL_*\KGL)$ by evaluating the symmetric monoidal natural transformation of Remark~\ref{rmk:topological-to-motivic} on the Hopf algebroid $(\Z[\beta^{\pm 1}],\Gamma_m)$.
		
	(ii) Since $\KGL\otimes\KGL$ is a free $\KGL$-module and since the unit in $\MS_S$ is compact, the canonical ring homomorphism
	\[
	\KGL^{\star}_\Lambda\KGL \to \Hom_{\KGL_{\star}}(\KGL_{\star}\KGL,\KGL_{\star}\otimes\Lambda)
	\]
	is an isomorphism. By inspection, the ring structure on the right-hand side is obtained by dualizing the Hopf algebroid from (i).
	The desired isomorphism of rings is thus an instance of~\eqref{eqn:twisted-ring}.
	The last statement follows from the equation $\Delta(\phi)(v)=v\cdot\phi$ for $v\in\Lambda$, which holds because $\Lambda$ is equalized by the left and right units.
\end{proof}

\begin{example}[Adams operations]
	\label{ex:Adams}
	Let $k\in\Z$. The $\E_\infty$-map $\Pic\to \Pic$, $\scr L\mapsto \scr L^{\otimes k}$, induces a morphism of $\E_\infty$-rings
	\[
	\psi^k\colon \Sigma^\infty_{\P^1}\Pic_+\to \Sigma^\infty_{\P^1}\Pic_+
	\]
	such that $\psi^k(\beta)=k\beta\in (\Sigma^\infty_{\P^1}\Pic_+)^{-1}(S)$. Thus, after inverting $\beta$ and $k$, we obtain an $\E_\infty$-map
	\[
	\psi^k\colon \KGL[\tfrac 1k]\to \KGL[\tfrac 1k],
	\]
	called the $k$th \emph{Adams operation}.
	In fact, since $k\mapsto (\ph)^{\otimes k}$ is an action of the multiplicative monoid $(\Z,\cdot)$ on the $\E_\infty$-monoid $\Pic$, we obtain for any submonoid $M\subset \Z$ an induced action of $M$ on $\KGL[M^{-1}]\in \CAlg(\MS_S)$ via Adams operations.
	
	Under the isomorphism of Proposition~\ref{prop:End(KGL)}(ii), the $k$th Adams operation $\psi^k\in\End(\KGL[\frac 1k])$ corresponds to the unique element $\psi^k\in (\Gamma_m^\vee)_0\hatotimes\Z[\frac 1k]$ such that $\omega^\infty(\psi^k)=(1-x)^{-k}$. Indeed, the latter power series in $\KGL^{0}[[x]]\simeq \KGL^{0}(\Pic)$ corresponds to the map $\Pic\to\K$, $\scr L\mapsto\scr L^{\otimes k}$. Explicitly, $\psi^k$ is given by the sequence
	\[
	\psi^k=\left(k^{-n}(1-x)^{-k}\right)_{n\in\N}
	\]
	in the limit~\eqref{eqn:Gamma_m} (which is how Riou defines the Adams operations in \cite[Definition 5.3.2]{RiouK}).
\end{example}

Our next goal is to investigate the case $\Lambda=\Q$ of Proposition~\ref{prop:End(KGL)} in more details, using the fact that the multiplicative formal group becomes isomorphic over $\Q$ to the additive one.
Analogously to the multiplicative case, the $\G_m$-stack of additive formal groups with trivialized Lie algebra is presented by the Hopf algebroid $(\Z[\beta^{\pm 1}],\Gamma_a)$ with
\[
\Gamma_a=\Z[\beta^{\pm 1}]\otimes_\L\LB\otimes_\L\Z[\beta^{\pm 1}],
\]
where the ring map $\L\to\Z[\beta^{\pm 1}]$ now classifies the graded formal group law $x+y$.
Unlike $\Gamma_m$, $\Gamma_a$ is not torsionfree and is more difficult to describe explicitly. For each prime $p$, $\Gamma_a\otimes\F_p$ is a double Laurent polynomial algebra over the even part of the dual Steenrod algebra at $p$. Rationally, however, we simply have
\[
\Gamma_a\otimes \Q=\Q[\beta^{\pm 1}]\otimes \Q[\beta^{\pm 1}].
\]
Viewing $\Gamma_a$ as a $\Z[\beta^{\pm 1}]$-module via the left unit and dualizing, we find
\begin{equation}\label{eqn:Gamma_a}
	(\Gamma_a\otimes \Q)^\vee=\Q^\Z[\beta^{\pm 1}], \quad \beta^{-1}a\beta = \sigma(a),
\end{equation}
as topological rings, where the ring structure on $\Q^\Z$ is given by pointwise multiplication and $\sigma\colon \Q^\Z\to\Q^\Z$ is the shift operator $(a_n)_{n\in \Z}\mapsto (a_{n+1})_{n\in \Z}$.

The graded formal group laws $x+y-\beta xy$ and $x+y$ over $\Q[\beta^{\pm 1}]$ are isomorphic via the power series
\[
-\beta^{-1}\log(1-\beta t) = \sum_{n\geq 1}\frac 1n \beta^{n-1}t^n\in \Q[\beta^{\pm 1}][[t]].
\]
Conjugation with this isomorphism defines an isomorphism of graded Hopf algebroids 
\[
(\Q[\beta^{\pm 1}],\Gamma_m\otimes \Q)\simeq (\Q[\beta^{\pm 1}],\Gamma_a\otimes \Q),
\]
hence an isomorphism of graded topological rings 
\begin{equation}\label{eqn:mult=add}
	(\Gamma_a\otimes \Q)^\vee\simeq (\Gamma_m\otimes \Q)^\vee.
\end{equation}

\begin{remark}\label{rmk:mult=add}
	In terms of the explicit descriptions \eqref{eqn:Gamma_m} and \eqref{eqn:Gamma_a} of the algebras $(\Gamma_m\otimes\Q)^\vee$ and $(\Gamma_a\otimes\Q)^\vee$, we have the following formula for the isomorphism~\eqref{eqn:mult=add}. First, there is an isomorphism of topological rings
		\[
		(\Q^\N,{\cdot})\simto (\Q[[x]],{\circ}),\quad (a_n)_{n\in \N}\mapsto \sum_{n=0}^\infty \frac{a_n}{n!}(-\log(1-x))^n.
		\]
	The inverse sends $f\in \Q[[x]]$ to $(a_n)_{n\in\N}$, where $a_n/n!$ is the $n$th coefficient of $f(1-\exp(x)^{-1})$. The power series $(1-x)^{-k}$ thus corresponds to $(k^n)_{n\in\N}$, which shows that the isomorphism is compatible with the ring structures.
	The shift operator $(a_n)_{n\in\N}\mapsto (a_{n+1})_{n\in \N}$ on $\Q^\N$ corresponds to the endomorphism $\omega$ of $\Q[[x]]$, which yields in the limit the desired isomorphism
		\[
		(\Gamma_a\otimes \Q)^\vee_0= \Q^\Z\simto (\Gamma_m\otimes \Q)^\vee_0,\quad (a_n)_{n\in \Z}\mapsto \left(\sum_{n=0}^\infty \frac{a_{n-k}}{n!}(-\log(1-x))^n\right)_{k\in \N}
		\]
	(cf.\ \cite[Proposition 5.3.7]{RiouK}).
\end{remark}

	Combining Proposition~\ref{prop:End(KGL)}(ii) with~\eqref{eqn:mult=add} and~\eqref{eqn:Gamma_a}, we obtain an isomorphism of rings 
	\[\Q^\Z\simto \End_{\h\MS_\Z}(\KGL_\Q).\] 
	For each $n\in \Z$, the characteristic function of $n$ thus defines an idempotent endomorphism $e_n$ of $\KGL_\Q$ over $\Spec\Z$, hence over any derived scheme $S$.
	We denote by $\KGL_\Q^\adams{n}$ the image of $e_n$, called the $n$th \emph{Adams eigenspace} of $\KGL_\Q$. 
	By Proposition~\ref{prop:Adams-dec}(i,iv) below, the motivic spectrum $\KGL_\Q^\adams{n}$ represents the $k^n$-eigenspace of the Adams operation $\psi^k$ on the rational $\K$-groups $\K_*(-)_\Q$ for any $k\in\Z-\{0,\pm 1\}$. Note that $\KGL_\Q^\adams{n}$ is by definition stable under arbitrary base change.

\begin{definition}\label{def:HQ}
	Let $S$ be a derived scheme.
	The \emph{rational motivic cohomology spectrum} $\HH\Q\in\MS_S$ is the motivic spectrum $\KGL_\Q^\adams{0}$ equipped with the $\E_0$-algebra structure $\1\to\KGL_\Q\to \KGL_\Q^\adams{0}$.
\end{definition}

\begin{remark}\label{rmk:E_0-retract}
	Under the isomorphism~\eqref{eqn:mult=add}, the map $\eta_L^\vee\colon (\Gamma_m\otimes\Q)^\vee_0\to \Q$ corresponds to $\eta_L^\vee\colon \Q^\Z=(\Gamma_a\otimes\Q)^\vee_0\to \Q$, which is evaluation at $0\in \Z$. In particular, it sends the idempotent $e_0$ to $1$. This implies that the endomorphism $e_0$ of $\KGL_\Q$ with image $\HH\Q$ is a morphism of $\E_0$-algebras in $\h\MS_S$ (this also follows from Lemma~\ref{lem:HQ=LQ} below).
\end{remark}

\begin{remark}
	If $S$ is regular noetherian, so that $\KGL\in\MS_S$ is $\A^1$-invariant, the spectrum $\KGL_\Q^\adams{n}$ coincides by construction with the one defined by Riou \cite[Definition 5.3.9]{RiouK} (the slight variations in our formulas are explained by a different choice of coordinate in our description of $\Gamma_m^\vee$: Riou uses the coordinate $u=(1-x)^{-1}-1$).
	In particular, $\HH\Q$ coincides in this case with the $\A^1$-invariant rational motivic cohomology spectrum defined in \cite[Definition 5.3.17]{RiouK} and studied further by Cisinski and Déglise in \cite[Section 14]{CD}.
\end{remark}

\begin{proposition}[Adams decomposition]
	\label{prop:Adams-dec}
	Let $S$ be a derived scheme.
	\begin{enumerate}
	\item The canonical map
	\[
	\bigoplus_{n\in \Z}\KGL_\Q^\adams{n} \to \KGL_\Q
	\]
	is an isomorphism in $\MS_S$.
	\item If $S$ is locally of finite Krull dimension, then the canonical map
	\[
	\KGL_\Q\to \prod_{n\in \Z}\KGL_\Q^\adams{n}
	\]
	is an isomorphism in $\MS_S$.
	\item For every $n\in \Z$, the Bott element $\beta$ induces an isomorphism $\Sigma_{\P^1}\KGL_\Q^\adams{n}\simeq\KGL_\Q^\adams{n+1}$.
	\item For every $n\in \Z$ and $k\in\Z-\{0\}$, we have $\psi^k=k^n\cdot\id$ on $\KGL_\Q^\adams{n}$.
	\end{enumerate}
\end{proposition}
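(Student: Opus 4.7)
The plan is to use Proposition~\ref{prop:End(KGL)}(ii) together with the isomorphism of graded topological rings $(\Gamma_m\otimes\Q)^\vee \simeq (\Gamma_a\otimes\Q)^\vee = \Q^\Z[\beta^{\pm 1}]$ from~\eqref{eqn:mult=add}, \eqref{eqn:Gamma_a}, and Remark~\ref{rmk:mult=add}. Under this identification, the idempotent $e_n$ corresponds to the characteristic function $\chi_{\{n\}}$, and the twisted Laurent relation reads $a\beta = \beta\sigma(a)$ with $\sigma(a)_k = a_{k+1}$. I will address the assertions in the order (iii), (iv), (i), (ii).

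Parts (iii) and (iv) are formal consequences of the commutation. For (iii), $\sigma(\chi_{\{n\}}) = \chi_{\{n-1\}}$ gives $e_n \circ \beta = \beta \circ \Sigma_{\P^1}(e_{n-1})$ as maps $\Sigma_{\P^1}\KGL_\Q \to \KGL_\Q$, so the Bott equivalence restricts to an isomorphism $\Sigma_{\P^1}\KGL_\Q^\adams{n} \simeq \KGL_\Q^\adams{n+1}$. For (iv), Example~\ref{ex:Adams} identifies $\psi^k$ with $(1-x)^{-k} = \exp(-k\log(1-x))$ in $(\Gamma_m\otimes\Q)^\vee_0$; the formula of Remark~\ref{rmk:mult=add} translates this to the sequence $(k^n)_{n\geq 0}$. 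The ring-homomorphism relation $\psi^k(\beta) = k\beta$ yields $\psi^k\beta = k\beta\psi^k$, which via $a\beta = \beta\sigma(a)$ forces $\sigma(\psi^k) = k\psi^k$; the recursion $\psi^k_{n+1} = k\psi^k_n$ with $\psi^k_0=1$ then extends $\psi^k_n = k^n$ to all $n\in\Z$, and pointwise multiplication with $e_n$ gives the claim.

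For (i), by (iii) and the definition $\HH\Q = \KGL_\Q^\adams{0}$, it suffices to prove $\bigoplus_n\Sigma_{\P^1}^n\HH\Q \simeq \KGL_\Q$ in $\MS_S$. I would apply the motivic Landweber theorem (Theorem~\ref{thm:LEFT}): over $\Q$, the graded formal group law isomorphism $-\beta^{-1}\log(1-\beta t)$ identifies the multiplicative and additive FGLs on $\Q[\beta^{\pm 1}]$, and in the additive presentation the classifying map factors through $B\G_m\times\Spec\Q \to \scr M_\fg$. The decomposition $\Q[\beta^{\pm 1}] = \bigoplus_{n\in\Z}\Q(n)$ of graded $\Q$-vector spaces is then sent by $\Phi$ --- which preserves direct sums of Landweber-flat modules at the level of $\MGL$-homology via $\Phi_*(M) = \MGL_*(\ph)\otimes_\L M$ --- to an isomorphism $\KGL_\Q \simeq \bigoplus_n\Sigma_{\P^1}^n\HH\Q$ in $\w\MS_S^\lisse$. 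The canonical map of the proposition realizes this same isomorphism modulo phantoms, as both are uniquely characterized by the action of the $e_n$ on the Adams eigenspaces. To upgrade the isomorphism from $\w\MS_S^\lisse$ to $\MS_S$, I would verify on compact generators $\Sigma^{p,q}_{\P^1}Y_+$ (with $Y$ smooth and finitely presented over $S$) that the map induces the Adams decomposition of $\K_{2q-p}(Y)_\Q$ --- a finite direct sum by a Soulé-type finiteness argument applied to the locally-finite-dimensional scheme $Y$.

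For (ii), assuming (i), the map $\KGL_\Q \to \prod_n\KGL_\Q^\adams{n}$ factors through the sum-to-product map $\bigoplus_n\Sigma_{\P^1}^n\HH\Q \to \prod_n\Sigma_{\P^1}^n\HH\Q$, which is an isomorphism in $\MS_S$ under the locally-finite-Krull-dimension hypothesis on $S$ thanks to a uniform bound on the Adams weight support of $\KGL^*$ on compact test objects (the generalized Soulé finiteness alluded to in the introduction). The main obstacle is (i): specifically the step of upgrading the Landweber-theoretic isomorphism in $\w\MS_S^\lisse$ to one realized by the explicit canonical map in $\MS_S$, which comes down to controlling the Adams weight support on rational $\K$-theory of smooth finitely presented $S$-schemes.
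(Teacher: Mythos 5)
Your arguments for (iii) and (iv) coincide with the paper's. For (i) you take a genuinely different route, via the Landweber functor $\Phi$: this could in principle work (the functor $\h\MS_S^\lisse\to\w\MS_S^\lisse$ is conservative, exactly as used in the proof of Theorem~\ref{thm:HQ}, so an isomorphism in $\w\MS_S^\lisse$ between lisse objects lifts automatically), but your ``upgrading'' step is misconceived. You invoke ``a Soulé-type finiteness argument applied to the locally-finite-dimensional scheme $Y$'' to show the decomposition of $\K_*(Y)_\Q$ is a finite direct sum. This is wrong in scope: part (i) carries no finite-dimensionality hypothesis on $S$ (hence none on $Y$), and the group-level finiteness you describe is precisely the content of (ii), not (i). Part (i) only needs the elementwise statement that each $f\colon X\to\KGL_\Q$ with $X$ compact is eventually fixed by the truncated idempotents $e_{[-n,n]}$; the paper proves this by a direct continuity argument — since $\id_{\KGL}\otimes f$ factors through a finite subsum of $\KGL\otimes\KGL_\Q$, the map $\alpha\mapsto\alpha\circ f$ from $\Q^\Z$ to the discrete group $\KGL^0_\Q(X)$ is continuous — and that argument requires no hypothesis on $S$ at all.

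For (ii) you correctly identify the required input — finiteness of the Adams-weight support of $\K_*(A)_\Q$ for $A$ of finite Krull dimension — but you merely cite it as ``the generalized Soulé finiteness alluded to in the introduction'' without supplying an argument. In the paper this is the substantive step (Lemma~\ref{lem:relativeK}): $\K_{\geq 0}(R)_\Q^\adams{i}$ is $i$-connective for local animated $R$, proved by comparing the Adams filtration on rational relative $\K$-theory with the HKR filtration on relative negative cyclic homology via Goodwillie's theorem, and combined with the finite homotopy dimension of the Zariski $\infty$-topos this yields the uniform bound on weight support. Your outline leaves this entirely unaddressed, so (ii) is not actually proved.
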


\begin{proof}
	(i) This follows from the case $S=\Spec\Z$ proved in \cite[Theorem 5.3.10]{RiouK}. We give a proof for completeness. We may assume $S$ qcqs, so that $\MS_S$ is compactly generated. It then suffices to show that for every $X\in\MS_S^\omega$, the map
	\[
	\colim_n\left[X,\bigoplus_{i=-n}^n\KGL_\Q^\adams{i}\right] \to [X,\KGL_\Q]
	\]
	is an isomorphism. Injectivity is obvious. Let $e_{[-n,n]}$ be the idempotent endomorphism $\sum_{i=-n}^ne_i$ of $\KGL_\Q$, whose image is $\bigoplus_{i=-n}^n\KGL_\Q^\adams{i}$. To prove surjectivity, we must show that for every $f\colon X\to \KGL_\Q$, there exists an $n$ such that $f\simeq e_{[-n,n]}\circ f$. Recall that $\KGL\otimes\KGL_\Q$ is a countable sum of copies of $\KGL_\Q$. Since $\KGL\otimes X$ is a compact $\KGL$-module, the map $\id_{\KGL}\otimes f$ factors through a finite sum of copies of $\KGL_\Q$. This shows that the map
	\[
	\Q^\Z\simeq \KGL^0_\Q(\KGL) \to \KGL^0_\Q(X),\quad \alpha\mapsto \alpha\circ f,
	\]
	is continuous, where the target has the discrete topology. Since the sequence $(e_{[-n,n]})_n$ converges to $1$ in $\KGL^0_\Q(\KGL)$, its image $(e_{[-n,n]}\circ f)_n$ in $\KGL^0_\Q(X)$ is eventually equal to $f$, as desired.
	
	(iii) By Proposition~\ref{prop:End(KGL)}(ii), there is a morphism of graded rings $\Q^\Z[\beta^{\pm 1}]\to \KGL^*_\Q\KGL$, where $\Q^\Z[\beta^{\pm 1}]$ is as described in~\eqref{eqn:Gamma_a}. The claim thus follows from the equation $e_{n+1}\beta=\beta e_n$ in the ring $\Q^\Z[\beta^{\pm 1}]$.
	
	(iv) By Example~\ref{ex:Adams} and Remark~\ref{rmk:mult=add}, $\psi^k$ is given by the sequence $(k^n)_{n\in\Z}$ in $\Q^\Z$.
	
	(ii) In light of (i), (iii), and (iv), the statement is equivalent to the following: for any animated ring $A$ of finite Krull dimension and any $n\in \Z$, there are only finitely many $i\in\Z$ such that $\K_n(A)_\Q^{\smash[t]{(i)}}\neq 0$. Here, the groups $\K_n(A)_\Q^{\smash[t]{(i)}}$ are the eigenspaces of the Adams operations on $\K_n(A)_\Q$. By the Bass delooping theorem \cite[Corollary 4.13]{AHI}, we can in fact ignore the negative K-groups and assume $n\geq 0$.
	In this case, we have $\K_n(A)_\Q^\adams{i}=0$ for any $i<0$, so it remains to prove that $\K_n(A)_\Q^\adams{i}=0$ for large enough $i$.
	Since $\K_{\geq 0}(\ph)_\Q^\adams{i}$ is finitary and the Zariski $\infty$-topos of $A$ has finite homotopy dimension \cite[Theorem 3.12]{ClausenMathew}, 
	this follows from Lemma~\ref{lem:relativeK}(ii) below.
\end{proof}	

\begin{lemma}\label{lem:relativeK}
	Let $i\in\Z$.
	\begin{enumerate}
		\item Let $R\to S$ be a morphism of animated commutative rings such that $\pi_0(R)\to\pi_0(S)$ is surjective with nilpotent kernel. Then the spectrum $\K(R,S)_\Q^\adams{i}$ is $i$-connective.
		\item Let $R$ be a local animated commutative ring. Then the spectrum $\K_{\geq 0}(R)_\Q^\adams{i}$ is $i$-connective.
	\end{enumerate}
\end{lemma}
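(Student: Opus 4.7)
The plan is to establish (i) via Goodwillie's rational rigidity theorem, and then to deduce (ii) from (i) together with the classical case of discrete local rings.

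For part (i), I would first base-change $R$ and $S$ along $\Z\to\Q$---harmless, since both sides are already rationalized---and then invoke Goodwillie's theorem to produce a natural equivalence
\[
	\K(R,S)_\Q \simeq \HC(R,S)_\Q[-1]
\]
intertwining the Adams operations on $\K$-theory with the $\lambda$-operations on cyclic homology. Under this identification, $\K(R,S)_\Q^\adams{i}$ corresponds to the Hodge weight-$i$ summand $\HC^{(i)}(R,S)_\Q$ shifted by $-1$, and the required $i$-connectivity amounts to the $(i-1)$-connectivity of $\HC^{(i)}(R,S)_\Q$. This in turn should follow from the Connes SBI sequence together with the $i$-connectivity of the weight-$i$ piece $\HH^{(i)}(R,S)_\Q$ of Hochschild homology---itself a consequence of the realization of $\HH^{(i)}$ as a retract of the derived wedge power $\wedge^i\mathbb{L}_{R/S}$, combined with the connectivity of the relative cotangent complex $\mathbb{L}_{R/S}$, which holds because $\pi_0(R)\to\pi_0(S)$ is surjective.

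For part (ii), I would apply (i) to the truncation map $R\to\pi_0(R)$, whose induced map on $\pi_0$ is an isomorphism and thus has (trivially) nilpotent kernel. The resulting fiber sequence
\[
	\K(R,\pi_0(R))_\Q^\adams{i} \to \K(R)_\Q^\adams{i} \to \K(\pi_0(R))_\Q^\adams{i}
\]
has $i$-connective leftmost term, so passing to connective covers reduces the $i$-connectivity of $\K_{\geq 0}(R)_\Q^\adams{i}$ to that of $\K_{\geq 0}(\pi_0(R))_\Q^\adams{i}$. That case concerns classical local commutative rings, where the vanishing $\K_n(R)_\Q^\adams{i}=0$ for $0\le n<i$ is Soul\'e's theorem, which would complete the argument.

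The main obstacle will be justifying Goodwillie's theorem in the required generality of arbitrary animated commutative rings, together with its compatibility with the Adams/Hodge decomposition; once that is in hand, the rest of the proof is straightforward connectivity bookkeeping.
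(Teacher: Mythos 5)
Your proposal follows essentially the same route as the paper: both prove (i) by reducing to relative cyclic homology via Goodwillie's rational rigidity theorem and then establishing a connectivity bound on the relevant Hodge/HKR graded piece, and both deduce (ii) from (i) by applying it to $R\to\pi_0(R)$ and invoking Soul\'e's theorem for the static local ring. Where you and the paper diverge is in how the compatibility of Goodwillie's theorem with the Adams/Hodge weights is justified, and this is precisely the point you correctly flag as ``the main obstacle.'' The paper does not use a direct natural equivalence $\K(R,S)_\Q\simeq\HC(R,S)_\Q$ ``intertwining Adams and $\lambda$-operations.'' Instead it works with the zigzag $(\K_{\geq 0})_\Q\to\HC^-_\Q\leftarrow\HC_\Q[1]$, equips the three terms with the Adams filtration and the HKR filtrations respectively, and proves that the cyclotomic trace uniquely refines to a filtered map by a connectivity argument on smooth $\Z$-algebras (using Soul\'e's theorem on one side and the explicit HKR graded pieces $\Omega^{\geq i}_R[2i]$ on the other), then left Kan extends to all animated rings. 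Goodwillie's theorem is applied not to the filtered pieces but to the underlying objects, and the fact that $\mathrm{Tr}$ and $\mathrm{Nm}$ are \emph{lower triangular} with respect to the rational splittings is what transfers the equivalence to the graded pieces. This is the substance that your proposal defers.

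Two smaller bookkeeping points. First, your shift is off: Goodwillie gives $\K_n(R,S)_\Q\simeq\HC_{n-1}(R,S)_\Q$, i.e.\ $\K(R,S)_\Q\simeq\HC(R,S)_\Q[1]$, not $[-1]$; correspondingly the weight drops by one, so $\K(R,S)_\Q^{\adams{i}}$ is identified with a weight-$(i-1)$ piece of $\HC(R,S)_\Q$, shifted up by one (the paper records this as $\gr^{i-1}_\mathrm{HKR}\HC^+(R,S)[1]$). Second, the phrase ``connectivity of the relative cotangent complex $\mathbb{L}_{R/S}$'' is off on two counts: the relevant object for the map $R\to S$ is $\mathbb{L}_{S/R}$, and it is \emph{always} connective; what the hypothesis that $\pi_0(R)\to\pi_0(S)$ is surjective buys you is that the map $\gr^{i-1}_\mathrm{HKR}\HC(R)\to\gr^{i-1}_\mathrm{HKR}\HC(S)$ is surjective on the bottom homotopy group $\pi_{i-1}$, so the fiber stays $(i-1)$-connective rather than dropping a degree. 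The paper gets this connectivity directly from the identification of the graded piece with Hodge-truncated derived de Rham cohomology $\rm L\Omega^{\leq i-1}_R[2(i-1)]$, which is cleaner than threading it through the SBI sequence as you propose, though your route should also work once the indices are fixed.
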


\begin{proof}
	If $R$ is local and static, (ii) was proved by Soulé \cite[Section 2, Théorème 1]{Soule}. It thus remains to prove (i).
	Consider the zigzag of natural transformations on animated commutative rings
	\[
	(\K_{\geq 0})_\Q\xrightarrow{\mathrm{Tr}}\HC^-_\Q\xleftarrow{\mathrm{Nm}}\HC_\Q^+[1].
	\]
	We equip $(\K_{\geq 0})_\Q$ with the Adams filtration $(\K_{\geq 0})_\Q^{(\geq *)}$ and $\HC^-$ and $\HC^+$ with the HKR filtrations as defined in \cite[Section 6]{Raksit} or \cite[Section 6.3]{BhattLurie}. By definition of the latter, $\mathrm{Nm}$ is canonically a filtered map (where $[1]$ also shifts the filtration by $1$). We claim that $\mathrm{Tr}$ can also be refined (uniquely) to a filtered map.
	This follows from connectivity considerations as in \cite[Proposition 4.6]{ElmantoMorrow}. Indeed, since $(\K_{\geq 0})_\Q$ is left Kan extended from smooth $\Z$-algebras \cite[Example A.0.6]{EHKSY3}, it suffices to consider $\mathrm{Tr}$ on smooth $\Z$-algebras.
	We already know that $\K_{\geq 0}(\ph)_\Q^\adams{\geq i}$ is $i$-connective as a Zariski sheaf on classical schemes, by Soulé's theorem. On the other hand, for a smooth $\Z$-algebra $R$, the HKR filtration on $\HC^-(R)$ is exhaustive with graded pieces 
	\[\gr^i_\mathrm{HKR}\HC^-(R)=\Omega^{\geq i}_R[2i]\] 
	(by \cite[Example 6.3.8]{Raksit}), so that $\HC^-(R)/\Fil_\mathrm{HKR}^{i}\HC^-(R)$ is $(i-1)$-truncated. 
	This shows that $\mathrm{Tr}$ refines uniquely to a filtered map.
	
	By Goodwillie's theorem \cite[Theorem II.3.4 and Lemma I.3.3]{Goodwillie}, both transformations $\mathrm{Tr}$ and $\mathrm{Nm}$ are isomorphisms on $(R,S)$. In particular, since $\HC^+$ commutes with rationalization, we have
	\[
	\K(R,S)_{\Q}^\adams{i} \simto \K(R_\Q,S_\Q)_{\Q}^\adams{i}.
	\]
	We may thus assume that $R$ is a $\Q$-algebra. On animated $\Q$-algebras, we have functorial splittings of the HKR filtration
	\[
	\HC^-\simeq\prod_{i\in\Z} \gr^{i}_\mathrm{HKR}\HC^-\quad\text{and}\quad \HC^+\simeq\bigoplus_{i\geq 0} \gr^i_\mathrm{HKR}\HC^+
	\]
	(for $\HC^+$, this reduces to the classical case of polynomial $\Q$-algebras as both sides are left Kan extended; for $\HC^-=\Fil^0_\mathrm{T}\HP$, this is established in the proof of \cite[Theorem 3.4]{Bals}). 
	Since $\mathrm{Tr}$ and $\mathrm{Nm}$ are lower triangular with respect to these decompositions, they induce isomorphisms on associated graded
	\[
	\K(R,S)_{\Q}^\adams{i} \simeq \gr_\mathrm{HKR}^{i}\mathrm{HC}^-(R,S) \simeq \gr_\mathrm{HKR}^{i-1}\mathrm{HC}^+(R,S)[1].
	\]
	The $i$th graded piece $\gr_\mathrm{HKR}^{i}\mathrm{HC}^+(R)$ of the HKR filtration is the Hodge-truncated derived de Rham cohomology $\rm L\Omega^{\leq i}_{R}[2i]$ (again by \cite[Example 6.3.8]{Raksit}), which is $i$-connective. Moreover, since $R\to S$ is surjective on $\pi_0$, the induced map on $\gr_\mathrm{HKR}^{i-1}\mathrm{HC}^+$ is surjective on $\pi_{i-1}$, so that the fiber $\gr_\mathrm{HKR}^{i-1}\mathrm{HC}^+(R,S)$ is still $(i-1)$-connective.
	By the above isomorphisms, we deduce that $\K(R,S)_\Q^\adams{i}$ is $i$-connective.
\end{proof}

Next, we observe that the Adams decomposition of $\KGL_\Q$ can also be obtained using the functor $\Phi$ from Theorem~\ref{thm:LEFT}.
Let $\Phi(\Q)\in \CAlg(\w\MS_S^\lisse)$ be the commutative monoid induced by the graded ring homomorphism $\L\to\Q$ classifying the additive formal group law over $\Q$, as in Example~\ref{ex:graded-FGL}(ii).

\begin{lemma}\label{lem:HQ=LQ}
	Let $S$ be a qcqs derived scheme.
	The summand inclusion $\KGL_\Q^\adams{n}\to \KGL_\Q$ is the image by $\Phi\colon \Mod_\fg^\flat\to \w\MS_S^\lisse$ of the morphism
	\[
	(\Spec(\Q)/\G_m,\hat\G_a,\Q(n)) \to (\Spec(\Q),\hat\G_a,\Q) \simeq (\Spec(\Q),\hat\G_m,\Q),
	\]
	In particular, there is an isomorphism of $\E_0$-algebras $\HH\Q\simeq\Phi(\Q)$.
\end{lemma}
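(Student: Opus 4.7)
The plan is to use the symmetric monoidality of $\Phi$ together with the exponential isomorphism $\hat\G_a\simto\hat\G_m$ over $\Q$ to recognize $\KGL_\Q$ as the $\Phi$-image of the additive formal group, whose $\G_m$-equivariance will then supply a direct sum decomposition indexed by characters. The hard part will be to match this decomposition with the Adams decomposition of $\KGL_\Q$.

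To build the decomposition, I would start from the identification $\KGL_\Q\simeq \Phi(\Spec(\Q),\hat\G_m,\Q)$ (Example~\ref{ex:weakly-periodic}(ii)) and use the exponential series to implement the isomorphism $(\Spec(\Q),\hat\G_m,\Q)\simeq(\Spec(\Q),\hat\G_a,\Q)$ in $\Mod_\fg^\flat$ displayed in the statement. Next, I would factor the classifying map $\Spec(\Q)\to\scr M_\fg$ through the standard $\G_m$-torsor $\pi\colon \Spec(\Q)\to \Spec(\Q)/\G_m$ coming from the canonical trivialization of $\omega_{\hat\G_a}$. Since $\Phi_*$ is defined through pushforward to $\scr M_\fg$ (Construction~\ref{ctr:Phi-star}) and pushforward is transitive, the counit morphism $(\Spec(\Q)/\G_m,\hat\G_a,\pi_*\Q)\to(\Spec(\Q),\hat\G_a,\Q)$ in $\Mod_\fg^\flat$ will be sent by $\Phi$ to an isomorphism. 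Decomposing the regular representation $\pi_*\Q\simeq \bigoplus_{n\in\Z}\Q(n)$ and invoking the additivity of $\Phi$ in the module variable (cf.\ the last paragraph of the proof of Theorem~\ref{thm:LEFT}), this yields
\[
\KGL_\Q\simeq \bigoplus_{n\in\Z}E_n,\qquad E_n:=\Phi(\Spec(\Q)/\G_m,\hat\G_a,\Q(n)),
\]
with the $n$th summand inclusion being the image under $\Phi$ of the morphism displayed in the lemma's statement.

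To identify each $E_n$ with $\KGL_\Q^\adams{n}$, I would bypass a direct comparison of idempotents and instead read off the Adams eigenvalues on each summand. By Example~\ref{ex:Adams}, the Adams operation $\psi^k$ corresponds under Proposition~\ref{prop:End(KGL)}(ii) to the $k$-power endomorphism $[k]$ of $\hat\G_m$, which the exponential turns into multiplication by $k$ on $\hat\G_a$; on the Landweber description $E_{n,*}(X)\simeq \MGL_*(X)\otimes_\L \Q(n)$ this therefore acts by $k^n$. For any fixed $k\in\Z-\{0,\pm 1\}$ the scalars $k^n$ separate $n\in\Z$, so comparison with Proposition~\ref{prop:Adams-dec}(iv) forces $E_n=\KGL_\Q^\adams{n}$. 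The $\E_0$-algebra identification $\HH\Q\simeq\Phi(\Q)$ is then automatic: applying $\Phi$ to the commuting triangle of unit maps
\[
\begin{tikzcd}
(\scr M_\fg,\scr O) \ar{r} \ar{dr} & (\Spec(\Q)/\G_m,\hat\G_a,\Q) \ar{d} \\
& (\Spec(\Q),\hat\G_m,\Q)
\end{tikzcd}
\]
in $\Mod_\fg^\flat$ shows that the unit $\1\to \Phi(\Q)=E_0$ factors the unit $\1\to\KGL_\Q$ through the summand inclusion $E_0\hookrightarrow\KGL_\Q$, producing exactly the composition $\1\to\KGL_\Q\to\KGL_\Q^\adams{0}=\HH\Q$ of Definition~\ref{def:HQ}.
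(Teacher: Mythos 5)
Your proof is correct in outline but takes a genuinely longer and more circuitous route than the paper's argument, and a couple of steps need more justification than you give. The paper observes that the isomorphism $\Q^\Z\simeq\End_{\h\MS_\Z}(\KGL_\Q)$ used in Definition~\ref{def:HQ} is itself built from $\Phi$ via Proposition~\ref{prop:End(KGL)}(ii) and the identification~\eqref{eqn:mult=add}; unwinding that construction shows immediately that the idempotent $e_n=\chi_n$ is the $\Phi_*$-image of the projector onto $\Q(n)$ in the regular representation $\bigoplus_m \Q(m)$, and the Lemma follows formally. You instead rebuild the decomposition from scratch by factoring $\Spec(\Q)\to\scr M_\fg$ through $\Spec(\Q)/\G_m$ and applying $\Phi$ to the counit, which works (since $\Phi_*$ visibly turns it into the identity, $\Phi_*$ is additive in the module variable, and the Adams equivalence from Proposition~\ref{prop:adams} lets you transport this), and then you match the two decompositions via Adams eigenvalues. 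Two points there are glossed over. First, ``the scalars $k^n$ separate $n$, so $E_n=\KGL_\Q^\adams{n}$'' needs an actual argument that two complete orthogonal families of idempotents of $\KGL_\Q$ with the same $\psi^k$-eigenvalues coincide; this follows from the fact that $\End^0(\KGL_\Q)\simeq\Q^\Z$ is commutative (or from the explicit structure of $\Q^\Z$), but that fact is exactly the identification the paper's ``by construction'' invokes. Second, to know that $\psi^k$ acts by $k^n$ on $E_n$ (as opposed to on $\KGL_\Q^\adams{n}$, which is Proposition~\ref{prop:Adams-dec}(iv)), you need to identify $\psi^k$ with the $\Phi$-image of the automorphism $[k]$ of $\hat\G_m$, which again requires tracing Example~\ref{ex:Adams} and Remark~\ref{rmk:mult=add} through the Landweber construction. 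So the eigenvalue approach does not actually bypass the bookkeeping the paper does---it relies on the same inputs, reorganized---but it is a valid alternative once those small gaps are filled. The final $\E_0$-algebra step, applying $\Phi$ to the triangle of unit maps, is correct and matches the paper's Remark~\ref{rmk:E_0-retract}.
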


\begin{proof}
	The idempotent endomorphism $e_n$ of $\KGL_{\Q*}(\ph)$ is by construction the image by $\Phi_*$ of the idempotent endomorphism of $(\Spec(\Q)/\G_m,\hat\G_a,\bigoplus_{n\in \Z}\Q(n))$ projecting onto $\Q(n)$.
\end{proof}

\begin{theorem}\label{thm:HQ}
	For any derived scheme $S$, the motivic $\E_0$-ring spectrum $\HH\Q\in\MS_S$ is idempotent. 
	Hence, it admits a unique $\E_\infty$-ring structure, and the forgetful functor $\Mod_{\HH\Q}(\MS_S)\to \MS_S$ is fully faithful.
\end{theorem}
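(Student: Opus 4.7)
My plan is to realize $\HH\Q$ via the motivic Landweber exact functor theorem, verify idempotence in $\Mod_\fg^{\flat,+}$, transport it to $\w\MS_S^\lisse$ via the symmetric monoidality of $\Phi$, and finally promote it to $\MS_S$ using compact generation. We may assume $S$ is qcqs by Zariski descent, so that $\MS_S$ is compactly generated. The goal is to show that $\eta\otimes\id_{\HH\Q}\colon\HH\Q\to\HH\Q\otimes\HH\Q$ is an isomorphism in $\MS_S$.

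\textbf{Idempotence at the level of formal groups.} By Lemma~\ref{lem:HQ=LQ}, $\HH\Q\simeq\Phi((\Spec\Q/\G_m,\Q(0)))$ as $\E_0$-algebras. Every formal group over a $\Q$-algebra is Zariski-locally isomorphic to the additive formal group $\hat\G_a$ via the formal logarithm, and $\Aut(\hat\G_a/\Q)=\G_m$; hence the classifying map $\Spec\Q/\G_m\to\scr M_\fg$ is a monomorphism with image the substack $\scr M_\fg\times_{\Spec\Z}\Spec\Q$. In particular, its diagonal is an isomorphism, so the unit-tensor map in $\Mod_\fg^{\flat,+}$
\[
(\Spec\Q/\G_m,\Q(0))\to(\Spec\Q/\G_m,\Q(0))\otimes(\Spec\Q/\G_m,\Q(0))
\]
is an isomorphism: the target is $(\Spec\Q/\G_m\times_{\scr M_\fg}\Spec\Q/\G_m,\Q(0)\boxtimes\Q(0))\simeq(\Spec\Q/\G_m,\Q(0))$. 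Applying the symmetric monoidal functor $\Phi$ of Theorem~\ref{thm:LEFT} then yields that $\eta\otimes\id$ is an isomorphism in $\w\MS_S^\lisse$.

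\textbf{Promotion to $\MS_S$.} Both $\HH\Q$ and $\HH\Q\otimes\HH\Q$ lie in $\MS_S^\lisse$: the former is a retract of the lisse spectrum $\KGL_\Q$ (Example~\ref{ex:KGL-lisse}), and tensor products of lisse objects are lisse (Observation~\ref{obs:lisse-proj}). An isomorphism in $\w\MS_S^\lisse$ admits a two-sided inverse in $\MS_S^\lisse$ modulo phantoms, so by Lemma~\ref{lem:phantom}(iii), the maps $[\1,(\eta\otimes\id)\otimes Z]$ are isomorphisms for every $Z\in\MS_S$. Specializing to $Z=X^\vee$ with $X\in\MS_S^\dual$ and using the adjunction $[\1,E\otimes X^\vee]=[X,E]$, the cofiber $C$ of $\eta\otimes\id$ satisfies $[X,C]_*=0$ for every dualizable $X$. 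Since $C\in\MS_S^\lisse$ and $\MS_S^\lisse$ is compactly generated by $\MS_S^\dual$, we conclude $C=0$, so $\eta\otimes\id$ is an isomorphism in $\MS_S$.

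\textbf{Formal consequences and main obstacle.} The uniqueness of the $\E_\infty$-refinement of $\HH\Q$ and the full faithfulness of $\Mod_{\HH\Q}(\MS_S)\into\MS_S$ follow from the general theory of idempotent $\E_0$-algebras in a symmetric monoidal $\infty$-category. The principal difficulty is the last step: lifting an isomorphism from the phantom quotient $\w\MS_S^\lisse$ back to $\MS_S$. It crucially uses Lemma~\ref{lem:phantom}, the dualizability of the compact generators, and the identification $(\MS_S^\lisse)^\omega=\MS_S^\dual$ yielding compact generation of $\MS_S^\lisse$.
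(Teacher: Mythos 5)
Your proof is correct and follows the same overall strategy as the paper's: translate to $\Phi(\Q)$ via Lemma~\ref{lem:HQ=LQ}, verify the idempotence identity in $\Mod_\fg^{\flat,+}$ and transport it through the symmetric monoidal functor $\Phi$ of Theorem~\ref{thm:LEFT}, and finally promote from $\w\MS_S^\lisse$ back to $\MS_S$. There are two minor structural differences. First, the paper shows the \emph{multiplication} $\mu\colon\Phi(\Q)\otimes\Phi(\Q)\to\Phi(\Q)$ is an isomorphism, by observing that the rationalized Hurewicz map $\L\otimes\Q\to(\Z\otimes_\L\LB)\otimes\Q$ is an isomorphism, and then deduces that $\id\otimes\eta$ is an isomorphism because it is a section of $\mu$; you instead argue directly that $\id\otimes\eta$ is an isomorphism by observing that $\Spec\Q/\G_m\to\scr M_\fg$ is a monomorphism. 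These are the same fact in two languages --- the rational Hurewicz isomorphism is precisely what identifies $\scr M_\fg\times_{\Spec\Z}\Spec\Q$ with $\Spec\Q/\G_m$ --- so neither is more elementary, but your geometric phrasing is arguably cleaner since it dispenses with the section-of-an-isomorphism intermediary. Second, for the promotion step from $\w\MS_S^\lisse$ to $\MS_S$ the paper simply invokes the conservativity of $\h\MS_S^\lisse\to\w\MS_S^\lisse$, which was recorded immediately after the definition of phantom maps; your argument via Lemma~\ref{lem:phantom}(iii) and compact generation of $\MS_S^\lisse$ by $\MS_S^\dual$ re-derives this conservativity from scratch, which is correct but more work than necessary.
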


\begin{proof}
	Since $\HH\Q$ is stable under base change, we may assume $S$ qcqs. We must show that the map \[\id_{\HH\Q}\otimes\eta\colon \HH\Q\to\HH\Q\otimes \HH\Q\] is an isomorphism, where $\eta\colon\1\to\HH\Q$ is the unit.
	By Lemma~\ref{lem:HQ=LQ}, it can be identified with the map \[\id_{\Phi(\Q)}\otimes\eta\colon \Phi(\Q)\to\Phi(\Q)\otimes\Phi(\Q).\]
	The latter is a section in $\w\MS_S^\lisse$ of the multiplication $\mu\colon \Phi(\Q)\otimes\Phi(\Q)\to\Phi(\Q)$. 
	Since the functor $\MS_S^\lisse\to\w\MS_S^\lisse$ is conservative, it suffices to show that $\mu$ is an isomorphism.
	Since $\Phi$ is a (nonunital) symmetric monoidal functor by Theorem~\ref{thm:LEFT}, $\mu$ is the image by $\Phi$ of the map
	\[
	\Q\otimes_\L\LB\otimes_\L\Q\to \Q
	\]
	classifying the identity automorphism of the additive formal group law over $\Q$. This map is an isomorphism, since the Hurewicz map $\eta_R\colon \L\to\Z\otimes_\L\LB$ is rationally an isomorphism by Lazard's theorem (see for example \cite[Lecture 2, Lemma 10]{Lurie:2010}).
\end{proof}

\begin{remark}
	The use of the motivic Landweber exact functor theorem in Proposition~\ref{prop:End(KGL)} is merely a convenience, as it would be possible to make that computation using only the Snaith presentation of $\KGL$. However, its use in the proof of Theorem~\ref{thm:HQ} seems more essential: we do not know an alternative argument.
\end{remark}

\begin{remark}[The $\E_\infty$-orientation of $\HH\Q$]
	\label{rmk:HQ-orientation}
	Being a retract of $\KGL_\Q$ as an $\E_0$-ring (Remark~\ref{rmk:E_0-retract}), the $\E_\infty$-ring $\HH\Q$ is canonically oriented.
	By \cite[Corollary 7.10]{AHI}, there is an isomorphism of $\HH\Q$-algebras $\HH\Q\otimes\MGL\simeq \HH\Q[b_1,b_2,\dotsc]$ in $\h\MS_S$.
	Since the action of the symmetric group $\Sigma_n$ on $\Sigma_{\P^1}^{n}\HH\Q$ is trivialized by the Thom isomorphism and $\Q\otimes\Sigma^\infty_+\rm B\Sigma_n\simeq \Q$, $\HH\Q[b_1,b_2,\dotsc]$ is actually the free $\E_\infty$-$\HH\Q$-algebra on the generators $b_i$. We therefore obtain a morphism of $\E_\infty$-rings
	\[
	\MGL \to \HH\Q\otimes\MGL \xleftarrow{\sim} \HH\Q[b_1,b_2,\dotsc] \to \HH\Q
	\]
	by sending each $b_i$ to $0$. 
\end{remark}

\begin{proposition}\label{prop:HQ-modules}
	The following are equivalent for a $\Q$-linear motivic spectrum $E\in \MS_S$:
	\begin{enumerate}
		\item $E$ lies in the full subcategory $\Mod_{\HH\Q}(\MS_S)\subset \MS_S$.
		\item $E$ admits a structure of $\MGL$-module in $\MS_S$.
		\item $E$ admits a structure of $\MGL$-module in $\h\MS_S$.
	\end{enumerate}
	In particular, every $\Q$-linear orientable object in $\CAlg(\h\MS_S)$ is an $\HH\Q$-module.
\end{proposition}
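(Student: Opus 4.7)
The implications (i) $\Rightarrow$ (ii) $\Rightarrow$ (iii) are formal. For (i) $\Rightarrow$ (ii), $\HH\Q$ is an $\E_\infty$-$\MGL$-algebra by Remark~\ref{rmk:HQ-orientation}, so every $\HH\Q$-module inherits a canonical $\MGL$-module structure in $\MS_S$. The implication (ii) $\Rightarrow$ (iii) is immediate, and the ``in particular'' clause follows because an orientation $\MGL \to E$ in $\CAlg(\h\MS_S)$ endows $E$ with an $\MGL$-module structure in $\h\MS_S$.

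For the substance (iii) $\Rightarrow$ (i), since $\HH\Q$ is idempotent (Theorem~\ref{thm:HQ}), $\Mod_{\HH\Q}(\MS_S) \subset \MS_S$ is the full subcategory of $\HH\Q$-local objects, and this subcategory is closed under retracts in $\h\MS_S$ (a retract of the iso $\eta_{Y}\colon Y \to \HH\Q \otimes Y$ is again an iso, and iso in $\h\MS_S$ means iso in $\MS_S$). The $\MGL$-module structure on $E$ in $\h\MS_S$ exhibits $E$ as a retract of $\MGL \otimes E \simeq \MGL_\Q \otimes E$ in $\h\MS_S$, where the last isomorphism uses that $E$ is $\Q$-linear. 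Since the tensor of an $\HH\Q$-module with anything is again an $\HH\Q$-module, it suffices to show that $\MGL_\Q$ itself is $\HH\Q$-local; for this we produce an abstract isomorphism $\MGL_\Q \simeq \HH\Q \otimes \MGL$ in $\MS_S$, the right-hand side being manifestly an $\HH\Q$-module.

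Both sides are lisse: $\MGL$ is lisse by Corollary~\ref{cor:MGL-lisse}, $\HH\Q$ is lisse as a retract of $\KGL_\Q$ (cf.\ Example~\ref{ex:KGL-lisse}), and lisse objects are closed under colimits and tensor products by Observation~\ref{obs:lisse-proj}. By the conservativity of $\MS_S^\lisse \to \w\MS_S^\lisse$ that was invoked in the proof of Theorem~\ref{thm:HQ}, it suffices to construct the iso in $\w\MS_S^\lisse$. Using Theorem~\ref{thm:LEFT}, Lemma~\ref{lem:HQ=LQ} (which gives $\HH\Q \simeq \Phi(\Q)$), and the symmetric monoidality of $\Phi$, we identify $\MGL_\Q \simeq \Phi(\L_\Q)$ and $\HH\Q \otimes \MGL \simeq \Phi(\L) \otimes \Phi(\Q) \simeq \Phi(\LB \otimes_\L \Q)$. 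The Hurewicz map $\eta_R\colon \L_\Q \to \LB \otimes_\L \Q$ is a rational isomorphism by Lazard's theorem---precisely the input used in the proof of Theorem~\ref{thm:HQ}---and applying $\Phi$ yields the desired iso in $\w\MS_S^\lisse$.

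The main obstacle is verifying the identification $\MGL_\Q \simeq \Phi(\L_\Q)$ in $\w\MS_S^\lisse$, i.e.\ that $\Phi$ interacts correctly with the rationalization of $\MGL$. Both sides represent the homological functor $X \mapsto \MGL_{**}(X) \otimes \Q$ on $\MS_S$ (immediate for $\Phi(\L_\Q)$ from the formula $\Phi(M)_{**} = \MGL_{**} \otimes_\L M$ of Construction~\ref{ctr:Phi-star}), so uniqueness in the Adams representability theorem (Proposition~\ref{prop:adams}) produces the iso over countable $S$, and it propagates to arbitrary $S$ by naturality of $\Phi$.
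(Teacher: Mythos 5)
Your proof is correct and follows the same route as the paper: (i)$\Rightarrow$(ii) via Remark~\ref{rmk:HQ-orientation}, (ii)$\Rightarrow$(iii) trivially, and (iii)$\Rightarrow$(i) by observing that $E$ is a retract of $\MGL\otimes E\simeq\MGL_\Q\otimes E$ and then showing $\MGL_\Q$ is an $\HH\Q$-module by comparing $\MGL_\Q$ and $\MGL\otimes\HH\Q$ through $\Phi$ and the Hurewicz isomorphism $\L_\Q\simto\LB\otimes_\L\Q$. The only cosmetic differences are that the paper asserts that the \emph{canonical} map $\MGL_\Q\to\MGL\otimes\HH\Q$ is an isomorphism and leaves the identification $\MGL_\Q\simeq\Phi(\L_\Q)$ as implicit, whereas you content yourself with an abstract isomorphism (which suffices, since being $\HH\Q$-local is invariant under isomorphism) and spell out the Adams-representability argument showing $\MGL_\Q\simeq\Phi(\L_\Q)$ in $\w\MS_S^\lisse$ — a step the paper takes for granted, so your version is if anything a little more complete.
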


\begin{proof}
	(i) $\Rightarrow$ (ii) follows from Remark~\ref{rmk:HQ-orientation}, and (ii) $\Rightarrow$ (iii) is obvious.
	The subcategory $\Mod_{\HH\Q}(\MS_S)\subset \MS_S$ is a tensor ideal and is closed under retracts. To prove (iii) $\Rightarrow$ (i), it therefore suffices to show that the canonical map $\MGL_\Q\to \MGL\otimes\HH\Q$ is an isomorphism, and we may assume $S$ qcqs.
	The spectrum $\MGL_\Q$ is the image by $\Phi$ of the universal rational formal group law on the graded ring $\L\otimes\Q$.
On the other hand, by Lemma~\ref{lem:HQ=LQ} and Theorem~\ref{thm:LEFT}, $\MGL\otimes \HH\Q$ is the image by $\Phi$ of the graded $\L$-algebra $\LB\otimes_\L\Q$. The claim follows since the map $\L\otimes\Q\to \LB\otimes_\L\Q$ is an isomorphism.
\end{proof}

\begin{remark}[Periodic rational motivic cohomology and the Chern character]
	\label{rmk:chern}
	If $E\in\CAlg(\MS_S)$ is a $\Q$-linear orientable $\E_\infty$-ring, there is a canonical isomorphism
	\[
	\bigoplus_{n\geq 0} \Sigma_{\P^1}^n E \simto \Sym_E(\Sigma_{\P^1}E)
	\] 
	(see Remark~\ref{rmk:HQ-orientation}).
	Inverting the tautological element $u\colon \P^1\to \Sym_E(\Sigma_{\P^1}E)$, we obtain a further isomorphism
	\[
	\rm PE=\bigoplus_{n\in \Z} \Sigma_{\P^1}^n E \simto \Sym_E(\Sigma_{\P^1}E)[u^{-1}].
	\]
	In other words, the spectrum $\rm PE$ has a structure of $\E_\infty$-$E$-algebra, which is initial among $\E_\infty$-$E$-algebras $R$ with a unit $u\colon \P^1\to R$.
	By Proposition~\ref{prop:HQ-modules}, $\KGL_\Q$ is an $\E_\infty$-$\HH\Q$-algebra. Hence, there is a unique map of $\E_\infty$-$\HH\Q$-algebras
	\[
	\mathrm{ch}^{-1}\colon \rm P\HH\Q\to \KGL_\Q
	\]
	sending $u$ to $\beta$, which is an isomorphism by Proposition~\ref{prop:Adams-dec}(iii).
\end{remark}

\begin{example}[Integral étale motivic cohomology]
	\label{ex:HZetale}
	Let $\HH\Z^\et\in \MS_\Z$ be the étale localization of the motivic cohomology spectrum $\HH\Z$. For any derived scheme $S$, let $\HH\Z^\et\in \MS_S$ be its base change to $S$. Then:
	\begin{enumerate}
		\item For any prime $p$, the $p$-completion of $\HH\Z^\et$ is the motivic spectrum $\HH\Z_p^\et$ from Example~\ref{ex:syn} representing Bhatt–Lurie syntomic cohomology.
		\item The rationalization of $\HH\Z^\et$ is the rational motivic cohomology spectrum $\HH\Q$ from Definition~\ref{def:HQ} representing the fixed points of the Adams operations on the rational K-groups.
		\item $\HH\Z^\et$ satisfies étale descent, i.e., belongs to the full subcategory $\MS_S^\et\subset\MS_S$.
		\item Over a Dedekind domain $D$, $\HH\Z^\et$ is the étale localization of $\HH\Z$ in $\MS_D$, and $\Omega^{\infty-n}_{\P^1}\HH\Z^\et$ is the étale sheafification of the Bloch–Levine motivic complex $\Z(n)[2n]$ on $\Sm_D$.
	\end{enumerate}
	Indeed, (i) and (ii) are true over $\Spec(\Z)$ and hence over arbitrary $S$ as both $\HH\Z_p^\et$ and $\HH\Q$ are stable under base change. 
	We therefore have the following cartesian fracture square in $\CAlg(\MS_S)$:
	\[
	\begin{tikzcd}
		\HH\Z^\et \ar{r} \ar{d} & \prod_p \HH\Z_p^\et \ar{d} \\
		\HH\Q \ar{r} & \left(\prod_p\HH\Z_p^\et\right)_\Q\rlap.
	\end{tikzcd}
	\]
	To prove (iii), it remains to show that $(\prod_p\HH\Z_p^\et)_\Q$ satisfies finite étale descent.
	Since $\mathrm{R}\Gamma_\syn(\ph,\Z_p(*))$ is an étale sheaf of $\E_\infty$-rings, it has a unique structure of finite étale transfers satisfying the projection formula \cite[Corollary C.13]{norms}. Moreover, it is the constant sheaf $\Z_p$ in weight $0$. Hence, for a finite étale map $f\colon Y\to X$, the endomorphism $f_*f^*$ of $\mathrm{R}\Gamma_\syn(X,\Z_p(*))$ is multiplication by the degree of $f$. This remains true if we take the product over all primes, so that we obtain a finite étale sheaf after rationalizing.
	Over a Dedekind domain $D$, we see using the idempotence of $\HH\Q$ (Theorem~\ref{thm:HQ}) that the fracture square for $\HH\Z$ maps to the one for $\HH\Z^\et$, which yields a map $\HH\Z\to \HH\Z^\et$ in $\CAlg(\MS_D)$. The induced map $\Z(n)[2n]=\Omega_{\P^1}^{\infty-n}\HH\Z\to \Omega_{\P^1}^{\infty-n}\HH\Z^\et$ exhibits its target as the étale sheafification of its source, since this holds after $p$-completion (by Example~\ref{ex:syn}) and rationally (since $\HH\Q$ is already an étale sheaf). This proves (iv).
\end{example}


\bibliographystyle{alphamod}
\bibliography{references}

\newcommand{\etalchar}[1]{$^{#1}$}
\providecommand{\bysame}{\leavevmode\hbox to3em{\hrulefill}\thinspace}
\providecommand{\MR}{\relax\ifhmode\unskip\space\fi MR }
\providecommand{\MRhref}[2]{%
  \href{http://www.ams.org/mathscinet-getitem?mr=#1}{#2}
}
\providecommand{\href}[2]{#2}
\begin{thebibliography}{AMMN22}
\providecommand{\url}[1]{\href{#1}{{\def~{\textasciitilde}\tt #1}}}

\bibitem[AC77]{AdamsClarke}
J.~F. Adams and F.~W. Clarke, \emph{Stable operations on complex K-theory},
  Illinois J. Math. \textbf{21} (1977), no.~4, pp.~826--829

\bibitem[Ada71]{AdamsBrown}
J.~F. Adams, \emph{A variant of E. H. Brown's representability theorem},
  Topology \textbf{10} (1971), no.~3, pp.~185--198

\bibitem[AHI24]{AHI}
T.~Annala, M.~Hoyois, and R.~Iwasa, \emph{Algebraic cobordism and a
  Conner--Floyd isomorphism for algebraic K-theory}, to appear in J. Amer.
  Math. Soc., 2024, \href{http://arxiv.org/abs/2303.02051}{{\sf
  arXiv:2303.02051}}

\bibitem[AI23]{AnnalaIwasa2}
T.~Annala and R.~Iwasa, \emph{Motivic spectra and universality of K-theory},
  2023, \href{http://arxiv.org/abs/2204.03434}{{\sf arXiv:2204.03434}}

\bibitem[AKN23]{AKN}
B.~Antieau, A.~Krause, and T.~Nikolaus, \emph{Prismatic cohomology relative to
  $\delta$-rings}, 2023, \href{http://arxiv.org/abs/2310.12770}{{\sf
  arXiv:2310.12770}}

\bibitem[AMM22]{AMM}
B.~Antieau, A.~Mathew, and M.~Morrow, \emph{The K-theory of perfectoid rings},
  Doc. Math. \textbf{27} (2022), pp.~1923--1951, preprint
  \href{http://arxiv.org/abs/2203.06472}{{\sf arXiv:2203.06472}}

\bibitem[AMMN22]{AMMN}
B.~Antieau, A.~Mathew, M.~Morrow, and T.~Nikolaus, \emph{On the Beilinson fiber
  square}, Duke Math. J. \textbf{171} (2022), no.~18, pp.~3707--3806

\bibitem[Ann22a]{annala-chern}
T.~Annala, \emph{Chern classes in precobordism theories}, J. Eur. Math. Soc.
  \textbf{25} (2022), pp.~1379--1422, preprint
  \href{http://arxiv.org/abs/1911.12493}{{\sf arXiv:1911.12493}}

\bibitem[Ann22b]{AnnalaThesis}
\bysame, \emph{Derived Algebraic Cobordism}, Ph.D. thesis, University of
  British Columbia, 2022, available at
  \href{http://arxiv.org/abs/2203.12096}{{\sf arXiv:2203.12096}}

\bibitem[Ayo08]{Ayoub}
J.~Ayoub, \emph{Les six op{\'e}rations de Grothendieck et le formalisme des
  cycles {\'e}vanescents dans le monde motivique, I}, Ast{\'e}risque
  \textbf{315} (2008)

\bibitem[Bac22]{Bac22}
T.~Bachmann, \emph{The very effective covers of KO and KGL over Dedekind
  schemes}, to appear in J. Eur. Math. Soc., 2022,
  \href{http://arxiv.org/abs/2201.02786}{{\sf arXiv:2201.02786}}

\bibitem[Bal23]{Bals}
K.~Bals, \emph{Periodic Cyclic Homology over $\mathbb{Q}$}, 2023,
  \href{http://arxiv.org/abs/2301.03112v2}{{\sf arXiv:2301.03112v2}}

\bibitem[Ber86]{Ber}
P.~Berthelot, \emph{G{\'e}om{\'e}trie rigide et cohomologie des
  vari{\'e}t{\'e}s alg{\'e}briques de caract{\'e}ristique $p$}, M{\'e}m. Soc.
  Math. Fr. (1986), no.~23, pp.~7--32

\bibitem[BGT13]{blumberg2013universal}
A.~J. Blumberg, D.~Gepner, and G.~Tabuada, \emph{A universal characterization
  of higher algebraic $K$-theory}, Geom. Topol. \textbf{17} (2013), no.~2,
  pp.~733--838

\bibitem[BH21]{norms}
T.~Bachmann and M.~Hoyois, \emph{Norms in motivic homotopy theory},
  Ast{\'e}risque \textbf{425} (2021), preprint
  \href{http://arxiv.org/abs/1711.03061}{{\sf arXiv:1711.03061}}

\bibitem[BL22]{BhattLurie}
B.~Bhatt and J.~Lurie, \emph{Absolute prismatic cohomology}, 2022,
  \href{http://arxiv.org/abs/2201.06120v1}{{\sf arXiv:2201.06120v1}}

\bibitem[BM23]{BM}
B.~Bhatt and A.~Mathew, \emph{Syntomic complexes and $p$-adic {\'e}tale Tate
  twists}, Forum Math. Pi \textbf{11} (2023), p.~e1

\bibitem[BMS19]{BMS}
B.~Bhatt, M.~Morrow, and P.~Scholze, \emph{Topological Hochschild homology and
  integral $p$-adic Hodge theory}, Publ. Math. Inst. Hautes {\'E}tudes Sci.
  \textbf{129} (2019), no.~1, pp.~199--310,
  \url{https://doi.org/10.1007/s10240-019-00106-9}

\bibitem[Bou24]{Bou}
T.~Bouis, \emph{Motivic cohomology of mixed characteristic schemes}, in
  preparation, 2024

\bibitem[BS22]{BS:2022}
B.~Bhatt and P.~Scholze, \emph{Prisms and prismatic cohomology}, Ann. of Math.
  \textbf{196} (2022), no.~3, pp.~1135--1275, preprint
  \href{http://arxiv.org/abs/1905.08229}{{\sf arXiv:1905.08229}}

\bibitem[CD19]{CD}
D.-C. Cisinski and F.~D{\'e}glise, \emph{Triangulated categories of mixed
  motives}, Springer Monographs in Mathematics, Springer, 2019, preprint
  \href{http://arxiv.org/abs/0912.2110}{{\sf arXiv:0912.2110}}

\bibitem[Cla21]{Cla21}
D.~Clausen, \emph{Algebraic de Rham cohomology}, Lecture notes, 2021,
  \url{https://sites.google.com/view/algebraicderham}

\bibitem[CM21]{ClausenMathew}
D.~Clausen and A.~Mathew, \emph{Hyperdescent and {\'e}tale K-theory}, Invent.
  Math. \textbf{225} (2021), pp.~981--1076

\bibitem[Del02]{Voevodsky-6FF}
P.~Deligne, \emph{Voevodsky's lectures on cross functors, Fall 2001}, 2002,
  \url{https://www.math.ias.edu/vladimir/sites/math.ias.edu.vladimir/files/2015_transfer_from_ps_delnotes01.pdf}

\bibitem[DLZ11]{DLZ}
C.~Davis, A.~Langer, and T.~Zink, \emph{Overconvergent de Rham--Witt
  cohomology}, Ann. Sci. {\'E}c. Norm. Sup{\'e}r. \textbf{44} (2011), no.~2,
  pp.~197--262

\bibitem[EHK{\etalchar{+}}20]{EHKSY3}
E.~Elmanto, M.~Hoyois, A.~A. Khan, V.~Sosnilo, and M.~Yakerson, \emph{Modules
  over algebraic cobordism}, Forum Math. Pi \textbf{8} (2020), p.~e14,
  \url{https://doi.org/10.1017/fmp.2020.13}

\bibitem[EK19]{ElmantoKhan}
E.~Elmanto and A.~A. Khan, \emph{Perfection in motivic homotopy theory}, Proc.
  London. Math. Soc. \textbf{120} (2019), pp.~28--38, preprint
  \href{http://arxiv.org/abs/1812.07506}{{\sf arXiv:1812.07506}}

\bibitem[EM23]{ElmantoMorrow}
E.~Elmanto and M.~Morrow, \emph{Motivic cohomology of equicharacteristic
  schemes}, 2023, \href{http://arxiv.org/abs/2309.08463v1}{{\sf
  arXiv:2309.08463v1}}

\bibitem[ESS23]{ESS}
V.~Ertl, A.~Shiho, and J.~Sprang, \emph{Integral $p$-adic cohomology theories
  for open and singular varieties}, 2023,
  \href{http://arxiv.org/abs/2105.11009}{{\sf arXiv:2105.11009}}

\bibitem[Gei04]{Gei}
T.~Geisser, \emph{Motivic cohomology over Dedekind domains}, Math. Z.
  \textbf{248} (2004), no.~4, pp.~773--794

\bibitem[Goo86]{Goodwillie}
T.~G. Goodwillie, \emph{Relative Algebraic K-Theory and Cyclic Homology}, Ann.
  Math. \textbf{124} (1986), no.~2, pp.~347--402

\bibitem[Gro61]{EGA2}
A.~Grothendieck, \emph{{\'E}l{\'e}ments de G{\'e}om{\'e}trie Alg{\'e}brique:
  II. {\'E}tude globale {\'e}l{\'e}mentaire de quelques classes de morphismes},
  Publ. Math. I.H.{\'E}.S. \textbf{8} (1961)

\bibitem[Hoy17]{hoyois-sixops}
M.~Hoyois, \emph{The six operations in equivariant motivic homotopy theory},
  Adv. Math. \textbf{305} (2017), pp.~197--279

\bibitem[Hoy23]{AdamsRep}
M.~Hoyois, \emph{Finite Brown representability}, 2023,
  \url{https://hoyois.app.uni-regensburg.de/papers/adams.pdf}

\bibitem[Kat89]{Kat}
K.~Kato, \emph{Logarithmic structures of Fontaine--Illusie}, Algebraic
  Analysis, Geometry and Number Theory (J.-I. Igusa, ed.), Johns Hopkins
  University Press, 1989, pp.~191--224

\bibitem[Kha21]{KhanExcess}
A.~A. Khan, \emph{Virtual excess intersection theory}, Ann. K-Theory \textbf{6}
  (2021), no.~3, pp.~559--570, preprint
  \href{http://arxiv.org/abs/1909.13829}{{\sf arXiv:1909.13829}}

\bibitem[KST17]{KST}
M.~Kerz, F.~Strunk, and G.~Tamme, \emph{Algebraic $K$-theory and descent for
  blow-ups}, Invent. Math. \textbf{211} (2017), no.~2, pp.~523--577, preprint
  \href{http://arxiv.org/abs/1611.08466}{{\sf arXiv:1611.08466}}

\bibitem[LT19]{LT}
M.~Land and G.~Tamme, \emph{On the K-theory of pullbacks}, Ann. of Math.
  \textbf{190} (2019), no.~3, pp.~877--930

\bibitem[Lur10]{Lurie:2010}
J.~Lurie, \emph{Chromatic Homotopy Theory}, Lecture notes, 2010,
  \url{https://www.math.ias.edu/~lurie/252x.html}

\bibitem[LYZR19]{LYZ}
M.~Levine, Y.~Yang, G.~Zhao, and J.~Riou, \emph{Algebraic elliptic cohomology
  theory and flops I}, Math. Ann. \textbf{375} (2019), no.~3--4, pp.~1823--1855

\bibitem[May01]{MayWrong}
P.~May, \emph{Idempotents and Landweber exactness in brave new algebra},
  Homology Homotopy Appl. \textbf{3} (2001), no.~2, pp.~355--359

\bibitem[Mer24]{Mer}
A.~Merici, \emph{A motivic integral $p$-adic cohomology}, 2024,
  \href{http://arxiv.org/abs/2211.14303}{{\sf arXiv:2211.14303}}

\bibitem[Mok93]{Mok}
A.~Mokrane, \emph{Cohomologie cristalline des vari{\'e}t{\'e}s ouvertes}, Rev.
  Maghr{\'e}bine Math. \textbf{2} (1993), no.~2, pp.~161--175

\bibitem[MV99]{MV}
F.~Morel and V.~Voevodsky, \emph{$\mathbb{A}^1$-homotopy theory of schemes},
  Publ. Math. Inst. Hautes {\'E}tudes Sci. \textbf{90} (1999), pp.~45--143

\bibitem[Nee97]{NeemanAdams}
A.~Neeman, \emph{On a theorem of Brown and Adams}, Topology \textbf{36} (1997),
  no.~3, pp.~619--645

\bibitem[NN16]{NN}
J.~Nekov{\'a}{\v r} and W.~Nizio{\l}, \emph{Syntomic cohomology and $p$-adic
  regulators for varieties over $p$-adic fields}, Algebra Number Theory
  \textbf{10} (2016), no.~8, pp.~1695--1790

\bibitem[NS08]{NS}
Y.~Nakkajima and A.~Shiho, \emph{Weight filtrations on log crystalline
  cohomologies of families of open smooth varieties}, Lecture Notes in Math.,
  no. 1959, Springer-Verlag, Berlin, 2008

\bibitem[NS{\O}09]{Naumann:2009}
N.~Naumann, M.~Spitzweck, and P.~A. {\O}stv{\ae}r, \emph{Motivic {L}andweber
  Exactness}, Doc. Math. \textbf{14} (2009), pp.~551--593, preprint
  \href{http://arxiv.org/abs/0806.0274}{{\sf arXiv:0806.0274}}

\bibitem[PPR08]{Panin:2008}
I.~Panin, K.~Pimenov, and O.~R{\"o}ndigs, \emph{A universality theorem for
  {V}oevodsky's algebraic cobordism spectrum}, Homology Homotopy Appl.
  \textbf{10} (2008), no.~2, pp.~211--226, preprint
  \href{http://arxiv.org/abs/0709.4116}{{\sf arXiv:0709.4116}}

\bibitem[Rak20]{Raksit}
A.~Raksit, \emph{Hochschild homology and the derived de Rham complex
  revisited}, 2020, \href{http://arxiv.org/abs/2007.02576v2}{{\sf
  arXiv:2007.02576v2}}

\bibitem[Rio05]{Riou}
J.~Riou, \emph{Dualit{\'e} de {S}panier--{W}hitehead en g{\'e}om{\'e}trie
  alg{\'e}brique}, C. R. Math. Acad. Sci. Paris \textbf{340} (2005), no.~6,
  pp.~431--436

\bibitem[Rio10]{RiouK}
\bysame, \emph{Algebraic K-theory, $\mathbb A^1$-homotopy and Riemann--Roch
  theorems}, J. Topol. \textbf{3} (2010), no.~2, pp.~229--264

\bibitem[Rob15]{Robalo}
M.~Robalo, \emph{$K$-theory and the bridge from motives to noncommutative
  motives}, Adv. Math. \textbf{269} (2015), pp.~399--550

\bibitem[R{\"o}n05]{RondigsSH}
O.~R{\"o}ndigs, \emph{Functoriality in motivic homotopy theory}, unpublished
  preprint, 2005

\bibitem[Sat07]{SatoTwists}
K.~Sato, \emph{$p$-adic {\'e}tale Tate twists and arithmetic duality}, Ann.
  Sci. {\'E}c. Norm. Sup{\'e}r. \textbf{40} (2007), pp.~519--588

\bibitem[Sou85]{Soule}
C.~Soul{\'e}, \emph{Op{\'e}rations en K-th{\'e}orie alg{\'e}brique}, Can. J.
  Math. \textbf{37} (1985), no.~3, pp.~488--550

\bibitem[Spi12]{Spitzweck:2012}
M.~Spitzweck, \emph{Slices of motivic {L}andweber spectra}, J. K-theory
  \textbf{9} (2012), no.~1, pp.~103--117, preprint
  \href{http://arxiv.org/abs/0805.3350}{{\sf arXiv:0805.3350}}

\bibitem[Spi18]{SpitzweckHZ}
\bysame, \emph{A commutative $\mathbb{P}^1$-spectrum representing motivic
  cohomology over {D}edekind domains}, M{\'e}m. Soc. Math. Fr. \textbf{157}
  (2018), preprint \href{http://arxiv.org/abs/1207.4078}{{\sf arXiv:1207.4078}}

\bibitem[SS16]{SchmidtStrunk}
J.~Schmidt and F.~Strunk, \emph{Stable $\mathbb A^1$-connectivity over Dedekind
  schemes}, to appear in Ann. K-Theory, 2016,
  \href{http://arxiv.org/abs/1602.08356v3}{{\sf arXiv:1602.08356v3}}

\bibitem[Stacks]{stacks}
{The Stacks Project Authors}, \emph{The Stacks Project},
  \url{http://stacks.math.columbia.edu}

\bibitem[Sus17]{Suslin2017}
A.~Suslin, \emph{Motivic complexes over nonperfect fields}, Ann. K-Theory
  \textbf{2} (2017), no.~2, pp.~277--302

\bibitem[Tan22]{Tan22}
L.~Tang, \emph{Syntomic cycle classes and prismatic Poincar{\'e} duality},
  2022, \href{http://arxiv.org/abs/2210.14279}{{\sf arXiv:2210.14279}}

\bibitem[Tan24]{Tang}
\bysame, in preparation, 2024

\bibitem[Tsu99]{Tsuji}
T.~Tsuji, \emph{Poincar{\'e} duality for logarithmic crystalline cohomology},
  Compos. Math. \textbf{118} (1999), no.~1, pp.~11--41

\bibitem[TT90]{TT}
R.~W. Thomason and T.~Trobaugh, \emph{Higher algebraic {K}-theory of schemes
  and of derived categories}, The Grothendieck Festschrift III, Progress in
  Mathematics, vol.~88, Birkh{\"a}user, 1990, pp.~247--435

\bibitem[Voe98]{VoevodskyICM}
V.~Voevodsky, \emph{$\mathbb{A}^1$-Homotopy Theory}, Doc. Math., Extra Volume:
  Proceedings of the International Congress of Mathematicians, Volume I (1998),
  pp.~579--604

\end{thebibliography}
    
\end{document}